\newcommand{\sm}[1]{( \begin{smallmatrix} #1 \end{smallmatrix} )}
\newcommand{\subgrp}[1]{\langle #1 \rangle}
\newcommand{\set}[1]{\left\{ #1 \right\}}
\newcommand{\abs}[1]{\left| #1 \right|}
\newcommand{\bs}[1]{\boldsymbol{#1}}
\newcommand{\wt}[1]{\widetilde{ #1}}
\newcommand{\ul}[1]{\underline{#1}}
\newcommand{\ol}[1]{\overline{#1}}
\newcommand{\wh}[1]{\widehat{ #1}}
\newcommand{\gotimes}{\tensor[^g]{\otimes}{}}
\newcommand{\da}[1]{\!\!\downarrow_{#1}}
\newcommand{\ev}{\textup{ev}}
\newcommand{\odd}{\textup{odd}}
\newcommand{\ve}{\varepsilon}
\newcommand{\whphi}{\wh{\phi}}
\newcommand{\wtalpha}{\wt{\alpha}}
\newcommand{\wtrho}{\wt{\rho}}
\newcommand{\wtsigma}{\wt{\sigma}}
\DeclareMathOperator{\ann}{ann}
\DeclareMathOperator{\coker}{coker}
\DeclareMathOperator{\End}{End}
\DeclareMathOperator{\Ext}{Ext}
\DeclareMathOperator{\fd}{fd}
\DeclareMathOperator{\opH}{H}
\newcommand{\Hbul}{\opH^\bullet}
\DeclareMathOperator{\Hom}{Hom}
\DeclareMathOperator{\id}{id}
\DeclareMathOperator{\im}{im}
\DeclareMathOperator{\ind}{ind}
\DeclareMathOperator{\Max}{Max}
\DeclareMathOperator{\pd}{pd}
\DeclareMathOperator{\res}{res}
\DeclareMathOperator{\Spec}{Spec}
\DeclareMathOperator{\Tor}{Tor}
\newcommand{\G}{\mathbb{G}}
\newcommand{\M}{\mathbb{M}}
\newcommand{\N}{\mathbb{N}}
\renewcommand{\P}{\mathbb{P}}
\newcommand{\Z}{\mathbb{Z}}
\newcommand{\fm}{\mathfrak{m}}
\newcommand{\fn}{\mathfrak{n}}
\newcommand{\fp}{\mathfrak{p}}
\newcommand{\fs}{\mathfrak{s}}
\newcommand{\alg}{\mathfrak{alg}}
\newcommand{\fmod}{\mathfrak{mod}}
\newcommand{\calg}{\mathfrak{calg}}
\newcommand{\salg}{\mathfrak{salg}}
\newcommand{\csalg}{\mathfrak{csalg}}
\newcommand{\sgrp}{\mathfrak{sgrp}}
\newcommand{\fsmod}{\mathfrak{smod}}
\newcommand{\sets}{\mathfrak{sets}}
\newcommand{\svec}{\mathfrak{svec}}
\newcommand{\calI}{\mathcal{I}}
\newcommand{\calJ}{\mathcal{J}}
\newcommand{\calN}{\mathcal{N}}
\newcommand{\Ga}{\G_a}
\newcommand{\Gam}{\Ga^-}
\newcommand{\Gar}{\G_{a(r)}}
\newcommand{\Gas}{\G_{a(s)}}
\newcommand{\Gaone}{\G_{a(1)}}
\newcommand{\zero}{\ol{0}}
\newcommand{\one}{\ol{1}}
\newcommand{\Azero}{A_{\zero}}
\newcommand{\Aone}{A_{\one}}
\newcommand{\Vone}{V_{\one}}
\newcommand{\Vzero}{V_{\zero}}
\newcommand{\GLmnr}{GL_{m|n(r)}}
\newcommand{\Grs}{G_{r;s}}
\newcommand{\Grmsp}{G_{r-1;s+1}}
\newcommand{\Mr}{\M_r}
\newcommand{\Mone}{\M_1}
\newcommand{\Moneone}{\M_{1;1}}
\newcommand{\Mones}{\M_{1;s}}
\newcommand{\Mrf}{\M_{r;f}}
\newcommand{\Mrfeta}{\M_{r;f,\eta}}
\newcommand{\Mrseta}{\M_{r;s,\eta}}
\newcommand{\Mrs}{\M_{r;s}}
\newcommand{\Nr}{\calN_r}
\newcommand{\NrG}{\calN_r(G)}
\newcommand{\None}{\calN_1}
\newcommand{\NoneG}{\None(G)}
\renewcommand{\Pr}{\P_r}
\newcommand{\Pone}{\P_1}
\newcommand{\ulExt}{\ul{\Ext}}
\newcommand{\bfHom}{\mathbf{Hom}}
\newcommand{\ulHom}{\ul{\Hom}}
\newcommand{\bsPi}{\bs{\Pi}}
\newcommand{\Vrg}{V_r(G)}
\newcommand{\Vrs}{V_{r;s}}
\newcommand{\bsV}{\bs{V}}
\newcommand{\bsvr}{\bsV\!_r}
\newcommand{\bsvrg}{\bsvr(G)}
\numberwithin{equation}{subsection}
\newtheorem{theorem}{Theorem}[subsection]
\newtheorem*{theorem*}{Theorem}
\newtheorem{proposition}[theorem]{Proposition}
\newtheorem{corollary}[theorem]{Corollary}
\newtheorem{lemma}[theorem]{Lemma}
\theoremstyle{definition}
\newtheorem{definition}[theorem]{Definition}
\newtheorem*{definition*}{Definition}
\newtheorem{notation}[theorem]{Notation}
\newtheorem{example}[theorem]{Example}
\newtheorem{remark}[theorem]{Remark}
\title{Support schemes for infinitesimal unipotent supergroups}
\author{Christopher M.\ Drupieski}
\address{Department of Mathematical Sciences,
		DePaul University,
		Chicago, IL 60614, USA}
\email{c.drupieski@depaul.edu}
\author{Jonathan R. Kujawa}
\address{Department of Mathematics \\
		University of Oklahoma \\
		Norman, OK 73019, USA}
\email{kujawa@math.ou.edu}
\thanks{The first author was supported in part by a Simons Collaboration Grant for Mathematicians, by a paid faculty leave from DePaul University in Winter and Spring  quarters 2018, and by NSF Grant No.\ DMS-1440140 while he was in residence at the Mathematical Sciences Research Institute in Berkeley, CA, during the Spring 2018 semester. The second author was supported in part by NSA grant H98230-16-0055 and in part by a Simons Collaboration Grant for Mathematicians.}
\subjclass[2010]{Primary 20G10. Secondary 17B56.}
\begin{document}

\begin{abstract}
We investigate support schemes for infinitesimal unipotent supergroups and their representations. Our main results provide a non-cohomological description of these schemes that generalizes the classical work of Suslin, Friedlander, and Bendel. As a consequence, support schemes in this setting have the desired features of such a theory, including naturality with respect to group homomorphisms, the tensor product property, and realizability. As an application of the theory developed here, we investigate support varieties for certain finite-dimensional Hopf subalgebras of the Steenrod algebra.
\end{abstract}

\maketitle

\tableofcontents

\section{Introduction}

Since the pioneering work of Quillen \cite{Quillen:1971}, geometric techniques have played a central role in non-semi\-simple representation theory. Of particular relevance to this paper is the seminal work of Suslin, Friedlander, and Bendel \cite{Suslin:1997, Suslin:1997a}, in which they develop a theory of support varieties for infinitesimal group schemes over fields of positive characteristic. Their main results give a non-cohomological description of the spectrum of the cohomology ring and of the support varieties of finite-dimensional modules. Their work demonstrates that unipotent group schemes, and one-parameter subgroups in particular, play a fundamental role. 

The main goal of this paper is to generalize the results and methods of Suslin, Friedlander, and Bendel to encompass representations of graded objects over fields of odd characteristic. Specifically, we develop the theory of infinitesimal uni\-potent group schemes and one-parameter subgroups, but in the super setting. Throughout, the prefix ``super''  denotes the existence of a $\Z_{2}$-grading and the use of graded analogues of the classical definitions. This includes $\Z$-graded objects as a special case, since one can reduce the $\Z$-gradings modulo two to obtain $\Z_{2}$-graded objects and then apply the theory developed here. In particular, this includes $\Z$-graded Hopf algebras, which play an important role in algebraic topology. As an application of this philosophy, at the end of the paper we explain how our results extend and correct the existing literature on cohomological support varieties for finite-dimensional graded Hopf subalgebras of the Steenrod algebra.

\subsection{Overview}

As mentioned above, infinitesimal one-parameter subgroups (i.e., subgroups isomorphic to a Frobenius kernel of the additive group scheme) play a fundamental role in the classical setting. Previous work by the authors \cite{Drupieski:2017a} and forthcoming work by Benson, Iyengar, Krause, and Pevtsova \cite{Benson:2018} suggests that the correct graded analogues of one-parameter subgroups are the multi\-parameter supergroups, whose definitions we recall in Section \ref{subsec:somesupergroups}. In contrast to their classical counterparts, the multiparameter supergroups are not all unipotent. By definition, the group algebra of each multi\-parameter supergroup is a finite-dimensional Hopf superalgebra quotient (for some $r \geq 1$) of the Hopf superalgebra $\Pr$ defined in \eqref{eq:kMrpol}. In fact, as we show in Proposition \ref{prop:Prquotients}, this property characterizes the group algebras of the multiparameter supergroups.

Motivated by the preceding observation, in Section \ref{subsec:Vrg} we define for each finite $k$-supergroup scheme $G$ the $k$-superfunctor $\bsvrg$, whose set of $A$-points (for each commutative $k$-superalgebra $A$) is given by
	\begin{align*}
	\bsvrg(A) &= \Hom_{Hopf/A}(\Pr \otimes_k A, kG \otimes_k A),
	\end{align*}
the set of Hopf $A$-superalgebra homomorphisms $\rho: \Pr \otimes_k A \rightarrow kG \otimes_k A$. Here $kG = k[G]^\#$ denotes the group algebra of $G$ (the Hopf superalgebra dual to the coordinate algebra of $G$). As observed in Lemma \ref{lemma:bsvrg}, $\bsvrg$ admits the structure of an affine $k$-superscheme of finite type. Then the underlying purely even subfunctor $\Vrg=\bsvrg_{\ev}$ of $\bsvrg$ is an affine $k$-scheme of finite type. This enables us (in parallel to the approach of Suslin, Friedlander, and Bendel) to define, for each finite-dimensional $kG$-super\-module $M$, the closed subscheme
	\[
	\Vrg_M := \set{ \fs \in \Vrg : \pd_{\Pone \otimes_k k(\fs)}(M \otimes_k k(\fs)) = \infty }.
	\]
For an explanation of notation we refer the reader to Section \ref{subsec:definesupportscheme}. Our definition of the support set $\Vrg_M$ is inspired by similar definitions appearing in the literature in the context of commutative local rings (cf.\ \cite{Avramov:1989,Avramov:2000,Jorgensen:2002}), and which were brought to our attention by way of a talk by Srikanth Iyengar at the Conference on Groups, Representations, and Cohomology, held at Sabal M\`or Ostaig, Isle of Skye, Scotland, in June 2015. We note that, while the main results of this paper are proved only for \emph{infinitesimal unipotent} supergroup schemes, the definition of the support scheme $\Vrg_M$ makes sense for \emph{any} finite $k$-supergroup scheme. Our proof that $\Vrg_M$ is a Zariski closed conical subset of $\Vrg$ relies on the fact that, as an ungraded algebra, the Hopf algebra $\Pone = k[u,v]/\subgrp{u^p+v^2}$ is a hypersurface ring. In particular, using Eisenbud's theory of matrix factorizations \cite{Eisenbud:1980}, we show in Proposition \ref{prop:Poneprojdim} that a $\Pone$-supermodule has infinite projective dimension if and only if a certain cup product in cohomology is nonzero.

Now given a finite $k$-supergroup scheme $G$, write $H(G,k)$ for the subalgebra
	\[
	\bigoplus_{n \geq 0} \opH^n(G,k)_{\ol{n}} = \opH^{\ev}(G,k)_{\zero} \oplus \opH^{\odd}(G,k)_{\one}
	\]
of the full cohomology ring $\Hbul(G,k)$.  This is a finitely-generated, commutative (in the ungraded sense) $k$-algebra, and its spectrum coincides with that of $\Hbul(G,k)$. In Section \ref{subsec:universalhom} we show that, for any finite $k$-supergroup scheme $G$, there exists a natural homomorphism of $\Z[\frac{p^r}{2}]$-graded algebras 
	\[
	\psi_r: H(G,k) \rightarrow k[\Vrg]
	\]
that multiplies degrees by $\frac{p^r}{2}$. The first main result of this paper is that, for $G$ infinitesimal unipotent, $\psi_r$ induces a universal homeomorphism between the associated schemes.

\begin{theorem*}[Theorem \ref{thm:psiuniversalhomeo}]
Let $G$ be an infinitesimal  unipotent $k$-super\-group scheme of height $\leq r$. Then the kernel of the homomorphism
	\[
	\psi_r: H(G,k) \rightarrow k[\Vrg]
	\]
is a locally nilpotent ideal, and the image of $\psi_r$ contains the $p^r$-th power of each element of $k[\Vrg]$. Consequently, the associated morphism of schemes $\Psi_r: \Vrg \rightarrow \abs{G}$ is a universal homeomorphism.
\end{theorem*}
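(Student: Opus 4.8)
The plan is to obtain the ``universal homeomorphism'' conclusion from the two algebraic assertions about $\psi_r$ by a standard commutative-algebra argument, and then to prove those assertions by reducing, via naturality, to explicit computations for the multiparameter supergroups. For the formal step, suppose $f\colon A\to B$ is a homomorphism of finitely generated commutative $k$-algebras whose kernel is locally nilpotent and which satisfies $b^{p^r}\in\im f$ for every $b\in B$. Then each $b\in B$ is a root of $x^{p^r}-b^{p^r}=(x-b)^{p^r}\in(\im f)[x]$, so $B$ is a finite $\im f$-module and $\Spec f$ is a finite, hence closed, morphism; it is dominant because $\im f\hookrightarrow B$, hence surjective; it is universally injective because of the relation $B^{p^r}\subseteq\im f$; and $\Spec(\im f)=\Spec(A/\ker f)\to\Spec A$ is a surjective closed immersion, hence itself a universal homeomorphism, since $\ker f$ is nil. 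Composing, $\Spec f$ is a universal homeomorphism (compare \cite{Suslin:1997}). So it remains to prove that $\ker\psi_r$ is locally nilpotent --- for which, $H(G,k)$ being Noetherian, it suffices that every element of $\ker\psi_r$ be nilpotent --- and that $\im\psi_r$ contains $k[\Vrg]^{p^r}$.

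For the base cases, take $G$ to be one of the multiparameter supergroups $\Mrs$ of Section~\ref{subsec:somesupergroups}, a Frobenius kernel $\Gar$ of $\Ga$, the odd additive supergroup $\Gam$, or a finite product of such groups. Here $kG$ is an explicit finite-dimensional Hopf superalgebra quotient of $\Pr$ (Proposition~\ref{prop:Prquotients}), the ring $H(G,k)$ is known, and $\Vrg$ can be described concretely as a closed subscheme of an affine space of parameter tuples. One then computes $\psi_r$ directly: it should match, up to $p^r$-th powers, the polynomial generators of $H(G,k)$ with coordinate functions on $\Vrg$, whence $\ker\psi_r$ is literally nilpotent and $\im\psi_r$ contains all $p^r$-th powers in these cases. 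The hypersurface analysis of Proposition~\ref{prop:Poneprojdim} --- which reinterprets infinite projective dimension over $\Pone$ via the nonvanishing of a specific cup product --- is what ties the condition defining $\Vrg$ (and the subschemes $\Vrg_M$) to exactly the cohomology classes that $\psi_r$ records, and so makes this matching precise.

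For a general infinitesimal unipotent $G$ of height $\leq r$, fix a closed embedding $\iota\colon G\hookrightarrow\mathbb{G}$ into a unipotent supergroup $\mathbb{G}$ for which the ``$p^r$-th power'' assertion is already available --- for instance the $r$-th Frobenius kernel of the strictly upper triangular subgroup of some $\GLmn$, treated by an explicit cohomological computation in the spirit of the $GL_{n(r)}$ case of \cite{Suslin:1997}, or an iterated extension of base-case groups through which the assertion can be propagated. Since a closed embedding of supergroup schemes makes $kG$ a sub-Hopf-superalgebra of $k\mathbb{G}$, it induces a closed immersion $\Vrg\hookrightarrow\bsvr(\mathbb{G})_{\ev}$, so $k[\Vrg]$ is a quotient of $k[\bsvr(\mathbb{G})_{\ev}]$, and naturality of $\psi_r$ gives a commutative square over the restriction map $H(\mathbb{G},k)\to H(G,k)$. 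Then $\im\psi_r$ (for $G$) contains the images in $k[\Vrg]$ of the $p^r$-th powers of the coordinate functions of $\bsvr(\mathbb{G})_{\ev}$, and those images generate $k[\Vrg]^{p^r}$. This proves the ``$p^r$-th power'' assertion for all unipotent $G$; in particular $\Psi_r\colon\Vrg\to\abs{G}$ is finite, hence closed, and its image is a closed subset of $\abs{G}$.

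The remaining point --- that $\ker\psi_r$ lies in the nilradical of $H(G,k)$, equivalently that the finite morphism $\Psi_r$ is surjective --- is the step I expect to be the main obstacle. I would reduce it to a detection theorem: that $\Spec H(G,k)$ is the union of the images of $\Spec H(E,k)$ as $E$ ranges over the ``elementary'' sub-supergroup schemes of $G$, those whose group algebras are, up to Frobenius twist, quotients of some $\P_s$. Granting this, the base case together with naturality shows each such image is covered by the image of the corresponding $\bsvr(E)_{\ev}$, so $\Psi_r$ is surjective; then any $z\in\ker\psi_r$ lies in every prime of $H(G,k)$ and is therefore nilpotent. The detection theorem is the graded, infinitesimal analogue of the Quillen-type results underlying \cite{Suslin:1997,Suslin:1997a}, and I would prove it by d\'evissage along a chief series of $G$ whose successive quotients lie among the base-case groups, using finite generation of cohomology and the explicit cohomology of the multiparameter supergroups. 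Unipotence of $G$ enters decisively here: for $G$ containing a nontrivial torus, cohomology need not be detected on the elementary subgroups, and the conclusion can fail.
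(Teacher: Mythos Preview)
Your overall architecture is right, and the formal step deducing ``universal homeomorphism'' from the two algebraic assertions is fine (the paper simply cites \cite[Tag 0CNF]{stacks-project} for this). But the two substantive parts diverge from the paper in important ways.

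For the $p^r$-th power assertion, the paper does \emph{not} embed $G$ into a larger $\mathbb{G}$. Instead, it uses the observation (Lemma~\ref{lemma:kbsvrg=kNrG}) that when $G$ is unipotent the augmentation ideal of $kG$ is nilpotent, so every $\rho\in\bsvrg(A)$ already factors through some $k\Mrs$; hence $\bsvrg=\bfHom(\Mrs,G)$ and $k[\Vrg]=k[V_{r;s}(G)]$ for $s\gg 0$. This identifies $\psi_r$ with the map $\psi_{r;s}$ of \cite{Drupieski:2017a}, for which the $p^r$-th power surjectivity was already computed in \cite[Lemma~3.5.3]{Drupieski:2017b} via explicit universal cohomology classes. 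Your embedding argument could be made to work, but you would still owe the base computation for your ambient $\mathbb{G}$, and the paper's route sidesteps that entirely. Also, your invocation of Proposition~\ref{prop:Poneprojdim} here is misplaced: that result governs $\Vrg_M$ for modules $M$ and plays no role in the statement or proof of Theorem~\ref{thm:psiuniversalhomeo}.

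The real gap is your treatment of the nilpotent kernel. The paper does \emph{not} prove the detection theorem; it \emph{invokes} it as a black box from Benson--Iyengar--Krause--Pevtsova \cite[Theorem~1.2]{Benson:2018}. Concretely: given $z\in\ker\psi_r$, one base-changes to an algebraically closed field $K$, restricts to an arbitrary elementary $E\leq G_K$, and uses the naturality square together with \cite[Corollary~3.5.5]{Drupieski:2017b} (injectivity modulo nilpotents of $\psi_r$ for elementary $E$) to conclude that $\nu^*(z_K)$ is nilpotent in $H(E,K)$ for every such $K$ and $E$; then BIKP's theorem forces $z$ to be nilpotent in $H(G,k)$. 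Your proposal to obtain the detection theorem ``by d\'evissage along a chief series'' is the step I would flag as not a proof: the BIKP result is genuinely deep (it requires passage to arbitrary field extensions and does not follow from a naive filtration argument), and the paper explicitly leans on it as external input rather than reproving it. Without that input, your argument for local nilpotence of $\ker\psi_r$ is incomplete.
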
  

Next let $M$ be a finite-dimensional rational $G$-supermodule. The second main result of the paper asserts that the morphism of schemes $\Psi_r: \Vrg \rightarrow \abs{G}$ restricts to a homeomorphism between the non-cohomological support scheme $\Vrg_M$ and the cohomological support scheme $\abs{G}_{M}$.

\begin{theorem*}[Theorem \ref{thm:Psirinv}]
Let $G$ be an infinitesimal unipotent $k$-supergroup scheme of height $\leq r$, and let $M$ be a finite-dimensional rational $G$-supermodule. Then the morphism $\Psi_r: \Vrg \rightarrow \abs{G}$ satisfies $\Psi_r^{-1}(\abs{G}_M) = \Vrg_M$. Thus, $\Psi_r$ restricts to a finite universal homeomorphism
	\[
	\Psi_r: \Vrg_M \stackrel{\sim}{\rightarrow} \abs{G}_M.
	\]
\end{theorem*}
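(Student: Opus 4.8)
The plan is to reduce the statement to the set-theoretic identity $\Psi_r^{-1}(\abs{G}_M) = \Vrg_M$. Granting this identity, the rest is formal. By Theorem~\ref{thm:psiuniversalhomeo} the morphism $\Psi_r \colon \Vrg \to \abs{G}$ is already a universal homeomorphism, hence in particular integral; since it is also of finite type it is finite. Both $\Vrg_M \subseteq \Vrg$ and $\abs{G}_M \subseteq \abs{G}$ are Zariski closed conical subsets (the former by the discussion preceding the statement, the latter by the standard properties of cohomological support schemes), so once $\Psi_r^{-1}(\abs{G}_M) = \Vrg_M$ is known, the restriction $\Psi_r|_{\Vrg_M} \colon \Vrg_M \to \abs{G}_M$ is a well-defined morphism onto $\abs{G}_M$ that inherits from $\Psi_r$ the property of being a finite universal homeomorphism: restricting a universal homeomorphism to a closed subscheme and its (closed) image yields again a universal homeomorphism, and finiteness passes to it as well. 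Thus the whole problem is the identity of the two support sets.

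To prove that identity I would argue point by point. Since both $\Psi_r^{-1}(\abs{G}_M)$ and $\Vrg_M$ are closed subsets of the finite-type scheme $\Vrg$, it suffices to show, for each point $\fs \in \Vrg$ with residue field $K = k(\fs)$, that $\fs \in \Vrg_M$ if and only if $\Psi_r(\fs) \in \abs{G}_M$. Such a point $\fs$ corresponds to a homomorphism of Hopf $K$-superalgebras $\rho_\fs \colon \Pr \otimes_k K \to kG \otimes_k K$, namely the specialization at $\fs$ of the universal homomorphism underlying the construction of $\psi_r$ in Section~\ref{subsec:universalhom}. Write $\sigma_\fs \colon \Pone \otimes_k K \to kG \otimes_k K$ for the composite of $\rho_\fs$ with the canonical embedding $\Pone \hookrightarrow \Pr$ base-changed to $K$, and let $N = \sigma_\fs^\ast(M \otimes_k K)$. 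By definition $\fs \in \Vrg_M$ means precisely that $\pd_{\Pone \otimes_k K}(N) = \infty$, and by Proposition~\ref{prop:Poneprojdim} this is equivalent to the non-vanishing of the cup product $\chi \cup \id_N$ in $\Ext^\bullet_{\Pone \otimes_k K}(N,N)$, where $\chi$ is the periodicity class coming from the hypersurface presentation of $\Pone$; equivalently, the cohomological support of $N$ over $\Pone \otimes_k K$ is the whole affine line rather than the origin.

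Next I would translate the condition $\Psi_r(\fs) \in \abs{G}_M$. Writing $J_M \subseteq H(G,k)$ for the radical of $\ann_{H(G,k)} \Ext^\bullet_{kG}(M,M)$, so that $\abs{G}_M = V(J_M)$, membership $\Psi_r(\fs) \in \abs{G}_M$ is equivalent to the vanishing at $\fs$ of $\psi_r(\zeta)$ for every $\zeta \in J_M$. The bridge between the two descriptions is the following compatibility, which should be read off from the construction of $\psi_r$: for $\zeta \in H(G,k)$ the restriction $\sigma_\fs^\ast(\zeta) \in \Ext^\bullet_{\Pone \otimes_k K}(K,K)$ has the form $\psi_r(\zeta)(\fs) \cdot \chi^m + (\text{nilpotent})$ for the appropriate power $m$, so that $\sigma_\fs^\ast(\zeta)$ is nilpotent exactly when $\psi_r(\zeta)(\fs) = 0$. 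From this the inclusion $\Vrg_M \subseteq \Psi_r^{-1}(\abs{G}_M)$ follows quickly: if $\fs \notin \Psi_r^{-1}(\abs{G}_M)$, choose $\zeta \in J_M$ with $\psi_r(\zeta)(\fs) \neq 0$; then $\zeta \cup \id_M$ is nilpotent in $\Ext^\bullet_{kG}(M,M)$, hence so is its image $\sigma_\fs^\ast(\zeta) \cup \id_N$ in $\Ext^\bullet_{\Pone \otimes_k K}(N,N)$, while $\sigma_\fs^\ast(\zeta) \cup \id_N = \psi_r(\zeta)(\fs) \cdot (\chi \cup \id_N)^m + (\text{nilpotent})$; since $\psi_r(\zeta)(\fs) \neq 0$, this forces $\chi \cup \id_N$ to be nilpotent and hence zero by the periodicity in Proposition~\ref{prop:Poneprojdim}, so $\pd_{\Pone \otimes_k K}(N) < \infty$ and $\fs \notin \Vrg_M$.

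I expect the reverse inclusion $\Psi_r^{-1}(\abs{G}_M) \subseteq \Vrg_M$ to be the main obstacle; it is the analogue of the classically subtle ``support variety $\subseteq$ rank variety'' containment. Concretely, one must show that $\Psi_r(\fs) \in \abs{G}_M$ implies $\pd_{\Pone \otimes_k K}(N) = \infty$ — equivalently, one must produce a class in $H(G,k)$ that annihilates $\Ext^\bullet_{kG}(M,M)$ yet does not vanish at $\Psi_r(\fs)$ — and the easy functoriality of $\Ext$ gives only the containment already established above. A genuinely constructive input is therefore needed. I would try to adapt the Carlson-module ($L_\zeta$) construction over $kG$ in conjunction with the matrix-factorization description of Proposition~\ref{prop:Poneprojdim}, or alternatively to reduce, using the naturality of $\bsvr(-)$ and of cohomological support under the homomorphisms $\rho_\fs$, to an explicit analysis over $\Pr$ itself (via the embedding $\Pone \hookrightarrow \Pr$), where the relevant cohomology and restriction maps are computable. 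Throughout, the hypothesis that $G$ is infinitesimal unipotent is essential — it is what makes Theorem~\ref{thm:psiuniversalhomeo} available and, more fundamentally, what guarantees that the homomorphisms $\rho_\fs$ collectively detect all of $\abs{G}_M$ — and keeping careful track of Frobenius twists and of the locally nilpotent discrepancy between $H(G,k)$ and its image under $\psi_r$ will be the main technical burden.
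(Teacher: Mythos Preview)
Your proposal has a genuine gap in both directions, stemming from a single overlooked point: for $r \geq 2$ the embedding $\iota \colon \Pone \hookrightarrow \Pr$ is \emph{not} a Hopf superalgebra homomorphism (the paper states this explicitly in Section~\ref{subsec:definesupportscheme}). Consequently your composite $\sigma_\fs = \rho_\fs \circ (\iota \otimes 1)$ is only a $K$-superalgebra map, and there is no well-defined ``restriction'' $\sigma_\fs^\ast \colon H(G_K,K) \to \Hbul(\Pone \otimes_k K,K)$ on cohomology rings. The compatibility formula $\sigma_\fs^\ast(\zeta) = \psi_r(\zeta)(\fs)\cdot \chi^m + (\text{nilpotent})$ that you rely on simply does not make sense when $r \geq 2$; it is exactly the content of Lemma~\ref{lemma:psiIMinJM}, which is stated and proved only for height-one $G$. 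So even your ``easy'' inclusion $\Vrg_M \subseteq \Psi_r^{-1}(\abs{G}_M)$ is not established.

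The paper circumvents this by a two-step reduction that you do not attempt. Both inclusions are first proved for the infinitesimal \emph{elementary} supergroups (Lemma~\ref{lemma:psiinverseheightone} for height one, then Proposition~\ref{prop:psirinvelementary} for $\Gar$, $\Mrs$, $\Mrseta$ with $r\geq 2$). The height-$r$ elementary case is handled by a non-Hopf superalgebra isomorphism $k\Mrs \cong k\Grs$ to a height-one group, together with Proposition~\ref{prop:secondradical} to control the discrepancy between the $\Pone$-action via $\iota$ and the one coming from the height-one Hopf map. For general infinitesimal unipotent $G$, the proof of Theorem~\ref{thm:Psirinv} then factors each $\nu_\fs$ through an elementary subsupergroup $E$ (using Lemma~\ref{lemma:kbsvrg=kNrG}) and invokes the elementary result; the reverse inclusion $\psi_r^{-1}(\calJ_M)\subseteq \calI_M$ requires in addition the BIKP detection theorem \cite[Theorem~8.4]{Benson:2018}, which is the crucial external input you did not mention. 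Your suggested alternatives (Carlson modules, direct analysis over $\Pr$) would not, by themselves, supply either the height-$r$ elementary computation or the detection step.
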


The proofs of Theorems \ref{thm:psiuniversalhomeo} and \ref{thm:Psirinv} build on the authors' previous work investigating the cohomology of multiparameter supergroups \cite{Drupieski:2017a,Drupieski:2017b}. The proofs also rely, critically, on the detection theorems of Benson, Iyengar, Krause, and Pevtsova \cite{Benson:2018}. While the overall strategy of the arguments parallels, in broad strokes, the methods of Suslin, Friedlander, and Bendel from the classical setting, fully implementing that strategy requires non-obvious generalizations, intricate calculations, and the use of deep results from commutative algebra and elsewhere.

It is worth emphasizing that when $G$ is an infinitesimal unipotent $k$-\emph{group} scheme (i.e., when $G$ is purely even), the schemes $\Vrg$ and $\Vrg_M$ as defined in this paper reduce to the schemes of the same names as defined by Suslin, Friedlander, and Bendel \cite{Suslin:1997a, Suslin:1997}; see Remark \ref{remark:Vrgpurelyeven} and Lemma \ref{lemma:finiteifffree}. We are thus justified in adopting their notation to our new context, and the main results of this paper are true generalizations of their classical counterparts.

Theorem \ref{thm:Psirinv} provides a non-cohomological description for the scheme $\abs{G}_{M}$. Applying this description, in Section \ref{SS:applications} we show that the cohomological support schemes of infinitesimal uni\-potent supergroups have the main desirable properties of such a theory: naturality with respect to group homomorphisms, the tensor product property, and realization. In particular, in Theorem \ref{theorem:supportvarietyunion} we show that $\abs{G}_M$ is a union of pieces coming from the multiparameter subsupergroup schemes of $G$. A similar stratification theorem, albeit one not directly comparable with ours, has been previously stated in the context of finite-dimensional graded connected cocommutative Hopf algebras by Nakano and Palmieri \cite[Theorem 3.2]{Nakano:1998}. However, their proof implicitly relies on an $F$-surjectivity theorem stated by Palmieri \cite[Theorem 4.1]{Palmieri:1997}, and as we discuss at the end of Section \ref{SS:applications}, the $F$-surjectivity theorem depends on the graded Hopf algebra in question having only finitely many graded Hopf subalgebras.

Finally, in Section \ref{subsec:examples} we analyze in detail the support varieties of an interesting family of $2p$-dimensional supermodules over the supergroup $\Moneone = \Gaone \times \Gam$, showing that the support varieties of these modules correspond to the affine lines in the two-dimensional affine space $\abs{\Moneone}$. Then in Section \ref{subsec:steenrod} we explain how the group algebra $k\Moneone$ occurs as a graded Hopf subalgebra of the Steenrod algebra, and we apply our calculations from Section \ref{subsec:examples} to show that, in general, and in contrast to many support variety theories appearing in the literature, support varieties in the graded setting need not be described in terms of projectivity over cyclic subalgebras. For a discussion of additional cautionary examples in the graded setting, we refer to the reader to \cite[\S1.4]{Drupieski:2017a}.

\subsection{Future and related work}

As mentioned previously, while the main results of this paper apply only to infinitesimal \emph{unipotent} supergroups, the definition of the support scheme $\Vrg_M$ makes sense for \emph{any} finite $k$-supergroup scheme. One could thus hope to extend the results of this paper to arbitrary infinitesimal $k$-supergroup schemes. Indeed, the decomposition described in Lemma \ref{lemma:bsvrg} already allows us to interpret one of the main calculations of our earlier work \cite[Corollary 6.2.4]{Drupieski:2017a} as saying that, modulo a finite morphism of varieties, the cohomological variety $\abs{\GLmnr}$ of the $r$-th Frobenius kernel of the general linear supergroup identifies with the affine variety $V_r(\GLmnr)(k)$.

The main obstacle to extending the results of this paper from unipotent to non-unipotent super\-groups is showing---for non-unipotent infinitesimal supergroup schemes---that projectivity of modules and nilpotence of cohomology classes can be detected by restriction to an appropriate family of finite supergroup schemes (e.g., the multiparameter supergroup schemes). In the classical ungraded setting, the extension from a detection theorem for unipotent infinitesimal groups to non-unipotent infinitesimal groups is accomplished by an argument that exploits algebro-geometric relationships between the general linear group $GL_n$ and (any) one of its Borel subgroups $B$. For example, Kempf vanishing ensures that the restriction map in cohomology $\Hbul(GL_n,M) \rightarrow \Hbul(B,M)$ is an isomorphism for any rational $GL_n$-module $M$, and ensures that the algebraic group induction functor $\ind_B^{GL_n}(-)$ maps the trivial module $k$ to itself. One could naively hope that some kind of bootstrap argument like this could be made in the graded setting as well, but the fact that algebraic supergroups have non-conjugate Borels, and the fact that the super analogue of the induction functor $\ind_B^{GL_{m|n}}(-)$ behaves differently depending on which Borel subgroup $B \subset GL_{m|n}$ is chosen, immediately presents serious difficulties. Once again, substantial new ideas will be needed in the graded setting, including perhaps a better understanding in positive characteristic of the algebro-geometric relationship between the general linear supergroup $GL_{m|n}$ and its Borel subgroups.  Recently, Grantcharov, Grantcharov, Nakano, and Wu \cite{Grantcharov:2018} introduced certain parabolic subalgebras for complex Lie superalgebras which have good homological properties. The positive characteristic analogue of these subalgebras may also shed light on support schemes for general infinitesimal $k$-supergroup schemes.

\subsection{Acknowledgements} \label{subsection:Acknowledgements}

It would be difficult for the authors to overstate their gratitude to David Benson, Srikanth Iyengar, Henning Krause, and Julia Pevtsova for helpful conversations and for their willingness to share early versions of their detection theorem manuscript \cite{Benson:2018}. In particular, the first author thanks David Benson for explaining how the theory of matrix factorizations applies to the algebra $\Pone$. The authors thank Luchezar Avramov and Srikanth Iyengar for sharing drafts of their manuscript \cite{Avramov:2018} and for other conversations that helped lead to the proofs of Lemma \ref{lemma:injprojdimtorsionmodule} and Proposition \ref{prop:secondradical}.  We also thank Daniel Nakano and John Palmieri for their comments on an earlier version of the paper. Finally, the first author thanks the Mathematical Sciences Research Institute in Berkeley, CA:  key results were obtained while he enjoyed their hospitality during the program on Group Representation Theory and Applications in Spring 2018.

\subsection{Conventions} \label{subsection:conventions}

We generally follow the conventions of our previous work \cite{Drupieski:2017a,Drupieski:2017b}, to which we refer the reader for any unexplained terminology or notation. For additional standard terminology and notation, the reader may consult Jantzen's book \cite{Jantzen:2003}. Except when indicated otherwise, $k$ will denote a field of characteristic $p \geq 3$ and $r$ will denote a positive integer. All vector spaces will be $k$-vector spaces, and all unadorned tensor products will denote tensor products over $k$. Given a $k$-vector space $V$, let $V^\# = \Hom_k(V,k)$ be its $k$-linear dual. Let $\N = \set{0,1,2,3,\ldots}$ denote the set of non-negative integers.

Set $\Z_2 = \Z/2\Z = \set{\zero,\one}$. Following the literature, we use the prefix `super' to indicate that an object is $\Z_2$-graded. We denote the decomposition of a vector superspace into its $\Z_2$-homogeneous components by $V = \Vzero \oplus \Vone$, calling $\Vzero$ and $\Vone$ the even and odd subspaces of $V$, respectively, and writing $\ol{v} \in \Z_2$ to denote the superdegree of a homogeneous element $v \in V$. Whenever we state a formula in which homogeneous degrees of elements are specified, we mean that the formula is true as written for homogeneous elements and that it extends linearly to non-homogeneous elements. For example, the parity map $\pi: V \rightarrow V$ is defined by $\pi(v) = (-1)^{\ol{v}} v$. We use the symbol $\cong$ to denote even (i.e., degree-preserving) isomorphisms of superspaces, and reserve the symbol $\simeq$ for odd (i.e., degree-reversing) isomorphisms.

\section{Preliminaries}\label{section:preliminaries}

\subsection{Multiparameter supergroups} \label{subsec:somesupergroups}

In this section we recall the definitions of some of the affine supergroup schemes introduced in \cite{Drupieski:2017a}. Given an affine $k$-supergroup scheme $G$ with coordinate Hopf superalgebra $k[G]$, set $kG = k[G]^\#$. The supercoalgebra structure on $k[G]$ induces by duality a $k$-superalgebra structure on $kG$; with this structure, we call $kG$ the \emph{group algebra} of $G$. If $G$ is a finite $k$-supergroup scheme, then $kG$ inherits the structure of a Hopf $k$-superalgebra.

First, $\Mr$ is the affine $k$-super\-group scheme whose coordinate algebra $k[\Mr]$ is the commutative $k$-super\-algebra generated by the odd element $\tau$ and the even elements $\theta$ and $\sigma_i$ for $i \in \N$, such that $\tau^2 = 0$, $\sigma_0 = 1$, $\theta^{p^{r-1}} = \sigma_1$, and $\sigma_i \sigma_j = \binom{i+j}{i} \sigma_{i+j}$, i.e.,
\[ \textstyle
k[\Mr] = k[\tau,\theta,\sigma_1,\sigma_2,\ldots]/\subgrp{ \tau^2=0, \theta^{p^{r-1}} = \sigma_1, \text{ and } \sigma_i\sigma_j = \binom{i+j}{i}\sigma_{i+j} \text{ for $i,j \in \N$}}.
\]
Then the set of monomials $\{ \theta^i \sigma_j , \tau \theta^i \sigma_j : 0 \leq i < p^{r-1}, j \in \N \}$ is a homogeneous basis for $k[\Mr]$, which we call the distinguished homogeneous basis for $k[\Mr]$. The coproduct $\Delta$ and the antipode $S$ on $k[\Mr]$ are defined on generators by the formulas
	\begin{align*}
	\Delta(\tau) &= \tau \otimes 1 + 1 \otimes \tau, & S(\tau) &= -\tau, \\
	\Delta(\theta) &= \theta \otimes 1 + 1 \otimes \theta, & S(\theta) &= -\theta, \\
	\Delta(\sigma_i) &= \textstyle \sum_{u+v=i} \sigma_u \otimes \sigma_v + \sum_{u+v+p=i} \sigma_u \tau \otimes \sigma_v \tau, & S(\sigma_i) &= (-1)^i \sigma_i.
	\end{align*}
For $0 \leq i \leq r-1$, let $u_i \in k\Mr = k[\Mr]^\#$ be the even linear functional that is dual to the distinguished basis element $\theta^{p^i} \in k[\Mr]$ (so in particular, $u_{r-1}$ is dual to $\sigma_1 = \theta^{p^{r-1}}$), and let $v \in k\Mr$ be the odd linear functional that is dual to the distinguished basis vector $\tau \in k[\Mr]$. Then by \cite[Proposition 3.1.4]{Drupieski:2017a}, the group algebra $k\Mr$ is given by
	\begin{equation} \label{eq:kMr}
	k\Mr = k[[u_0,\ldots,u_{r-1},v]]/\subgrp{u_0^p,\ldots,u_{r-2}^p,u_{r-1}^p + v^2}.
	\end{equation}

Let $\Pr$ be the `polynomial subalgebra' of $k\Mr$,
	\begin{equation} \label{eq:kMrpol}
	\Pr = k[u_0,\ldots,u_{r-1},v]/\subgrp{u_0^p,\ldots,u_{r-2}^p,u_{r-1}^p + v^2}.
	\end{equation}
By \cite[Remark 3.1.3(3)]{Drupieski:2017a}, the $\Z_2$-grading on $k[\Mr]$ lifts to a $\Z$-grading such that $\deg(\tau) = p^r$, $\deg(\theta) = 2$, and $\deg(\sigma_i) = 2ip^{r-1}$, which makes $k[\Mr]$ into a graded Hopf algebra of finite type in the sense of Milnor and Moore \cite{Milnor:1965}. Then $\Pr$ is the graded dual of $k[\Mr]$. In particular, $\Pr$ inherits by duality the structure of a graded Hopf algebra of finite type \cite[Proposition 4.8]{Milnor:1965}. To describe this structure, first define $u_i$ for $i \geq r$ by $u_i = (u_{r-1})^{p^{i-r+1}}$, so that $u_r = u_{r-1}^p$, $u_{r+1} = u_{r-1}^{p^2}$, etc. Next, given an integer $\ell \geq 0$ with base-$p$ decomposition $\ell = \sum_{i \geq 0} \ell_i p^i$, set
	\begin{equation} \label{eq:gammaell}
	\gamma_\ell = \prod_{i \geq 0} \frac{u_i^{\ell_i}}{\ell_i!} = \frac{(u_0)^{\ell_0}(u_1)^{\ell_1}(u_2)^{\ell_2}\cdots}{(\ell_0!)(\ell_1!)(\ell_2!)\cdots}.
	\end{equation}
This product is a well-defined monomial in $\Pr$ by the fact that $0 \leq \ell_i < p$ for each $i$ and the fact that the $\ell_i$ are eventually all equal to $0$; for $\ell \geq p^r$, this definition for $\gamma_\ell$ differs by a scalar factor from the definition in \cite[Proposition 3.1.4(2)]{Drupieski:2017b}. Then the set of monomials $\set{\gamma_\ell, v \cdot \gamma_\ell: \ell \in \N}$ is a homogeneous basis for $\Pr$, which we call the distinguished homogeneous basis for $\Pr$. Now the coproduct $\Delta$ and antipode $S$ on $\Pr$ are determined by the formulas
	\begin{align*}
	\Delta(\gamma_\ell) &= \textstyle \sum_{i+j=\ell} \gamma_i \otimes \gamma_j, & S(\gamma_\ell) &= (-1)^\ell \gamma_\ell & \text{for $0 \leq \ell < p^r$,} \\
	\Delta(u_{r-1}^p) &= u_{r-1}^p \otimes 1 + 1 \otimes u_{r-1}^p, & S(u_{r-1}^p) &= - u_{r-1}^p, \\
	\Delta(v) &= v \otimes 1 + 1 \otimes v, & S(v) &= -v;
	\end{align*}
cf.\ \cite[Proposition 3.1.4(6)]{Drupieski:2017a}. More generally, let $\ell \in \N$ be arbitrary, and write $\ell = a+bp^r$ for integers $a$ and $b$ with $0 \leq a < p^r$ and $b \geq 0$. Then $\gamma_\ell = \gamma_a \cdot \gamma_{bp^r}$, and the preceding formulas imply that
	\begin{equation} \label{eq:coproductgammaell}
	\Delta(\gamma_\ell) = \sum_{\substack{i+j=a \\ s+t \stackrel{N.C.}{=} b}} \gamma_{i+sp^r} \otimes \gamma_{j+tp^r},
	\end{equation}
where $s+t \stackrel{N.C.}{=} b$ means that no carries are required when $s$ and $t$ are added in base $p$. (The `no carries' condition is a consequence of using the Binomial Theorem to compute $\Delta((u_{r-1}^p)^b)$, and then applying Lucas' theorem for binomial coefficients modulo $p$.)

Let $0 \neq f = \sum_{i=1}^t c_i T^{p^i} \in k[T]$ be an inseparable $p$-polynomial (i.e., a $p$-polynomial without a linear term), and let $\eta \in k$. Since $u_{r-1}^p$ and $u_0$ are each primitive in $\Pr$, the sum $f(u_{r-1}) + \eta \cdot u_0$ is also primitive in $\Pr$. Then by the assumption that $f \neq 0$, the quotient
	\[
	k\Mrfeta := \Pr/\subgrp{f(u_{r-1})+\eta \cdot u_0}
	\]
is a finite-dimensional cocommutative Hopf superalgebra. The \emph{multiparameter supergroup} $\Mrfeta$ is the affine $k$-supergroup scheme such that $k[\Mrfeta]^\# = k\Mrfeta$, i.e., such that the group algebra of $\Mrfeta$ is precisely $k\Mrfeta$. Set $\Mrf = \M_{r;f,0}$, and given an integer $s \geq 1$, set $\Mrseta = \M_{r;T^{p^s},\eta}$ and $\Mrs = \M_{r;T^{p^s},0}$. Then
	\begin{align*}
	k\Mrf &= k[u_0,\ldots,u_{r-1},v]/\subgrp{u_0^p,\ldots,u_{r-2}^p,u_{r-1}^p+v^2,f(u_{r-1})}, \\
	k\Mrseta &= k[u_0,\ldots,u_{r-1},v]/\subgrp{u_0^p,\ldots,u_{r-2}^p,u_{r-1}^p+v^2,u_{r-1}^{p^s}+\eta \cdot u_0}, \quad \text{and} \\
	k\Mrs &= k[u_0,\ldots,u_{r-1},v]/\subgrp{u_0^p,\ldots,u_{r-2}^p,u_{r-1}^p+v^2,u_{r-1}^{p^s}}.
	\end{align*}
The coordinate algebra $k[\Mrs]$ identifies with the Hopf subsuperalgebra of $k[\Mr]$ generated by $\tau$, $\theta$, and $\sigma_i$ for $1 \leq i < p^s$. (For $r=1$, the generator $\theta$ is irrelevant, since then $\sigma_1 = \theta^{p^{r-1}} = \theta$.) More generally, the coalgebra structure of $k[\Mrfeta]$ is given for $r = 1$ in \cite[Lemma 3.1.9]{Drupieski:2017a} and for $r \geq 2$ and $f = T^{p^s}$ in \cite[Lemma 2.2.1]{Drupieski:2017b}.

For $r \geq 2$, the assumption $f \neq 0$ is necessary in order for the quotient $\Pr/\subgrp{f(u_{r-1})+\eta \cdot u_0}$ to be finite-dimensional. For $r = 1$, the quotient $\Pone/\subgrp{f(u_0)+\eta \cdot u_0}$ is finite-dimensional so long as either $f \neq 0$ or $\eta \neq 0$. In particular, if $\eta \neq 0$, then
	\[
	k[u,v]/\subgrp{u^p+v^2, \eta \cdot u} = k[u,v]/\subgrp{u^p+v^2, u} = k[v]/\subgrp{v^2} = k\Gam,
	\]
the group algebra of the purely odd additive supergroup scheme $\Gam$.

\begin{definition}[Multiparameter supergroups] \label{def:multiparameter}
An affine $k$-supergroup scheme is a \emph{multiparameter $k$-supergroup scheme} if it is isomorphic to one of the following $k$-supergroup schemes:
	\begin{itemize}
	\item $\Gar$ for some $r \in \N$,
	\item $\Gar \times \Gam$ for some $r \in \N$, or
	\item $\Mrfeta$ for some $r \geq 1$, some inseparable $p$-polynomial $0 \neq f \in k[T]$, and some $\eta \in k$.
	\end{itemize}
By convention, $\G_{a(0)}$ is the trivial group scheme, with $k\G_{a(0)} = k = k[\G_{a(0)}]$.
\end{definition}

There are some repetitions in the list in the preceding definition. For example, $\M_{r;1} = \Gar \times \Gam$, and $\Mrseta \cong \M_{r;s,\eta'}$ if $\eta/\eta' = a^{p^{r+s-1}-1}$ for some $a \in k$; cf.\ \cite[Theorem 3.8]{Benson:2018}. We do not attempt to classify the isomorphisms among the multiparameter supergroups.

The multiparameter $k$-supergroup schemes are all infinitesimal: $\Gar$ and $\Mrfeta$ are infinitesimal of height $r$ (cf.\ \cite[Lemma 3.1.7]{Drupieski:2017a}), while $\Gam$ is infinitesimal of height $1$. Furthermore, there are canonical Hopf superalgebra identifications
	\begin{align*}
	\Pr/\subgrp{v} &\cong k\Gar, &	\Pr/\subgrp{u_0,\ldots,u_{r-1}} &\cong k\Gam, & \Pr/\subgrp{u_0} &\cong \P_{r-1}.
	\end{align*}
Thus if $E$ is a multiparameter $k$-supergroup scheme of height $r' \leq r$, then there is a canonical Hopf superalgebra quotient map $\Pr \twoheadrightarrow \P_{r'} \twoheadrightarrow kE$.

The group algebra $k\Mrseta$ is evidently a local algebra, so $\Mrseta$ is a unipotent supergroup scheme. More generally, it follows from \cite[Remark 3.1.3(4)]{Drupieski:2017a} that any finite-dimensional rational representation of $\Mr$ factors for $s \gg 0$ through the canonical quotient map $\Mr \twoheadrightarrow \Mrs$ corresponding to the inclusion of coordinate algebras $k[\Mrs] \hookrightarrow k[\Mr]$. Combined with the local finiteness of rational representations, this implies that $\Mr$ is also unipotent. If $f$ is not a scalar multiple of a single monomial, then $\Mrfeta$ is not unipotent.

\subsection{Hopf superalgebra quotients of \texorpdfstring{$\Pr$}{Pr}}

Benson, Iyengar, Krause, and Pevtsova (BIKP) \cite{Benson:2018} define a finite $k$-super\-group scheme to be \emph{elementary} if it is isomorphic for some positive integers $r,s,t$ to a quotient of $\Mrs \times (\Z/p\Z)^t$.\footnote{See \cite[Definition 1.1]{Benson:2018}. Their $E_{m,n}^-$ is our $\M_{n;m}$, and their $E_{m,n,\mu}^-$ is our $\M_{n+1;m,-\mu}$.} Here $\Z/p\Z$ denotes the finite constant group scheme corresponding to the finite cyclic group $\Z/p\Z$. Then an infinitesimal $k$-supergroup scheme $G$ is elementary if and only if it is a quotient of $\Mrs$ for some positive integers $r$ and $s$. Equivalently, $G$ is elementary if and only if its group algebra $kG$ is a Hopf superalgebra quotient of $k\Mrs$. More generally, we can classify all finite-dimensional Hopf superalgebra quotients of $\Pr$.

\begin{proposition} \label{prop:Prquotients}
Let $A$ be a finite-dimensional Hopf superalgebra quotient of $\Pr$. Then $A$ is isomorphic to the group algebra of one of the following:
	\begin{itemize}
	\item $\Gas$ for some integer $0 \leq s \leq r$,
	\item $\Gas \times \Gam$ for some integer $0 \leq s \leq r$, or
	\item $\M_{s;f,\eta}$ for some $1 \leq s \leq r$, some inseparable $p$-polynomial $0 \neq f \in k[T]$, and some $\eta \in k$.
	\end{itemize}
In other words, $A$ is isomorphic to the group algebra of a multiparameter $k$-supergroup scheme.
\end{proposition}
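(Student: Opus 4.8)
The plan is to classify finite-dimensional Hopf superalgebra quotients $A$ of $\Pr$ by a two-step reduction: first analyze the quotient as a commutative algebra to understand the kernel ideal, then use the Hopf structure to pin down the specific defining relations. Write $\Pr = k[u_0,\dots,u_{r-1},v]/\subgrp{u_0^p,\dots,u_{r-2}^p,u_{r-1}^p+v^2}$ as in \eqref{eq:kMrpol}, with augmentation ideal $\fm = \subgrp{u_0,\dots,u_{r-1},v}$, and let $\pi: \Pr \twoheadrightarrow A$ be the quotient map with kernel $I$. Since $A$ is a finite-dimensional cocommutative Hopf superalgebra that is a quotient of $\Pr$, the dual $A^\#$ is the coordinate algebra of a finite $k$-supergroup scheme $G$ with a closed embedding $G \hookrightarrow \Mr$, and $A = kG$. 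So it suffices to classify the closed subsupergroup schemes $G$ of $\Mr$ (equivalently, the Hopf ideals $I$) and recognize each as a multiparameter supergroup.

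First I would dispose of the odd generator. The element $v$ is primitive and $v^2 = -u_{r-1}^p$ lies in the even part; in $A$ either $v$ maps to zero or it does not. If $\pi(v) = 0$ then $A$ is a quotient of $\Pr/\subgrp{v} \cong k\Gar$, and the classification of Hopf quotients of $k\Gar$ (a divided-power/truncated polynomial Hopf algebra on one even generator) gives $A \cong k\Gas$ for some $0 \le s \le r$ by the standard theory of infinitesimal one-parameter subgroups — this recovers the first bullet. If $\pi(v) \neq 0$, then since $v$ is primitive and odd, $\pi(v)$ spans a one-dimensional sub-Hopf-superalgebra $k[\pi(v)]/\subgrp{\pi(v)^2 - c}$ where $c = -\pi(u_{r-1})^p$; I would argue that either $c$ becomes $0$ in $A$, forcing $\pi(u_{r-1})^p = 0$, or else one may absorb a scalar. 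The key structural point is that the even subalgebra $A_{\zero}$ is itself a commutative cocommutative Hopf algebra quotient of the divided power Hopf algebra generated by the $u_i$ — essentially a quotient of $k[\Gar]^\#$ possibly with one extra relation coming from $v^2 = -u_{r-1}^p$ — so $\Spec(A_{\zero}^\#)$, or rather the even supergroup $\Gev$, is a closed subgroup of $\Ga^r$ cut out by $p$-polynomial relations among the $u_i$.

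The heart of the argument is then the even/unipotent part: a closed infinitesimal subgroup scheme of $\Ga^r$ (or more precisely of the group scheme with group algebra $k[u_0,\dots,u_{r-1}]/\subgrp{u_0^p,\dots,u_{r-2}^p,u_{r-1}^p}$) is cut out, after a change of coordinates, by the vanishing of a single additive (p-)polynomial in the $u_i$, by the Dieudonné-theory / one-parameter-subgroup analysis already used by Suslin–Friedlander–Bendel; combining this defining polynomial with the surviving relation $u_{r-1}^p + v^2$ (if $v \neq 0$) or $u_{r-1}^p$ (if $v = 0$) and reconciling indices yields exactly the presentations of $k\Mrfeta$, $k\Gas$, or $k\Gas \times k\Gam$ listed in the statement, where the polynomial $f$ together with the coefficient $\eta$ of $u_0$ records the defining additive relation. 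The main obstacle I anticipate is the bookkeeping in this last step: one must show that an arbitrary Hopf ideal $I$ of $\Pr$, after a suitable automorphism of $\Pr$, is generated (modulo the ambient relations) by a single primitive element of the form $f(u_{r-1}) + \eta\, u_0$ — in particular that no more exotic Hopf ideals occur — and then match the resulting quotient against the normal forms, handling the degenerate cases ($f = 0$ but $\eta \neq 0$ giving $k\Gam$, etc.) and the low-height cases $r = 1$ separately. I expect this to follow by combining the primitive-element description of Hopf ideals in graded connected Hopf algebras with the structure of $\Pr$ recorded in \eqref{eq:coproductgammaell}, but the verification that nothing beyond the listed families arises is the technically delicate point.
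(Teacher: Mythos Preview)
Your overall shape is reasonable, but the proposal has a genuine gap at precisely the point you flag as ``the main obstacle,'' and the tools you invoke to close it do not apply as stated.

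Two concrete issues. First, the claim that a closed infinitesimal subgroup of $\Ga^r$ (or of the even group underlying $\Mr$) is, after a change of coordinates, cut out by a \emph{single} additive $p$-polynomial is false in general: subgroups such as the trivial one already need several relations. What is true, and what must be shown, is that the particular Hopf ideals arising as kernels of $\Pr \twoheadrightarrow A$ are principal of this form; this is the content of the proposition, not an input. The SFB reference classifies homomorphisms \emph{from} $\Gar$ into a target, which is a different problem, and Dieudonn\'e theory over non-perfect $k$ does not supply this statement off the shelf. Second, the ``change of coordinates'' in $\Pr$ you envisage is not available: the Hopf superalgebra automorphisms of $\Pr$ are essentially the scalings $\phi_{\mu,a}$, so one cannot freely normalize the relation.

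The paper's proof avoids both issues by a direct combinatorial argument that uses the distinguished basis $\{\gamma_\ell, v\gamma_\ell\}$ and the explicit coproduct \eqref{eq:coproductgammaell} rather than any external classification. The case split is on $\phi(u_0)$, not $\phi(v)$: if $\phi(u_0)=0$ one factors through $\Pr/\subgrp{u_0}\cong\P_{r-1}$ and inducts on $r$; if $\phi(u_0)\neq 0$ one lets $m$ be minimal with $\phi(\gamma_1),\dots,\phi(\gamma_{m+1})$ linearly dependent, applies $\Delta$ to the dependence relation, and uses \eqref{eq:coproductgammaell} plus linear independence of the tensors $\phi(\gamma_i)\otimes\phi(\gamma_j)$ to force $m+1=p^{r+e}$ and to kill all coefficients except those of $\gamma_1,\gamma_{p^r},\dots,\gamma_{p^{r+e-1}}$. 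A second pass with the $v\gamma_\ell$ (when $\phi(v)\neq 0$) shows these together give a basis of $A$ up to the relation, so $A\cong k\M_{r;f,\eta}$. Your final sentence gestures at exactly this mechanism, so your instinct is right, but the work is in the coalgebra bookkeeping on the $\gamma_\ell$, not in structure theory imported from elsewhere.
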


\begin{proof}
Let $\phi: \Pr \rightarrow A$ be a quotient homomorphism of Hopf superalgebras, with $A$ finite-dimen\-sional. If $\phi(\gamma_1) = \phi(u_0) = 0$, then $\phi$ factors through the quotient $\Pr/ \subgrp{u_0}$. If $r = 1$, then $\Pone/\subgrp{u_0} \cong k[v]/\subgrp{v^2} = k\Gam$, and hence $A \cong k$ or $A \cong k\Gam$, depending on whether or not $\phi(v) = 0$. If $r > 1$, then $A$ is a quotient of $\Pr/\subgrp{u_0} \cong \P_{r-1}$, and we may assume by induction on $r$ that $A$ is isomorphic to one of the group algebras listed in the proposition. So assume that $\phi(\gamma_1) \neq 0$, and let $m \geq 1$ be the minimal integer such that $\phi(\gamma_1),\ldots,\phi(\gamma_m)$ are $k$-linearly independent in $A$ but $\phi(\gamma_1),\ldots,\phi(\gamma_m),\phi(\gamma_{m+1})$ are not. Then $\phi(\gamma_{m+1}) = \sum_{\ell=1}^m a_\ell \cdot \phi(\gamma_\ell)$ for some scalars $a_1\ldots,a_m \in k$. Since $\phi$ is a homomorphism of Hopf superalgebras, it satisfies the compatibility condition $\Delta_A \circ \phi = (\phi \otimes \phi) \circ \Delta$, where $\Delta_A$ denotes the coproduct on $A$. Suppose $m+1 = a+bp^r$ for integers $a$ and $b$ with $0 \leq a < p^r$ and $b \geq 0$. Then applying $\Delta_A$ to both sides of the dependence relation $\phi(\gamma_{m+1}) = \sum_{\ell=1}^m a_\ell \cdot \phi(\gamma_\ell)$, and applying the coproduct formula \eqref{eq:coproductgammaell}, one gets
	\begin{equation} \label{eq:coproductdeprelation}
	\sum_{\substack{i+j=a \\ s+t \stackrel{N.C.}{=} b}} \phi(\gamma_{i+sp^r}) \otimes \phi(\gamma_{j+tp^r}) = \sum_{\ell=1}^m \sum_{\substack{c+dp^r=\ell \\ 0 \leq c < p^r \\ d \geq 0}} \sum_{\substack{c_1+c_2 = c \\ d_1+d_2 \stackrel{N.C.}{=} d}} a_\ell \cdot \phi(\gamma_{c_1+d_1p^r}) \otimes \phi(\gamma_{c_2+d_2p^r}).
	\end{equation}
Note that for each fixed value of $\ell$, the integers $c$ and $d$ such that $c+dp^r = \ell$ are unique. The summands on the left-hand side of \eqref{eq:coproductdeprelation} corresponding to the tuples $(i,j,s,t) = (a,0,b,0)$ and $(i,j,s,t) = (0,a,0,b)$ are $\phi(\gamma_{m+1}) \otimes 1$ and $1 \otimes \phi(\gamma_{m+1})$. Using the dependence relation, these terms can be rewritten as $\sum_{\ell=1}^m a_\ell \cdot \phi(\gamma_\ell) \otimes 1$ and $\sum_{\ell=1}^m a_\ell \cdot 1 \otimes \phi(\gamma_\ell)$. These expressions also appear on the right-hand side of \eqref{eq:coproductdeprelation}---namely, as the summands corresponding for each $\ell$ to the tuples of the form $(c_1,c_2,d_1,d_2) = (c,0,d,0)$ and $(c_1,c_2,d_1,d_2) = (0,c,0,d)$---so they can be subtracted from both sides of \eqref{eq:coproductdeprelation}. If $m+1$ is not of the form $p^{r+e}$ for some integer $e \geq 0$, then there will remain additional terms on the left-hand side of \eqref{eq:coproductdeprelation}, and hence a sum of terms of the form $\phi(\gamma_i) \otimes \phi(\gamma_j)$ with $i+j = m+1$ and $1 \leq i,j \leq m$ would be left equal to a combination of terms of the form $\phi(\gamma_i) \otimes \phi(\gamma_j)$ with $1 \leq i+j \leq m$. This would be a contradiction, because the fact that $\phi(\gamma_1),\ldots,\phi(\gamma_m)$ are linearly independent in $A$ implies that the set $\set{ \phi(\gamma_i) \otimes \phi(\gamma_j) : 1 \leq i,j \leq m}$ is linearly independent in $A \otimes A$. So it must be the case that $m+1 = p^{r+e}$ for some integer $e \geq 0$. Now the simplified version of \eqref{eq:coproductdeprelation} takes the form $0 = \sum a_\ell \cdot \phi(\gamma_{c_1+d_1p^r}) \otimes \phi(\gamma_{c_2+d_2p^r})$, where the sum is over all remaining tuples $(c_1,c_2,d_1,d_2)$ not of the form $(c,0,d,0)$ or $(0,c,0,d)$. Let us call any such remaining tuple $(c_1,c_2,d_1,d_2)$ a `nontrivial decomposition' of the integer $(c_1+d_1p^r)+(c_2+d_2p^r)$. By the linear independence of the set $\set{\phi(\gamma_i) \otimes \phi(\gamma_j): 1 \leq i,j \leq m}$, this implies that $a_\ell = 0$ for all $1 \leq \ell \leq m$ such that $\ell$ admits a nontrivial decomposition. Then the only possible nonzero coefficients that may occur in the dependence relation $\phi(\gamma_{m+1}) = \sum_{\ell=1}^m a_\ell \cdot \phi(\gamma_\ell)$ are $a_1,a_{p^r},a_{p^{r+1}},\ldots,a_{p^{r+e-1}}$.

We have shown under the assumption $\phi(\gamma_1) \neq 0$ that there exists an integer $e \geq 0$ such that $\phi(\gamma_1),\ldots,\phi(\gamma_{p^{r+e}-1})$ are linearly independent in $A$. Since these elements must be mapped into the augmentation ideal of $A$, but $\phi(\gamma_0) = \phi(1_{\Pr}) = 1_A$ is not, we deduce that $\phi(\gamma_0),\phi(\gamma_1),\ldots,\phi(\gamma_{p^{r+e}-1})$ are linearly independent in $A$. Suppose $\phi(v) = 0$. Then $\phi$ factors through the quotient $\Pr / \subgrp{v} \cong k\Gar$. Since the images of $\gamma_0,\gamma_1,\ldots,\gamma_{p^r-1}$ under the quotient map $\Pr \twoheadrightarrow \Pr/\subgrp{v} \cong k\Gar$ already form a basis for $k\Gar$, while the elements $\gamma_\ell$ for $\ell \geq p^r$ map to $0$, it follows in this case that $A \cong k\Gar$. Now suppose that $\phi(v) \neq 0$. We claim that the set $\set{ \phi(v \cdot \gamma_\ell) : 0 \leq \ell \leq p^{r+e}-1}$ is linearly independent in $\Aone$. If not, there exists a minimal integer $0 \leq m < p^{r+e}-1$ such that the set $\set{\phi(v \cdot \gamma_\ell) : 0 \leq \ell \leq m}$ is linearly independent in $\Aone$, but $\set{\phi(v \cdot \gamma_\ell): 0 \leq \ell \leq m+1}$ is not. Then $\phi(v \cdot \gamma_{m+1}) = \sum_{\ell=0}^m c_\ell \cdot \phi(v \cdot \gamma_\ell)$ for some scalars $c_0,\ldots,c_m \in k$. As before, suppose $m+1 = a+bp^r$ with $0 \leq a < p^r$ and $b \geq 0$. Then by the compatibility of $\phi$ and $\Delta$, one gets as above
	\begin{multline} \label{eq:vcoproductrelation}
	\sum_{\substack{i+j=a \\ s+t \stackrel{N.C.}{=} b}} \left\{ \phi(v \cdot \gamma_{i+sp^r}) \otimes \phi(\gamma_{j+tp^r}) + \phi(\gamma_{i+sp^r}) \otimes \phi(v \cdot \gamma_{j+tp^r})\right\} \\
	= \sum_{\ell=0}^m \sum_{\substack{c+dp^r=\ell \\ 0 \leq c < p^r \\ d \geq 0}} \sum_{\substack{c_1+c_2 = c \\ d_1+d_2 \stackrel{N.C.}{=} d}} c_\ell \cdot \left\{ \phi(v \cdot \gamma_{c_1+d_1p^r}) \otimes \phi(\gamma_{c_2+d_2p^r}) + \phi(\gamma_{c_1+d_1p^r}) \otimes \phi(v \cdot \gamma_{c_2+d_2p^r}) \right\}.
	\end{multline}
As before, the summands $\phi(v \cdot \gamma_{m+1}) \otimes 1$ and $1 \otimes \phi(v \cdot \gamma_{m+1})$ on the left-hand side of \eqref{eq:vcoproductrelation} can be rewritten using the dependence relation and then subtracted from both sides of the equation. The resulting new equation still includes the terms $\phi(v) \otimes \phi(\gamma_{m+1})$ and $\phi(\gamma_{m+1}) \otimes \phi(v)$ on the left-hand side, both with coefficient $1$, while the right-hand side is a sum of terms of the form $\phi(v \cdot \gamma_i) \otimes \phi(\gamma_j)$ and $\phi(\gamma_i) \otimes \phi(v \cdot \gamma_j)$ with $0 \leq i+j \leq m$. This is a contradiction, because our hypothesis and the results of the previous paragraph imply that the set $\set{\phi(\gamma_i), \phi(v \cdot \gamma_j) : 0 \leq i < p^{r+e}, 0 \leq j \leq m}$ is linearly independent in $A$ (there can be no nontrivial dependence relations between elements in $\Azero$ and $\Aone$), and hence tensor products of pairs of these elements are linearly independent in $A \otimes A$.

We have now shown, under the assumptions $\phi(\gamma_1) \neq 0$ and $\phi(v) \neq 0$, that there exists an integer $e \geq 0$ such that the set $\set{\phi(\gamma_\ell), \phi(v \cdot \gamma_\ell): 0 \leq \ell < p^{r+e}}$ is linearly independent in $A$, but that a dependence relation of the form $\phi(\gamma_{p^{r+e}}) = a_1 \cdot \phi(\gamma_1) + \sum_{i=1}^e a_{p^{r-1+i}} \cdot \phi(\gamma_{p^{r-1+i}})$ holds in $A$. Set $f = T^{p^{e+1}} - \sum_{i=1}^e a_{p^{r-1+i}} T^{p^i}$, and set $\eta = -a_1$. Then the dependence relation implies that the quotient homomorphism $\phi: \Pr \rightarrow A$ factors through the canonical quotient map
	\[
	\Pr \twoheadrightarrow k\Mrfeta = \Pr/\subgrp{f(u_{r-1}) + \eta \cdot u_0}.
	\]
Since the set $\set{\phi(\gamma_\ell), \phi(v \cdot \gamma_\ell): 0 \leq \ell < p^{r+e}}$ is already a homogeneous basis for $k\Mrfeta$, we deduce that $A$ must be isomorphic to $k\Mrfeta$.
\end{proof}

\begin{corollary} \label{cor:infinitesimalelementary}
Every infinitesimal elementary $k$-supergroup scheme is isomorphic to one of:
	\begin{enumerate}
	\item $\Gar$ for some integer $r \geq 0$,
	\item $\Gar \times \Ga^-$ for some integer $r \geq 0$,
	\item $\Mrs$ for some integers $r,s \geq 1$, or
	\item $\Mrseta$ for some integers $r \geq 2$, $s \geq 1$, and some scalar $0 \neq \eta \in k$.
	\end{enumerate}
\end{corollary}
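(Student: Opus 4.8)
The plan is to bootstrap from Proposition \ref{prop:Prquotients}. As recalled above, an infinitesimal $k$-supergroup scheme $G$ is elementary exactly when its group algebra $kG$ is a Hopf superalgebra quotient of $k\Mrs$ for some integers $r,s\geq 1$, and $k\Mrs = \Pr/\subgrp{u_{r-1}^{p^s}}$ is itself a Hopf superalgebra quotient of $\Pr$. Composing quotient maps, $kG$ is then a finite-dimensional Hopf superalgebra quotient of $\Pr$, so Proposition \ref{prop:Prquotients} identifies $kG$ with the group algebra of $\Gas$, of $\Gas\times\Gam$, or of $\M_{s';f,\eta}$ for suitable parameters (with $s'$ the height). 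Since a finite $k$-supergroup scheme is determined up to isomorphism by its group algebra (pass to the dual Hopf superalgebra), $G$ itself is isomorphic to one of these three. The first two are cases (1) and (2) of the corollary, so it remains only to analyze $G\cong \M_{s';f,\eta}$.

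For that case I would invoke unipotence. Since $G$ is a quotient of $\Mrs$ and $k\Mrs$ is a local algebra, $kG$ is local and hence $G$ is unipotent; by the last remark in Section \ref{subsec:somesupergroups}, a unipotent supergroup of the form $\M_{s';f,\eta}$ has $f$ equal to a scalar multiple of a single monomial $T^{p^{s''}}$ with $s''\geq 1$. Rescaling the primitive generator $f(u_{s'-1})+\eta\, u_0$ of the defining Hopf ideal (which leaves the ideal unchanged) then gives $kG\cong k\M_{s';s'',\eta'}$ for some scalar $\eta'$. If $\eta'=0$ this is case (3). If $\eta'\neq 0$, I claim $s'\geq 2$: for $s'=1$ the defining relation of $k\M_{1;s'',\eta'}$ factors as $u_0^{p^{s''}}+\eta'\, u_0 = u_0\bigl(u_0^{p^{s''}-1}+\eta'\bigr)$, a product of two coprime nonconstant polynomials in $k[u_0]$, coprimality using $\eta'\neq 0$. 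Using $v^2=-u_0^p$ to eliminate $v$ from even monomials identifies the even subalgebra of $k\M_{1;s'',\eta'}$ with $k[u_0]/\subgrp{u_0^{p^{s''}}+\eta'\, u_0}$, which splits as a nontrivial product of rings by the Chinese Remainder Theorem; it therefore contains a nontrivial idempotent, contradicting the locality of $kG$. Hence $s'\geq 2$, which is case (4), and every possibility has been accounted for.

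I expect the final step to be the only nonformal ingredient: the fact that $\M_{1;s'',\eta}$ with $\eta\neq 0$ is not unipotent is precisely what pins down the restriction $r\geq 2$ in case (4). Alternatively, one can avoid unipotence and re-run the computation in the proof of Proposition \ref{prop:Prquotients}, now carrying the extra relation $\phi(\gamma_{p^{r-1+s}})=0$ supplied by the factorization $\Pr\twoheadrightarrow k\Mrs\twoheadrightarrow kG$ through the argument: that relation forces the scalars indexed by $p^{r-1+i}$ in the dependence relation found there to vanish, so the polynomial $f$ produced is automatically a monomial. Either way, the resulting list coincides with the classification of elementary $k$-supergroup schemes of Benson--Iyengar--Krause--Pevtsova \cite{Benson:2018} specialized to the case with no constant finite-group-scheme factor.
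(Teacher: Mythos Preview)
Your proof is correct and follows essentially the same route as the paper: invoke Proposition~\ref{prop:Prquotients} and then eliminate the non-listed cases using the fact that any quotient of the local algebra $k\Mrs$ is again local. The only caveat is that the sentence you cite at the end of Section~\ref{subsec:somesupergroups} (``if $f$ is not a scalar multiple of a single monomial, then $\Mrfeta$ is not unipotent'') is stated there without proof, and the paper's proof of the corollary is in effect where it gets justified, via an explicit separable subalgebra generated by a suitable $p$-th power of $u_{r-1}$; your Chinese Remainder argument for the $r=1$, $\eta\neq 0$ case is the same idea in a slightly different packaging, and it extends readily to supply the missing justification for the general ``$f$ not a monomial'' case as well.
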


\begin{proof}
Let $G$ be an infinitesimal elementary $k$-supergroup scheme and let $A = kG$ be its group algebra. Then $A$ is a finite-dimensional Hopf superalgebra quotient of $k\Mrs$ for some integers $r,s \geq 1$, and hence is also a Hopf superalgebra quotient $\Pr$. We want to show that $A$ is isomorphic to the group algebra of one of the supergroups listed in the statement of the corollary. Suppose by way of contradiction that $A$ is isomorphic to the group algebra of a supergroup listed in Proposition \ref{prop:Prquotients} but not listed in the corollary. Then either
	\begin{enumerate}
	\item $A \cong k\M_{1;f}$ for some inseparable $p$-polynomial $0 \neq f \in k[T]$ that is not a multiple of a single monomial, or \label{item:case1}
	\item $A \cong k\M_{1;f,\eta}$ for some inseparable $p$-polynomial $0 \neq f \in k[T]$ and some scalar $0 \neq \eta \in k$, or \label{item:case2}
	\item $A \cong k\M_{r';f,\eta}$ for some integer $2 \leq r' \leq r$, some inseparable $p$-polynomial $0 \neq f \in k[T]$ that is not a scalar multiple of a single monomial, and some scalar $\eta \in k$. \label{item:case3}
	\end{enumerate}
We will show in each of these cases that some nonzero element of the augmentation ideal of $A$ generates a separable subalgebra. This will produce a contradiction, because all nonzero elements in the augmentation ideal of $k\Mrs$, and hence also in any quotient of $k\Mrs$, are nilpotent and thus cannot generate a separable subalgebra.

In cases \eqref{item:case1} and \eqref{item:case2} we can write $A \cong k[u,v]/\subgrp{u^p+v^2,\sum_{i=m}^n a_i u^{p^i}}$ for some scalars $a_m,\ldots,a_n \in k$ with $0 \leq m < n$ and $a_m,a_n \neq 0$. Then the subalgebra $B$ generated by $x = u^{p^m}$ has the form $B \cong k[x]/\subgrp{\sum_{i=0}^{n-m} a_{m+i} x^{p^i}}$. The polynomial $\sum_{i=0}^{n-m} a_{m+i} x^{p^i} = a_m x + a_{m+1} x^p + \cdots + a_n x^{p^{n-m}}$ is separable, so $B$ is a separable subalgebra of $A$. In case \eqref{item:case3}, if $\eta \neq 0$ then $A \cong k\M_{r',f,\eta} \cong k\M_{r'-1,f^p}$ as $k$-superalgebras by \cite[Remark 3.1.8(4)]{Drupieski:2017b}, so we may assume that $A \cong k\M_{r',f}$ for some $r' \geq 1$ and some inseparable $p$-polynomial $0 \neq f \in k[T]$ that is not a scalar multiple of a single monomial. Suppose $f = \sum_{i=m}^n a_i T^{p^i}$ for some $a_m,\ldots,a_n \in k$ with $1 \leq m < n$ and $a_m,a_n \neq 0$. Then as in cases \eqref{item:case1} and \eqref{item:case2}, the subalgebra $B$ generated by $u_{r'-1}^{p^m}$ has the form $B \cong k[x]/\subgrp{\sum_{i=0}^{n-m} a_{m+i} x^{p^i}}$, and hence is a separable algebra.
\end{proof}

\begin{remark}
In the preceding corollary we made no assumption on the field $k$ other than the standing assumption that its characteristic is odd. BIKP \cite{Benson:2018} classify the elementary finite supergroup schemes under the assumption that the field $k$ is perfect, by applying Koch's classification of Dieudonn\'{e} modules killed by $p$ \cite{Koch:2001}. Under their stronger hypothesis, BIKP deduce that the only isomorphisms among the groups listed in Corollary \ref{cor:infinitesimalelementary} are
	\begin{enumerate}
	\item $\M_{r;1} \cong \Gar \times \Gam$, and
	\item $\Mrseta \cong \M_{r;s,\eta'}$ if and only if $\eta/\eta' = a^{p^{r+s-1}-1}$ for some $a \in k$.
	\end{enumerate}
In particular, if $k$ is algebraically closed, then $\Mrseta \cong \M_{r;s,\eta'}$ for all nonzero $\eta,\eta' \in k^\times$.
\end{remark}

\subsection{Parity change functors} \label{subsec:paritychange}

Recall from \cite[\S2.3.2]{Drupieski:2016} the two parity change functors $\Pi = - \otimes k^{0|1}$ and $\bsPi = k^{0|1} \otimes -$ on the category of $k$-superspaces. On objects, $\Pi$ and $\bsPi$ both act by reversing the $\Z_2$-grading of the underlying superspace. On morphisms, the functors act by $\Pi(\phi) = \phi$ and $\bsPi(\phi) = (-1)^{\ol{\phi}} \phi$, i.e., if $\phi: V \rightarrow W$ is a linear map, then $\Pi(\phi): \Pi(V) \rightarrow \Pi(W)$ is equal to $\phi$ as a function between the underlying sets, while $\bsPi(\phi): \bsPi(V) \rightarrow \bsPi(W)$ is equal to $(-1)^{\ol{\phi}} \phi$. Given a $k$-super\-space $V$ and an element $v \in V$, write $v^\pi$ and ${}^\pi v$ to denote the vector $v$ considered as an element of $\Pi(V)$ and $\bsPi(V)$, respectively. So
	\[
	\Pi(V) = \set{ v^\pi : v \in V} \quad \text{and} \quad \bsPi(V) = \set{ {}^\pi v: v \in V}.
	\]
There are canonical identifications $\Pi(V^\#) \cong \Pi(V)^\#$ and $\bsPi(V^\#) \cong \bsPi(V)^\#$. Specifically, if $\psi \in V^\#$, then we consider $\psi^\pi \in \Pi(V^\#)$ and ${}^\pi \psi \in \bsPi(V^\#)$ as functions $\psi^\pi: \Pi(V) \rightarrow k$ and ${}^\pi \psi: \bsPi(V) \rightarrow k$ via the formulas $(\psi^\pi)(v^\pi) = (-1)^{\ol{v}} \psi(v)$ and $({}^\pi \psi) ({}^\pi v) = (-1)^{\ol{\psi}} \psi(v)$. (Note that $\psi(v) \neq 0$ only if $\ol{\psi} = \ol{v}$.) The signs in these formulas arise from the convention that a symbol $x$ commutes with the superscript $\pi$ up to the sign $(-1)^{\ol{x}}$, and from the convention that ${}^{\pi\pi}x = x = x^{\pi\pi}$.

Now let $A$ be a $k$-superalgebra, and suppose $V$ and $W$ are (left) $A$-supermodules. To extend the functors $\Pi$ and $\bsPi$ to the category $\fsmod_A$ of (left) $A$-supermodules, define the action of $A$ on $\Pi(V)$ and $\bsPi(V)$ by the formulas
	\begin{align} 
	a. (v^\pi) &= (a.v)^\pi, \quad \text{and} \label{eq:PiVaction} \\
	a.({}^\pi v) &= (-1)^{\ol{a}} \cdot {}^\pi(a.v). \label{eq:bsPiVaction}
	\end{align}
Then the maps $(-)^\pi: v \mapsto (-1)^{\ol{v}} v^\pi$ and ${}^\pi(-): v \mapsto {}^\pi v$ define odd $A$-supermodule isomorphisms $V \simeq \Pi(V)$ and $V \simeq \bsPi(V)$.\footnote{Recall that $\phi: V \rightarrow W$ is a left $A$-supermodule homomorphism if $\phi(a.v) = (-1)^{\ol{a} \cdot \ol{\phi}} a.\phi(v)$ for all $a \in A$, $v \in V$.} There are canonical identifications
	\begin{equation} \label{eq:oddhom}
	\Hom_A(\bsPi(V),W)_{\zero} = \Hom_A(V,W)_{\one} = \Hom_A(V,\bsPi(W))_{\zero}
	\end{equation}
defined by pre- and post-composition with ${}^\pi(-)$, respectively.

The parity change functor $\bsPi$ also extends for each affine $k$-supergroup scheme $G$ to the category of rational $G$-supermodules. Let $V$ be a rational $G$-supermodule, and let $\Delta_V: V \rightarrow V \otimes k[G]$ be its (even) comodule structure map. Given $v \in V$, write $\Delta_V(v) = \sum v_0 \otimes v_1$ in the usual Sweedler notation. Then the comodule structure map $\Delta_{\bsPi(V)}: \bsPi(V) \rightarrow \bsPi(V) \otimes k[G]$ is defined by
	\[ \textstyle
	\Delta_{\bsPi(V)}({}^\pi v) = \sum {}^\pi(v_0) \otimes v_1.
	\]
Recall that the induced action of the group algebra $kG$ on $V$ is defined for $\phi \in kG = k[G]^\#$ and $v \in V$ by $\phi.v = (1 \otimes \phi) \circ \Delta_V(v)$. Then the induced action of $kG$ on $\bsPi(V)$ is related to the action of $kG$ on $V$ by the formula \eqref{eq:bsPiVaction}.

\begin{remark}
For consistency with the sign conventions described above, the extension of $\Pi$ to the category of rational $G$-supermodules ought to be defined so that $\Delta_{\Pi(V)}: \Pi(V) \rightarrow \Pi(V) \otimes k[G]$ is defined by $\Delta_{\Pi(V)}(v^\pi) = \sum (-1)^{\ol{v_1}} (v_0)^\pi \otimes v_1$. However, with this formula the induced action of $kG$ on $\Pi(V)$ does not obviously satisfy the sign convention of \eqref{eq:PiVaction}. So we choose only to consider the extension of $\bsPi$ to rational $G$-supermodules.
\end{remark}

\section{Homological dimensions}

\subsection{Projective dimension for \texorpdfstring{$\Pone$}{P1}} \label{subsec:projectiveresolutions}

In this section we describe projective resolutions of the trivial module for the group algebra $k\Mone$ and its polynomial subalgebra $\Pone$. These resolutions arise via Eisenbud's theory of matrix factorizations \cite{Eisenbud:1980}. (Note that $k\Mone$ and $\Pone$ are commutative in the non-super sense, so it makes sense to apply the results of \cite{Eisenbud:1980}.) We then apply the Hopf super\-algebra structure of $\Pone$ to characterize, in terms of the vanishing of a cup product in cohomology, when a $\Pone$-supermodule has finite projective dimension. First we make some general observations concerning the homological dimensions of supermodules.

Given a $k$-superalgebra $A$, let $\fsmod_A$ be the category of left $A$-supermodules, and let $\fmod_A$ be the ordinary category of (arbitrary, not necessarily graded) left $A$-modules. For $V,W \in \fsmod_A$, let $\Hom_A(V,W) = \Hom_{\fsmod_A}(V,W)$ be the $\Z_2$-graded set of left $A$-supermodule homomorphisms from $V$ to $W$, and let $\ulHom_A(V,W) = \Hom_{\fmod_A}(V,W)$ be the set of ordinary $A$-module homomorphisms from $V$ to $W$. Then
	\[
	\ulHom_A(V,W) = \{ \phi \in \Hom_k(V,W) : \phi(a.v) = a.\phi(v) \text{ for all $a \in A$, $v \in V$} \},
	\]
and $\Hom_A(V,W) = \Hom_A(V,W)_{\zero} \oplus \Hom_A(V,W)_{\one}$, where
	\begin{align*}
	\Hom_A(V,W)_{\zero} &= \{ \phi \in \Hom_k(V,W)_{\zero} : \phi(a.v) = a.\phi(v) \text{ for all $a \in A$, $v \in V$} \}, \quad \text{and} \\
	\Hom_A(V,W)_{\one} &= \{ \phi \in \Hom_k(V,W)_{\one} : \phi(a.v) = (-1)^{\ol{a} \cdot \ol{\phi}} a.\phi(v) \text{ for all $a \in A$, $v \in V$} \}.
	\end{align*}
The set $\ulHom_A(V,W)$ inherits a superspace structure from $\Hom_k(V,W)$, and then it immediately follows that $\Hom_A(V,W)_{\zero} = \ulHom_A(V,W)_{\zero}$. On the other hand, let $\pi: W \rightarrow W$ be the parity map defined by $\pi(w) = (-1)^{\ol{w}}w$. Then it is straightforward to check the assignment $\phi \mapsto \pi \circ \phi$ defines an isomorphism $\Hom_A(V,W)_{\one} \cong \ulHom_A(V,W)_{\one}$. Thus for each $V,W \in \fsmod_A$, one gets the superspace isomorphism
	\begin{equation} \label{eq:Homiso}
	\Hom_A(V,W) \cong \ulHom_A(V,W), \quad \phi \mapsto \begin{cases} \phi & \text{if $\phi$ is even,} \\ \pi \circ \phi & \text{if $\phi$ is odd,} \end{cases}
	\end{equation}
which is natural with respect to even homomorphisms in either variable.

Next recall from \cite[\S2.3]{Drupieski:2016a} that the category $\fsmod_A$ is not an abelian category, but its underlying even subcategory $(\fsmod_A)_{\ev}$, consisting of all of the objects of $\fsmod_A$ but only the even $A$-super\-module homomorphisms between them, is an abelian category. Specifically, $(\fsmod_A)_{\ev}$ identifies with the left module category for the smash product algebra $A \# k\Z_2$, where the action of $\Z_2$ on $A$ is defined by having the nontrivial element $\one \in \Z_2$ act on $A$ via the parity automorphism $\pi: A \rightarrow A$. Then $(\fsmod_A)_{\ev}$ contains both enough projectives and enough injectives. Now given $V,W \in \fsmod_A$, the extension groups $\Ext_A^n(V,W)$ are defined as the derived functors of either
	\[
	\Hom_A(V,-): (\fsmod_A)_{\ev} \rightarrow \svec, \quad \text{or equivalently} \quad \Hom_A(-,W): (\fsmod_A)_{\ev} \rightarrow \svec.
	\]
Note that the usual extension groups computed purely within the abelian category $(\fsmod_A)_{\ev}$ are given by just the even subspace $\Ext_A^n(-,-)_{\zero}$ of $\Ext_A^n(-,-)$.

As a left $A$-module, $A \# k\Z_2 = A \otimes k\Z_2$. In particular, $A \# k\Z_2$ is free as a left $A$-module, so any projective resolution in $(\fsmod_A)_{\ev}$ restricts to a projective resolution in $\fmod_A$. Then it follows for each $V,W \in \fsmod_A$ and $n \in \N$ that \eqref{eq:Homiso} extends to an isomorphism of extension groups
	\begin{equation} \label{eq:Extiso}
	\Ext_A^n(V,W) \cong \ulExt_A^n(V,W)
	\end{equation}
where $\ulExt_A^n(V,W)$ denotes the usual extension group in the category $\fmod_A$. Now for $V \in \fsmod_A$, let $\pd_A(V)$ denote the projective dimension of $V$ in the abelian category $(\fsmod_A)_{\ev}$, and let $\ul{\pd}_A(V)$ denote the projective dimension of $V$ in the abelian category $\fmod_A$. Similarly, write $\id_A(V)$ and $\ul{\id}_A(V)$ for the injective dimensions of $V$ in the categories $(\fsmod_A)_{\ev}$ and $\fmod_A$, respectively. Then the isomorphism \eqref{eq:Extiso} implies for $V,W \in \fsmod_A$ that
	\begin{equation} \label{eq:pdidweak}
	\pd_A(V) \leq \ul{\pd}_A(V) \quad \text{and} \quad \id_A(W) \leq \ul{\id}_A(W).
	\end{equation}
Conversely, since projective resolutions in $(\fsmod_A)_{\ev}$ restrict to projective resolutions in $\fmod_A$, it follows that $\ul{\pd}_A(V) \leq \pd_A(V)$, and hence
	\begin{equation} \label{eq:pdequal}
	\pd_A(V) = \ul{\pd}_A(V).
	\end{equation}

\begin{lemma} \label{lemma:injprojdimtorsionmodule}
Let $A$ be a $k$-superalgebra, and let $\fm \subset A$ be a superideal such that $A/\fm \cong k$. Assume that, when the $\Z_2$-grading on $A$ is ignored, $A$ is a commutative noetherian ring in the usual non-super sense. Let $V$ be a finitely-generated $A$-supermodule, and suppose that $V$ is $\fm$-torsion, i.e., suppose for each $v \in V$ that there exists an integer $\ell \geq 1$ such that $\fm^\ell.v = 0$. Then
	\begin{align}
	\pd_A(V) &= \ul{\pd}_A(V) = \sup \{i \in \N: \ul{\Ext}_A^i(V,k) \neq 0 \} = \sup \{i \in \N: \Ext_A^i(V,k) \neq 0 \}, \label{eq:pdequalities} \\
	\id_A(V) &= \ul{\id}_A(V) = \sup \{i \in \N: \ul{\Ext}_A^i(k,V) \neq 0 \} = \sup \{i \in \N: \Ext_A^i(k,V) \neq 0 \}. \label{eq:idequalities}
	\end{align}
\end{lemma}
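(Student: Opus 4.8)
The plan is to peel off the graded content using the comparison results already in hand, and then to reduce the remaining ungraded assertions to standard commutative algebra over the localization $A_\fm$. By \eqref{eq:pdequal} we already have $\pd_A(V) = \ul{\pd}_A(V)$, and the isomorphism \eqref{eq:Extiso} identifies $\Ext_A^i(V,k)$ with $\ulExt_A^i(V,k)$ and $\Ext_A^i(k,V)$ with $\ulExt_A^i(k,V)$ for every $i$, so the two ``$\sup$'' quantities on the right of \eqref{eq:pdequalities} agree, as do those on the right of \eqref{eq:idequalities}. For injective dimensions, \eqref{eq:pdidweak} gives $\id_A(V) \leq \ul{\id}_A(V)$, and the reverse inequality is automatic once $\ul{\id}_A(V) = \sup\{ i : \ulExt_A^i(k,V) \neq 0 \}$ is known: by the definition of $\Ext_A^\bullet(-,-)$ as a derived functor on the abelian category $(\fsmod_A)_{\ev}$, if $\Ext_A^i(k,V) \neq 0$ then $V$ has no injective resolution of length $< i$ in $(\fsmod_A)_{\ev}$, so $\id_A(V) \geq i$. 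Hence it suffices to prove the two purely ungraded identities
\[
\ul{\pd}_A(V) = \sup\{ i : \ulExt_A^i(V,k) \neq 0 \}, \qquad \ul{\id}_A(V) = \sup\{ i : \ulExt_A^i(k,V) \neq 0 \}.
\]

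Next I would reduce to the local case. Because $V$ is finitely generated and $\fm$-torsion we have $\fm^N V = 0$ for some $N$, hence $V_\fp = 0$ for every prime $\fp \neq \fm$ and $V$ is supported only at $\fm$; likewise $\ulExt_A^i(V,k)$ and $\ulExt_A^i(k,V)$ are finitely generated and killed by $\fm = \ann_A(k)$, so they are finite-dimensional over $k$ and supported only at $\fm$. Since for finitely generated modules over a commutative noetherian ring $\Ext$ commutes with localization and both projective and injective dimension are the suprema of the corresponding dimensions of the localizations (for injective dimension one uses Baer's criterion together with the fact that every ideal of $A_\fm$ is extended from $A$), this yields $\ul{\pd}_A(V) = \ul{\pd}_{A_\fm}(V)$, $\ul{\id}_A(V) = \ul{\id}_{A_\fm}(V)$, and $\ulExt_A^i(V,k) \cong \ulExt_{A_\fm}^i(V,k)$, $\ulExt_A^i(k,V) \cong \ulExt_{A_\fm}^i(k,V)$, with $k = A_\fm/\fm A_\fm$. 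We are thus reduced to proving the two identities for a finitely generated module over a commutative noetherian \emph{local} ring.

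In that local setting the first identity is the familiar consequence of the existence of a minimal free resolution $P_\bullet \to V$: one has $\dim_k P_i = \dim_k \Tor_i^A(V,k) = \dim_k \ulExt_A^i(V,k)$, so $\ul{\pd}_A(V)$ is exactly the largest $i$ with $\ulExt_A^i(V,k) \neq 0$. The second identity is Bass's characterization of injective dimension for finitely generated modules, valid whether the supremum is finite or infinite; its proof establishes $\ulExt_A^i(A/\fp,V) = 0$ for $i > n$ by induction on $\dim A/\fp$, using the short exact sequences $0 \to A/\fp \xrightarrow{x} A/\fp \to A/(\fp + xA) \to 0$ for $x \in \fm \setminus \fp$ and Nakayama's lemma applied to the finitely generated modules $\ulExt_A^i(A/\fp,V)$; one can either reproduce this or cite it (for instance \cite{Avramov:2018} or a standard reference). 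This is the step I expect to be the main obstacle: unlike for projective dimension there is no minimal injective resolution with $k$-computable Bass numbers to short-circuit the argument, so the dimension-induction-with-Nakayama is genuinely needed, and it is precisely here that finite generation and $\fm$-torsion of $V$ are used and that the possible non-locality of $A$ is dealt with by passing to $A_\fm$. Combining all of this gives \eqref{eq:pdequalities} and \eqref{eq:idequalities}.
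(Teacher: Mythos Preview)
Your proposal is correct and follows essentially the same approach as the paper. The only difference is cosmetic: after reducing to the two ungraded identities exactly as the paper does (via \eqref{eq:pdequal}, \eqref{eq:Extiso}, and \eqref{eq:pdidweak}), the paper observes that $V_\fp = 0$ for every prime $\fp \neq \fm$ and then simply cites \cite[\S5.3]{Avramov:1991} for the statement that $\ul{\pd}_A(V)$ and $\ul{\id}_A(V)$ are computed by $\ulExt_A^\bullet(V,k)$ and $\ulExt_A^\bullet(k,V)$ respectively, whereas you spell out the content of that citation (localize at $\fm$, then use minimal free resolutions for projective dimension and Bass's criterion for injective dimension).
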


\begin{proof}
First consider $V$ as an object in the ordinary module category $\fmod_A$, and consider $A$ as an ordinary commutative noetherian ring. Then $\fm$ is a maximal ideal in $A$ (i.e., it is maximal among all, not necessarily $\Z_2$-graded ideals) by the fact that $A/\fm \cong k$ is a field. For each prime ideal $\fm \neq \fp \subset A$, one has $(A-\fp) \cap \fm \neq \emptyset$ by the maximality of $\fm$. Since $V$ is $\fm$-torsion, this implies for each prime ideal $\fm \neq \fp \subset A$ that the localization $V_{\fp}$ is zero. Then the second equality in each of \eqref{eq:pdequalities} and \eqref{eq:idequalities} follows from \cite[\S5.3]{Avramov:1991}, and the third equality in each line is by \eqref{eq:Extiso}. Finally, suppose $\Ext_A^i(k,V) \neq 0$ for some $i \geq 0$. Then $i \leq \id_A(V)$, and hence $\ul{\id}_A(V) \leq \id_A(V)$. Now \eqref{eq:pdidweak} implies that $\id_A(V) = \ul{\id}_A(V)$, and we already know that $\pd_A(V) = \ul{\pd}_A(V)$ by \eqref{eq:pdequal}.
\end{proof}

For the rest of this subsection, let $u$ (resp.\ $v$) be an indeterminate of even (resp.\ odd) superdegree, and let $A$ be either the power series algebra $k[[u,v]]$ or its polynomial subalgebra $k[u,v]$. Set $x = u^p+v^2$, and let $B = A/\subgrp{x}$. Then $B$ is isomorphic to either the group algebra $k\Mone$ or its polynomial subalgebra $\Pone$. Set $F = G = A^{1|1} := A \oplus \Pi(A)$, and consider the elements of $A^{1|1}$ as column vectors whose first (upper) coordinate comes from $A$ and whose second (lower) coordinate comes from $\Pi(A)$.\footnote{Here we use $\Pi$ rather than $\bsPi$ so that no signs are involved in the left action of $A$ on $\Pi(A)$.} Let $\varphi: F \rightarrow G$ and $\psi: G \rightarrow F$ be the (even) $A$-supermodule homomorphisms defined by the matrices
	\[
	\varphi = \begin{pmatrix} u^{p-1} & v \\ v & -u \end{pmatrix} \quad \text{and} \quad \psi = \begin{pmatrix} u & v \\ v & -u^{p-1} \end{pmatrix}.
	\]
Then $\varphi \circ \psi = x \cdot 1_G$ and $\psi \circ \varphi = x \cdot 1_F$, so the pair $(\varphi,\psi)$ is a matrix factorization of $x$ in the sense of \cite[\S5]{Eisenbud:1980}. Next we want to verify that the ideal $\subgrp{x}/\subgrp{x^2} \subset A/\subgrp{x^2}$ is free as a module over $B = A/\subgrp{x}$. Since $A$ is a unique factorization domain, it suffices to show that $x$ is irreducible in $A$. First, $u^p$ is an element of the maximal ideal of $k[[u]]$, so $u^p+v^2$ is a Weierstrass polynomial in $k[[u]][v] \subset k[[u,v]]$. Moreover, a Weierstrass polynomial in $k[[u]][v]$ is irreducible in $k[[u,v]]$ if and only if it is irreducible in $k[[u]][v]$. Since $p$ is odd by our standing assumption on the characteristic of the field $k$, the element $-u^p$ is not a square in either $k[u]$ or $k[[u]]$. Then $v^2+u^p$ has no roots in either $k[u]$ or $k[[u]]$, and hence is irreducible both as an element of $k[u,v] = k[u][v]$ and as an element of $k[[u]][v]$. Then $x$ is irreducible in $A$. Applying \cite[Proposition 5.1]{Eisenbud:1980}, we now get:

\begin{proposition}
Retain the notation of the preceding paragraph. Let $\ol{\varphi}: B^{1|1} \rightarrow B^{1|1}$ and $\ol{\psi}: B^{1|1} \rightarrow B^{1|1}$ denote the (even) $B$-supermodule homomorphisms induced by $\varphi$ and $\psi$, respectively, and let $\ve: B \rightarrow k$ be the augmentation map. Then
\begin{equation} \label{eq:Bfreeresolution}
k \stackrel{\ve}{\longleftarrow} B \stackrel{(u,v)}\longleftarrow B^{1|1} \stackrel{\ol{\varphi}}{\longleftarrow} B^{1|1} \stackrel{\ol{\psi}}{\longleftarrow} B^{1|1} \stackrel{\ol{\varphi}}{\longleftarrow} B^{1|1} \stackrel{\ol{\psi}}{\longleftarrow} \cdots
\end{equation}
is a $B$-free resolution of the trivial module $k$. 
\end{proposition}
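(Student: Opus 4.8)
The plan is to invoke Eisenbud's structure theory for maximal Cohen--Macaulay modules over hypersurface rings, as packaged in \cite[Proposition 5.1]{Eisenbud:1980}, and to check that the hypotheses of that result are met in our situation. Recall that Eisenbud shows that if $(\varphi,\psi)$ is a matrix factorization of a nonzerodivisor $x$ in a regular local (or in our case graded or complete) ring $A$, and if the entries of $\varphi$ lie in the maximal ideal, then the $2$-periodic complex
	\[
	\cdots \stackrel{\ol{\psi}}{\longrightarrow} B^{n} \stackrel{\ol{\varphi}}{\longrightarrow} B^{n} \stackrel{\ol{\psi}}{\longrightarrow} B^{n} \stackrel{\ol{\varphi}}{\longrightarrow} B^{n} \longrightarrow \coker(\ol{\varphi}) \longrightarrow 0
	\]
is exact, i.e.\ it is a (minimal) $B$-free resolution of $\coker(\ol{\varphi})$. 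So first I would observe that, forgetting the $\Z_2$-grading, $A$ is a commutative regular ring of the kind to which \cite[Proposition 5.1]{Eisenbud:1980} applies (a polynomial ring in two variables localized implicitly at the irrelevant/maximal ideal, or a two-variable power series ring), and that $B = A/\subgrp{x}$ is a hypersurface ring; this is exactly the ungraded reduction we are entitled to make since $k\Mone$ and $\Pone$ are honestly commutative. The preceding paragraph has already verified the two things Eisenbud needs: that $x = u^p + v^2$ is a nonzerodivisor in $A$ (it is irreducible, hence prime, in the UFD $A$, so in particular not a zerodivisor), and that $(\varphi,\psi)$ is genuinely a matrix factorization of $x$, since $\varphi\psi = \psi\varphi = x\cdot 1$ by the computation $\begin{pmatrix} u^{p-1} & v \\ v & -u \end{pmatrix}\begin{pmatrix} u & v \\ v & -u^{p-1}\end{pmatrix} = \begin{pmatrix} u^p + v^2 & 0 \\ 0 & v^2 + u^p \end{pmatrix}$.

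Next I would identify $\coker(\ol{\varphi})$, equivalently $\coker(\varphi) / x\cdot \coker(\varphi)$, with the trivial module $k$, so that the tail of Eisenbud's resolution is exactly the $2$-periodic part displayed in \eqref{eq:Bfreeresolution}. Concretely, over $A$ the cokernel of $\varphi = \begin{pmatrix} u^{p-1} & v \\ v & -u\end{pmatrix}$ is $A^{1|1}$ modulo the column space of $\varphi$; applying row/column operations (or directly: the second generator can be solved for in terms of the relation $v e_1 = u e_2$ after inverting nothing, using instead that modulo the relations one reduces generators to the residue field) one sees $\coker(\varphi)\cong A/\subgrp{u,v} = k$. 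This matches the fact that $\varphi$ followed by the map $(u,v)\colon B^{1|1}\to B$ should assemble into the start of a resolution of $k$: the composite $B^{1|1}\xrightarrow{(u,v)} B \xrightarrow{\ve} k$ is zero, and $(u,v)$ is surjective onto the augmentation ideal, with kernel generated by the columns of $\ol{\varphi}$ (this last point is again \cite[Proposition 5.1]{Eisenbud:1980}, which says the first syzygy of $\coker(\varphi)$ as a $B$-module is $\coker(\psi)$, and one checks the relevant identification is compatible with the explicit map $(u,v)$). Finally I would note exactness at the $B$ spot: $\ker(\ve) = (u,v)B$ is immediate since $B$ is local with residue field $k$ and maximal ideal generated by the images of $u$ and $v$.

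The main obstacle, such as it is, is bookkeeping rather than mathematics: one must match up Eisenbud's generic statement (which produces a resolution of $\coker(\ol\varphi)$ by the $2$-periodic complex $B^{1|1}\rightleftarrows B^{1|1}$ with the maps alternating $\ol\psi, \ol\varphi$) against the specific augmented complex \eqref{eq:Bfreeresolution}, getting the order of $\ol\varphi$ versus $\ol\psi$ correct, and confirming that the ``connecting'' map $B^{1|1}\to B$ really is $(u,v)$ and really has image the augmentation ideal with kernel $\im(\ol\varphi)$. The superalgebra decoration $A^{1|1} = A\oplus\Pi(A)$ plays no role in the homological content — it is there only to record that $\varphi$ and $\psi$ are \emph{even} maps of supermodules, which is automatic from the parities of the matrix entries ($u^{p-1}, u$ even; $v$ odd) together with the convention that the two summands have opposite parity — so I would simply remark that the resolution \eqref{eq:Bfreeresolution} is a resolution in $(\fsmod_B)_{\ev}$, hence by \eqref{eq:pdequal} also computes ordinary projective dimension. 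Everything else is the verification already carried out in the paragraph preceding the proposition, so the proof should be short: cite \cite[Proposition 5.1]{Eisenbud:1980}, cite the preceding paragraph for the hypotheses, and identify $\coker(\ol\varphi) = k$.
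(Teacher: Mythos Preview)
Your overall strategy---invoke \cite[Proposition 5.1]{Eisenbud:1980} for the $2$-periodic tail and then splice on the augmentation---is the same as the paper's, but there is a genuine error in the key step. You claim $\coker(\ol{\varphi}) \cong k$, and this is false. One quick way to see it: $B$ is a domain (since $x$ is prime in $A$), and over $\mathrm{Frac}(B)$ the matrix $\ol{\varphi}$ has determinant $-u^p - v^2 \equiv 0$ but nonzero entries, hence rank $1$; so $\coker(\ol{\varphi})$ has generic rank $1$ over $B$, whereas $k$ is torsion. In fact $\coker(\ol{\varphi})$ is isomorphic to the augmentation ideal $\fm = (u,v)B$, not to $k$ itself---which is also why the resolution \eqref{eq:Bfreeresolution} has the extra term $B$ at the front, rather than starting directly with $B^{1|1}$ as Eisenbud's resolution of $\coker(\ol{\varphi})$ would.

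Consequently the step you wave away---``kernel generated by the columns of $\ol{\varphi}$''---is exactly the content that is missing and does not follow formally from \cite{Eisenbud:1980}. What Eisenbud gives you is that $\im(\ol{\varphi}) = \ker(\ol{\psi})$ and that the periodic complex resolves $\coker(\ol{\varphi})$; it says nothing about the unrelated map $(u,v)\colon B^{1|1} \to B$. The paper handles this directly: the containment $\im(\ol{\varphi}) \subseteq \ker((u,v))$ is immediate from $(u,v)\,\ol{\varphi} = (u^p + v^2,\,0) = 0$ in $B$, and for the reverse it takes $\binom{\alpha}{\beta^\pi}$ with $u\alpha + v\beta = 0$, computes $\ol{\psi}\binom{\alpha}{\beta^\pi} = \binom{0}{(v\alpha - u^{p-1}\beta)^\pi}$, and observes $u(v\alpha - u^{p-1}\beta) = v(u\alpha + v\beta) = 0$, hence $v\alpha - u^{p-1}\beta = 0$ since $B$ is a domain. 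Thus $\ker((u,v)) \subseteq \ker(\ol{\psi}) = \im(\ol{\varphi})$. This short computation is the actual heart of the proof; your row/column-reduction sketch does not supply it.
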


\begin{proof}
By \cite[Proposition 5.1]{Eisenbud:1980} and the observations preceding the proposition, the complex
\[
B^{1|1} \stackrel{\ol{\varphi}}{\longleftarrow} B^{1|1} \stackrel{\ol{\psi}}{\longleftarrow} B^{1|1} \stackrel{\ol{\varphi}}{\longleftarrow} B^{1|1} \stackrel{\ol{\psi}}{\longleftarrow} \cdots
\]
is a $B$-free resolution of the $B$-supermodule $\coker(\varphi) = \coker(\ol{\varphi})$. The complex \eqref{eq:Bfreeresolution} is evidently exact at $B$, and $\im(\ol{\varphi}) \subseteq \ker((u,v))$, so it suffices to verify that $\ker((u,v)) \subseteq \im(\ol{\varphi}) = \ker(\ol{\psi})$. So let $\alpha,\beta \in B$, and suppose $u\alpha+v\beta = 0$, i.e., suppose $\binom{\alpha}{\beta^\pi} \in \ker((u,v))$. Then
	\[
	\ol{\psi}\begin{pmatrix} \alpha \\ \beta^\pi \end{pmatrix} = \begin{pmatrix} u & v \\ v & -u^{p-1} \end{pmatrix} \begin{pmatrix} \alpha \\ \beta^\pi \end{pmatrix} = \begin{pmatrix} u\alpha+v\beta \\ (v\alpha-u^{p-1}\beta)^\pi \end{pmatrix} = \begin{pmatrix} 0 \\ (v\alpha-u^{p-1}\beta)^\pi \end{pmatrix}.
	\]
As verified in the paragraph preceding the proposition, $x = u^p+v^2$ is irreducible and hence prime in $A$ (because $A$ is a unique factorization domain), so $B = A/\subgrp{x}$ is a domain. In particular, $v\alpha-u^{p-1}\beta = 0$ in $B$ if and only if $u(v\alpha-u^{p-1}\beta) = 0$. But
	\[
	u(v\alpha-u^{p-1}\beta) = uv\alpha - u^p\beta = uv\alpha + v^2\beta = v(u\alpha+v\beta) = v \cdot 0 = 0.
	\]
Then $\binom{\alpha}{\beta^\pi} \in \ker(\ol{\psi})$, and hence $\ker((u,v)) \subseteq \im(\ol{\varphi})$.
\end{proof}

\begin{corollary} \label{cor:H(B,k)}
Let $B$ be either the group algebra $k\Mone$ or its polynomial subalgebra $\Pone$. Then
	\[
	\opH^i(B,k) = \Ext_B^i(k,k) \cong \begin{cases} k^{1|0} & \text{if $i = 0$,} \\ k^{1|1} & \text{if $i \geq 1$.} \end{cases}
	\]
\end{corollary}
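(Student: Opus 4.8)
The plan is to read the answer straight off the $B$-free resolution \eqref{eq:Bfreeresolution} of the trivial module. Since the augmentation $\ve$, the map $(u,v)$, and the maps $\ol{\varphi},\ol{\psi}$ are all \emph{even} $B$-supermodule homomorphisms between free modules, and free modules are projective objects in the abelian category $(\fsmod_B)_{\ev}$, the complex \eqref{eq:Bfreeresolution} is a projective resolution of $k$ in $(\fsmod_B)_{\ev}$, hence may be used to compute $\Ext_B^\bullet(k,k) = \opH^\bullet(B,k)$. So the first step is simply to delete the term $k$ and apply $\Hom_B(-,k)$ to the resulting complex $B \leftarrow B^{1|1} \leftarrow B^{1|1} \leftarrow \cdots$.

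Next I would identify the terms of the dualized complex together with their $\Z_2$-gradings. A $B$-supermodule homomorphism out of $B$ is determined by the image of $1$, which is even, so $\Hom_B(B,k) \cong k^{1|0}$ (there are no nonzero odd homomorphisms $B \to k$); writing $B^{1|1} = B \oplus \Pi(B)$ and noting that the canonical generator of $\Pi(B)$ is odd, one gets $\Hom_B(\Pi(B),k) \cong k^{0|1}$, and hence $\Hom_B(B^{1|1},k) \cong k^{1|0} \oplus k^{0|1} = k^{1|1}$. (Alternatively one may compute $\ulExt_B^\bullet(k,k)$ from the underlying ungraded resolution and then invoke \eqref{eq:Extiso}, but tracking the grading directly is just as quick.) It then remains to check that every differential in the dualized complex is zero. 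The matrices $(u,v)$, $\ol{\varphi}$, and $\ol{\psi}$ have all their entries in the augmentation ideal $\fm = \subgrp{u,v}$ of $B$ — here one uses the standing hypothesis $p \geq 3$, so that $u^{p-1} \in \fm$ as well — and any $B$-supermodule homomorphism from a free module to $k$ annihilates $\fm$ times that free module. Hence multiplication by an element of $\fm$ induces the zero map after applying $\Hom_B(-,k)$, so all differentials vanish.

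Combining these two steps, $\opH^i(B,k) = \Ext_B^i(k,k)$ is just the $i$-th term of the dualized complex: $\Hom_B(B,k) \cong k^{1|0}$ when $i = 0$ and $\Hom_B(B^{1|1},k) \cong k^{1|1}$ when $i \geq 1$. The argument is entirely routine; the only thing requiring care is the bookkeeping of the parity of the $\Hom$-spaces, and that is handled either by the isomorphisms \eqref{eq:Homiso}–\eqref{eq:Extiso} or by the elementary observation about the generators above. I do not anticipate any genuine obstacle.
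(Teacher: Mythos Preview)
Your proposal is correct and follows exactly the paper's approach: apply $\Hom_B(-,k)$ to the free resolution \eqref{eq:Bfreeresolution}, observe that all differentials vanish because their entries lie in the augmentation ideal, and read off the terms. The paper's proof is a one-line version of what you wrote, so your added bookkeeping on the $\Z_2$-grading of $\Hom_B(B^{1|1},k)$ simply fills in detail the paper leaves implicit.
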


\begin{proof}
Let $P_\bullet$ be the $B$-free resolution of $k$ described in \eqref{eq:Bfreeresolution}. So $P_0 = B$, and $P_i = B^{1|1}$ for $i \geq 1$. Then $\Hbul(B,k)$ can be computed as the cohomology of the cochain complex $\Hom_B(P_\bullet,k)$. The differentials in this complex are all trivial, so the calculation of $\Hbul(B,k)$ follows.
\end{proof}

In the next proposition we use the fact that $\Pone$ is a Hopf superalgebra in order to talk about cup products in cohomology. Given a $\Pone$-supermodule $M$, let $1_M: M \rightarrow M$ be its identity map.

\begin{proposition} \label{prop:Poneprojdim}
Let $M$ and $N$ be $\Pone$-supermodules, and let $0 \neq y \in \opH^1(\Pone,k)_{\one}$. Then the right cup product action of $y$ defines for all $i \geq 2$ an odd isomorphism
	\[
	\Ext_{\Pone}^i(M,N) \simeq \Ext_{\Pone}^{i+1}(M,N).
	\]
In particular, $\pd_{\Pone}(M) = \infty$ if and only if the cup product $1_M \cup y^2 \in \Ext_{\Pone}^2(M,M)$ is nonzero.
\end{proposition}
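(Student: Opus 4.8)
The plan is to exploit the periodicity visible in the free resolution \eqref{eq:Bfreeresolution}: since $P_i = B^{1|1}$ for all $i \geq 1$ with the differentials alternating between $\ol{\varphi}$ and $\ol{\psi}$, and since these two matrices satisfy $\varphi\psi = \psi\varphi = x\cdot\id$, there is an evident chain map of degree $2$ from $P_\bullet$ to itself (shifting $P_i$ to $P_{i+2}$ by the identity on $B^{1|1}$ in high degrees), which represents a cohomology class in $\Ext_{\Pone}^2(k,k)$. First I would identify this periodicity operator explicitly with the square of a class $y \in \opH^1(\Pone,k)_{\one}$: by Corollary \ref{cor:H(B,k)}, $\opH^1(\Pone,k) \cong k^{1|1}$, so the odd part is one-dimensional and spanned by any nonzero $y$; I would check that $y^2$ is nonzero and equals (up to a unit) the periodicity class, using that the comultiplication on $\Pone$ makes cup product with $y^2$ correspond, on the level of the minimal resolution, precisely to the shift $P_\bullet \to P_{\bullet}[2]$ induced by $\id_{B^{1|1}}$. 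The key input is that $\ol{\varphi}$ and $\ol{\psi}$ become \emph{isomorphisms} after a single further syzygy is taken — more precisely, the composite of two consecutive differentials is multiplication by $x = 0$ in $B$, and the resolution's $2$-periodicity is literal, not merely up to homotopy.

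Second, I would deduce the isomorphism $\Ext_{\Pone}^i(M,N) \simeq \Ext_{\Pone}^{i+1}(M,N)$ for $i \geq 2$ for arbitrary $M$ and $N$. For this I would resolve $M$ (in the abelian category $(\fsmod_{\Pone})_{\ev}$) and observe that, since $\Pone$ is a hypersurface ring, Eisenbud's theorem gives that the $i$-th syzygy of $M$ for $i$ large is the cokernel of a matrix factorization of $x$, hence is $2$-periodic; thus the truncated resolution of $M$ is itself eventually $2$-periodic, and right cup product with $y$ realizes a shift. Concretely, cup product with a class in $\opH^1(\Pone,k)_{\one}$ is given by composing a resolution $Q_\bullet \to M$ with a cocycle representing $y$; because $y$ is odd and lives in degree $1$, and because the eventual $2$-periodicity of $Q_\bullet$ forces the relevant connecting maps to be isomorphisms, the induced map on $\Ext^i$ for $i \geq 2$ is bijective. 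I would be careful here to track the parity shifts introduced by $\Pi$ and by the odd class $y$, so that the statement comes out as an \emph{odd} isomorphism as claimed; this is where the conventions from Section \ref{subsec:paritychange} and the identifications \eqref{eq:Homiso}, \eqref{eq:Extiso} are used.

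Finally, the ``in particular'' clause follows by a standard argument. If $\pd_{\Pone}(M) < \infty$, then $\Ext_{\Pone}^i(M,M) = 0$ for all $i \gg 0$, so by the periodicity isomorphism $\Ext_{\Pone}^i(M,M) = 0$ for all $i \geq 2$; in particular $1_M \cup y^2 \in \Ext_{\Pone}^2(M,M)$ is zero. Conversely, if $1_M \cup y^2 = 0$, I would argue that cup product with $y^2$ is the zero map on $\Ext_{\Pone}^i(M,M)$ for all $i \geq 0$ (cup product with the central, even class $y^2$ is $\Ext_{\Pone}^\bullet(M,M)$-linear, so it is determined by its value on $1_M$), and then combine this with the fact that $y^2$ acts \emph{invertibly} on $\Ext_{\Pone}^{\geq 2}(M,M)$ by the periodicity isomorphism just established; an endomorphism that is simultaneously zero and invertible forces $\Ext_{\Pone}^i(M,M) = 0$ for all $i \geq 2$, whence $\pd_{\Pone}(M) \leq 1 < \infty$. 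The main obstacle I anticipate is the first step: pinning down that the periodicity operator on the resolution \eqref{eq:Bfreeresolution} is genuinely realized by cup product with $y^2$ (and not just abstractly periodic), which requires unwinding the Hopf-algebra definition of the cup product against the matrix-factorization differentials and confirming the nonvanishing of $y^2$ — the parity bookkeeping makes this more delicate than in the classical ungraded hypersurface case.
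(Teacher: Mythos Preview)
Your argument for the ``in particular'' clause is essentially identical to the paper's, and correct. The gap is in the first claim: you aim to extract the \emph{odd, degree-one} isomorphism $- \cup y$ from the \emph{even, degree-two} periodicity that Eisenbud's matrix-factorization theory provides, and you have not explained how. Knowing that the resolution of $M$ is eventually $2$-periodic tells you that $- \cup y^2$ is an isomorphism on $\Ext^{\geq 2}$, but an invertible composite $(-\cup y)\circ(-\cup y)$ does not by itself force each factor to be invertible. You assert that ``the eventual $2$-periodicity of $Q_\bullet$ forces the relevant connecting maps to be isomorphisms,'' but this is precisely the point that needs proof.

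The paper avoids this by a different and more direct route. Rather than resolving $M$ abstractly and invoking general hypersurface theory, it uses the Hopf structure of $\Pone$ to take the \emph{fixed} resolution $M \otimes P_\bullet$ of $M$, where $P_\bullet$ is the resolution \eqref{eq:Bfreeresolution} of $k$. On this explicit complex the paper writes down an odd $B$-linear automorphism $\phi$ of $B^{1|1}$ (swapping the two summands with suitable signs) and checks by hand that $\phi$ intertwines $\ol{\varphi}$ and $\ol{\psi}$; this yields a \emph{one}-step chain isomorphism $P_{\geq 2} \simeq P_{\geq 2}[1]$ directly. A second explicit diagram then identifies this shift with Yoneda composition by $1_M \cup y$. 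Two further advantages of this approach: it requires no finite-generation hypothesis on $M$ (your appeal to Eisenbud's theorem on syzygies does), and it makes the identification of the periodicity operator with cup product by $y$ a concrete lift of a specific cocycle $\wt{y}$, rather than an abstract comparison of two degree-one classes in cohomology.
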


\begin{proof}
Set $B = \Pone$, and again let $P_\bullet$ be the $B$-free resolution of $k$ described in \eqref{eq:Bfreeresolution}. Then the K\"{u}nneth Theorem and the proof of \cite[Proposition 3.1.5]{Benson:1998} imply that $P_\bullet \otimes M$ is a resolution of $M$ by free $B$-supermodules. Because $B$ is (super)cocommutative, $P_\bullet \otimes M \cong M \otimes P_\bullet$ as a complex of $B$-supermodules, and hence $\Ext_B^\bullet(M,N)$ is the cohomology of the complex $\Hom_B(M \otimes P_\bullet,N)$.

Since $\opH^1(B,k)_{\one}$ is one-dimensional by Corollary \ref{cor:H(B,k)}, we may assume that $y$ is represented by the cochain $\wt{y}: B^{1|1} \rightarrow k$ defined by $\wt{y}\binom{\alpha}{\beta^\pi} = \ve(\beta)$. Here $\ve: B \rightarrow k$ denotes the augmentation map of $B$. Define $\phi: B^{1|1} \rightarrow B^{1|1}$ by the formula
	\[
	\phi \begin{pmatrix} \alpha \\ \beta^\pi \end{pmatrix} = \begin{pmatrix} (-1)^{\ol{\beta}} \beta \\ (-1)^{\ol{\alpha}} \alpha^{\pi} \end{pmatrix}.
	\]
Then $\phi$ is an odd $B$-supermodule isomorphism with $\phi^{-1} = \phi$. A straightforward calculation checks the commutativity of the diagram
	\begin{equation} \label{eq:oddisodiagram}
	\vcenter{\xymatrix{
B^{1|1} \ar@{<-}[r]^{\ol{\psi}} \ar@{->}[d]^{-\phi} & B^{1|1} \ar@{<-}[r]^{\ol{\varphi}} \ar@{->}[d]^{\phi} & B^{1|1} \ar@{<-}[r]^{\ol{\psi}} \ar@{->}[d]^{-\phi} & B^{1|1} \ar@{<-}[r]^{\ol{\varphi}} \ar@{->}[d]^{\phi} & \cdots \\
B^{1|1} \ar@{<-}[r]^{\ol{\varphi}} & B^{1|1} \ar@{<-}[r]^{\ol{\psi}} & B^{1|1} \ar@{<-}[r]^{\ol{\varphi}} & B^{1|1} \ar@{<-}[r]^{\ol{\psi}} & \cdots.
}}
	\end{equation}
Then considering the diagram obtained by applying $M \otimes -$ to \eqref{eq:oddisodiagram}, it follows for $i \geq 2$ that sending a cochain $f: M \otimes P_i \rightarrow N$ to the map $f \circ (1_M \otimes (-1)^i \phi): M \otimes P_{i+1} \rightarrow M \otimes P_i \rightarrow N$ induces an odd isomorphism $\Ext_B^i(M,N) \simeq \Ext_B^{i+1}(M,N)$.

To see that this isomorphism can be realized via the right cup product action of $y$, first note that the cup product action of $y$ on an element $z \in \Ext_B^i(M,N)$ factors as
	\[
	z \cup y = (z \circ 1_M) \cup y = z \circ (1_M \cup y),
	\]
where $\circ$ denotes the Yoneda composition of extensions. The cup product $1_M \cup y$ is represented by the cochain $1_M \otimes \wt{y}: M \otimes B^{1|1} \rightarrow M \otimes k = M$. Next observe that the following diagram commutes:
	\begin{equation} \label{eq:precomposediagram}
	\vcenter{\xymatrix{
\coker(\ol{\varphi}) \ar@{<<-}[r] \ar@{->}[d]^{\wt{y}} & B^{1|1} \ar@{<-}[r]^{\ol{\varphi}} \ar@{->}[d]^{\pi_B \circ \phi} & B^{1|1} \ar@{<-}[r]^{\ol{\psi}} \ar@{->}[d]^{-\phi} & B^{1|1} \ar@{<-}[r]^{\ol{\varphi}} \ar@{->}[d]^{\phi} & B^{1|1} \ar@{<-}[r]^{\ol{\psi}} \ar@{->}[d]^{-\phi} & \cdots \\
k \ar@{<-}[r]^{\ve} & B \ar@{<-}[r]^{(u,v)} & B^{1|1} \ar@{<-}[r]^{\ol{\varphi}} & B^{1|1} \ar@{<-}[r]^{\ol{\psi}} & B^{1|1} \ar@{<-}[r]^{\ol{\varphi}} & \cdots.
}}
	\end{equation}
Here $\pi_B$ denotes the canonical projection map $B^{1|1} = B \oplus \Pi(B) \twoheadrightarrow B$, so that $\pi_B \circ \phi: B^{1|1} \rightarrow B$ is given by $(\pi_B \circ \phi) \binom{\alpha}{\beta^\pi} = (-1)^{\ol{\beta}} \beta$. Also by abuse of notation we have written $\wt{y} : \coker(\ol{\varphi}) \rightarrow k$ for the map canonically induced by $\wt{y}: B^{1|1} \rightarrow k$. Now considering the commutative diagram obtained by applying $M \otimes -$ to \eqref{eq:precomposediagram}, it follows from \cite[Exercise III.6.2]{Mac-Lane:1995} that if $z \in \Ext_B^i(M,N)$ is represented by the cocycle $f: M \otimes P_i \rightarrow N$, then $z \circ (1_M \cup y)$ is represented by the cocycle
	\[
	\begin{cases} f \circ (1_M \otimes (\pi_B \circ \phi)): M \otimes P_1 \rightarrow M \otimes P_0 \rightarrow N, & \text{if $i = 0$,} \\ f \circ (1_M \otimes (-1)^i \phi): M \otimes P_{i+1} \rightarrow M \otimes P_i \rightarrow N, & \text{if $i \geq 1$;} \end{cases}
	\]
Thus the odd isomorphism $\Ext_B^i(M,N) \simeq \Ext_B^{i+1}(M,N)$ of the preceding paragraph can be realized via the right cup product action of $y$.

For the last assertion of the proposition, if $1_M \cup y^2 \neq 0$, then $\Ext_B^2(M,M) \neq 0$, and hence $\Ext_B^i(M,M) \neq 0$ for all $i \geq 2$, which implies that $\pd_{\Pone}(M) = \infty$. On the other hand, if $1_M \cup y^2 = 0$, then for all $B$-supermodules $N$ and all $i \geq 2$, the isomorphism
	\[
	-\cup y^2 = -\circ (1_M \cup y^2): \Ext_B^i(M,N) \cong \Ext_B^{i+2}(M,N)
	\]
is the zero map. Then $\Ext_B^i(M,-) \equiv 0$ for $i \geq 2$, and hence $\pd_B(M) \leq 1$.
\end{proof}

\begin{remark} \label{remark:flatbasechange}
Let $P_\bullet$ be the $\Pone$-free resolution of $k$ described in \eqref{eq:Bfreeresolution}, and let $A \in \calg_k$ be a purely even commutative $k$-algebra. Set $A\Pone = \Pone \otimes_k A$. Then $F := P_\bullet \otimes_k A$ is a resolution of $A$ by free $A\Pone$-supermodules of finite rank. More generally, let $V$ be an $A\Pone$-super\-module, and suppose that $V$ is free of finite rank over $A$, say, $V = M \otimes_k A$ for some finite-dimensional $k$-superspace $M$. Then it follows as in the proof that $V \otimes_A F = M \otimes_k F$ is a resolution of $V$ by free $A\Pone$-super\-modules of finite rank. Now let $A'$ be a purely even commutative $A$-algebra that is flat over $A$. Set $V_{A'} = V \otimes_A A'$, and set $F_{A'} = F \otimes_A A'$. By the exactness of the functor $- \otimes_A A'$, one gets
	\begin{equation} \label{eq:flatbasechange}
\Ext_{A\Pone}^\bullet(V,V) \otimes_A A' = \Hbul(\Hom_{A\Pone}(V \otimes_A F,V) \otimes_A A')
	\end{equation}
Next, since each $V \otimes_A F_i$ is a free $A\Pone$-supermodule of finite rank, and since $V$ is a free over $A$ of finite rank, it follows that the natural map
	\[
	\Phi: \Hom_{A\Pone}(V \otimes_A F,V) \otimes_A A' \rightarrow \Hom_{A\Pone}(V \otimes_A F,V \otimes_A A'),
	\]
defined for $\phi \in \Hom_{A\Pone}(V \otimes_A F,V)$, $a' \in A'$, $v \in V$, and $f \in F$ by
	\[
	\Phi(\phi \otimes_A a')(v \otimes_A f) = \phi(v \otimes_A f) \otimes_A a',
	\]
is an isomorphism. Then
	\begin{align*}
	\Hom_{A\Pone}(V \otimes_A F,V) \otimes_A A' &\cong \Hom_{A\Pone}(V \otimes_A F,V_{A'}) \cong \Hom_{A'\Pone}(V_{A'} \otimes_{A'} F_{A'},V_{A'})
	\end{align*}
as complexes. From this and \eqref{eq:flatbasechange} we deduce that
	\[
	\Ext_{A\Pone}^\bullet(V,V) \otimes_A A' \cong \Ext_{A' \Pone}^\bullet(V_{A'},V_{A'}),
	\]
i.e., flat base change induces an isomorphism in cohomology; cf.\ \cite[I.4.13]{Jantzen:2003}.
\end{remark}

\subsection{Injective and projective dimensions for supergroups} \label{subsec:injprojdim}

Let $G$ be an affine $k$-supergroup scheme. Given a rational $G$-supermodule $V$, define the injective dimension of $V$ in the category of rational $G$-supermodules by
	\[
	\id_G(V) = \sup \set{ i \in \N : \Ext_G^i(-,V) \neq 0 }.
	\]
Similarly, define the projective dimension of $V$ in the category of rational $G$-supermodules by
	\[
	\pd_G(V) = \sup \set{ i \in \N : \Ext_G^i(V,-) \neq 0 }.
	\]
The next two lemmas were stated and proved in \cite[\S3.3]{Drupieski:2017b}.

\begin{lemma} \label{lemma:idequivalent}
Let $G$ be an affine $k$-supergroup scheme, and let $V$ be a rational $G$-supermodule. Then the following are equivalent:
	\begin{enumerate}
	\item $\id_G(V) \leq n < \infty$
	\item \label{item:extvanishing} $\Ext_G^i(L,V) = 0$ for all $i > n$ and all irreducible rational $G$-supermodules $L$.
	\item There exists a resolution of $V$ by rational injective $G$-supermodules,
		\[
		0 \rightarrow V \rightarrow Q_0 \rightarrow Q_1 \rightarrow Q_2 \rightarrow \cdots,
		\]
	such that $Q_i = 0$ for all $i > n$.
	\end{enumerate}
\end{lemma}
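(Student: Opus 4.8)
The plan is to prove the cycle of implications (3) $\Rightarrow$ (1) $\Rightarrow$ (2) $\Rightarrow$ (3). The first two are immediate: if $0 \rightarrow V \rightarrow Q_0 \rightarrow \cdots \rightarrow Q_n \rightarrow 0$ is a resolution by rational injective $G$-supermodules, then $\Ext_G^i(-,V)$ is computed by applying $\Hom_G(-,Q_\bullet)$, which vanishes in cohomological degrees $> n$, so $\id_G(V) \leq n < \infty$; and (1) $\Rightarrow$ (2) holds because irreducible rational $G$-supermodules are in particular rational $G$-supermodules, so $\Ext_G^i(L,V) = 0$ for all $i > n$ by the very definition of $\id_G(V)$. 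All of the content therefore lies in the implication (2) $\Rightarrow$ (3).

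For that implication I would first isolate the following claim: \emph{a rational $G$-supermodule $W$ with $\Ext_G^1(L,W) = 0$ for every irreducible rational $G$-supermodule $L$ is a rational injective $G$-supermodule.} To prove the claim I would use that the even subcategory of the category of rational $G$-supermodules is a locally finite Grothendieck category (the super analogue of the corresponding fact for rational representations of ordinary affine group schemes); in particular every nonzero object contains an irreducible submodule, and every object has an injective hull. Let $W \hookrightarrow I(W)$ be an injective hull. If $W$ were not injective, then $W \subsetneq I(W)$, so $I(W)/W$ is a nonzero rational $G$-supermodule and hence contains an irreducible submodule $L$; its preimage $W'$ in $I(W)$ fits into a short exact sequence $0 \rightarrow W \rightarrow W' \rightarrow L \rightarrow 0$, and this sequence cannot split, since a splitting would exhibit a nonzero submodule of $I(W)$ meeting $W$ in $0$, contradicting the essentiality of $W \hookrightarrow I(W)$. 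Thus $\Ext_G^1(L,W) \neq 0$, contrary to hypothesis, and the claim follows.

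Granting the claim, to deduce (3) from (2) I would choose any resolution $0 \rightarrow V \rightarrow Q_0 \rightarrow Q_1 \rightarrow \cdots$ by rational injective $G$-supermodules (these exist because the even subcategory has enough injectives) and truncate it. When $n = 0$, condition (2) gives $\Ext_G^1(L,V) = 0$ for all irreducible $L$, so $V$ itself is a rational injective $G$-supermodule by the claim and one may take $Q_0 = V$. When $n \geq 1$, let $N$ be the $n$-th cosyzygy, i.e.\ the cokernel of $Q_{n-2} \rightarrow Q_{n-1}$, with the convention $Q_{-1} = V$ so that $N = Q_0/V$ when $n = 1$. Splitting the resolution into short exact sequences and dimension-shifting through the injective modules $Q_0,\ldots,Q_{n-1}$ yields $\Ext_G^1(L,N) \cong \Ext_G^{n+1}(L,V)$ for every irreducible $L$, and the right-hand side vanishes by (2) since $n+1 > n$. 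Hence $N$ is a rational injective $G$-supermodule by the claim, and $0 \rightarrow V \rightarrow Q_0 \rightarrow \cdots \rightarrow Q_{n-1} \rightarrow N \rightarrow 0$ is a resolution by rational injective $G$-supermodules with $Q_i = 0$ for all $i > n$.

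The main obstacle is the claim, and more precisely the structural input that the even subcategory of rational $G$-supermodules is a locally finite Grothendieck category: this is exactly what supplies injective hulls and the existence of irreducible submodules of nonzero objects. Once that input is in hand, the remaining steps---the dimension-shifting and the truncation---are routine, as are the bookkeeping checks that everything is compatible with the parity-change functor $\bsPi$ and the $\Z_2$-grading on $\Ext$ (so that testing against all irreducible $L$ in the $\Z_2$-graded sense is equivalent to working in the even subcategory).
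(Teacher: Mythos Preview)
Your argument is correct and is essentially the standard proof one would expect. Note, however, that the paper does not actually prove this lemma in the present text: the sentence immediately preceding it reads ``The next two lemmas were stated and proved in \cite[\S3.3]{Drupieski:2017b},'' so the paper simply cites the authors' earlier work. Your write-up therefore supplies exactly the kind of proof that reference would contain: the routine implications (3) $\Rightarrow$ (1) $\Rightarrow$ (2), the key claim that vanishing of $\Ext_G^1(L,W)$ against all irreducibles forces $W$ to be injective (proved via injective hulls and essentiality in the locally finite Grothendieck category of rational $G$-supermodules), and the dimension-shift to truncate an injective resolution at the $n$-th cosyzygy. The structural inputs you flag---enough injectives, injective hulls, and the existence of irreducible subobjects in nonzero objects---are indeed available in this category, and your remark about the $\Z_2$-graded bookkeeping (irreducibles are closed under $\bsPi$, so testing even $\Ext$ against all irreducibles is the same as testing the full $\Ext$) is the right way to reconcile the graded statement with the abelian-category argument.
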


\begin{lemma} \label{lemma:idEndV}
Let $G$ be an affine $k$-supergroup scheme, and let $V$ be a finite-dimensional rational $G$-supermodule. Then $\id_G(V) = \id_G(V \otimes V^\#) = \id_G(\Hom_k(V,V))$.
\end{lemma}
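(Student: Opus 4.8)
The plan is to first reduce the statement to comparing $\id_G(V)$ with $\id_G(V \otimes V^\#)$, and then to prove that comparison by a ``sandwich'' argument that exploits the fact that $V$, being finite-dimensional, is a direct summand of $V \otimes V^\# \otimes V$.

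For the reduction, recall that for a finite-dimensional superspace $V$ there is a canonical even isomorphism of $k$-superspaces $\Hom_k(V,V) \cong V \otimes V^\#$, and one checks directly (minding the super sign conventions) that this is an isomorphism of rational $G$-supermodules when both sides carry the natural diagonal $G$-action. Hence $\id_G(\Hom_k(V,V)) = \id_G(V \otimes V^\#)$, and it remains to show $\id_G(V) = \id_G(V \otimes V^\#)$.

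The key homological input is that tensoring a rational injective $G$-supermodule by an arbitrary rational $G$-supermodule again yields a rational injective: every rational injective $G$-supermodule is a direct summand of a direct sum of copies of the coordinate Hopf superalgebra $k[G]$ (viewed as a rational $G$-supermodule via the regular coaction), and the super analogue of the tensor identity gives $k[G] \otimes M \cong \bigoplus^{\dim_k M} k[G]$ as rational $G$-supermodules; hence $Q \otimes M$ is injective whenever $Q$ is. Granting this, and using that $- \otimes W$ is exact for every $k$-superspace $W$, the two inequalities go as follows. If $\id_G(V) = n < \infty$, choose by Lemma \ref{lemma:idequivalent} a rational injective resolution $0 \to V \to Q^0 \to \cdots \to Q^n \to 0$; tensoring with $V^\#$ produces a rational injective resolution of $V \otimes V^\#$ of length $\leq n$, so $\id_G(V \otimes V^\#) \leq n$ (and if $\id_G(V) = \infty$ this inequality is vacuous). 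For the reverse inequality, since $V$ is finite-dimensional it is dualizable in the symmetric monoidal category of rational $G$-supermodules, so the composite of the (even, $G$-equivariant) coevaluation and evaluation maps
	\[
	V \longrightarrow (V \otimes V^\#) \otimes V \longrightarrow V
	\]
is the identity; hence $V$ is a direct summand of $(V \otimes V^\#) \otimes V$. Tensoring a finite rational injective resolution of $V \otimes V^\#$ with $V$ (or, if $\id_G(V \otimes V^\#) = \infty$, arguing vacuously) shows $\id_G((V \otimes V^\#) \otimes V) \leq \id_G(V \otimes V^\#)$, and directly from the definition of $\id_G$ in terms of the vanishing of $\Ext_G^i(-,-)$, injective dimension cannot increase on passing to a direct summand, so $\id_G(V) \leq \id_G((V \otimes V^\#) \otimes V) \leq \id_G(V \otimes V^\#)$. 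Combining the two inequalities yields the lemma.

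The part requiring genuine care is the super sign bookkeeping: identifying $\Hom_k(V,V)$ with $V \otimes V^\#$ as rational $G$-supermodules, establishing the tensor identity $k[G] \otimes M \cong \bigoplus^{\dim_k M} k[G]$, and verifying that the coevaluation/evaluation zigzag for $V$ is built from even $G$-supermodule homomorphisms and composes to $1_V$. Once these super-linear-algebra facts are in place, the homological portion of the argument is formal.
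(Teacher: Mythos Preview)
Your argument is correct. The paper itself does not prove this lemma inline; it simply cites \cite[\S3.3]{Drupieski:2017b}, so there is no in-paper proof to compare against. Your approach---tensoring an injective resolution of $V$ by $V^\#$ for one inequality, and using that $V$ is a retract of $V \otimes V^\# \otimes V$ for the other---is the standard one and is almost certainly what appears in the cited reference.

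One small point of precision: in the super setting the tensor identity gives $k[G] \otimes M \cong k[G] \otimes M_{\mathrm{triv}}$, which decomposes as a direct sum of copies of $k[G]$ \emph{and} $\bsPi(k[G])$ according to the $\Z_2$-grading of $M$, not just copies of $k[G]$. You flag this yourself under ``super sign bookkeeping,'' and since $\bsPi(k[G])$ is injective as well, the conclusion that $Q \otimes M$ is injective is unaffected.
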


In general, an affine supergroup scheme need not admit any nonzero projective rational supermodules (cf.\ \cite[I.3.18]{Jantzen:2003}), so we have the following weaker projective analogue of Lemma \ref{lemma:idequivalent}.

\begin{lemma} \label{lemma:pdequivalent}
Let $G$ be an affine $k$-supergroup scheme, and let $V$ be a finite-dimensional rational $G$-supermodule. Then the following are equivalent:
	\begin{enumerate}
	\item $\pd_G(V) \leq n < \infty$
	\item $\Ext_G^i(V,L) = 0$ for all $i > n$ and all irreducible rational $G$-supermodules $L$.
	\end{enumerate}
\end{lemma}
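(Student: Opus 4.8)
The implication (1) $\Rightarrow$ (2) is immediate, since every irreducible rational $G$-supermodule is in particular a rational $G$-supermodule; I would dispose of it in a sentence. For the converse, the plan is to bootstrap the $\Ext$-vanishing from irreducible test modules to arbitrary rational test modules in two stages: first a dévissage reducing the claim to finite-dimensional coefficients, and then a filtered-colimit argument reducing from finite-dimensional to irreducible coefficients. This parallels the proof of Lemma~\ref{lemma:idequivalent}, but on the projective side the absence of projective resolutions forces the colimit argument to do the work that injective resolutions do there, which is exactly why finite-dimensionality of $V$ must be assumed.

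In more detail: assuming (2), I would first take $M$ a \emph{finite-dimensional} rational $G$-supermodule. Regarded as a finite-dimensional $k[G]$-comodule, $M$ admits a finite filtration whose sections are irreducible rational $G$-supermodules; feeding the associated short exact sequences into the long exact sequence for $\Ext_G^{\bullet}(V,-)$ and inducting on the length of the filtration, hypothesis (2) gives $\Ext_G^i(V,M)=0$ for all $i>n$. Next I would take $M$ an arbitrary rational $G$-supermodule. By local finiteness of rational representations, $M$ is the filtered colimit of its finite-dimensional sub-supermodules, $M=\varinjlim_\alpha M_\alpha$. Since $V$ is finite-dimensional there is a natural isomorphism $\Ext_G^i(V,-)\cong \opH^i(G,\Hom_k(V,-))=\opH^i(G,V^\#\otimes -)$, and $\opH^i(G,-)$ is the cohomology of the Hochschild complex $N\mapsto N\otimes k[G]^{\otimes \bullet}$; each term of that complex is a functor of the form $N\mapsto N\otimes E$ for a fixed superspace $E$, so it commutes with filtered colimits, and cohomology commutes with filtered colimits, whence $\Ext_G^i(V,-)$ commutes with filtered colimits. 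Therefore $\Ext_G^i(V,M)=\varinjlim_\alpha \Ext_G^i(V,M_\alpha)=0$ for all $i>n$, i.e.\ $\pd_G(V)\le n<\infty$.

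The step I expect to be the main obstacle is the commutation of $\Ext_G^i(V,-)$ with filtered colimits: this is where the finite-dimensionality of $V$ is genuinely used, since it is what lets us replace a (possibly nonexistent) projective resolution of $V$ by the Hochschild/relative-bar complex, which is ``free in the second variable.'' Everything else is standard and carries over verbatim from the classical ungraded theory: the identification $\Ext_G^i(V,M)\cong \opH^i(G,V^\#\otimes M)$ for finite-dimensional $V$, the existence of composition series for finite-dimensional comodules, and the local finiteness of rational supermodules (cf.\ \cite{Jantzen:2003}), together with the super-theoretic refinements developed in \cite{Drupieski:2017a,Drupieski:2017b} and recalled in Section~\ref{subsec:projectiveresolutions}. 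The remaining content of the argument is then purely formal dévissage.
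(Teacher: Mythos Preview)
Your proposal is correct and follows essentially the same argument as the paper: reduce from irreducible to finite-dimensional coefficients by d\'evissage along a composition series, then from finite-dimensional to arbitrary coefficients by writing $M=\varinjlim M_\alpha$ and using the identification $\Ext_G^i(V,-)\cong\opH^i(G,V^\#\otimes -)$ together with the fact that rational cohomology commutes with filtered colimits (the paper cites \cite[I.4.17]{Jantzen:2003} here, where you invoke the Hochschild complex directly). The only cosmetic difference is that the paper phrases the colimit step as the chain of isomorphisms $\Ext_G^i(V,W)\cong\Ext_G^i(k,V^\#\otimes W)=\varinjlim\Ext_G^i(k,V^\#\otimes W_j)\cong\varinjlim\Ext_G^i(V,W_j)$, but this is exactly your argument unpacked.
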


\begin{proof}
The first statement clearly implies the second. Conversely, suppose (2) holds. By the local finiteness of rational representations, every irreducible rational $G$-supermodule is finite-dimensional. Then arguing by induction on the composition length of $W$, and considering the long exact sequence in cohomology, it follows for all finite-dimensional rational $G$-supermodules $W$ and all $i > n$ that $\Ext_G^i(V,W) = 0$. Now let $W$ be an arbitrary rational $G$-supermodule. Then $W$ is the direct limit of its finite-dimensional $G$-submodules, say, $W = \varinjlim W_j$. Then $V^\# \otimes W = \varinjlim V^\# \otimes W_j$, so arguing as in \cite[I.4.17]{Jantzen:2003}, one gets for all $i > n$ that
	\[
	\Ext_G^i(V,W) \cong \Ext_G^i(k,V^\# \otimes W) = \varinjlim \Ext_G^i(k,V^\# \otimes W_j) \cong \varinjlim \Ext_G^i(V,W_j) = 0,
	\]
and hence $\pd_G(V) \leq n$. (The finite-dimensionality assumption on $V$ is used for the first and last isomorphisms in the preceding equation; cf.\ \cite[I.4.4]{Jantzen:2003} or \cite[Lemma 2.3.4]{Drupieski:2016a}.)
\end{proof}

\begin{lemma} \label{lemma:idpdequal}
Let $G$ be an affine $k$-supergroup scheme, and let $V$ be a finite-dimensional rational $G$-supermodule. Then
	\[
	\id_G(V) = \id_G(V^\#) = \pd_G(V) = \pd_G(V^\#).
	\]
\end{lemma}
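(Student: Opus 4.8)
The plan is to reduce the statement to a single duality isomorphism for $\Ext$ and then read off the four equalities from the characterizations of $\id_G$ and $\pd_G$ in Lemmas \ref{lemma:idequivalent} and \ref{lemma:pdequivalent}. The key input I will use is that, for any finite-dimensional rational $G$-supermodules $M$ and $N$, there is a natural isomorphism
\[
\Ext_G^i(M,N) \cong \opH^i(G, M^\# \otimes N) \cong \Ext_G^i(N^\#, M^\#) \qquad (i \geq 0).
\]
The first isomorphism is the tensor identity already invoked in the proof of Lemma \ref{lemma:pdequivalent} (it requires $M$ finite-dimensional, which is harmless here); the second follows by applying that identity to the pair $(N^\#, M^\#)$, using $N^{\#\#} \cong N$, together with the supercommutativity isomorphism $M^\# \otimes N \cong N \otimes M^\#$ of rational $G$-supermodules.

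First I would record the sup-formulas. For a finite-dimensional rational $G$-supermodule $W$, Lemma \ref{lemma:idequivalent}(2) gives $\id_G(W) = \sup\set{i \in \N : \Ext_G^i(L,W) \neq 0 \text{ for some irreducible } L}$ and Lemma \ref{lemma:pdequivalent} gives $\pd_G(W) = \sup\set{i \in \N : \Ext_G^i(W,L) \neq 0 \text{ for some irreducible } L}$; both hold with the usual conventions even when the dimension is infinite, since if the right-hand side were a finite number $n$ the relevant lemma would force the homological dimension to be $\leq n$, while if the homological dimension equals a finite number $m$ the lemma exhibits a nonzero $\Ext^m$ against some irreducible. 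Next, since $L \mapsto L^\#$ is an exact anti-autoequivalence of the category of finite-dimensional rational $G$-supermodules, it permutes the isomorphism classes of irreducibles (a submodule of $L^\#$ is dual to a quotient of $L$, and $L^{\#\#} \cong L$). Feeding the key input into the sup-formula for $\pd_G$ then gives, for our $V$,
\[
\pd_G(V) = \sup\set{i : \Ext_G^i(V,L) \neq 0} = \sup\set{i : \Ext_G^i(L^\#, V^\#) \neq 0} = \id_G(V^\#),
\]
and the same computation applied to $V^\#$ (using $V^{\#\#} \cong V$) yields $\pd_G(V^\#) = \id_G(V)$.

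It remains to see that $\id_G(V) = \id_G(V^\#)$. For this I would invoke Lemma \ref{lemma:idEndV}, which gives $\id_G(V) = \id_G(\Hom_k(V,V))$ and $\id_G(V^\#) = \id_G(\Hom_k(V^\#, V^\#))$, and then note that since $V$ is finite-dimensional there is an isomorphism of rational $G$-supermodules $\Hom_k(V^\#, V^\#) \cong V^{\#\#} \otimes V^\# \cong V \otimes V^\# \cong V^\# \otimes V \cong \Hom_k(V,V)$. Combining this with the previous paragraph gives $\pd_G(V) = \id_G(V^\#) = \id_G(V)$ and $\pd_G(V^\#) = \id_G(V)$, which is the claim. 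The only step that requires care is the key input: one must check that the tensor identity and the supercommutativity isomorphism respect the $\Z_2$-grading and the even derived-functor formalism of Section \ref{subsec:injprojdim}. Since the tensor identity in exactly this form already appears in the proof of Lemma \ref{lemma:pdequivalent} and the category of rational $G$-supermodules is symmetric monoidal, this presents no real obstacle, and the rest is bookkeeping with the sup-formulas and the fact that duality permutes irreducibles.
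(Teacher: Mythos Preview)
Your proposal is correct and follows essentially the same approach as the paper: both arguments use the duality isomorphism $\Ext_G^i(L,V)\cong\Ext_G^i(V^\#,L^\#)$ (via the tensor identity) together with the fact that $L\mapsto L^\#$ permutes irreducibles to deduce $\id_G(V)=\pd_G(V^\#)$ from Lemmas~\ref{lemma:idequivalent} and~\ref{lemma:pdequivalent}, and both invoke Lemma~\ref{lemma:idEndV} with $\Hom_k(V,V)\cong\Hom_k(V^\#,V^\#)$ to obtain $\id_G(V)=\id_G(V^\#)$. The only difference is the order in which these two steps are carried out.
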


\begin{proof}
First, $\id_G(V) = \id_G(V \otimes V^\#)$ by Lemma \ref{lemma:idEndV}. But $V \otimes V^\# \cong V^\# \otimes V \cong V^\# \otimes (V^\#)^\#$ as $G$-supermodules, so applying Lemma \ref{lemma:idEndV} again we get $\id_G(V) = \id_G(V^\# \otimes (V^\#)^\#) = \id_G(V^\#)$. Next, as observed in the proof of Lemma \ref{lemma:pdequivalent}, the irreducible rational $G$-supermodules are all finite-dimensional. This implies for $L$ irreducible that $\Ext_G^i(L,V) \cong \Ext_G^i(V^\#,L^\#)$, and that up to isomorphism the set of irreducible rational $G$-super\-modules is closed under the operation $L \mapsto L^\#$ of taking linear duals. Then the following statements are equivalent:
	\begin{itemize}
	\item $\Ext_G^i(L,V) = 0$ for all $i > n$ and all irreducible rational $G$-supermodules $L$.
	\item $\Ext_G^i(V^\#,L) = 0$ for all $i > n$ and all irreducible rational $G$-supermodules $L$.
	\end{itemize}
By Lemmas \ref{lemma:idequivalent} and \ref{lemma:pdequivalent}, this implies that $\id_G(V) = \pd_G(V^\#)$. Then replacing $V$ with $V^\#$, and using the $G$-supermodule isomorphism $V \cong (V^\#)^\#$, one gets $\id_G(V^\#) = \pd_G(V)$.
\end{proof}

If $G$ is a finite $k$-supergroup scheme, then the previous lemma is a direct consequence of fact that $kG$ is a Hopf superalgebra, and hence self-injective; cf.\ \cite[Lemma 2.3.2]{Drupieski:2016a}.

\begin{lemma} \label{lemma:idGunipotent}
Let $G$ be a unipotent affine $k$-supergroup scheme, and let $V$ be a rational $G$-super\-module. Then
	\[
	\id_G(V) = \sup \{ i \in \N : \Ext_G^i(k,V) \neq 0 \}.
	\]
If $V$ is finite-dimensional, then
	\[
	\id_G(V) = \sup \{ i \in \N : \Ext_G^i(V,V) \neq 0 \} \quad
	\text{and} \quad \pd_G(V) = \sup \{ i \in \N : \Ext_G^i(V,k) \neq 0 \}.
	\]
\end{lemma}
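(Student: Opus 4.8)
The plan is to bootstrap all three equalities from Lemma \ref{lemma:idequivalent} together with the one genuinely super-theoretic input: since $G$ is unipotent, its only irreducible rational supermodules are the trivial module $k$ and its parity shift $\bsPi(k)$ (these are non-isomorphic in the abelian category $(\fsmod_{kG})_{\ev}$, but are interchanged by the odd isomorphism ${}^\pi(-)\colon k \simeq \bsPi(k)$). First I would record the consequence that, for every rational $G$-supermodule $V$ and every $i$, precomposition with the odd $G$-supermodule isomorphism ${}^\pi(-)$ induces an odd isomorphism $\Ext_G^i(\bsPi(k),V) \simeq \Ext_G^i(k,V)$; in particular these two $\Ext$-groups vanish simultaneously. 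Feeding this into Lemma \ref{lemma:idequivalent}(\ref{item:extvanishing})---which says $\id_G(V) \leq n < \infty$ exactly when $\Ext_G^i(L,V) = 0$ for all $i > n$ and all irreducible $L$---and treating the case $\id_G(V) = \infty$ separately in the obvious way, yields $\id_G(V) = \sup\{i \in \N : \Ext_G^i(k,V) \neq 0\}$ for arbitrary rational $V$, which is the first displayed equality.

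For the remaining two equalities I would assume $V$ finite-dimensional and reduce to the first one. For the equality $\id_G(V) = \sup\{i : \Ext_G^i(V,V) \neq 0\}$, apply Lemma \ref{lemma:idEndV} to replace $V$ by $V \otimes V^\#$, use the first equality to get $\id_G(V) = \id_G(V \otimes V^\#) = \sup\{i : \Ext_G^i(k, V \otimes V^\#) \neq 0\}$, and then invoke the tensor-duality isomorphism $\Ext_G^i(k, V \otimes V^\#) \cong \Ext_G^i(V,V)$, valid because $V$ is finite-dimensional (cf.\ \cite[I.4.4]{Jantzen:2003}, \cite[Lemma 2.3.4]{Drupieski:2016a}). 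For the equality $\pd_G(V) = \sup\{i : \Ext_G^i(V,k) \neq 0\}$, use Lemma \ref{lemma:idpdequal} to rewrite $\pd_G(V) = \id_G(V^\#)$, apply the first equality to the finite-dimensional module $V^\#$, and then use the isomorphism $\Ext_G^i(k, V^\#) \cong \Ext_G^i(V,k)$ coming from the same adjunction.

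\textbf{Main obstacle.} There is no serious difficulty here beyond super-bookkeeping: the argument is essentially a short assembly of Lemmas \ref{lemma:idequivalent}, \ref{lemma:idEndV}, and \ref{lemma:idpdequal}. The two points that need care are (i) confirming that a unipotent affine $k$-supergroup scheme has exactly the two irreducible rational supermodules $k$ and $\bsPi(k)$---equivalently, that the simple objects of $(\fsmod_{kG})_{\ev}$ are $k$ and $\bsPi(k)$---which is standard in this setting and available from the authors' earlier work; and (ii) checking that the tensor-Hom/duality adjunctions used in the last two steps go through with the Koszul signs in place, which again is routine in the super context. Once these are in hand the proof is immediate.
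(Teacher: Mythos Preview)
Your proposal is correct and follows essentially the same approach as the paper. The only minor difference is in the final equality: the paper argues $\pd_G(V) = \sup\{i : \Ext_G^i(V,k) \neq 0\}$ directly from Lemma~\ref{lemma:pdequivalent} (mirroring the $\id$ argument), whereas you route through Lemma~\ref{lemma:idpdequal} and duality; both work equally well.
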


\begin{proof}
The first characterization of $\id_G(V)$ follows from Lemma \ref{lemma:idequivalent} and the fact that, up to isomorphism and parity change, the trivial module $k$ is the unique irreducible rational $G$-super\-module. For the second characterization of $\id_G(V)$, observe now by Lemma \ref{lemma:idEndV} that
	\[
	\id_G(V) = \id_G(V \otimes V^\#) = \sup \{ n \in \N : \Ext_G^n(k,V \otimes V^\#) \neq 0 \}.
	\]
Since $\Ext_G^\bullet(k,V \otimes V^\#) \cong \Ext_G^\bullet(V,V)$, the result for $\id_G(V)$ follows. The argument for $\pd_G(V)$ is entirely similar to the argument for $\id_G(V)$, using instead Lemma \ref{lemma:pdequivalent}.
\end{proof}

\subsection{Homological dimensions for \texorpdfstring{$\Mone$, $\Pone$, and $k\Mone$}{M1, P1, and kM1}} \label{subsec:injprojdimM1}

In this section we investigate how the injective and projective dimensions of a finite-dimensional rational $\Mone$-supermodule $V$ are related to its injective and projective dimensions in the categories of $\Pone$- and $k\Mone$-supermodules.

For the rest of this section write $k\Mone = k[[u,v]]/\subgrp{u^p+v^2}$. Recall that $k[\Mone] = \bigcup_{s \geq 1} k[\Mones]$, where $k[\Mones]$ is the finite-dimensional Hopf subsuperalgebra of $k[\Mone]$ generated by $\tau$ and $\sigma_i$ for $0 \leq i < p^s$. Then if $V$ is a finite-dimensional rational $\Mone$-supermodule, it follows that the comodule structure map $V \rightarrow V \otimes k[\Mone]$ has image in $V \otimes k[\Mones]$ for some $s \geq 1$. This implies that the action of $\Mone$ on $V$ factors through the quotient $\Mone \twoheadrightarrow \Mones$, and the induced action of $k\Mone$ on $V$ factors through the quotient $k\Mone \twoheadrightarrow k\Mone/\subgrp{u^{p^s}} \cong k\Mones$. Thus if $M$ and $N$ are finite-dimensional rational $\Mone$-supermodules, we can consider them both as $\Mones$-super\-modules (equivalently, as $k\Mones$-super\-modules) for some integer $s \geq 1$, with the $\Mone$- and $k\Mone$-supermodule structures coming from the quotient maps $\Mone \twoheadrightarrow \Mones$ and $k\Mone \twoheadrightarrow k\Mones$. Conversely, any $\Mones$-super\-module can be lifted to a rational $\Mone$-supermodule via the quotient $\Mone \twoheadrightarrow \Mones$.

\begin{lemma} \label{lemma:Extrestrictioniso}
Let $s \geq 1$. Let $M$ be a $k\Mones$-supermodule, viewed as a $k\Mone$-super\-module via the canonical quotient map $k\Mone \twoheadrightarrow k\Mones$, and let $N$ be a $k\Mone$-supermodule. Then restriction to $\Pone$ defines an isomorphism
	\[
	\Ext_{k\Mone}^\bullet(M,N) \cong \Ext_{\Pone}^\bullet(M,N).
	\]
In particular, if $M$ and $N$ are finite-dimensional rational $\Mone$-supermodules, then
	\[
	\Ext_{k\Mone}^\bullet(M,N) \cong \Ext_{\Pone}^\bullet(M,N) \cong \Ext_{\Mone}^\bullet(M,N).
	\]
\end{lemma}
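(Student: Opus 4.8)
Here the goal is the displayed isomorphisms relating $\Ext^\bullet_{k\Mone}$, $\Ext^\bullet_{\Pone}$ and (in the finite-dimensional case) $\Ext^\bullet_{\Mone}$. The two parts call for different tools: the first isomorphism is a flat base-change argument, while the extra comparison with rational cohomology is gotten by exhibiting a resolution that simultaneously computes cohomology over $k\Mone$ and over $\Mone$.

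For the first isomorphism I would argue as follows. Writing $\Pone = k[u,v]/\langle u^p+v^2\rangle$ and $k\Mone = k[[u,v]]/\langle u^p+v^2\rangle$, and using that completion commutes with quotients, $k\Mone$ is the $\langle u,v\rangle$-adic completion of $\Pone$; in particular $k\Mone$ is flat over the Noetherian ring $\Pone$. Since the action of $k\Mone$ on $M$ factors through $k\Mones = k\Mone/\langle u^{p^s}\rangle = \Pone/\langle u^{p^s}\rangle$ (the latter quotient being Artinian, hence equal to its own completion), a short computation gives $M \otimes_{\Pone} k\Mone \cong M$ as $k\Mone$-supermodules. Now choose a resolution $R_\bullet \twoheadrightarrow M$ by free $\Pone$-supermodules. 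By flatness $R_\bullet \otimes_{\Pone} k\Mone \twoheadrightarrow M \otimes_{\Pone} k\Mone = M$ is a resolution of $M$ by free $k\Mone$-supermodules, and the hom--tensor adjunction for $\Pone \to k\Mone$ gives an isomorphism of complexes $\Hom_{k\Mone}(R_\bullet \otimes_{\Pone} k\Mone, N) \cong \Hom_{\Pone}(R_\bullet, N)$. Passing to cohomology yields $\Ext^\bullet_{k\Mone}(M,N) \cong \Ext^\bullet_{\Pone}(M,N)$; unwinding the adjunction, with $R_\bullet = R_\bullet \otimes_{\Pone}\Pone \hookrightarrow R_\bullet \otimes_{\Pone} k\Mone$ serving as the comparison map of $M$-resolutions, identifies this with restriction to $\Pone$. (The super sign-bookkeeping is routine; alternatively one may invoke \eqref{eq:Extiso} first and argue with ordinary modules over the underlying ungraded rings. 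Note that no finiteness hypothesis on $M$ or $N$ is needed.)

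Now assume $M$ and $N$ are finite-dimensional rational $\Mone$-supermodules; by the previous paragraph it suffices to prove $\Ext^\bullet_{\Mone}(M,N) \cong \Ext^\bullet_{k\Mone}(M,N)$. The crucial observation is that $k[\Mone]$ is an injective $k\Mone$-supermodule: the natural embedding $k[\Mone] \hookrightarrow \Hom_k(k\Mone,k)$ identifies $k[\Mone]$ with the $\langle u,v\rangle$-power-torsion submodule of $\Hom_k(k\Mone,k)$, which is injective over $k\Mone$ (it is the image of the injective $k$-module $k$ under the right adjoint $\Hom_k(k\Mone,-)$ of restriction of scalars), and the torsion submodule of an injective module over the Noetherian ring $k\Mone$ is again injective. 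By the tensor identity $W \otimes k[\Mone] \cong \bigoplus k[\Mone]$ as $k\Mone$-supermodules for any rational $\Mone$-supermodule $W$, which is again $k\Mone$-injective; hence the Hochschild cobar complex $N \to N \otimes k[\Mone] \to N \otimes k[\Mone]^{\otimes 2} \to \cdots$ is at the same time a rational-injective resolution of $N$ and a $k\Mone$-injective resolution of $N$. Because $M$ is finite-dimensional, any $k\Mone$-linear map out of $M$ has finite-dimensional image annihilated by some power of $u$, hence takes values in a finite-dimensional $\Mones$-subcomodule; it follows that $\Hom_{\Mone}(M, Q) = \Hom_{k\Mone}(M, Q)$ for every rational $\Mone$-supermodule $Q$. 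Applying this termwise to the cobar resolution, the complex computing $\Ext^\bullet_{\Mone}(M,N)$ and the complex computing $\Ext^\bullet_{k\Mone}(M,N)$ literally coincide, and taking cohomology finishes the proof.

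I expect the main obstacle to be the injectivity of $k[\Mone]$ as a $k\Mone$-supermodule --- equivalently, the assertion that the cobar resolution of $N$ is an honest injective resolution over the \emph{ring} $k\Mone$, not merely in the category of rational $\Mone$-supermodules. This is where the concrete structure of $k\Mone = k[[u,v]]/\langle u^p+v^2\rangle$ as a one-dimensional Noetherian (Gorenstein) local ring is used, and where care is needed to reconcile the ``rational'' and ``$k\Mone$-module'' notions of injectivity and of $\Hom$. The remaining ingredients --- flat base change, the hom--tensor adjunction, the tensor identity, and the reduction of super-$\Ext$ to ordinary $\Ext$ --- are standard.
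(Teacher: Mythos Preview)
Your argument for the first isomorphism is essentially the paper's proof: $k\Mone$ is the completion of the Noetherian ring $\Pone$, hence flat; base-change a projective $\Pone$-resolution of $M$ and use hom--tensor adjunction. The only cosmetic difference is that the paper completes at $\langle u\rangle$ rather than $\langle u,v\rangle$, which is the same since $v^2=-u^p$.

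For the second isomorphism the paper does not argue at all: it simply invokes \cite[Proposition~3.3.6]{Drupieski:2017b}. Your route via the cobar resolution is genuinely different and self-contained. The pieces are correct: $k[\Mone]$ is the $\langle u,v\rangle$-torsion in $\Hom_k(k\Mone,k)$ and hence injective over the Noetherian local ring $k\Mone$; the tensor identity then makes each cobar term a (possibly infinite) direct sum of copies of $k[\Mone]$, which is still injective because $k\Mone$ is Noetherian. The equality $\Hom_{\Mone}(M,Q)=\Hom_{k\Mone}(M,Q)$ also holds, though the justification you give is a little compressed: the point is that if $u^{p^s}$ kills $M$, then any $k\Mone$-map $M\to Q$ lands in $\{q\in Q:u^{p^s}.q=0\}$, and one checks (by passing to a finite-dimensional subcomodule and using that $k\M_{1;s'}$-modules coincide with $\M_{1;s'}$-comodules) that this set is exactly the largest $\Mones$-subcomodule of $Q$, after which module maps and comodule maps agree by finite duality. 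What your approach buys is independence from the external reference; what the paper's citation buys is brevity and avoidance of the injectivity verification you correctly flag as the delicate step.
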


\begin{proof}
The algebra $k\Mone = k[[u,v]]/\subgrp{u^p+v^2}$ is the completion of $\Pone = k[u,v]/\subgrp{u^p+v^2}$ at the ideal $\subgrp{u}$ generated by $u$. Since $\Pone$ is a noetherian ring, this implies that $k\Mone$ is a flat $\Pone$-algebra, and hence that the functor $k\Mone \otimes_{\Pone} -$ is exact \cite[Proposition 10.14]{Atiyah:1969}. Let $P_\bullet \rightarrow M$ be a resolution of $M$ by projective $\Pone$-supermodules. By the exactness of the functor $k\Mone \otimes_{\Pone} -$, the complex $k\Mone \otimes_{\Pone} P_\bullet$ is a resolution of $k\Mone \otimes_{\Pone} M$ by projective $k\Mone$-supermodules. Now since $k\Mone$ acts on $M$ via the quotient map $k\Mone \rightarrow k\Mones$, it follows that $k\Mone \otimes_{\Pone} M = k\Mones \otimes_{k\Mones} M = M$ as a $k\Mone$-supermodule. Then $k\Mone \otimes_{\Pone} P_\bullet$ is a resolution of $M$ by projective $k\Mone$-supermodules, and the map $\Phi: P_\bullet \rightarrow k\Mone \otimes_{\Pone} P_\bullet$ defined by $\Phi(z) = 1 \otimes_{\Pone} z$ is a $\Pone$-supermodule chain map that lifts the identity on $M$. Since $\Phi$ induces the evident isomorphism of cochain complexes
	\[
	\Hom_{k\Mone}(k\Mone \otimes_{\Pone} P_\bullet,N) \cong \Hom_{\Pone}(P_\bullet,N),
	\]
it follows that restriction to $\Pone$ defines an isomorphism $\Ext_{k\Mone}^\bullet(M,N) \cong \Ext_{\Pone}^\bullet(M,N)$. This proves the first assertion of the lemma, and then the second is by \cite[Proposition 3.3.6]{Drupieski:2017b}.
\end{proof}

\begin{proposition} \label{prop:injprojdimequalities}
Let $V$ be a finite-dimensional rational $\Mone$-supermodule. Then
	\[
	\id_{k\Mone}(V) = \id_{\Pone}(V) = \id_{\Mone}(V) = \pd_{\Mone}(V) = \pd_{\Pone}(V) = \pd_{k\Mone}(V).
	\]
\end{proposition}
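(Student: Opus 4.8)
The plan is to rewrite each of the six homological dimensions as the supremum of the degrees in which a suitable $\Ext$-group is nonzero, and then to identify all of those $\Ext$-groups by means of the restriction isomorphisms of Lemma \ref{lemma:Extrestrictioniso}. First, since $\Mone$ is an affine $k$-supergroup scheme and $V$ is finite-dimensional, Lemma \ref{lemma:idpdequal} gives $\id_{\Mone}(V) = \pd_{\Mone}(V)$. Because $\Mone$ is unipotent (Section \ref{subsec:somesupergroups}), Lemma \ref{lemma:idGunipotent} then identifies this common value with both $\sup\set{i \in \N : \Ext_{\Mone}^i(k,V) \neq 0}$ and $\sup\set{i \in \N : \Ext_{\Mone}^i(V,k) \neq 0}$; in particular these two suprema agree.

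Next I would apply Lemma \ref{lemma:injprojdimtorsionmodule} with $A = \Pone$ and with $A = k\Mone$. When the $\Z_2$-grading is ignored, both $\Pone = k[u,v]/\subgrp{u^p+v^2}$ and $k\Mone = k[[u,v]]/\subgrp{u^p+v^2}$ are commutative noetherian rings, and in each case the augmentation ideal $\fm = \subgrp{u,v}$ satisfies $A/\fm \cong k$. The only hypothesis requiring comment is that $V$ is $\fm$-torsion: as recalled at the start of Section \ref{subsec:injprojdimM1}, the induced action of $k\Mone$ on the finite-dimensional rational $\Mone$-supermodule $V$ factors through the quotient $k\Mone \twoheadrightarrow k\Mones = k\Mone/\subgrp{u^{p^s}}$ for some $s \geq 1$, so $u$ acts nilpotently on $V$, and hence so does $v$ because $v^2 = -u^p$ in $k\Mone$. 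Therefore Lemma \ref{lemma:injprojdimtorsionmodule} gives $\pd_{\Pone}(V) = \sup\set{i \in \N : \Ext_{\Pone}^i(V,k) \neq 0}$ and $\id_{\Pone}(V) = \sup\set{i \in \N : \Ext_{\Pone}^i(k,V) \neq 0}$, and likewise over $k\Mone$.

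Finally, $V$ and $k$ are both finite-dimensional rational $\Mone$-supermodules, so Lemma \ref{lemma:Extrestrictioniso} yields
	\[
	\Ext_{\Mone}^\bullet(V,k) \cong \Ext_{\Pone}^\bullet(V,k) \cong \Ext_{k\Mone}^\bullet(V,k) \quad \text{and} \quad \Ext_{\Mone}^\bullet(k,V) \cong \Ext_{\Pone}^\bullet(k,V) \cong \Ext_{k\Mone}^\bullet(k,V).
	\]
Combining the three steps, each of $\pd_{\Mone}(V)$, $\pd_{\Pone}(V)$, and $\pd_{k\Mone}(V)$ equals $\sup\set{i \in \N : \Ext_{\Mone}^i(V,k) \neq 0}$, while each of $\id_{\Mone}(V)$, $\id_{\Pone}(V)$, and $\id_{k\Mone}(V)$ equals $\sup\set{i \in \N : \Ext_{\Mone}^i(k,V) \neq 0}$, and these two suprema coincide by the first step. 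I do not anticipate a serious obstacle: the argument is essentially an assembly of the preceding lemmas, and the only points demanding care are verifying the noetherian and $\fm$-torsion hypotheses of Lemma \ref{lemma:injprojdimtorsionmodule} and keeping straight that $\id_{\Mone}(V)$ and $\pd_{\Mone}(V)$ refer to the category of rational supermodules whereas $\id_{\Pone}(V)$, $\pd_{\Pone}(V)$, $\id_{k\Mone}(V)$, and $\pd_{k\Mone}(V)$ refer to the module categories over the algebras $\Pone$ and $k\Mone$.
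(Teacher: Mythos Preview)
Your proposal is correct and follows essentially the same approach as the paper: both arguments assemble Lemmas \ref{lemma:injprojdimtorsionmodule}, \ref{lemma:idGunipotent}, \ref{lemma:idpdequal}, and \ref{lemma:Extrestrictioniso} in the same way, reducing each dimension to a supremum of degrees where $\Ext^\bullet(V,k)$ or $\Ext^\bullet(k,V)$ is nonzero and then identifying these groups across the three settings. Your write-up is simply more explicit than the paper's about verifying the noetherian and $\fm$-torsion hypotheses of Lemma \ref{lemma:injprojdimtorsionmodule}.
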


\begin{proof}
By Lemma \ref{lemma:Extrestrictioniso} there are isomorphisms of graded superspaces
	\begin{align*}
	\Ext_{k\Mone}^\bullet(k,V) &\cong \Ext_{\Pone}^\bullet(k,V) \cong \Ext_{\Mone}^\bullet(k,V), \quad \text{and} \\
	\Ext_{k\Mone}^\bullet(V,k) &\cong \Ext_{\Pone}^\bullet(V,k) \cong \Ext_{\Mone}^\bullet(V,k).
	\end{align*}
Furthermore, it follows from the discussion preceding Lemma \ref{lemma:Extrestrictioniso} that $V$ is $\fm$-torsion, where $\fm$ is the maximal ideal of $k\Mone$ (resp.\ of $\Pone$) generated by $u$ and $v$. Then the asserted equalities of injective and projective dimensions follow from Lemmas \ref{lemma:injprojdimtorsionmodule}, \ref{lemma:idGunipotent}, and \ref{lemma:idpdequal}.
\end{proof}

The next lemma is an analogue of \cite[Proposition 2.2]{Friedlander:2005}.

\begin{proposition} \label{prop:secondradical}
Let $\alpha,\beta,\gamma,\delta$ be indeterminates with $\alpha,\beta,\gamma$ of even superdegree and $\delta$ of odd super\-degree. Let $s$ and $t$ be positive integers, let $R = k[\alpha,\beta,\gamma,\delta]/\subgrp{\alpha^p+\delta^2,\beta^p,\alpha^{p^s},\gamma^{p^t}}$, and let $M$ be a finitely-generated $R$-supermodule. Let $\sigma_{\alpha},\sigma_{\alpha+\beta\gamma}: \Pone \rightarrow R$ be the $k$-super\-algebra homomorphisms defined by $\sigma_{\alpha}(u) = \alpha$ and $\sigma_{\alpha}(v) = \delta$, and $\sigma_{\alpha+\beta\gamma}(u) = \alpha+\beta\gamma$ and $\sigma_{\alpha+\beta\gamma}(v) = \delta$, respectively, and let $M\da{\alpha}$ and $M\da{\alpha+\beta\gamma}$ denote the pullbacks of $M$ along $\sigma_\alpha$ and $\sigma_{\alpha+\beta\gamma}$. Then
	\[
	\pd_{\Pone}(M\da{\alpha}) < \infty \quad \text{if and only if} \quad \pd_{\Pone}(M\da{\alpha+\beta\gamma}) < \infty.
	\]
\end{proposition}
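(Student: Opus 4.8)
The plan is to interpolate between the two homomorphisms and argue that finiteness of projective dimension is both an \emph{open} and a \emph{closed} condition along the interpolation. Since, by Remark~\ref{remark:flatbasechange} and Proposition~\ref{prop:Poneprojdim}, finiteness of $\pd_{\Pone}$ is unaffected by extension of the ground field, we may assume $k$ is algebraically closed; note also that $M$ is finite-dimensional over $k$ because $R$ is. As $p$ is odd and $\beta^p=\alpha^{p^s}=0$ in $R$, the identities $(\alpha+T\beta\gamma)^p+\delta^2=\alpha^p+\delta^2=0$ and $(\alpha+T\beta\gamma)^{p^s}=\alpha^{p^s}=0$ hold in $R\otimes_k k[T]$, so for each $c\in k$ there is a homomorphism $\sigma_{\alpha+c\beta\gamma}\colon\Pone\to R$ with $u\mapsto\alpha+c\beta\gamma$, $v\mapsto\delta$, and these are the closed-point specializations of a single $k[T]$-superalgebra homomorphism $\Pone\otimes_k k[T]\to R\otimes_k k[T]$. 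Letting $\wh M$ denote the $\Pone\otimes_k k[T]$-module obtained by pulling back $M\otimes_k k[T]$ along that homomorphism --- free of finite rank over $k[T]$, with fibre $M\da{\alpha+c\beta\gamma}$ at $c$ --- it suffices to show that
	\[
	\mathcal U=\set{ c\in k : \pd_{\Pone}(M\da{\alpha+c\beta\gamma})<\infty }
	\]
is both open and closed in $\mathbb{A}^1=\Spec k[T]$, for then it is empty or all of $\mathbb{A}^1$, and in either case it contains $0$ and $1$ or neither. (The endpoints $c=0,1$ are interchanged by $c\mapsto 1-c$, so the situation is symmetric; one may also see this via the $k$-algebra automorphism $\psi$ of $R$ with $\psi(\alpha)=\alpha+\beta\gamma$ fixing $\beta,\gamma,\delta$, for which $\sigma_{\alpha+\beta\gamma}=\psi\circ\sigma_\alpha$.)

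Openness is straightforward: each fibre is $\fm$-torsion over $\Pone$ (the image of $u$ being nilpotent), so by Lemma~\ref{lemma:injprojdimtorsionmodule} and the two-periodicity in Proposition~\ref{prop:Poneprojdim}, $\pd_{\Pone}(M\da{\alpha+c\beta\gamma})<\infty$ if and only if $\Ext^2_{\Pone}(M\da{\alpha+c\beta\gamma},k)=0$. By Remark~\ref{remark:flatbasechange}, $\wh M$ has a resolution $M\otimes_k P_\bullet\otimes_k k[T]$ by free $\Pone\otimes_k k[T]$-modules of finite rank ($P_\bullet$ as in~\eqref{eq:Bfreeresolution}); applying $\Hom_{\Pone\otimes_k k[T]}(-,k[T])$ and specializing at $c$ computes $\Ext^\bullet_{\Pone}(M\da{\alpha+c\beta\gamma},k)$ as the cohomology of a complex of finite-dimensional $k$-spaces whose differentials are polynomial in $c$, so $c\mapsto\dim_k\Ext^2_{\Pone}(M\da{\alpha+c\beta\gamma},k)$ is upper semicontinuous and $\mathcal U$ is open.

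Closedness is the hard part, and I would reduce it to a concrete statement. Since $(\alpha+c\beta\gamma)^{p^s}=\alpha^{p^s}+c^{p^s}\beta^{p^s}\gamma^{p^s}=0$ in $R$, the module $M\da{\alpha+c\beta\gamma}$ is a module over $k\Mones=\Pone/\subgrp{u^{p^s}}$; because $u^{p^s}$ is a nonzerodivisor in the domain $\Pone$ and $k\Mones$ is self-injective, $\pd_{\Pone}(M\da{\alpha+c\beta\gamma})<\infty$ if and only if $M\da{\alpha+c\beta\gamma}$ is free over $k\Mones$, and as $k\Mones$ is a local Frobenius algebra with one-dimensional socle (spanned by $u^{p^s-1}v$), this holds if and only if $\dim_k\bigl(\ker((\alpha+c\beta\gamma)|_M)\cap\ker(\delta|_M)\bigr)=\dim_k(M)/2p^s$. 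This intersection equals $\ker\bigl((\alpha+c\beta\gamma)|_W\bigr)$ with $W=\ker(\delta|_M)$ independent of $c$; on $W$ one has $\alpha^p=-\delta^2=0$ and $\beta^p=0$, so $(\alpha+c\beta\gamma)|_W$ is $p$-nilpotent. Thus closedness of $\mathcal U$ --- indeed the whole Proposition --- reduces to showing
	\[
	\dim_k\ker\bigl((\alpha+c\beta\gamma)|_W\bigr)\ \text{is independent of}\ c.
	\]
The structural input is the $\beta$-adic filtration $W\supseteq\beta W\supseteq\cdots\supseteq\beta^p W=0$: the operator $\beta\gamma$ carries $\beta^iW$ into $\beta^{i+1}W$ while $\beta$ carries $\beta^iW$ \emph{onto} $\beta^{i+1}W$, so $\alpha+c\beta\gamma$ preserves this filtration and induces on each associated graded piece precisely the operator induced by $\alpha$, independently of $c$. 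The main work --- the analogue in our setting of the key step of \cite{Friedlander:2005} --- is to leverage this, together with the commuting actions of $\alpha,\beta,\gamma$ on $W$ and the relations $\beta^p=\gamma^{p^t}=\alpha^p|_W=0$, to show the filtration is compatible enough with kernels that passing from $\alpha$ to $\alpha+c\beta\gamma$ leaves $\dim_k\ker$ unchanged; I expect this kernel/filtration bookkeeping to be the only genuine obstacle.
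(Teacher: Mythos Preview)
Your interpolation strategy and the openness argument are fine, but the closedness reduction contains a genuine error: the claimed equivalence ``$\pd_{\Pone}(N)<\infty$ if and only if $N$ is free over $k\Mones$'' is false. For a counterexample take $M=R/\subgrp{\alpha,\beta,\gamma}\cong k[\delta]/\subgrp{\delta^2}$; then $M\da{\alpha}$ is the $\Pone$-module on which $u$ acts as $0$ and $v$ as $\delta$, so $M\da{\alpha}\cong k\Gam$, which has $\pd_{\Pone}<\infty$ by Lemma~\ref{lemma:finiteifffree}, yet $\dim_k M=2<2p^s=\dim_k k\Mones$, so $M\da{\alpha}$ is not free over $k\Mones$. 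The change-of-rings argument you allude to only gives one direction: freeness over $k\Mones$ implies $\pd_{\Pone}\leq 1$, not conversely.

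Consequently the kernel-constancy statement you reduce to is not the right target, and in fact it too is false. Let $M$ be the four-dimensional $R$-module $k[\beta,\gamma]/\subgrp{\beta^2,\gamma^2}$ on which $\alpha$ and $\delta$ act as zero; then $W=\ker(\delta|_M)=M$, $\ker(\alpha|_W)=W$ has dimension $4$, but $\ker((\alpha+\beta\gamma)|_W)=\ker(\beta\gamma)$ is the $3$-dimensional span of $\beta,\gamma,\beta\gamma$. (Here both $M\da{\alpha}$ and $M\da{\alpha+\beta\gamma}$ have infinite projective dimension over $\Pone$, consistent with the Proposition, but your intermediate criterion misfires.) The paper avoids interpolation entirely: working in $P=k[\alpha,\beta,\gamma,\delta]$ it observes that the two hypersurface equations $f=\alpha^p+\delta^2$ and $g=(\alpha+\beta\gamma)^p+\delta^2$ differ by $\beta^p\gamma^p\in\fn\cdot\subgrp{\alpha^p+\delta^2,\beta^p}$, invokes a theorem of Avramov--Iyengar \cite[Theorem~2.1]{Avramov:2018} to obtain $\Tor_i^{P/\subgrp{f}}(k,M)\cong\Tor_i^{P/\subgrp{g}}(k,M)$ for all $i$, and then uses that finiteness of $\pd$ over $\Pone$ is equivalent to finiteness of $\pd$ over the polynomial extension $\Pone[\beta,\gamma]=P/\subgrp{f}$.
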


\begin{proof}
By \eqref{eq:pdequal} we may ignore the $\Z_2$-gradings on $\Pone$ and $M$, so for the duration of the proof we operate purely in the context of ordinary commutative algebra.

Set $P = k[\alpha,\beta,\gamma,\delta]$, let $\fn = \subgrp{\alpha,\beta,\gamma,\delta} \subset P$, and let $I = \subgrp{\alpha^p+\delta^2,\beta^p} \subset P$. Then $M$ is a $P/I$-module via the evident quotient map $P/I \rightarrow R$. Set $\wtalpha = \alpha + \beta \gamma$, let $f = \alpha^p+\delta^2$, and let $g = \wtalpha^p + \delta^2$. Then $f-g = \beta^p \gamma^p \in \fn I$, so by \cite[Theorem 2.1]{Avramov:2018} there exists for each $i \in \N$ an isomorphism $\Tor_i^{P/\subgrp{f}}(k,M) \cong \Tor_i^{P/\subgrp{g}}(k,M)$. Since $M$ is $\fn$-torsion and $\fn$ is a maximal ideal in $P$, this implies by \cite[\S5.3]{Avramov:1991} that $\fd_{P/\subgrp{f}}(M) = \fd_{P/\subgrp{g}}(M)$, where $\fd_A(M)$ denotes the flat dimension of $M$ as an $A$-module. But flat dimension and projective dimension coincide for finitely-generated modules over noetherian rings \cite[Corollary 2.10F]{Avramov:1991}, so
	\[
	\pd_{P/\subgrp{f}}(M) = \pd_{P/\subgrp{g}}(M).
	\]
Next set $Q = k[\alpha,\delta]/\subgrp{\alpha^p+\delta^2}$, and set $Q' = k[\wtalpha,\delta]/\subgrp{\wtalpha^p+\delta^2}$. Then $P/\subgrp{f} = Q[\beta,\gamma]$ is a polynomial extension of $Q$, and $P/\subgrp{g} = Q'[\beta,\gamma]$ is a polynomial extension of $Q'$, so
	\begin{align*}
	\pd_Q(M) &< \infty \quad \text{if and only if} \quad \pd_{P/\subgrp{f}}(M) < \infty, \quad \text{and} \\
	\pd_{Q'}(M) &< \infty \quad \text{if and only if} \quad \pd_{P/\subgrp{g}}(M) < \infty;
	\end{align*}
cf.\ \cite[\S5.6]{Avramov:2018}. Now $\pd_Q(M) = \pd_{\Pone}(M\da{\alpha})$ and $\pd_{Q'}(M) = \pd_{\Pone}(M\da{\alpha+\beta\gamma})$, so the result follows.
\end{proof}

\section{Support schemes}\label{section:four}

\subsection{The functor of multiparameter supergroups} \label{subsec:Vrg}

Given an affine $k$-supergroup scheme $G \in \sgrp_k$ and a commutative $k$-superalgebra $A \in \csalg_k$, let $G_A = G \otimes_k A \in \sgrp_A$ denote the affine $A$-super\-group scheme obtained from $G$ via base change to $A$, i.e., the affine $A$-supergroup scheme with coordinate Hopf $A$-superalgebra $A[G_A] = k[G] \otimes_k A$. If $G$ is finite, then $A[G_A]$ is a free $A$-super\-module of finite rank, and there are canonical identifications
	\[
	\Hom_A(A[G_A],A) \cong \Hom_k(k[G],A) \cong A \otimes_k k[G]^\# \cong A \otimes_k kG.
	\]
Thus if $G$ is finite, the group algebra $AG_A$ of $G_A$ identifies as a Hopf $A$-superalgebra with $kG \otimes_k A$.

Next recall that given affine $k$-supergroup schemes $G$ and $H$, the $k$-superfunctor
	\[
	\bfHom(G,H): \csalg_k \rightarrow \sets
	\]
is defined by
	\[
	\bfHom(G,H)(A) = \Hom_{Grp/A}(G_A,H_A),
	\]
the set of $A$-supergroup scheme homomorphisms $\rho: G_A \rightarrow H_A$. In \cite[Theorem 3.3.6]{Drupieski:2017a}, we showed that if $H$ is algebraic and if $G$ is a multiparameter $k$-supergroup scheme, then $\bfHom(G,H)$ admits the structure of an affine $k$-superscheme of finite type over $k$. If $G$ and $H$ are both finite, then the set of Hopf $A$-superalgebra homomorphisms $\rho: A[H_A] \rightarrow A[G_A]$ identifies by duality with the set of Hopf $A$-superalgebra homomorphisms $\rho: AG_A \rightarrow AH_A$, and hence
	\[
	\bfHom(G,H)(A) \cong \Hom_{Hopf/A}(AG_A,AH_A) \quad \text{for $G,H$ finite.}
	\]
	
\begin{lemma} \label{lemma:HomRS}
Let $R$ and $S$ be Hopf $k$-superalgebras such that $R$ is finitely-generated as a $k$-algebra and $S$ is finite-dimensional over $k$. Define the $k$-superfunctor $\bfHom(R,S): \csalg_k \rightarrow \sets$ by
	\[
	\bfHom(R,S)(A) = \Hom_{Hopf/A}(R \otimes_k A,S \otimes_k A).
	\]
Then $\bfHom(R,S)$ admits the structure of an affine $k$-super\-scheme of finite type. With this structure, the assignment $S \mapsto \bfHom(R,S)$ is a covariant functor from the category of finite-dimensional Hopf $k$-super\-algebras to the category of affine $k$-superschemes that takes injections to closed embeddings. Similarly, the assignment $R \mapsto \bfHom(R,S)$ takes surjections to closed embeddings.
\end{lemma}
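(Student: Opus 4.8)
The plan is to exhibit $\bfHom(R,S)$ as a closed subfunctor of the functor of \emph{superalgebra} homomorphisms $R\otimes_k(-)\to S\otimes_k(-)$, to exhibit the latter as a closed subfunctor of a finite product of affine superspaces, and then to deduce the functoriality assertions by tracking which of these defining conditions are preserved. This parallels the proof of \cite[Theorem 3.3.6]{Drupieski:2017a}, of which the present lemma is essentially a dualized (and slightly more general) version.

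For representability, fix finitely many homogeneous $k$-algebra generators $x_1,\dots,x_n$ of $R$, with $\overline{x_i}=d_i\in\Z_2$. By the base-change adjunction $\Hom_{\salg/A}(R\otimes_k A,B)\cong\Hom_{\salg/k}(R,B)$, an $A$-superalgebra homomorphism $R\otimes_k A\to S\otimes_k A$ is the same datum as a tuple $\big(\phi(x_1),\dots,\phi(x_n)\big)\in\prod_i(S\otimes_k A)_{d_i}$ satisfying every defining relation of $R$. Letting $S[d_i]$ be $S$ or $\Pi(S)$ according to whether $d_i$ is $\zero$ or $\one$, the functor $A\mapsto\prod_i(S\otimes_k A)_{d_i}$ is isomorphic to $A\mapsto\big(\bigoplus_i S[d_i]\otimes_k A\big)_{\zero}$, which is represented by the affine superspace associated to the finite-dimensional superspace $\bigoplus_i S[d_i]$, i.e.\ by the affine $k$-superscheme whose coordinate superalgebra is the (finitely generated) symmetric superalgebra on $\big(\bigoplus_i S[d_i]\big)^{\#}$; this is of finite type because $S$ is finite-dimensional. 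Each defining relation of $R$ imposes a system of polynomial equations in the matrix coordinates of the tuple, hence cuts out a closed subfunctor. The one point here that is more than bookkeeping: $R$ need not be finitely presented, so there may be infinitely many such relations — but the ambient coordinate superalgebra is noetherian, so the ideal they generate is finitely generated, and $A\mapsto\Hom_{\salg/A}(R\otimes_k A,S\otimes_k A)$ is represented by an affine $k$-superscheme of finite type.

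To cut out the Hopf homomorphisms, recall that a superalgebra homomorphism $\phi\colon R\otimes_k A\to S\otimes_k A$ is a homomorphism of Hopf $A$-superalgebras if and only if it is a homomorphism of bialgebras (compatibility with the antipode is automatic), i.e.\ if and only if $\Delta_S\circ\phi=(\phi\otimes\phi)\circ\Delta_R$ and $\ve_S\circ\phi=\ve_R$ after base change to $A$. Both sides of each identity are $A$-algebra homomorphisms out of $R\otimes_k A$, so the identities hold as soon as they hold on the generators $x_i\otimes 1$; writing $\Delta_R(x_i)=\sum_j a_{ij}\otimes b_{ij}$ and expanding $a_{ij},b_{ij}$ as polynomials in the $x$'s turns these into polynomial equations in the coordinates of $\phi(x_1),\dots,\phi(x_n)$, with coefficients depending only on $R$ and $S$, and similarly for the counit condition. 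These are closed conditions, so $\bfHom(R,S)$ is a closed subfunctor of $A\mapsto\Hom_{\salg/A}(R\otimes_k A,S\otimes_k A)$, hence an affine $k$-superscheme of finite type.

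Finally, for functoriality in $S$: a homomorphism $f\colon S\to S'$ of finite-dimensional Hopf superalgebras induces the natural transformation $\phi\mapsto(f\otimes\id_A)\circ\phi$, which is a morphism of the representing affine superschemes by Yoneda, and this is visibly covariant. If $f$ is injective, then $f\otimes\id_A$ is injective for every $A$ (base change along the field $k$ is exact), so $\bfHom(R,f)$ is a monomorphism; fixing a superspace splitting $S'=f(S)\oplus C$, its image is the subfunctor of those $\phi$ for which each $\phi(x_i)$ has vanishing $C\otimes_k A$-component (an algebra map lands in the subalgebra $f(S)\otimes_k A$ exactly when the images of the generators do, and the corestriction of a Hopf homomorphism to a Hopf subalgebra is again one), which is a closed condition; hence $\bfHom(R,f)$ is a closed embedding. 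Dually, a surjection $g\colon R\twoheadrightarrow R'$ (so that $R'$ is again finitely generated) induces $\phi\mapsto\phi\circ(g\otimes\id_A)$, a monomorphism because $g\otimes\id_A$ is surjective; its image is the subfunctor of those $\phi$ annihilating $\ker(g)\otimes_k A$, equivalently annihilating a fixed generating set of the ideal $\ker(g)$ — again a closed condition, finitely many of which suffice by noetherianity — and one checks routinely, using surjectivity of $g\otimes\id_A$, that the resulting factored map $R'\otimes_k A\to S\otimes_k A$ is again a homomorphism of Hopf superalgebras; hence $\bfHom(g,S)$ is a closed embedding. I do not expect any genuinely hard step: the argument is a matter of verifying that several natural conditions are closed, with noetherianity of the coordinate superalgebra the only non-formal input.
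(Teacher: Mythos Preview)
Your proposal is correct and follows essentially the same approach as the paper: both embed $\bfHom(R,S)$ into an affine superspace via a choice of generators for $R$ and the linear structure of $S$, then cut out by polynomial conditions encoding the algebra relations of $R$ and the Hopf compatibility, and handle the functoriality claims by tracking how these conditions change under $S\hookrightarrow S'$ or $R\twoheadrightarrow R'$. Two minor differences are worth noting: you invoke the standard fact that antipode compatibility is automatic for bialgebra maps between Hopf (super)algebras, whereas the paper treats the antipode condition explicitly; and you make explicit the appeal to noetherianity of the coordinate superalgebra to reduce possibly infinitely many relations to finitely many, a point the paper leaves implicit.
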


\begin{proof}
Let $r_0,r_1,\ldots,r_m$ be a homogeneous generating set for $R$, and let $s_0,s_1,\ldots,s_n$ be a homo\-geneous basis for $S$. Assume that $r_0$ (resp.\ $s_0$) is the identity element of $R$ (resp.\ $S$), and that the remaining elements generate the augmentation ideals of their respective rings. For $1 \leq i \leq m$ and $1 \leq j \leq n$, let $x_{ij}$ be an indeterminant of superdegree $\ol{r_i} + \ol{s_j}$, and let $T$ be the (super)commutative polynomial $k$-super\-algebra generated by the $x_{ij}$. If $A \in \csalg_k$ and if $\rho : R \otimes_k A \rightarrow S \otimes_k A$ is a homomorphism of Hopf $A$-superalgebras, then there exists a unique $k$-superalgebra homomorphism $\wtrho: T \rightarrow A$ such that $\rho(r_i) = \sum_{j=1}^n s_j \otimes \wtrho(x_{ij}) \in S \otimes_k A$ for each $1 \leq i \leq m$, and $\rho$ is completely determined by $\wtrho$. (Automatically, $\rho(r_0) = s_0$.) We will argue that there are certain polynomials in $T$, depending only on the Hopf superalgebra structures of $R$ and $S$, on which $\wtrho$ must vanish, but that if a $k$-super\-algebra map $\wtsigma: T \rightarrow A$ vanishes on the ideal $J \subset T$ generated by those polynomials, then the assignments $r_i \mapsto \sum_{j=1}^n s_j \otimes \wtsigma(x_{ij})$ for $1 \leq i \leq m$ extend uniquely to a homomorphism of Hopf $A$-superalgebras $\sigma: R \otimes_k A \rightarrow S \otimes_k A$.

Let $\rho$ and $\wtrho$ be as above, and let $f = f(t_0,\ldots,t_m)$ be a polynomial over $k$ in the non-commuting variables $t_0,\ldots,t_m$. Then applying the algebra relations in $S$ and the (super)commutativity of $A$, it follows that there exist polynomials $f_1,\ldots,f_n \in T$, depending only on $f$ and the algebra relations in $S$, such that
	\begin{align*}
	\rho(f(r_0,\ldots,r_m)) = f(\rho(r_0),\ldots,\rho(r_m)) &= \sum_{j=1}^n s_j \otimes f_j(\wtrho(x_{11}),\ldots,\wtrho(x_{mn})) \\
	&= \sum_{j=1}^n s_j \otimes \wtrho(f_j(x_{11},\ldots,x_{mn})).
	\end{align*}
In particular, if $f(r_0,\ldots,r_m)$ is an algebra relation in $R$, then it must be the case that $\wt{\rho}(f_j) = 0$ for each $j$. Conversely, if $\wtsigma: T \rightarrow A$ is a $k$-superalgebra homomorphism such that $\wtsigma(f_j) = 0$ for each $j$ and each algebra relation $f$ of $R$, then it follows that the assignments $r_i \mapsto \sum_{j=1}^n s_j \otimes \wtsigma(x_{ij})$ uniquely extend to an $A$-super\-algebra homomorphism $\sigma: R \otimes_k A \rightarrow S \otimes_k A$.

Next, write
	\[
	\Delta(r_i) = \sum_{(r_i)} r_{i(1)}(r_0,\ldots,r_m) \otimes r_{i(2)}(r_0,\ldots,r_m)
	\]
for some polynomials $r_{i(1)}(t_0,\ldots,t_m)$ and $r_{i(2)}(t_0,\ldots,t_m)$ in the non-commuting variables $t_0,\ldots,t_m$. Then making the canonical identification $(S \otimes_k A) \otimes_A (S \otimes_k A) \cong (S \otimes_k S) \otimes_k A$, and applying the algebra relations in $S$ and the commutativity of $A$, it follows that
	\begin{align*}
	(\rho \otimes \rho) \circ \Delta(r_i) &= \sum_{(r_i)} r_{i(1)}(\rho(r_0),\ldots,\rho(r_m)) \otimes r_{i(2)}(\rho(r_0),\ldots,\rho(r_m)) \\
	&= \sum_{c,d=0}^n (s_c \otimes s_d) \otimes g_{cd}^i(\wtrho(x_{11}),\ldots,\wtrho(x_{mn})) \\
	&= \sum_{c,d=0}^n (s_c \otimes s_d) \otimes \wtrho(g_{cd}^i(x_{11},\ldots,x_{mn}))
	\end{align*}
for some polynomials $g_{cd}^i \in T$ that depend only on the algebra structure of $S$. On the other hand, write the coproduct in $S$ as $\Delta (s_j) = \sum_{c,d=0}^n (s_c \otimes s_d) \cdot b_{cd}^j$ with $b_{cd}^j \in k$. Then
	\[
	\Delta \circ \rho(r_i) = \Delta\left(\sum_{j=1}^n s_j \otimes \wtrho(x_{ij}) \right) = \sum_{c,d=0}^n \left((s_c \otimes s_d) \otimes \left( \sum_{j=1}^n b_{cd}^j \cdot \wtrho(x_{ij})\right)\right).
	\]
Since $(\rho \circ \rho) \circ \Delta = \Delta \circ \rho$, it must be the case that $\wtrho(g_{cd}^i-\sum_{j=1}^n b_{cd}^j \cdot x_{ij}) = 0$ for each $1 \leq i \leq m$ and each $0 \leq c,d \leq n$. Conversely, if $\wtsigma: T \rightarrow A$ vanishes on each $g_{cd}^i-\sum_{j=1}^n b_{cd}^j \cdot x_{ij}$ and on each $f_j$ from the previous paragraph, then $\wtsigma$ lifts to an $A$-superalgebra homomorphism $\sigma: R \otimes_k A \rightarrow S \otimes_k A$ such that $(\sigma \otimes \sigma) \circ \Delta(r_i) = \Delta \circ \sigma(r_i)$ for each $1 \leq i \leq m$, and hence such that  $(\sigma \otimes \sigma) \circ \Delta = \Delta \circ \sigma$ in general because the $r_i$ generate $R$ and because $\Delta$ and $\sigma$ are algebra maps.

In a similar fashion to the previous two paragraphs, one can show that $\sigma$ is compatible with the antipodes on $R \otimes_k A$ and $S \otimes_k A$ if and only if $\wtsigma$ vanishes on additional polynomials in $T$ that depend only on the Hopf superalgebra structures of $R$ and $S$; we leave the details of this verification to the reader. Taking $J$ to be the ideal in $T$ generated by these additional polynomial relations and the polynomial relations from the previous two paragraphs, this shows that $\bfHom(R,S)$ is an affine $k$-superscheme represented by $T/J$.

Now suppose $S \hookrightarrow S'$ is an injective Hopf superalgebra homomorphism. We can extend the given homo\-geneous basis $s_0,\ldots,s_n$ for $S$ to a homogeneous basis $s_0,\ldots,s_n,s_{n+1},\ldots,s_{n'}$ for $S'$. Then $\bfHom(R,S)$ is the closed subsuperscheme of $\bfHom(R,S')$ defined by the vanishing of the coordinate functions $x_{ij}$ for $n+1 \leq j \leq n'$. Similarly, if $R \twoheadrightarrow R'$ is a quotient map of Hopf superalgebras with kernel $I$, then $\bfHom(R',S)$ is the closed subsuperscheme of $\bfHom(R,S)$ defined by the vanishing of the polynomials $f_1,\ldots,f_n \in T$ that arise from each additional algebra relation $f \in I$. 
\end{proof}

\begin{definition}
Given a finite $k$-supergroup scheme $G$, define $\bsvrg: \csalg_k \rightarrow \sets$ by
	\begin{align*}
	\bsvrg(A) &= \Hom_{Hopf/A}(\Pr \otimes_k A, kG \otimes_k A),
	\end{align*}
the set of Hopf $A$-superalgebra homomorphisms $\rho: \Pr \otimes_k A \rightarrow kG \otimes_k A$. We call $\bsvrg$ the \emph{functor of multiparameter supergroups of height $\leq r$ in $G$} because of Lemma \ref{lemma:bsvrg}\eqref{item:stratification} below.
\end{definition}

Given finite $k$-supergroup schemes $E$ and $E'$, write $E \succ E'$ if $kE'$ is a proper Hopf superalgebra quotient of $kE$, i.e., if there exists a surjective Hopf superalgebra homomorphism $kE \twoheadrightarrow kE'$ and $kE \not\cong kE'$. If $G$ is a finite $k$-supergroup scheme and if $E \succ E'$, then the quotient map $kE \twoheadrightarrow kE'$ defines a closed embedding $\bfHom(E',G) \hookrightarrow \bfHom(E,G)$. Now given $E$ and $G$ as above, set
	\[
	\bfHom(E,G)^+(A) = \bfHom(E,G)(A) - \left( \bigcup_{E \succ E'} \bfHom(E',G)(A) \right).
	\]

\begin{lemma} \label{lemma:bsvrg}
Let $G$ be a finite $k$-supergroup scheme.
	\begin{enumerate}
	\item \label{item:bsvrgaffine} The $k$-superfunctor $\bsvrg$ admits the structure of an affine $k$-superscheme of finite type. Then the assignment $G \mapsto \bsvrg$ is a covariant functor from the category of finite $k$-super\-group schemes to the category of affine $k$-super\-schemes of finite type that takes closed embeddings to closed embeddings.
	
	\item \label{item:stratification} Let $E$ be a multiparameter $k$-supergroup scheme of height $\leq r$. Then the canonical quotient map $\Pr \twoheadrightarrow kE$ induces a closed embedding of affine $k$-super\-schemes $\bfHom(E,G) \hookrightarrow \bsvrg$. Identifying $\bfHom(E,G)$ with its image in $\bsvrg$ via this embedding, one has
		\[
		\bsvrg(k) = \bigcup_{(E \leq G)/\cong} \bfHom(E,G)(k) = \coprod_{(E \leq G)/\cong} \bfHom(E,G)^+(k)
		\]
	where the union (resp.\ disjoint union) is taken over the isomorphism classes of multiparameter $k$-subsupergroups $E$ of height $\leq r$ that occur as closed subsupergroups of $G$.
	
	\item \label{item:union} Similarly to part \eqref{item:stratification},
		\[
		\bsvrg(k) = \bigcup_{E \leq G} \bsvr(E)(k),
		\]
	where now the union is over all multiparameter (closed) $k$-subsupergroups $E$ of height $\leq r$ in $G$, and $\bsvr(E)$ is identified with its image under the closed embedding $\bsvr(E) \hookrightarrow \bsvrg$.
	\end{enumerate}
\end{lemma}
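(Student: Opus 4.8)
The strategy is to derive \eqref{item:bsvrgaffine} directly from Lemma~\ref{lemma:HomRS}, and then to deduce \eqref{item:stratification} and \eqref{item:union} from that lemma together with the classification of the finite-dimensional Hopf superalgebra quotients of $\Pr$ in Proposition~\ref{prop:Prquotients}.

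For \eqref{item:bsvrgaffine}: since $\Pr$ is finitely generated as a $k$-algebra and $kG$ is finite-dimensional ($G$ being finite), Lemma~\ref{lemma:HomRS} applied with $R = \Pr$ and $S = kG$ shows that $\bsvrg = \bfHom(\Pr,kG)$ is an affine $k$-superscheme of finite type. A morphism $G \to G'$ of finite $k$-supergroup schemes dualizes to a Hopf superalgebra homomorphism $kG \to kG'$, and post-composition with the latter realizes the induced morphism $\bsvrg \to \bsvr(G')$; this is exactly the covariant functor $S \mapsto \bfHom(\Pr,S)$ of Lemma~\ref{lemma:HomRS}. If $G \hookrightarrow G'$ is a closed embedding, then $k[G'] \twoheadrightarrow k[G]$ is surjective, so $kG \hookrightarrow kG'$ is injective, and Lemma~\ref{lemma:HomRS} then gives that $\bsvrg \hookrightarrow \bsvr(G')$ is a closed embedding.

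For \eqref{item:stratification}: when $E$ is a multiparameter $k$-supergroup scheme of height $r' \le r$, the canonical quotient map $\Pr \twoheadrightarrow \P_{r'} \twoheadrightarrow kE$ is a surjection of Hopf superalgebras, and the part of Lemma~\ref{lemma:HomRS} concerning surjections produces the claimed closed embedding $\bfHom(E,G) = \bfHom(kE,kG) \hookrightarrow \bfHom(\Pr,kG) = \bsvrg$, given on $A$-points by precomposition with $\Pr \otimes_k A \twoheadrightarrow kE \otimes_k A$. For the set-level statement, let $\rho: \Pr \to kG$ be a point of $\bsvrg(k)$. Its image $\rho(\Pr)$ is a finite-dimensional Hopf subsuperalgebra of $kG$ that is a quotient of $\Pr$, so by Proposition~\ref{prop:Prquotients} it is the group algebra $kE$ of a multiparameter $k$-supergroup scheme $E$; inspecting the proof of that proposition, the surjection $\Pr \twoheadrightarrow \rho(\Pr)$ is the canonical quotient map $\Pr \twoheadrightarrow kE$, so $\rho$ factors as $\Pr \twoheadrightarrow kE \hookrightarrow kG$ with second arrow the (injective) inclusion of $\rho(\Pr)$. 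Dualizing $kE \hookrightarrow kG$ exhibits $E$ as a closed subsupergroup of $G$, necessarily of height $\le r$ because $kE$ is a quotient of $\Pr$; hence $\rho$ lies in the image of $\bfHom(E,G)(k)$, and moreover in $\bfHom(E,G)^+(k)$ since that inclusion is injective. Together with the trivial containment $\bfHom(E,G)(k) \subseteq \bsvrg(k)$ coming from \eqref{item:bsvrgaffine}, this gives the union in \eqref{item:stratification}. Since a point $\rho$ lies in $\bfHom(E,G)^+(k)$ precisely when $\rho(\Pr) \cong kE$ rather than a proper Hopf superalgebra quotient of $kE$, and since non-isomorphic multiparameter supergroup schemes have non-isomorphic group algebras, the isomorphism type of $\rho(\Pr)$ picks out the unique piece containing $\rho$, upgrading the union to the asserted disjoint union.

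Statement \eqref{item:union} then follows from the same factorization: by \eqref{item:bsvrgaffine}, each inclusion $E \hookrightarrow G$ of a multiparameter subsupergroup of height $\le r$ induces a closed embedding $\bsvr(E) \hookrightarrow \bsvrg$, so the displayed union is contained in $\bsvrg(k)$; conversely, the factorization $\Pr \twoheadrightarrow \rho(\Pr) = kE \hookrightarrow kG$ of the previous paragraph realizes each $\rho \in \bsvrg(k)$ as the image of the point $\Pr \twoheadrightarrow kE$ of $\bsvr(E)(k)$ under $\bsvr(E)(k) \hookrightarrow \bsvrg(k)$. I expect the main obstacle to be the bookkeeping in \eqref{item:stratification}: one must check that an arbitrary $\rho: \Pr \to kG$ genuinely factors through the \emph{canonical} quotient $\Pr \twoheadrightarrow kE$ (and not merely through some surjection onto an abstractly isomorphic algebra), and that the sets $\bfHom(E,G)^+(k)$ are pairwise disjoint; both of these rely on the explicit information contained in the proof of Proposition~\ref{prop:Prquotients}.
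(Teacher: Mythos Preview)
Your proposal is correct and follows essentially the same approach as the paper: part \eqref{item:bsvrgaffine} is Lemma~\ref{lemma:HomRS} specialized to $R=\Pr$, $S=kG$, and parts \eqref{item:stratification}--\eqref{item:union} follow from Proposition~\ref{prop:Prquotients} together with the correspondence between closed subsupergroups $E\leq G$ and Hopf subsuperalgebras $kE\subseteq kG$. The paper's own proof is a two-line citation of these same inputs; you have simply filled in the bookkeeping (including the point, implicit in the proof of Proposition~\ref{prop:Prquotients}, that an arbitrary $\rho$ factors through the \emph{canonical} quotient $\Pr\twoheadrightarrow kE$), so there is no genuine divergence.
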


\begin{proof}
Part \eqref{item:bsvrgaffine} is a rephrasing in this context of Lemma \ref{lemma:HomRS}. The decompositions in parts \eqref{item:stratification} and \eqref{item:union} then follow from Proposition \ref{prop:Prquotients} and the fact that $E$ is a closed subsupergroup of $G$ if and only if $kE$ is a Hopf subsuperalgebra of $kG$.
\end{proof}

\begin{notation}
Given a finite $k$-supergroup scheme $G$, let $\Vrg = \bsvrg_{\ev}$ be the underlying purely even subscheme of $\bsvrg$. Then
	\[
	k[\Vrg] = k[\bsvrg]/\subgrp{k[\bsvrg]_{\one}}
	\]
is the largest purely even quotient of $k[\bsvrg]$, and for each commutative $k$-superalgebra $A \in \csalg_k$, one has $\Vrg(A) = \Vrg(\Azero) = \bsvrg(\Azero)$.
\end{notation}

\begin{remark} \label{remark:Vrgpurelyeven}
Let $G$ be a purely even finite $k$-supergroup scheme (i.e., an ordinary finite $k$-group scheme), and let $A = \Azero$ be a purely even commutative $k$-superalgebra (i.e., an ordinary commutative $k$-algebra). Then $kG \otimes_k A$ is a purely even $k$-algebra, and hence any Hopf $A$-superalgebra homomorphism $\rho: \Pr \otimes_k A \rightarrow kG \otimes_k A$ factors through the canonical quotient map
	\[
	\Pr \otimes_k A \twoheadrightarrow \Pr/\subgrp{v} \otimes_k A \cong k\Gar \otimes_k A.
	\]
This implies that $\bsvrg(A) = \Vrg(A) \cong \Hom_{Grp/A}(\Gar \otimes_k A,G \otimes_k A)$. Thus when $G$ is purely even, our use of the notation $\Vrg$ agrees with that of Suslin, Friedlander, and Bendel \cite{Suslin:1997} (although they allow $G$ to be an arbitrary affine algebraic $k$-group scheme, whereas we assume that $G$ is finite in order to obtain an affine superscheme structure on $\bsvrg$).
\end{remark}

\begin{lemma} \label{lemma:kbsvrg=kNrG}
Let $G$ be a unipotent finite $k$-supergroup scheme. Then there exists an integer $s = s(G) \geq 1$ such that for all $s' \geq s$, the canonical quotient maps $\Pr \twoheadrightarrow k\M_{r;s'} \twoheadrightarrow k\Mrs$ induce identifications
	\[
	\bfHom(\Mrs,G) = \bfHom(\M_{r;s'},G) = \bsvrg.
	\]
In particular, if the field $k$ is algebraically closed, then $k[\bsvrg]_{red}$, the largest (purely even) reduced quotient of $k[\bsvrg]$, identifies with the algebra $k[\NrG]$ of \cite[Definition 2.3.6]{Drupieski:2017b}.
\end{lemma}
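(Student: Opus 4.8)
The plan is to reduce the statement to the fact that, for $G$ unipotent, the augmentation ideal of $kG$ is nilpotent, with a nilpotency degree that is intrinsic to $kG$ and hence uniform over all base superalgebras $A$.

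First I would record the formal part of the argument. Since $\Pr$ is finitely generated as a $k$-algebra (for instance by $u_0,\dots,u_{r-1},v$), Lemma \ref{lemma:HomRS} applies with $R = \Pr$ and $S = kG$, showing that $\bsvrg = \bfHom(\Pr,kG)$ is an affine $k$-superscheme of finite type and that, for any $s' \geq s \geq 1$, the canonical surjections $\Pr \twoheadrightarrow k\M_{r;s'} \twoheadrightarrow k\Mrs$ induce closed embeddings
\[
\bfHom(\Mrs,G) \hookrightarrow \bfHom(\M_{r;s'},G) \hookrightarrow \bsvrg .
\]
Since any natural transformation of $k$-superfunctors that is bijective on $A$-points for every $A \in \csalg_k$ is a natural isomorphism, it suffices to produce an integer $s = s(G) \geq 1$ such that for every $A \in \csalg_k$ and every $s' \geq s$, each Hopf $A$-superalgebra homomorphism $\rho\colon \Pr \otimes_k A \to kG \otimes_k A$ factors through the quotient map $\Pr \otimes_k A \twoheadrightarrow k\Mrs \otimes_k A$ (and hence also through $k\M_{r;s'} \otimes_k A$).

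For the heart of the matter, observe that since $G$ is unipotent and $kG$ is finite-dimensional, the augmentation ideal $I := \ker(\ve\colon kG \to k)$ is nilpotent (the trivial supermodule being the only irreducible up to parity, $I$ annihilates every composition factor of the regular representation, hence acts nilpotently on $kG$); fix $N \geq 1$ with $I^N = 0$ and choose $s = s(G) \geq 1$ with $p^s \geq N$. Let $\rho$ be as above. Being in particular a homomorphism of $A$-supercoalgebras, $\rho$ commutes with the counits $\ve_{\Pr}\otimes\id_A$ and $\ve\otimes\id_A$, hence maps the augmentation ideal $I_{\Pr}\otimes_k A$ of $\Pr\otimes_k A$ into $I\otimes_k A$. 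Since $u_{r-1} = \gamma_{p^{r-1}} \in I_{\Pr}$, since $k\Mrs = \Pr/\subgrp{u_{r-1}^{p^s}}$, and since $(I\otimes_k A)^n = I^n\otimes_k A$ for all $n$, it follows that
\[
\rho\bigl(u_{r-1}^{p^s}\otimes 1\bigr) = \rho(u_{r-1}\otimes 1)^{p^s} \in (I\otimes_k A)^{p^s} = I^{p^s}\otimes_k A \subseteq I^N\otimes_k A = 0 .
\]
Thus $\rho$ kills the ideal of $\Pr\otimes_k A$ generated by $u_{r-1}^{p^s}\otimes 1$, so it factors through $k\Mrs\otimes_k A$; the same computation with $s'$ in place of $s$ handles $k\M_{r;s'}\otimes_k A$. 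This yields the three identifications, and in particular shows that $k[\bfHom(\Mrs,G)]$ (hence its reduced quotient) is independent of $s$ once $s \geq s(G)$.

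Finally, for the last assertion, suppose $k$ is algebraically closed. The identification $\bsvrg = \bfHom(\Mrs,G)$ for $s \geq s(G)$ yields $k[\bsvrg]_{\mathrm{red}} = k[\bfHom(\Mrs,G)]_{\mathrm{red}}$, and unwinding \cite[Definition 2.3.6]{Drupieski:2017b} identifies this with $k[\NrG]$. I do not anticipate a serious obstacle: the single substantive input is the nilpotency of $I$ for $G$ unipotent, and the only point that calls for a moment's care is that the bound $N$ — hence the integer $s(G)$ — is a property of $kG$ alone and so works uniformly in $A$; the closing clause is purely a matter of matching conventions with \cite{Drupieski:2017b}.
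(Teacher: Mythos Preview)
Your proof is correct and follows essentially the same approach as the paper: use nilpotence of the augmentation ideal $I \subset kG$ to choose $s$ with $I^{p^s}=0$, observe that any Hopf $A$-superalgebra map $\rho$ sends $u_{r-1}\otimes 1$ into $I\otimes_k A$, and conclude that $\rho(u_{r-1}^{p^s}\otimes 1)=0$ so $\rho$ factors through $k\Mrs\otimes_k A$. The only place where the paper is slightly more explicit is the final clause: the integer $N$ in \cite[Definition 2.3.6]{Drupieski:2017b} is characterized by stabilization of \emph{field}-valued points, whereas your $s(G)$ gives stabilization of all $A$-points, so a priori $N\leq s(G)$; the paper then invokes \cite[Remark 2.3.7(2)]{Drupieski:2017b} to see that the reduced coordinate rings at levels $N$ and $s(G)$ agree---but as you say, this is just matching conventions.
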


\begin{proof}
Let $I$ be the augmentation ideal of the group algebra $kG$. By the assumption that $G$ is unipotent, $kG$ is local and $I$ is nilpotent. Let $s = s(G) \geq 0$ be the minimal integer such that $I^{p^s} = 0$. Then for any $A \in \csalg_k$, the augmentation ideal of $kG \otimes_k A$ is $I \otimes_k A$, and $(I \otimes_k A)^{p^s} = 0$. Now if $\rho: \Pr \otimes_k A \rightarrow kG \otimes_k A$ is a homomorphism of Hopf $A$-superalgebras, then $\rho(u_{r-1} \otimes 1)$ is an element of the augmentation ideal of $kG \otimes_k A$, and so $\rho(u_{r-1} \otimes 1)^{p^s} = 0$. Then $\rho$ factors through the canonical quotient map $\Pr \otimes_k A \twoheadrightarrow k\Mrs \otimes_k A$, which means that $\rho \in \bfHom(\Mrs,G)(A)$, and hence $\bsvrg(A) \subseteq \bfHom(\Mrs,G)(A)$. This implies the first assertion of the lemma.

Next, by definition, $k[\NrG] = k[\bfHom(\M_{r;N},G)]_{red}$, where $N \geq 1$ is the minimal integer such that for all $N' \geq N$ and all field extensions $K/k$, the canonical quotient map $\M_{r;N'} \twoheadrightarrow \M_{r;N}$ induces an identification $\bfHom(\M_{r;N},G)(K) = \bfHom(\M_{r;N'},G)(K)$. Evidently $s(G) \geq N$, because we get stabilizations not just of the field-valued points but of the entire $k$-superfunctors at $s(G)$. But for $k$ algebraically closed, we get by \cite[Remark 2.3.7(2)]{Drupieski:2017b} that
	\[
	k[\bfHom(\M_{r;N},G)]_{red} = k[\bfHom(\M_{r;s(G)},G)]_{red} = k[\Vrg]_{red}. \qedhere
	\]
\end{proof}

\begin{lemma} \label{lemma:Zpr2grading}
Let $G$ be a finite $k$-supergroup scheme. Then $k[\bsvrg]$ admits a nonnegative algebra grading by $\Z[\frac{p^r}{2}]$, which in turn induces a corresponding grading on $k[\Vrg]$. If $G \rightarrow G'$ is a homo\-morphism of finite $k$-super\-group schemes, then the induced algebra maps $k[\bsvr(G')] \rightarrow k[\bsvrg]$ and $k[V_r(G')] \rightarrow k[\Vrg]$ respect the gradings.
\end{lemma}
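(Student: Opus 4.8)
The plan is to read the grading directly off the explicit presentation of $k[\bsvrg]$ furnished by Lemma \ref{lemma:HomRS}, and then to check that it descends to $k[\Vrg]$ and is natural in $G$. Concretely, I would apply Lemma \ref{lemma:HomRS} with $R = \Pr$ and $S = kG$, taking for $R$ the homogeneous generating set $\set{1,u_0,\dots,u_{r-1},v}$ and fixing a homogeneous $k$-basis $\set{s_j}$ of $kG$ with $s_0 = 1$. Then $k[\bsvrg] = T/J$, where $T$ is the supercommutative polynomial algebra on coordinate functions $x_{i,j}$ (for $0 \le i \le r-1$) and $y_j$, characterized by $\rho(u_i) = \sum_j s_j \otimes \wtrho(x_{i,j})$ and $\rho(v) = \sum_j s_j \otimes \wtrho(y_j)$ for $\rho \in \bsvrg(A)$. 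I would give $x_{i,j}$ the $\Z[\frac{p^r}{2}]$-degree $p^i$ and $y_j$ the degree $\frac{p^r}{2}$; these are the degrees obtained by halving the Milnor--Moore $\Z$-grading on the Hopf superalgebra $\Pr$ recalled in Section \ref{subsec:somesupergroups} (in which $\deg(u_i) = 2p^i$ and $\deg(v) = p^r$). All of these weights are positive, so $T$ is then nonnegatively $\Z[\frac{p^r}{2}]$-graded with $k$ in degree $0$.

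The central step is to verify that the ideal $J$ is homogeneous for this grading. By the construction in Lemma \ref{lemma:HomRS}, $J$ is generated by: (i) the polynomials produced from the defining relations of $\Pr$ (the (super)commutation relations among the generators, together with $u_i^p$ for $i \le r-2$ and $u_{r-1}^p + v^2$) after the substitution above and collecting coefficients in $\set{s_j}$; and (ii) the coproduct-, counit-, and antipode-compatibility relations attached to each generator $u_i$ and $v$. Relations of type (i) are homogeneous because the defining relations of $\Pr$ are themselves homogeneous for the Milnor--Moore grading, of degrees $p^{i+1}$ for $u_i^p$ and $p^r = 2\cdot\frac{p^r}{2}$ for $u_{r-1}^p+v^2$. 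For type (ii), the generator $v$ is primitive with $S(v) = -v$, so its relations are linear in the $y_j$ and homogeneous of degree $\frac{p^r}{2}$, and similarly the counit and antipode relations for $u_i$ are linear in the $x_{i,j}$ and homogeneous of degree $p^i$. The coproduct relation for $u_i$ is the one genuine calculation: from $\Delta(u_i) = \sum_{a+b=p^i}\gamma_a\otimes\gamma_b$ (the specialization of \eqref{eq:coproductgammaell} to $\ell = p^i < p^r$), from the fact that $\gamma_a$ has weighted degree $\sum_\ell a_\ell p^\ell = a$ in the grading above so that $\deg(\gamma_a)+\deg(\gamma_b) = p^i = \deg(u_i)$, and from the fact that the $s_j$-coefficients of $\rho(\gamma_a) = \prod_\ell \rho(u_\ell)^{a_\ell}/a_\ell!$ are polynomials in the $x_{\ell,j}$ of weighted degree $a$, one gets that both sides of $(\rho\otimes\rho)\circ\Delta(u_i) = \Delta\circ\rho(u_i)$ lie in weighted degree $p^i$. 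Hence $J$ is homogeneous and $k[\bsvrg] = T/J$ inherits a nonnegative $\Z[\frac{p^r}{2}]$-grading.

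Each generator of $T$ is homogeneous for the $\Z_2$-grading as well, and $J$ is $\Z_2$-homogeneous by Lemma \ref{lemma:HomRS}, so $k[\bsvrg]$ is bigraded over $\Z_2 \times \Z[\frac{p^r}{2}]$; in particular $k[\bsvrg]_{\one}$ is spanned by $\Z[\frac{p^r}{2}]$-homogeneous elements, the ideal $\subgrp{k[\bsvrg]_{\one}}$ is $\Z[\frac{p^r}{2}]$-homogeneous, and $k[\Vrg] = k[\bsvrg]/\subgrp{k[\bsvrg]_{\one}}$ carries an induced nonnegative $\Z[\frac{p^r}{2}]$-grading. For naturality, a homomorphism $G \to G'$ dualizes to an even Hopf superalgebra map $kG \to kG'$; expressing it on homogeneous bases, the induced algebra map $k[\bsvr(G')] \to k[\bsvrg]$ (from Lemma \ref{lemma:bsvrg}) carries the coordinate function recording the $s'_\ell$-component of $\rho(u_i)$ to a $k$-linear combination of the $x_{i,j}$, and that recording the $s'_\ell$-component of $\rho(v)$ to a $k$-linear combination of the $y_j$. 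Because $\deg(x_{i,j})$ depends only on $i$ and $\deg(y_j)$ is constant in $j$, this map is graded (and even), so it descends to a graded map $k[V_r(G')] \to k[\Vrg]$; taking $G=G'$ with a change of basis of $kG$ shows incidentally that the grading is independent of the chosen basis.

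The only step that is not pure bookkeeping is the homogeneity of the coproduct-compatibility relations in (ii): it hinges on the coproduct formula \eqref{eq:coproductgammaell} restricted to the generators $u_i$ together with the identity $\deg(\gamma_\ell)=\ell$ in the (halved Milnor--Moore) grading, and on the observation --- which is also exactly what makes naturality work --- that the weights $p^i$ and $\frac{p^r}{2}$ can be assigned uniformly in the $kG$-basis index $j$. Everything else reduces to tracking degrees through the presentation of Lemma \ref{lemma:HomRS}.
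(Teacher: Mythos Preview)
Your argument is correct, but it takes a different route from the paper's. The paper constructs the grading via a monoid scheme action: it introduces the purely even bialgebra $B = k[x,y]/\subgrp{x^{p^r}-y^2}$, observes that $\Spec(B)$ acts on $\bsvrg$ on the right by precomposition with the Hopf superalgebra endomorphisms $\phi_{\mu,a}\colon \Pr \to \Pr$ defined by $\phi_{\mu,a}(u_i) = a^{p^i}u_i$ and $\phi_{\mu,a}(v) = \mu v$ (for $\mu^2 = a^{p^r}$), and then invokes \cite[Lemma~3.4.1]{Drupieski:2017a} to read off the $\Z[\frac{p^r}{2}]$-grading from the resulting $B$-comodule structure on $k[\bsvrg]$. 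Naturality is immediate because the $\Spec(B)$-action commutes with postcomposition by any $kG \to kG'$.

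Your approach instead works directly with the presentation $k[\bsvrg] = T/J$ from Lemma~\ref{lemma:HomRS}, assigns degrees to the generators, and checks by hand that $J$ is homogeneous. The two gradings coincide: under the action above your $x_{i,j}$ scales by $a^{p^i}$ and your $y_j$ by $\mu$, which are exactly the weights $p^i$ and $\frac{p^r}{2}$ in the paper's convention. Your route is more elementary in that it avoids the external monoid-action lemma, at the cost of the explicit bookkeeping with the coproduct formula \eqref{eq:coproductgammaell} for $\Delta(u_i)$. The paper's route is slicker once that lemma is available, and it packages the grading so that the description of homogeneous components as $\phi_{\mu,a}^*$-eigenspaces (Remark~\ref{remark:Zpr2components}) and the proof of Lemma~\ref{lemma:uGgradedmap} fall out directly.
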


\begin{proof}
Let $A \in \csalg_k$, and let $\mu,a \in \Azero$ with $a^{p^r} = \mu^2$. Then there exists a Hopf $A$-superalgebra homomorphism $\phi = \phi_{\mu,a}: \Pr \otimes_k A \rightarrow \Pr \otimes_k A$ such that $\phi^*(v) = v \cdot \mu$ and $\phi^*(u_i) = u_i \cdot a^{p^i}$ for $0 \leq i \leq r-1$. If $\mu',a' \in \Azero$ also satisfy $(a')^{p^r} = (\mu')^2$, then $\phi_{\mu,a} \circ \phi_{\mu',a'} = \phi_{\mu \mu', aa'}$. Next, let $B = k[x,y]/\subgrp{x^{p^r}-y^2}$ be the purely even bialgebra generated by the grouplike elements $x$ and $y$. Then $\Spec(B)$ admits the structure of a purely even unital associative monoid $k$-scheme. The set of $A$-points of this $k$-scheme is given by
	\[
	\Spec(B)(A) = \Hom_{\salg_k}(B,A) \cong \set{ (\mu,a) \in \Azero : \mu^2 = a^{p^r}},
	\]
and the monoid structure is given by $(\mu,a) \cdot (\mu',a') = (\mu\mu',aa')$. The preceding discussion shows that $\bsvrg$ admits a right monoid action by $\Spec(B)$, with the action of an $A$-point $(\mu,a) \in \Spec(B)(A)$ on $\rho \in \bsvrg(A)$ defined by $\rho \cdot (\mu,a) = \rho \circ \phi_{(\mu,a)}$. Then by \cite[Lemma 3.4.1]{Drupieski:2017a}, the algebra $k[\bsvrg]$ admits a nonnegative $\Z[\frac{p^r}{2}]$-grading (more precisely, a grading by $\Z + \Z \cdot \frac{p^r}{2}$), which induces a $\Z[\frac{p^r}{2}]$-grading on $k[\bsvrg_{\ev}] = k[\Vrg]$. Specifically, if $f^*: k[\bsvrg] \rightarrow k[\bsvrg] \otimes B$ is the comorphism corresponding to the monoid action of $\Spec(B)$ on $\Vrg$, and if we write
	\[
	f^*(a) = \sum_{i \geq 0} \left( \phi_i(a) \otimes x^i + \psi_i(a) \otimes x^i y \right)
	\]
for some unique elements $\phi_i(a),\psi_i(a) \in k[\bsvrg]$, then the functions $\phi_i: k[\bsvrg] \rightarrow k[\bsvrg]$ and $\psi_i: k[\bsvrg] \rightarrow k[\bsvrg]$ define the projection maps onto the $\Z[\frac{p^r}{2}]$-graded components of $k[\bsvrg]$ of degrees $i$ and $i+\frac{p^r}{2}$, respectively. Finally, the last statement of the lemma follows from the naturality of the action of $\Spec(B)$ on $\bsvrg$.
\end{proof}

\begin{remark} \label{remark:Zpr2components}
Let $G$ be a finite $k$-supergroup scheme, and suppose $k$ is algebraically closed. Given $\mu,a \in k$ with $a^{p^r} = \mu^2$, let $\phi_{\mu,a}: \Pr \rightarrow \Pr$ be the corresponding Hopf superalgebra homomorphism as in the proof of the lemma, and let $(\phi_{\mu,a})_*: \Vrg \rightarrow \Vrg$ be the natural transformation defined for $\rho \in \Vrg(A) = \Hom_{Hopf/A}(\Pr \otimes_k A, kG \otimes_k A)$ by $(\phi_{\mu,a})_*(\rho) = \rho \circ (\phi_{\mu,a} \otimes 1)$. Then by Yoneda's Lemma, $(\phi_{\mu,a})_*$ is induced by a $k$-algebra homomorphism $\phi_{\mu,a}^*: k[\Vrg] \rightarrow k[\Vrg]$. Now by the definition of the $\Z[\frac{p^r}{2}]$-grading on $k[\Vrg]$ and by the assumption that $k$ is algebraically closed (and hence contains infinitely many pairs $\mu,a \in k$ such that $a^{p^r} = \mu^2$), it follows that the homogeneous components of $k[\Vrg]$ are given for $i \in \N$ by
	\begin{align*}
	k[\Vrg]_i &= \{ z \in k[\Vrg] : \phi_{\mu,a}^*(z) = a^i \cdot z \text{ for all $\phi_{\mu,a}$} \}, \text{ and} \\
	k[\Vrg]_{i+\frac{p^r}{2}} &= \{ z \in k[\Vrg] : \phi_{\mu,a}^*(z) = a^i\mu \cdot z \text{ for all $\phi_{\mu,a}$} \}.
	\end{align*}
\end{remark}

\subsection{Universal homomorphisms} \label{subsec:universalhom}

\begin{definition}
Let $G$ be a finite $k$-supergroup scheme.
	\begin{enumerate}
	\item Define the \emph{universal Hopf superalgebra homomorphism from $\Pr$ to $kG$},
		\[
		u_G': \Pr \otimes_k k[\bsvrg] \rightarrow kG \otimes_k k[\bsvrg],
		\]
	to be the element of $\bsvrg(k[\bsvrg]) = \Hom_{\salg/k}(k[\bsvrg],k[\bsvrg])$ corresponding to the identity map $1_{k[\bsvrg]} : k[\bsvrg] \rightarrow k[\bsvrg]$. Then $u_G'$ is universal in the sense that if $A \in \csalg_k$ and if $\rho: \Pr \otimes_k A \rightarrow kG \otimes_k A$ is a homomorphism of Hopf $A$-super\-algebras, then there exists a unique $k$-superalgebra homomorphism $\phi: k[\bsvrg] \rightarrow A$ such that $\rho = u_G' \otimes_{\phi} A$, i.e., such that $\rho$ is obtained from $u_G'$ via base change along $\phi$.
	
	\item Let $\pi: k[\bsvrg] \twoheadrightarrow k[\Vrg]$ be the canonical quotient map, and set
		\[
		u_G = u_G' \otimes_\pi k[\Vrg] : \Pr \otimes_k k[\Vrg] \rightarrow kG \otimes k[\Vrg].
		\]
	Then $u_G$ is universal in the sense that if $A \in \calg_k$ is a purely even commutative $k$-algebra and if $\rho: \Pr \otimes_k A \rightarrow kG \otimes_k A$ is a homomorphism of Hopf $A$-super\-algebras, then there exists a unique $k$-algebra homomorphism $\phi: k[\Vrg] \rightarrow A$ such that $\rho = u_G \otimes_{\phi} A$, i.e., such that $\rho$ is obtained from $u_G$ via base change along $\phi$. We call $u_G$ the \emph{universal purely even Hopf superalgebra homomorphism from $\Pr$ to $kG$} (here the phrase \emph{purely even} refers solely to the coefficient ring $A$).
	\end{enumerate}
\end{definition}

Observe that $\Pr$ becomes a $\Z[\frac{p^r}{2}]$-graded algebra if we define the degrees of the generators by $\deg(v) = \frac{p^r}{2}$ and $\deg(u_i) = p^i$ for $0 \leq i \leq r-1$. We consider $kG$ as a $\Z[\frac{p^r}{2}]$-graded algebra concentrated in degree $0$. Then $\Pr \otimes_k k[\bsvrg]$ and $kG \otimes_k k[\bsvrg]$ become $\Z[\frac{p^r}{2}]$-graded algebras with $\deg(a \otimes b) = \deg(a) + \deg(b)$, and similarly for $\Pr \otimes_k k[\Vrg]$ and $kG \otimes_k k[\Vrg]$.

\begin{lemma} \label{lemma:uGgradedmap}
Let $G$ be a finite $k$-supergroup scheme. Then $u_G'$ and $u_G$ are homomorphisms of $\Z[\frac{p^r}{2}]$-graded algebras.
\end{lemma}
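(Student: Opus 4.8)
The plan is to prove the claim for $u_G'$ by checking it on a generating set for $\Pr \otimes_k k[\bsvrg]$, and then to deduce the claim for $u_G$ formally. By \eqref{eq:kMrpol}, $\Pr$ is generated as a $k$-algebra by $v, u_0, \dots, u_{r-1}$, so $\Pr \otimes_k k[\bsvrg]$ is generated by the $\Z[\tfrac{p^r}{2}]$-homogeneous elements $v \otimes 1$, the $u_i \otimes 1$ ($0 \le i \le r-1$), and the elements $1 \otimes z$ with $z \in k[\bsvrg]$ homogeneous. Since $u_G'$ is a unital algebra homomorphism, it suffices to check that each generator maps to a homogeneous element of the same degree. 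For $1 \otimes z$ this is immediate from the $k[\bsvrg]$-linearity of $u_G'$, which gives $u_G'(1 \otimes z) = 1_{kG} \otimes z$, an element of degree $\deg(z)$; so the substance of the lemma is to compute the degrees of $u_G'(v \otimes 1)$ and $u_G'(u_i \otimes 1)$ in $kG \otimes_k k[\bsvrg]$, where $kG$ sits in degree $0$.

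For that computation I would use the monoid action of $\Spec(B)$ from the proof of Lemma~\ref{lemma:Zpr2grading}. Fix a $k$-basis $\{s_j\}$ of $kG$ and write $u_G'(v \otimes 1) = \sum_j s_j \otimes z_j$ with $z_j \in k[\bsvrg]$. Let $f^* \colon k[\bsvrg] \to k[\bsvrg] \otimes_k B$ be the comorphism of the action, and set $R = k[\bsvrg]\otimes_k B$. Base-changing $u_G'$ along $f^*$ yields the point $\rho_m \in \bsvrg(R)$ corresponding to $f^*$ itself; by the definition of the action, $\rho_m = \rho_0 \circ \phi$, where $\rho_0 \in \bsvrg(R)$ is $u_G'$ base-changed along $k[\bsvrg] \hookrightarrow R$, $z \mapsto z \otimes 1$, and $\phi$ is the Hopf-superalgebra endomorphism of $\Pr \otimes_k R$ attached to the monoid point $(1 \otimes y, 1 \otimes x) \in \Spec(B)(R)$, so that $\phi(v \otimes 1) = v \otimes (1 \otimes y)$ and $\phi(u_i \otimes 1) = u_i \otimes (1 \otimes x^{p^i})$. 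Evaluating the equality $\rho_m = \rho_0 \circ \phi$ on $v \otimes 1$ and using $R$-linearity gives, in $kG \otimes_k k[\bsvrg] \otimes_k B$,
\[
\sum_j s_j \otimes f^*(z_j) = \sum_j s_j \otimes (z_j \otimes y),
\]
and linear independence of the $s_j$ forces $f^*(z_j) = z_j \otimes y$ for every $j$. By the description of the graded pieces in the proof of Lemma~\ref{lemma:Zpr2grading} (there $f^*(z) = \sum_{i \ge 0}(\phi_i(z) \otimes x^i + \psi_i(z) \otimes x^i y)$, with $\phi_i, \psi_i$ the projections onto degrees $i$ and $i + \tfrac{p^r}{2}$), the relation $f^*(z_j) = z_j \otimes y$ says exactly that each $z_j$ is homogeneous of degree $\tfrac{p^r}{2}$; hence $u_G'(v \otimes 1)$ is homogeneous of degree $\tfrac{p^r}{2} = \deg(v \otimes 1)$. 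Repeating the argument with $u_i \otimes 1$ in place of $v \otimes 1$ gives $f^*(z_j) = z_j \otimes x^{p^i}$ for the corresponding structure constants, so $u_G'(u_i \otimes 1)$ is homogeneous of degree $p^i = \deg(u_i \otimes 1)$. This establishes the claim for $u_G'$.

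For $u_G = u_G' \otimes_\pi k[\Vrg]$, recall from Lemma~\ref{lemma:Zpr2grading} that $\pi \colon k[\bsvrg] \twoheadrightarrow k[\Vrg]$ is a homomorphism of $\Z[\tfrac{p^r}{2}]$-graded algebras. Then $\id_{\Pr} \otimes \pi \colon \Pr \otimes_k k[\bsvrg] \to \Pr \otimes_k k[\Vrg]$ and $\id_{kG} \otimes \pi$ are surjective graded algebra maps, and $u_G$ is determined by $u_G \circ (\id_{\Pr} \otimes \pi) = (\id_{kG} \otimes \pi) \circ u_G'$. Since a surjective graded algebra map carries the degree-$d$ piece of its source onto the degree-$d$ piece of its target, and the composite on the right is graded, it follows at once that $u_G$ is graded.

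The step I expect to be the main obstacle is the middle paragraph: carefully identifying the base change of $u_G'$ along the coaction $f^*$ with $\rho_0 \circ \phi$, and then reading off the degrees of the structure constants $z_j$ from the resulting identity. Everything else is routine, and as an alternative one could run the same bookkeeping directly with the explicit presentation $k[\bsvrg] = T/J$ and the coordinate functions $x_{ij}$ of Lemma~\ref{lemma:HomRS}, noting that $u_G'(u_i \otimes 1) = \sum_j s_j \otimes x_{ij}$ and that the monoid action exhibits $x_{ij}$ as homogeneous of degree $p^i$ (and the $v$-coordinates as homogeneous of degree $\tfrac{p^r}{2}$).
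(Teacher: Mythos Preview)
Your argument is correct and follows essentially the same strategy as the paper: both proofs use the monoid action of $\Spec(B)$ from Lemma~\ref{lemma:Zpr2grading} to pin down the degrees of $u_G'(v\otimes 1)$ and $u_G'(u_i\otimes 1)$, and then pass to $u_G$ by base change along $\pi$. The one noteworthy difference is that the paper first extends scalars to an algebraically closed field and then uses the eigenvector characterization of the graded components from Remark~\ref{remark:Zpr2components} (which needs enough $k$-points $(\mu,a)$ of $\Spec(B)$ to separate degrees), whereas you work directly with the universal $R$-point $(1\otimes y,1\otimes x)$ of the monoid and read off homogeneity from the coaction formula $f^*(z_j)=z_j\otimes y$ (resp.\ $z_j\otimes x^{p^i}$); this avoids the base change to the algebraic closure and is arguably cleaner, but the underlying idea is the same.
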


\begin{proof}
Extending scalars if necessary, we may assume that the field $k$ is algebraically closed.\footnote{If $K/k$ is a field extension, then there are canonical identifications $kG \otimes_k K = KG_K$, $k[\bsvrg] \otimes_k K = K[\bsvr(G_K)]$, and $\Hbul(G,k) \otimes_k K = \Hbul(G_K,K)$; cf.\ \cite[I.1.10, I.4.13]{Jantzen:2003}.} Let $\mu,a \in k$ such that $\mu^2 = a^{p^r}$, and let $\phi = \phi_{\mu,a}$ as in Remark \ref{remark:Zpr2components}. Then by the universal property of $u_G'$, it follows that
	\[
	u_G' \circ (\phi \otimes 1) = u_G' \otimes_{\phi^*} k[\bsvrg] = (1 \otimes \phi^*) \circ u_G',
	\]
i.e., $u_G' \circ (\phi \otimes 1)$ can be obtained from $u_G'$ via base change along $\phi^*: k[\bsvrg] \rightarrow k[\bsvrg]$. Then
	\begin{align*}
	(1 \otimes \phi^*) \circ u_G'(v \otimes 1) &= \mu \cdot u_G'(v \otimes 1), & \text{and} \\
	(1 \otimes \phi^*) \circ u_G'(u_i \otimes 1) &= a^{p^i} \cdot u_G'(u_i \otimes 1) & \text{for $0 \leq i \leq r-1$.}
	\end{align*}
This implies by Remark \ref{remark:Zpr2components} that $u_G'(v \otimes 1)$ and $u_G'(u_i \otimes 1)$ are homogeneous of degrees $\frac{p^r}{2}$ and $p^i$, respectively. Thus, $u_G'$ is a homomorphism of $\Z[\frac{p^r}{2}]$-graded algebras, and the claim for $u_G$ then follows via base change along the quotient map $k[\bsvrg] \twoheadrightarrow k[\Vrg]$.
\end{proof}

As in \cite{Drupieski:2017a}, we write $H(G,k)$ for the subalgebra
	\[
	\bigoplus_{n \geq 0} \opH^n(G,k)_{\ol{n}} = \opH^{\ev}(G,k)_{\zero} \oplus \opH^{\odd}(G,k)_{\one}
	\]
of the full cohomology ring $\Hbul(G,k)$. Then $H(G,k)$ inherits the structure of a ($\Z$-graded, via the cohomological degree) ordinary commutative $k$-algebra. The subspace $\opH^{\ev}(G,k)_{\one} \oplus \opH^{\odd}(G,k)_{\zero}$ that is complementary to $H(G,k)$ in $\Hbul(G,k)$ consists entirely of nilpotent elements, so from the point of view of considering either prime or maximal ideal spectra, it makes no difference whether we work with the full ring $\Hbul(G,k)$ or with the subring $H(G,k)$; cf.\ \cite[\S2.2]{Drupieski:2016a}.

Recall from \cite[Proposition 3.3.6]{Drupieski:2017b} that if $M$ and $N$ are finite-dimensional rational $\Mr$-super\-modules, then the inclusion of categories ${}_{\Mr}\fsmod \hookrightarrow {}_{\Pr}\fsmod$ induces an isomorphism on cohomology groups $\Ext_{\Mr}^\bullet(M,N) \cong \Ext_{\Pr}^\bullet(M,N)$. Taking $M = N = k$, this implies that the cohomology rings $\Hbul(\Mr,k)$ and $\Hbul(\Pr,k)$ are isomorphic. Then by \cite[Proposition 3.2.1]{Drupieski:2017a},
	\begin{equation} \label{eq:Prcohomology}
	\Hbul(\Pr,k) \cong k[x_1,\ldots,x_r,y]/\subgrp{x_r-y^2} \gotimes \Lambda(\lambda_1,\ldots,\lambda_r),
	\end{equation}
where $x_i \in \opH^2(\Pr,k)_{\zero}$, $\lambda_i \in \opH^1(\Pr,k)_{\zero}$, $y \in \opH^1(\Pr,k)_{\one}$, and $\gotimes$ denotes the graded tensor product of graded superalgebras. Let $\ve: \Hbul(\Pr,k) \rightarrow k$ be the $k$-algebra homomorphism that maps $x_r$ and $y$ each to $1$, but that sends the other generators of $\Hbul(\Pr,k)$ each to $0$. Note that $\ve$ is a $k$-algebra homomorphism but not a $k$-superalgebra homo\-morphism because it maps the odd element $y \in \Hbul(\Pr,k)$ to the even element $1 \in k$.

\begin{definition}
Given a finite $k$-supergroup scheme $G$, define
	\[
	\psi_r: H(G,k) \rightarrow k[\Vrg]
	\]
to be the composite $k$-algebra homomorphism
	\begin{multline*} 
	\psi_r: H(G,k) \stackrel{\iota}{\longrightarrow} \Hbul(G,k) \otimes_k k[\Vrg] = \Hbul(G \otimes_k k[\Vrg],k[\Vrg]) \\
	\stackrel{u_G^*}{\longrightarrow} \Hbul(\Pr \otimes_k k[\Vrg],k[\Vrg]) = \Hbul(\Pr,k) \otimes_k k[\Vrg] \stackrel{\ve \otimes 1}{\longrightarrow} k[\Vrg],
	\end{multline*}
where $\iota$ is the base change map $z \mapsto z \otimes 1$, and the equals signs denote the canonical identifications arising via base change (cf.\ \cite[I.4.13]{Jantzen:2003}).
\end{definition}

\begin{remark} \label{remark:psir} \ 
	\begin{enumerate}
	\item When it is useful to emphasize the base field $k$ or the group $G$, we may denote $\psi_r$ by either $\psi_{r,k}$ or $\psi_{r,G}$. In particular, if $K/k$ is a field extension, then it follows via the identifications $H(G,k) \otimes_k K = H(G_K,K)$ and $k[\Vrg] \otimes_k K = K[V_r(G_K)]$ that $\psi_{r,k} \otimes_k K = \psi_{r,K}$.

	\item \label{item:psiragrees} Suppose $G$ is a purely even finite $k$-group scheme. Then it follows from Remark \ref{remark:Vrgpurelyeven} that the universal Hopf superalgebra homomorphism $u_G: \Pr \otimes_k k[\Vrg] \rightarrow kG \otimes k[\Vrg]$ factors through the (map of group algebras uniquely corresponding to the) universal homomorphism $\Gar \otimes_k k[\Vrg] \rightarrow G \otimes_k k[\Vrg]$ defined by Suslin, Friedlander, and Bendel \cite[1.14]{Suslin:1997}. This implies in turn that the homomorphism $\psi_r: H(G,k) \rightarrow k[\Vrg]$ defined above identifies with the homomorphism of the same name that is also defined in \cite[1.14]{Suslin:1997}.
	\end{enumerate}
\end{remark}

\begin{lemma} \label{lemma:psirnatural}
The homomorphism $\psi_r: H(G,k) \rightarrow k[\Vrg]$ is natural with respect to homomorphisms $\phi: G \rightarrow G'$ of finite $k$-supergroup schemes.
\end{lemma}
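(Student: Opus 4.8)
The plan is to reduce the assertion to the naturality in $G$ of the universal homomorphism $u_G$. Write $(k\phi)\colon kG \to kG'$ for the Hopf superalgebra homomorphism induced by $\phi$; write $\phi^*\colon \Hbul(G',k) \to \Hbul(G,k)$ for the induced restriction map, which carries $H(G',k)$ into $H(G,k)$ since it respects the cohomological grading and the $\Z_2$-grading; and write $\phi^\circ\colon k[V_r(G')] \to k[\Vrg]$ for the comorphism of the morphism $\Vrg \to V_r(G')$ induced by $\phi$ via Lemma \ref{lemma:bsvrg}. Naturality of $\psi_r$ is then the statement that $\psi_{r,G}\circ \phi^* = \phi^\circ \circ \psi_{r,G'}$ as homomorphisms $H(G',k) \to k[\Vrg]$.

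The main step is the identity
\[
(k\phi \otimes 1_{k[\Vrg]}) \circ u_G = u_{G'} \otimes_{\phi^\circ} k[\Vrg]
\]
of Hopf $k[\Vrg]$-superalgebra homomorphisms $\Pr \otimes_k k[\Vrg] \to kG' \otimes_k k[\Vrg]$, the right-hand side being the base change of $u_{G'}$ along $\phi^\circ$. This follows from Yoneda's Lemma and the universal properties defining $u_G$ and $u_{G'}$: under the identification $V_r(G')(k[\Vrg]) = \Hom_{\calg_k}(k[V_r(G')], k[\Vrg])$, the element $(k\phi\otimes1)\circ u_G$ is the image of $u_G$ --- which corresponds to $1_{k[\Vrg]}$ --- under the map $\Vrg(k[\Vrg]) \to V_r(G')(k[\Vrg])$ induced by $\phi$, and hence corresponds to $1_{k[\Vrg]}\circ\phi^\circ = \phi^\circ$; but the Hopf homomorphism attached to $\phi^\circ$ is, by the universal property of $u_{G'}$, precisely $u_{G'} \otimes_{\phi^\circ} k[\Vrg]$.

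Applying to this identity the functor ``restriction of cohomology along a group-algebra map'', and using that restriction along a composite is the composite of the restrictions, one obtains $u_G^* \circ (k\phi\otimes1)^* = (u_{G'}\otimes_{\phi^\circ}k[\Vrg])^*$. Under the base-change identifications $\Hbul(H\otimes_k R, R) \cong \Hbul(H,k)\otimes_k R$, the factor $(k\phi\otimes1)^*$ becomes $\phi^*\otimes 1_{k[\Vrg]}$, while $(u_{G'}\otimes_{\phi^\circ}k[\Vrg])^*$ becomes the scalar extension $u_{G'}^*\otimes_{\phi^\circ}k[\Vrg]$ of the $k[V_r(G')]$-linear map $u_{G'}^*$. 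Precomposing with $\iota$ (the map $z\mapsto z\otimes1$, which satisfies $(\phi^*\otimes1)\circ\iota = \iota\circ\phi^*$) and postcomposing with $\ve\otimes1$ (which does not involve $G$ and satisfies $\phi^\circ\circ(\ve\otimes1) = (\ve\otimes1)\circ(1\otimes\phi^\circ)$), and then unwinding the definitions of $\psi_{r,G}$ and $\psi_{r,G'}$, one gets exactly $\psi_{r,G}\circ\phi^* = \phi^\circ\circ\psi_{r,G'}$.

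The step that requires the most care is the penultimate one: $\phi^\circ$ need not be flat, so the flat base-change isomorphism \cite[I.4.13]{Jantzen:2003} does not apply verbatim to $\phi^\circ$, and one must check that the identifications $\Hbul(H\otimes_k R,R)\cong \Hbul(H,k)\otimes_k R$ and the restriction maps are compatible with scalar extension along $\phi^\circ$. I would argue this at the level of cochains: the cobar complex $C^\bullet(kH)$ of a finite-dimensional Hopf superalgebra $kH$ is a complex of $k$-vector spaces, the cobar complex computing $\Hbul(H\otimes_k R,R)$ is $C^\bullet(kH)\otimes_k R$, and restriction maps are induced by cochain maps that are strictly compatible with scalar extension. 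Since over the field $k$ every bounded-below complex of vector spaces is chain homotopy equivalent to the direct sum of its cohomology (with zero differential) and a contractible complex, tensoring such a decomposition first with $k[V_r(G')]$ and then with $k[\Vrg]$ shows that taking cohomology commutes with the scalar extension along $\phi^\circ$ here, notwithstanding its non-flatness; the chain-level identity then descends to cohomology exactly as used above, and the lemma follows.
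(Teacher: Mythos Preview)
Your argument is correct and spells out in detail what the paper merely cites from \cite[Lemma 6.2.1]{Drupieski:2017a}: the naturality of $u_G$ via its universal property, followed by passing to cohomology and sandwiching with $\iota$ and $\ve\otimes 1$. One remark: your final paragraph's worry about non-flatness of $\phi^\circ$ is unnecessary, because the identifications $\Hbul(H\otimes_k R,R)\cong \Hbul(H,k)\otimes_k R$ are obtained by base change from the field $k$ (always flat), and the relevant compatibility with $\phi^\circ$ is then a formal identity of maps between tensor products of $k$-vector spaces with $k$-algebras---no exactness issue arises.
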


\begin{proof}
This follows by the same line of reasoning as in the proof of \cite[Lemma 6.2.1]{Drupieski:2017a}.
\end{proof}

\begin{proposition} \label{prop:psirgradedmap}
Let $G$ be a finite $k$-supergroup scheme. Then
	\[
	\psi_r: H(G,k) \rightarrow k[\Vrg]
	\]
is a homomorphism of graded $k$-algebras that multiplies degrees by $\frac{p^r}{2}$.
\end{proposition}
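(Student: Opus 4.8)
The plan is to introduce, alongside the cohomological grading, an \emph{internal} grading on $\Hbul(\Pr,k)$ induced by the $\Z[\frac{p^r}{2}]$-grading on $\Pr$ with $\deg(v) = \frac{p^r}{2}$ and $\deg(u_i) = p^i$, and to track both gradings through the three maps composing $\psi_r$. Since $k$ is the graded residue field of $\Pr$, the algebra $\Hbul(\Pr,k) = \Ext_{\Pr}^\bullet(k,k)$ carries such an internal grading, computed from a graded $\Pr$-free resolution of $k$. The only features I need are: this internal grading is compatible with the cohomological grading; the odd generator $y$ in \eqref{eq:Prcohomology}, being dual to $v$, has internal degree $-\frac{p^r}{2}$; and the homomorphism $\ve: \Hbul(\Pr,k)\to k$ — which sends every generator except $x_r = y^2$ and $y$ to $0$, and $x_r,y$ to $1$ — computes, on the cohomological degree $n$ component, the coefficient of $y^n$. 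In particular $\ve$ annihilates every internally homogeneous element of cohomological degree $n$ whose internal degree is not $-n\frac{p^r}{2}$.

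Next I would verify that $u_G^*$ preserves the ``total'' internal degree. Equip $\Pr\otimes_k k[\Vrg]$ and $kG\otimes_k k[\Vrg]$ with the $\Z[\frac{p^r}{2}]$-gradings of the discussion preceding Lemma \ref{lemma:uGgradedmap}, with $kG$ concentrated in internal degree $0$ and $k[\Vrg]$ carrying the grading of Lemma \ref{lemma:Zpr2grading}. By Lemma \ref{lemma:uGgradedmap} the map $u_G$ is a homomorphism of graded algebras, and being a Hopf algebra homomorphism it is augmented. Choosing a graded $\Pr\otimes_k k[\Vrg]$-free resolution of $k[\Vrg]$ obtained by base change from a graded $\Pr$-free resolution of $k$, one checks that the base-change identifications used in the definition of $\psi_r$,
\[
\Hbul(G,k)\otimes_k k[\Vrg] = \Hbul(G\otimes_k k[\Vrg], k[\Vrg]), \qquad \Hbul(\Pr\otimes_k k[\Vrg], k[\Vrg]) = \Hbul(\Pr,k)\otimes_k k[\Vrg],
\]
are compatible with the internal gradings — where on the right the total internal degree of a simple tensor is the internal degree of the first factor plus the $k[\Vrg]$-degree of the second — and hence that $u_G^*$ preserves this total internal degree. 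This compatibility of the base-change identifications with the internal gradings is the one genuinely technical point; the rest is bookkeeping.

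Finally I would run the argument. Fix $z\in\opH^n(G,k)_{\ol{n}}\subseteq H(G,k)$. Since $kG$, and hence $\Hbul(G,k)$, is concentrated in internal degree $0$, the element $\iota(z) = z\otimes 1$ has cohomological degree $n$ and total internal degree $0$; by the previous paragraph so does $u_G^*(z\otimes 1)\in\Hbul(\Pr,k)\otimes_k k[\Vrg]$. Write $u_G^*(z\otimes 1) = \sum_j c_j\otimes f_j$ with each $c_j\in\Hbul(\Pr,k)$ internally homogeneous of cohomological degree $n$ and internal degree $e_j$, and $f_j\in k[\Vrg]$ of degree $-e_j$. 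Applying $\ve\otimes 1$ kills each term with $\ve(c_j) = 0$; for the surviving terms the first paragraph forces $e_j = -n\frac{p^r}{2}$, so $f_j\in k[\Vrg]_{np^r/2}$. Hence $\psi_r(z) = (\ve\otimes 1)(u_G^*(z\otimes 1))\in k[\Vrg]_{np^r/2}$, i.e., $\psi_r$ multiplies degrees by $\frac{p^r}{2}$. Since $\psi_r$ is the composite of the $k$-algebra homomorphisms $\iota$, $u_G^*$, and $\ve\otimes 1$, it is in particular a homomorphism of $k$-algebras, which completes the proof.
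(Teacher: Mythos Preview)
Your proposal is correct and follows essentially the same approach as the paper: both introduce the internal $\Z[\frac{p^r}{2}]$-grading on $\Hbul(\Pr,k)$ coming from the grading on $\Pr$, invoke Lemma~\ref{lemma:uGgradedmap} to see that $u_G^*\circ\iota$ preserves total internal degree, and then observe that since $y^n$ has internal degree $-n\cdot\frac{p^r}{2}$ its coefficient in $(u_G^*\circ\iota)(z)$ must lie in $k[\Vrg]_{np^r/2}$. The only presentational difference is that the paper writes out the explicit monomial basis expansion \eqref{eq:uGiotaz} and picks off the $y^n$-coefficient directly, whereas you phrase the same step more abstractly via the behaviour of $\ve$ on internally homogeneous elements; you also flag the base-change compatibility as the one technical point, which the paper leaves implicit.
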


\begin{proof}
Given $z \in \opH^n(G,k)_{\ol{n}} \subset H(G,k)$, we can write
	\begin{equation} \label{eq:uGiotaz}
	(u_G^* \circ \iota)(z) = \sum_{\bs{i},\bs{j},v} x^{\bs{i}}\lambda^{\bs{j}}y^v \otimes f_{\bs{i},\bs{j},v}(z) \in \opH^n(\Pr,k) \otimes_k k[\Vrg]
	\end{equation}
for some unique elements $f_{\bs{i},\bs{j},v}(z) \in k[\Vrg]$. Here $x^{\bs{i}} = x_1^{i_1} \cdots x_{r-1}^{i_{r-1}}$, $\lambda^{\bs{j}} = \lambda_1^{j_1} \cdots \lambda_r^{j_r}$, and the indices run over all nonnegative integers such that $(\sum_{\ell = 1}^{r-1} 2i_\ell + j_\ell) + j_r + v = n$ and $0 \leq j_1,\ldots,j_r \leq 1$. Note that $x_r$ is omitted from these expressions, but the generator $y$ is allowed to appear with an arbitrary nonnegative exponent; since $x_r = y^2$, the given monomials form a basis for $\opH^n(\Pr,k)$. Then $\psi_r(z)$ is the coefficient of $y^n$.

Next, the cohomology rings $\Hbul(G,k)$ and $\Hbul(\Pr,k)$ each inherit an internal $\Z[\frac{p^r}{2}]$-grading from the $\Z[\frac{p^r}{2}]$-algebra gradings on $kG$ and $\Pr$. Then the cohomology ring $\Hbul(G,k)$ is concentrated in internal degree $0$, while the generator $y \in \opH^1(\Pr,k)_{\one}$ is of internal degree $-\frac{p^r}{2}$, and $x_i \in \opH^2(\Pr,k)_{\zero}$ is of internal degree $-p^i$.\footnote{To see this, observe that the grading on $\Pr$ is induced via duality from a corresponding grading on $k[\Mr]$, defined by $\deg(\tau) = -\frac{p^r}{2}$, $\deg(\theta) = 1$, and $\deg(\sigma_i) = ip^{r-1}$ for $i \geq 0$. Now apply the isomorphism $\Hbul(\Pr,k) \cong \Hbul(\Mr,k)$ and the explicit description of the generators for $\Hbul(\Mr,k)$ given in \cite[Proposition 3.2.1]{Drupieski:2017a}.} Since $u_G$ is a map of $\Z[\frac{p^r}{2}]$-graded algebras by Lemma \ref{lemma:uGgradedmap}, the map $u_G^* \circ \iota: H(G,k) \rightarrow \Hbul(\Pr,k) \otimes_k k[\Vrg]$ must preserve both the cohomological and internal degrees of elements. Now $z \in \opH^n(G,k)_{\ol{n}}$ is of internal degree $0$ and $y^n \in \opH^n(\Pr,k)$ is of internal degree $-n \cdot \frac{p^r}{2}$, so it follows that the coefficient of $y^n$ in $(u_G^* \circ \iota)(z)$ is of internal degree $n \cdot \frac{p^r}{2}$. Thus, $\psi_r$ is a graded map that multiplies degrees by $\frac{p^r}{2}$.
\end{proof}

\begin{lemma} \label{lemma:ontoprthpowers}
Let $G$ be a infinitesimal unipotent $k$-super\-group scheme of height $\leq r$. Then the map $\psi_r: H(G,k) \rightarrow k[\Vrg]$ contains in its image the $p^r$-th power of each element of $k[\Vrg]$.
\end{lemma}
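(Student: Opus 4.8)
The plan is to follow, in broad strokes, the strategy of Suslin, Friedlander, and Bendel \cite[\S5]{Suslin:1997a}. First come two routine reductions. Since $\psi_{r,k}\otimes_k K = \psi_{r,K}$ and $k[\Vrg]\otimes_k K = K[V_r(G_K)]$ for any field extension $K/k$ (Remark \ref{remark:psir}), and since any equation $z^{p^r} = \psi_{r,K}(w)$ over $K$ decomposes, upon expressing $w$ in terms of a $k$-basis of $K$ containing $1$, into an equation of the same form over $k$, we may assume that $k$ is algebraically closed. Next, the set $S := \set{z \in k[\Vrg] : z^{p^r}\in\im(\psi_r)}$ is a $k$-subalgebra of $k[\Vrg]$: it is closed under multiplication and $k$-scalar multiples because $\im(\psi_r)$ is a $k$-subalgebra, and it is closed under addition because $p^r$ is a power of $p = \operatorname{char}(k)$, so $(z+z')^{p^r} = z^{p^r}+(z')^{p^r}$ in the commutative ring $k[\Vrg]$. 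Thus it suffices to exhibit a set of $k$-algebra generators of $k[\Vrg]$ lying in $S$.

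One might hope to combine this with the decomposition $\bsvrg(k) = \bigcup_{E\leq G}\bsvr(E)(k)$ from Lemma \ref{lemma:bsvrg} and reduce to the multiparameter supergroups $E$, for which $\psi_r$ can be computed directly; but there is no evident way to assemble preimages from the various rings $H(E,k)$ into a single class in $H(G,k)$ (for one thing, the restriction maps $H(G,k)\rightarrow H(E,k)$ need not be surjective). Instead I would reduce to a general linear supergroup. A faithful finite-dimensional rational representation of $G$, together with the fact that $G$ has height $\leq r$, provides a closed embedding $G\hookrightarrow\GLmnr$ for suitable $m,n$; by Lemma \ref{lemma:bsvrg}\eqref{item:bsvrgaffine} this induces a closed embedding $\bsvrg\hookrightarrow\bsvr(\GLmnr)$, hence a surjection of coordinate rings $q: k[V_r(\GLmnr)]\twoheadrightarrow k[\Vrg]$, and by the naturality of $\psi_r$ (Lemma \ref{lemma:psirnatural}) one has $q\circ\psi_{r,\GLmnr} = \psi_{r,G}\circ\res$ with $\res: H(\GLmnr,k)\rightarrow H(G,k)$ the restriction map. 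Given $z\in k[\Vrg]$, choose $\wt{z}\in k[V_r(\GLmnr)]$ with $q(\wt{z}) = z$; if $\wt{z}^{\,p^r} = \psi_{r,\GLmnr}(w)$, then applying the ring homomorphism $q$ yields $z^{p^r} = \psi_{r,G}(\res(w))\in\im(\psi_r)$. So it suffices to prove the lemma for $G = \GLmnr$. (This reduction uses nothing about unipotence; that hypothesis matters only for the results in which the lemma is subsequently used.)

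The case $G = \GLmnr$ is the heart of the matter. Here one invokes the explicit descriptions, from \cite{Drupieski:2017a,Drupieski:2017b}, of the affine superscheme $\bsvr(\GLmnr)$ — a closed subscheme of the space of tuples $(\beta_0,\dots,\beta_{r-1},\gamma)$ with $\beta_i\in\glzero$ and $\gamma\in\glone$, cut out by the relations of $\Pr$, whose even quotient $V_r(\GLmnr)$ has coordinate ring generated by the entry functions of the $\beta_i$ and $\gamma$ — and of the cohomology ring $\Hbul(\GLmnr,k)$, together with the presentation \eqref{eq:Prcohomology} of $\Hbul(\Pr,k)$ and the definition of $\psi_r$ via $u_{\GLmnr}$ and $\ve\otimes 1$. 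Mimicking the classical argument, one uses distinguished ``universal'' cohomology classes on $\GLmnr$ — the super analogue of the Suslin--Friedlander--Bendel class, with coefficients in a Frobenius twist of $\glmn$, together with an odd companion detecting the $v$-coordinate — to produce, by the standard manipulations, scalar-valued classes in $\Hbul(\GLmnr,k)$ that $\psi_r$ carries to nonzero scalar multiples of powers $Y^{p^a}$ (with $a\leq r$) of the entry-function generators $Y$. Since $\psi_r$ is a ring homomorphism and $(Y^{p^a})^{p^{r-a}} = Y^{p^r}$, it follows that $Y^{p^r}\in\im(\psi_r)$ for every such generator $Y$, which by the first paragraph completes the proof.

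The step I expect to be the main obstacle is this last one: constructing the universal classes for $\GLmnr$ and computing their $\psi_r$-images. The even part essentially recapitulates the already intricate $GL_n$-computation of Suslin, Friedlander, and Bendel, which rests on a fine understanding of $\Hbul(GL_{n(r)},k)$; the genuinely new difficulty is the odd coordinate $\gamma$ and the relation $\beta_{r-1}^p + \gamma^2 = 0$ it satisfies, which forces one to track the odd generator $y\in\opH^1(\Pr,k)_{\one}$ with $y^2 = x_r$ through $u_{\GLmnr}^*$ and to identify an odd cohomology class on $\GLmnr$ accounting for it — a feature with no analogue in the classical setting.
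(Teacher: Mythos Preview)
Your outline is plausible in spirit but departs substantially from the paper's argument, and its crucial step is left unfinished.

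The paper's proof is much shorter and does \emph{not} reduce to $\GLmnr$. Instead it exploits the unipotence hypothesis directly: by Lemma~\ref{lemma:kbsvrg=kNrG}, unipotence forces $\bsvrg = \bfHom(\Mrs,G)$ for some fixed $s = s(G)$, whence $k[\Vrg] = k[V_{r;s}(G)]$ in the notation of \cite[Definition 3.3.7]{Drupieski:2017a}. The authors then verify that, under this identification, the present $\psi_r$ coincides with the map $\psi_{r;s}$ of \cite[\S6.2]{Drupieski:2017a} (by checking that the universal homomorphism $u_G$ factors through $k\Mrs$ and hence agrees with the universal $\Mrs$-homomorphism of \cite[Definition 3.3.8(2)]{Drupieski:2017a}), and quote \cite[Lemma 3.5.3]{Drupieski:2017b} for the $p^r$-th-power containment for $\psi_{r;s}$. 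So the unipotence hypothesis is used essentially, not incidentally: it is precisely what allows the earlier results to be invoked verbatim.

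Your reduction to $\GLmnr$ is formally valid, but it lands you in exactly the situation the paper's method cannot handle: $\GLmnr$ is not unipotent, so Lemma~\ref{lemma:kbsvrg=kNrG} is unavailable and $V_r(\GLmnr)$ need not coincide with any $V_{r;s}(\GLmnr)$ (homomorphisms $\Pr\to k\GLmnr$ need not factor through any $k\Mrs$, since the image of $u_{r-1}$ need not be nilpotent). Consequently you cannot simply quote the computation for $\psi_{r;s}$ from \cite{Drupieski:2017a,Drupieski:2017b}; you would have to redo it for $\psi_r$ on the larger ring $k[V_r(\GLmnr)]$. The universal-class argument you sketch is indeed essentially the content of \cite[\S6]{Drupieski:2017a} and the proof of \cite[Lemma 3.5.3]{Drupieski:2017b}, so the approach is not unreasonable and would, if it went through, yield the stronger conclusion with no unipotence hypothesis on $G$. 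But as you acknowledge, you have not actually carried out this step, and the odd coordinate introduces genuine extra work beyond the classical case. In effect your plan reproves a large piece of the earlier papers in a slightly generalized setting, whereas the paper arranges matters so that those results can be cited as-is.
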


\begin{proof}
Let $s = s(G)$ be the integer of Lemma \ref{lemma:kbsvrg=kNrG}, so that $\bsvrg = \bfHom(\Mrs,G)$. Then considering the coordinate algebras of the underlying purely even subschemes, we get in the notation of \cite[Definition 3.3.7]{Drupieski:2017a} the equality $k[\Vrg] = k[\Vrs(G)]$. Next, the proof of \cite[Lemma 3.5.3]{Drupieski:2017b} shows that the homomorphism $\psi_{r;s}: H(G,k) \rightarrow k[\Vrs(G)]$ defined in \cite[\S6.2]{Drupieski:2017a} contains in its image all $p^r$-th powers. So it suffices to check $\psi_r = \psi_{r;s}$. To see this, first observe by the definition of the integer $s = s(G)$ that $u_G: \Pr \otimes_k k[\Vrg] \rightarrow kG \otimes_k k[\Vrg]$ factors through the canonical quotient map $\Pr \otimes_k k[\Vrg] \twoheadrightarrow k\Mrs \otimes_k k[\Vrg]$, and hence induces a Hopf superalgebra homomorphism $\wt{u}_G: k\Mrs \otimes_k k[\Vrg] \rightarrow kG \otimes_k k[\Vrg]$. By duality, this corresponds to a homomorphism of $k[\Vrg]$-super\-group schemes $\wt{u}_G: \Mrs \otimes_k k[\Vrg] \rightarrow G \otimes_k k[\Vrg]$. By the universality of $u_G$, it follows that $\wt{u}_G$ is equal to the universal purely even supergroup homo\-mor\-phism $\Mrs \otimes_k k[\Vrs(G)] \rightarrow G \otimes_k k[\Vrs(G)]$ defined in \cite[Definition 3.3.8(2)]{Drupieski:2017a}. This implies by the definitions of $\psi_r$ and $\psi_{r;s}$ that $\psi_r = \psi_{r;s}$.
\end{proof}

\subsection{The support scheme of a module} \label{subsec:definesupportscheme}

Let $G$ be a finite $k$-supergroup scheme. At this point we consider the purely even scheme $\Vrg$ as $\Spec(k[\Vrg])$, the prime ideal spectrum of the commutative $k$-algebra $k[\Vrg]$, in addition to thinking of $\Vrg$ as its functor of points,
	\[
	\Vrg: \calg_k \rightarrow \sets, \quad A \mapsto \Vrg(A) \cong \Hom_{\alg/k}(k[\Vrg],A).
	\]
Given a point $\fs \in \Vrg$ (i.e., a prime ideal $\fs \subset k[\Vrg]$), let $k(\fs)$ be the residue field of $\Vrg$ at $\fs$ (i.e., the residue field of the local ring $k[\Vrg]_{\fs}$, or equivalently the field of fractions of the integral domain $k[\Vrg]/\fs$). Then the canonical $k$-algebra homomorphism $\phi_{\fs}: k[\Vrg] \rightarrow k(\fs)$ defines the $k(\fs)$-point $\nu_{\fs} := u_G \otimes_{\phi_{\fs}} k(\fs) \in \Vrg(k(\fs))$. Conversely, if $K/k$ is a field extension and if $\phi \in \Hom_{\alg/k}(k[\Vrg],K)$, then $\fs = \ker(\phi)$ is a prime ideal in $k[\Vrg]$, and the induced $k$-algebra homomorphism $k[\Vrg]/\fs \hookrightarrow K$ extends to a field embedding $k(\fs) \hookrightarrow K$. Then the Hopf superalgebra homomorphism $\nu_{\phi} := u_G \otimes_\phi K : \Pr \otimes_k K \rightarrow kG \otimes_k K$ corresponding to $\phi$ can be obtained from $\nu_{\fs}$ via base change along the field embedding $k(\fs) \hookrightarrow K$.

Write $\Pone = k[u,v]/\subgrp{u^p+v^2}$, and let $\iota: \Pone \rightarrow \Pr$ be the superalgebra map defined by $\iota(u) = u_{r-1}$ and $\iota(v) = v$. Note that $\iota$ is a map of Hopf superalgebras only if $r=1$. Now given a $kG$-super\-module $M$ and a point $\fs \in \Vrg$, we consider $M \otimes_k k(\fs)$ as a $\Pone \otimes_k k(\fs)$-super\-module by pulling back along the composite $k(\fs)$-superalgebra homomorphism
	\[
	\nu_{\fs} \circ (\iota \otimes 1): \Pone \otimes_k k(\fs) \hookrightarrow \Pr \otimes_k k(\fs) \rightarrow kG \otimes_k k(\fs).
	\]

By Lemma \ref{lemma:Zpr2grading}, the ring $k[\Vrg]$ is a $\Z[\frac{p^r}{2}]$-graded $k$-algebra. We say that a Zariski closed subset $X \subset \Vrg$ is \emph{conical} if $X$ is defined by a $\Z[\frac{p^r}{2}]$-homogeneous ideal.

\begin{proposition} \label{prop:VrgMclosed}
Let $G$ be a finite $k$-supergroup scheme, and let $M$ be a $kG$-supermodule. Then
	\[
	\Vrg_M := \set{ \fs \in \Vrg : \pd_{\Pone \otimes_k k(\fs)}(M \otimes_k k(\fs)) = \infty }
	\]
is a Zariski closed conical subset of $\Vrg$. If $f: H \rightarrow G$ is a homomorphism of finite $k$-supergroup schemes, and if $M$ is also considered as a $kH$-supermodule via pullback along $f$, then the morphism of schemes $f_*: V_r(H) \rightarrow \Vrg$ induced by $f$ satisfies
	\begin{equation} \label{eq:invimageVrgM}
	f_*^{-1}(\Vrg_M) = V_r(H)_M.
	\end{equation}
In particular, if $f$ and hence also $f_*$ is a closed embedding, and if we identify $V_r(H)$ with its image in $\Vrg$, then $V_r(H)_M = V_r(H) \cap \Vrg_M$.
\end{proposition}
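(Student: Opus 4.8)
The plan is to reduce every assertion to Proposition~\ref{prop:Poneprojdim} and to the flat base change isomorphism of Remark~\ref{remark:flatbasechange}. Throughout I take $M$ to be finite-dimensional, as in the definition of $\Vrg_M$, so that the matrix factorization resolution \eqref{eq:Bfreeresolution} and its base change are available. The one fact I will use over and over is this: for any field $K/k$ of odd characteristic, a finite-dimensional $\Pone \otimes_k K$-supermodule $V$ satisfies $\pd_{\Pone \otimes_k K}(V) = \infty$ if and only if $\Ext_{\Pone \otimes_k K}^2(V,V) \neq 0$, if and only if $1_V \cup y^2 \neq 0$ in that group --- the first equivalence by the last two lines of the proof of Proposition~\ref{prop:Poneprojdim} (if the cup product vanishes then $\pd \leq 1$, hence $\Ext^{\geq 2}(V,-) = 0$), the second by the proposition itself.

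\emph{Closedness.} The idea is to compute $\Ext^2$ in a family over $k[\Vrg]$. Set $A\Pone = \Pone \otimes_k k[\Vrg]$, let $F = P_\bullet \otimes_k k[\Vrg]$ be the base change of \eqref{eq:Bfreeresolution}, and form the cochain complex $C^\bullet = \Hom_{A\Pone}(M \otimes_k F, M \otimes_k k[\Vrg])$, where $M \otimes_k k[\Vrg]$ carries the $A\Pone$-structure pulled back along $u_G$ and $\iota \otimes 1$. By Remark~\ref{remark:flatbasechange}, $M \otimes_k F$ resolves $M \otimes_k k[\Vrg]$ by free $A\Pone$-supermodules of finite rank, so each $C^i$ is a finitely generated free $k[\Vrg]$-module and each differential is $k[\Vrg]$-linear. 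For a prime $\fs \in \Vrg$, the fiber $C^\bullet \otimes_{k[\Vrg]} k(\fs)$ is canonically $\Hom_{\Pone \otimes_k k(\fs)}(M \otimes_k P_\bullet \otimes_k k(\fs), M \otimes_k k(\fs))$: indeed $\Hom$ out of a finite-rank free module commutes with arbitrary base change, and $M \otimes_k P_\bullet \otimes_k k(\fs)$ is a free resolution of $M \otimes_k k(\fs)$ over $\Pone \otimes_k k(\fs)$ (flat base change $k \to k(\fs)$ for $P_\bullet$, then the K\"unneth/cocommutativity argument from the proof of Proposition~\ref{prop:Poneprojdim}). Hence $\opH^2(C^\bullet \otimes_{k[\Vrg]} k(\fs)) \cong \Ext_{\Pone \otimes_k k(\fs)}^2(M \otimes_k k(\fs), M \otimes_k k(\fs))$, and by the first paragraph $\fs \in \Vrg_M$ if and only if $\dim_{k(\fs)} \opH^2(C^\bullet \otimes_{k[\Vrg]} k(\fs)) \geq 1$. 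Writing $d^1, d^2$ for the two differentials flanking degree $2$, this dimension equals the rank of the free module $C^2$ minus the ranks of the specializations $d^1 \otimes k(\fs)$ and $d^2 \otimes k(\fs)$; those ranks drop only on closed subsets (vanishing loci of the appropriate minors), so the dimension can only jump up on closed subsets, and $\Vrg_M$ is Zariski closed.

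\emph{Conicality.} Recall from Lemma~\ref{lemma:Zpr2grading} that the $\Z[\tfrac{p^r}{2}]$-grading on $k[\Vrg]$ encodes the right action of the monoid scheme $\Spec(B)$, $B = k[x,y]/\subgrp{x^{p^r}-y^2}$, where an $A$-point $(\mu,a)$ with $\mu^2 = a^{p^r}$ acts by $\rho \mapsto \rho \circ \phi_{\mu,a}$. Composing $\phi_{\mu,a}$ with $\iota \colon \Pone \to \Pr$ gives $\phi_{\mu,a} \circ (\iota \otimes 1) = (\iota \otimes 1) \circ \theta_{\mu,a}$, where $\theta_{\mu,a}$ is the $A$-algebra endomorphism of $\Pone \otimes_k A$ with $u \mapsto u \cdot a^{p^{r-1}}$ and $v \mapsto v \cdot \mu$ (well-defined since $a^{p^r} = \mu^2$), and $\theta_{\mu,a}$ is an automorphism whenever $\mu$ and $a$ are units. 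So for a unit $(\mu,a)$, the $\Pone$-supermodule defining membership in $\Vrg_M$ at $\rho \circ \phi_{\mu,a}$ differs from the one at $\rho$ by precomposition with an algebra automorphism, hence has the same projective dimension; thus $\Vrg_M$ is stable under the unit subgroup of $\Spec(B)$. Extending scalars to $\overline{k}$ (legitimate because homogeneity of the vanishing ideal may be tested after the faithfully flat extension $k \to \overline{k}$, using that $\Vrg_M \otimes_k \overline{k} = V_r(G_{\overline{k}})_{M \otimes \overline{k}}$, which follows from the naturality statement applied to $G_{\overline{k}} \to G$), the unit subgroup is large enough that stability of a closed subset under its action forces the defining ideal to be $\Z[\tfrac{p^r}{2}]$-homogeneous, just as in the classical correspondence between conical subsets and homogeneous ideals. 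Hence $\Vrg_M$ is conical.

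\emph{Naturality.} Let $f \colon H \to G$ induce $kf \colon kH \to kG$ and $f_* \colon V_r(H) \to \Vrg$, $\rho \mapsto (kf \otimes 1) \circ \rho$. Fix $\fs \in V_r(H)$, put $\ft = f_*(\fs) \in \Vrg$, and note the induced residue map is a field embedding $k(\ft) \hookrightarrow k(\fs)$. Using the universal property of $u_G$ one checks $f_*(\nu_\fs^H) = \nu_\ft^G \otimes_{k(\ft)} k(\fs)$; consequently the $\Pone \otimes_k k(\fs)$-supermodule whose infinite projective dimension defines $\fs \in V_r(H)_M$ --- namely $M$ pulled back along $\nu_\fs^H$, then $kf \otimes 1$, then $\iota \otimes 1$ --- is exactly the base change along $k(\ft) \hookrightarrow k(\fs)$ of the $\Pone \otimes_k k(\ft)$-supermodule defining $\ft \in \Vrg_M$. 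Applying Remark~\ref{remark:flatbasechange} to the flat extension $k(\ft) \hookrightarrow k(\fs)$ and to the class $1 \cup y^2$ (which is compatible with base change), this class is nonzero over $k(\fs)$ if and only if it is nonzero over $k(\ft)$, so by Proposition~\ref{prop:Poneprojdim}, $\fs \in V_r(H)_M$ if and only if $\ft = f_*(\fs) \in \Vrg_M$; this is \eqref{eq:invimageVrgM}. If moreover $f$ is a closed embedding, then so is $f_*$ by Lemma~\ref{lemma:bsvrg}\eqref{item:bsvrgaffine}, and the preimage of $\Vrg_M$ under the inclusion $V_r(H) \hookrightarrow \Vrg$ is $V_r(H) \cap \Vrg_M$, which gives the last assertion. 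I expect the main obstacle to be the closedness step: exhibiting one finite complex $C^\bullet$ of free $k[\Vrg]$-modules whose fiber at \emph{every} prime genuinely computes $\Ext^\bullet_{\Pone \otimes_k k(\fs)}$ --- this is precisely why the explicit matrix-factorization resolution \eqref{eq:Bfreeresolution} and its good base-change behaviour (Remark~\ref{remark:flatbasechange}) are indispensable --- after which semicontinuity of matrix rank closes the argument; conicality and naturality are then essentially base-change bookkeeping.
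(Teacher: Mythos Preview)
Your proof is correct and reaches the same conclusions, but it takes a genuinely different route from the paper in two places, and one step needs a small repair.

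\textbf{Closedness.} The paper works directly with the single element $1_{M_A}\cup y^2\in\Ext^2_{A\Pone}(M_A,M_A)$ (where $A=k[\Vrg]$): it sets $W=A\cdot(1_{M_A}\cup y^2)$ and $J_M=\ann_A(W)$, observes that $(\phi_\fs)_*$ carries the generator to $1_{M_{k(\fs)}}\cup y^2$, and invokes Nakayama to identify $\Vrg_M$ with the zero locus $V(J_M)$. Your upper-semicontinuity argument on $\dim_{k(\fs)}\opH^2(C^\bullet\otimes k(\fs))$ is equally valid and arguably more robust, since it avoids any question about how the base-change map on $\Ext^2$ interacts with the cyclic submodule $W$. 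The trade-off is that you must assume $M$ finite-dimensional so that each $C^i$ is finite free over $k[\Vrg]$; the paper's annihilator argument does not use this.

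\textbf{Conicality.} The paper's argument is much shorter: having produced $J_M$ as an annihilator, it simply notes that $1_{M_A}\cup y^2$ is homogeneous of internal degree $-p^r$ for the $\Z[\tfrac{p^r}{2}]$-grading, so its annihilator is homogeneous. Your stability-under-units argument is correct in outline, but the phrase ``the naturality statement applied to $G_{\bar k}\to G$'' is not accurate: base change is not a homomorphism of finite $k$-supergroup schemes, so the naturality clause of the proposition does not literally apply. What you actually need---and what your naturality paragraph already proves---is that the condition $1_{M_{k(\fs)}}\cup y^2\neq 0$ is preserved under field extensions of $k(\fs)$, which is exactly Remark~\ref{remark:flatbasechange}. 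Citing that directly fixes the argument.

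\textbf{Naturality.} The paper simply checks on $K$-points: for $\nu\in V_r(H)(K)$ the two $K\Pone$-structures on $M_K$ (via $\nu$ and via $f_*(\nu)=(kf\otimes 1)\circ\nu$) are literally the same module, so membership agrees trivially. Your prime-by-prime argument via the residue-field embedding $k(\ft)\hookrightarrow k(\fs)$ and flat base change is correct but uses more machinery than needed.
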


\begin{proof}
Let $A = k[\Vrg]$. Set $M_A = M \otimes_k A$, and given a point $\fs \in \Vrg$, set $M_{k(\fs)} = M \otimes_k k(\fs)$. We consider $M_A$ as a $\Pone \otimes_k A$-supermodule by pulling back along the composite $A$-superalgebra homomorphism
	\begin{equation} \label{eq:PoneApullback}
	u_G \circ (\iota \otimes_k 1): \Pone \otimes_k A \hookrightarrow \Pr \otimes_k A \rightarrow kG \otimes_k A.
	\end{equation}
Identify the cohomology ring $\Hbul(\Pone,k)$ as in \eqref{eq:Prcohomology} with $r=1$. For each point $\fs \in \Vrg$, the canonical $k$-algebra homomorphism $\phi_{\fs} : A \rightarrow k(\fs)$ induces a ring homomorphism
	\[
	(\phi_{\fs})_*: \Ext_{\Pone \otimes_k A}^\bullet(M_A,M_A) \rightarrow \Ext_{\Pone \otimes_k k(\fs)}^\bullet(M_{k(\fs)},M_{k(\fs)}).
	\]
This homomorphism sends the identity map $1_{M_A} : M_A \rightarrow M_A$ to the identity map $1_{M_{k(\fs)}}: M_{k(\fs)} \rightarrow M_{k(\fs)}$, and commutes with the right cup product action of $\Hbul(\Pone,k)$. (The ring $\Hbul(\Pone,k)$ acts on $\Ext_{\Pone \otimes_k A}^\bullet(M_A,M_A)$ through the base change map $\Hbul(\Pone,k) \rightarrow \Hbul(\Pone,k) \otimes_k A$, $z \mapsto z \otimes 1$, and similarly for its action on $\Ext_{\Pone \otimes_k k(\fs)}^\bullet(M_{k(\fs)},M_{k(\fs)})$.)

Now let $W = W_M = A \cdot (1_{M_A} \cup y^2)$ be the $A$-submodule of $\Ext_{\Pone \otimes_k A}^2(M_A,M_A)$ generated by the cup product $1_{M_A} \cup y^2$, and let $J_M = \ann_A(W)$ be the annihilator in $A$ of $W$. For each $\fs \in \Vrg$, $(\phi_{\fs})_*$ maps $W$ onto the $k(\fs)$-subspace of $\Ext_{\Pone \otimes_k k(\fs)}^2(M_{k(\fs)},M_{k(\fs)})$ spanned by $1_{M_{k(\fs)}} \cup y^2$. Then by Proposition \ref{prop:Poneprojdim}, $\pd_{\Pone \otimes_k k(\fs)}(M_{k(\fs)}) = \infty$ if and only if $(\phi_{\fs})_*(W) \neq 0$. Since $W$ is a finitely-generated $A$-module, Nakayama's Lemma implies that $(\phi_{\fs})_*(W) = W \otimes_A k(\fs)$ is nonzero if and only if the localization $W_{\fs}$ is nonzero. But $W_{\fs} \neq 0$ if and only if $J_M \subseteq \fs$. Thus, $\Vrg_M$ is the Zariski closed subset of $\Vrg$ defined by the ideal $J_M$.

To see that $J_M$ is a $\Z[\frac{p^r}{2}]$-homogeneous ideal, we consider $M$ as a graded $kG$-supermodule concentrated in $\Z[\frac{p^r}{2}]$-degree $0$, and consider $\Pone$ as a $\Z[\frac{p^r}{2}]$-graded algebra with $\deg(u) = p^{r-1}$ and $\deg(v) = \frac{p^r}{2}$, so that \eqref{eq:PoneApullback} is a homomorphism of $\Z[\frac{p^r}{2}]$-graded algebras. Then $\Ext_{\Pone \otimes_k A}^\bullet(M_A,M_A)$ inherits an internal $\Z[\frac{p^r}{2}]$-grading, making it into a graded $A$-module. The cup product $1_{M_A} \cup y^2$ is homogeneous (of degree $-p^r$) with respect to the internal grading, so its  annihilator in $A$ is a homogeneous ideal.

Now let $f: H \rightarrow G$ be a homomorphism of finite $k$-supergroup schemes. Given a field extension $K/k$, the set of $K$-points of the scheme $\Vrg_M$ is given by
	\[
	\Vrg_M(K) = \set{ \nu \in \Vrg(K): \pd_{K\Pone}(\nu^*M_K) = \infty},
	\]
where $K\Pone = \Pone \otimes_k K$, $M_K = M \otimes_k K$, and $\nu^* M_K$ denotes the pullback of the $KG = kG \otimes_k K$-super\-module $M_K$ along the Hopf $K$-superalgebra homomorphism $\nu: K\Pr \rightarrow KG$. One immediately sees that the map on $K$-points $f_*: V_r(H)(K) \rightarrow \Vrg(K)$ satisfies $f_*^{-1}(\Vrg_M(K)) = V_r(H)_M(K)$. Then varying over all $K$, \eqref{eq:invimageVrgM} follows (cf.\ the discussion preceding Proposition \ref{prop:psirinvelementary}).
\end{proof}

\begin{remark} \label{remark:VrgMK}
Retain the notations of the preceding proof, and let $K/k$ be a field extension. Given a $k$-vector space $V$ and an element $v \in V$, set $V_K = V \otimes_k K$ and set $v_K = v \otimes 1 \in V_K$. Then $A_K = K[V_r(G_K)]$, and for $M$ finite-dimensional it follows by Remark \ref{remark:flatbasechange} that
	\[
	\Ext_{A\Pone}^\bullet(M_A,M_A) \otimes_k K = \Ext_{A\Pone}^\bullet(M_A,M_A) \otimes_A A_K \cong \Ext_{A_K\Pone}^\bullet(M_{A_K},M_{A_K})
	\]
Under this identification, $(1_{M_A} \cup y^2)_K$ identifies with the cup product $1_{M_{A_K}} \cup y^2$. Then $V_r(G_K)_{M_K}$ is the Zariski closed subset of $V_r(G_K)$ defined by the ideal
	\[
	J_{M_K} = \ann_{A_K}\left( A_K . (1_{M_A} \cup y^2 )_K \right) \subset A_K.
	\]
From this it follows that $(J_M)_K \subseteq J_{M_K}$, i.e., the image of the ideal $J_M \subset k[\Vrg]$ under the base change map $k[\Vrg] \rightarrow k[\Vrg] \otimes_k K = K[V_r(G_K)]$ is contained in the defining ideal $J_{M_K}$ of the scheme $V_r(G_K)_{M_K}$. In particular, this shows that one of the technical assumptions made to justify \cite[Conjecture 3.5.6]{Drupieski:2017b} is indeed satisfied.
\end{remark}

By Remark \ref{remark:Vrgpurelyeven}, our usage of the notation $\Vrg$ agrees with that of Suslin, Friedlander, and Bendel \cite{Suslin:1997} when $G$ is a purely even finite $k$-group scheme. The next lemma implies that our usage of the notation $\Vrg_M$ is similarly consistent with its usage in \cite{Suslin:1997a}. To simplify notation we work over the base field $k$ rather than over the residue field $k(\fs)$ at a point $\fs \in \Vrg$.

\begin{lemma} \label{lemma:finiteifffree}
Let $M$ be a finite-dimensional $k\Gaone$-supermodule, considered also as a $\Pone$-super\-module via the canonical quotient map $\Pone \twoheadrightarrow \Pone/\subgrp{v} = k[u]/\subgrp{u^p} = k\Gaone$. Then $\pd_{\Pone}(M) < \infty$ if and only if $M$ is projective (equivalently, free) as a $k\Gaone$-module.

Similarly, if $M$ is a finite-dimensional $k\Gam$-super\-module, considered also as a $\Pone$-supermodule via the canonical quotient map $\Pone \twoheadrightarrow \Pone/\subgrp{u} = k[v]/\subgrp{v^2} = k\Gam$, then $\pd_{\Pone}(M) < \infty$ if and only if $M$ is projective (equivalently, free) as a $k\Gam$-supermodule.
\end{lemma}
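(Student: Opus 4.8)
The plan is to reduce the statement to a computation with the explicit matrix-factorization resolution from \eqref{eq:Bfreeresolution} and the cohomological criterion of Proposition \ref{prop:Poneprojdim}. First I would observe that in both cases the statement ``$M$ is projective iff $M$ is free'' is automatic: $k\Gaone = k[u]/\subgrp{u^p}$ and $k\Gam = k[v]/\subgrp{v^2}$ are finite-dimensional local (super)algebras, hence Frobenius/self-injective, so a finite-dimensional (super)module over either is projective if and only if it is free. This reduces the claim to: $\pd_{\Pone}(M) < \infty$ iff $M$ is free over $k\Gaone$ (resp.\ $k\Gam$).

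For the direction ``free implies finite projective dimension'', note that the canonical quotient map $\Pone \twoheadrightarrow k\Gaone$ (resp.\ $\Pone \twoheadrightarrow k\Gam$) lets us realize the one-step Koszul-type resolution: one has a short exact sequence of $\Pone$-supermodules, say $0 \to \Pone \xrightarrow{\cdot v} \Pone \to k\Gaone \to 0$ in the first case (using $u^p = -v^2$, so that multiplication by $v$ has the right kernel/cokernel — one checks $v \cdot \alpha = 0$ in $\Pone$ forces $\alpha \in \subgrp{v}$ since $\Pone$ is a domain, as established in the excerpt) and analogously $0 \to \Pi(\Pone) \xrightarrow{\cdot u^{p-1}} \Pone \to k\Gam \to 0$ in the second. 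Actually the cleanest route is just to invoke Proposition \ref{prop:Poneprojdim}: it suffices to show that when $M$ is free over $k\Gaone$ (resp.\ $k\Gam$) the cup product $1_M \cup y^2 \in \Ext_{\Pone}^2(M,M)$ vanishes. Since $\Ext$ commutes with finite direct sums in the first variable and $M$ is a direct sum of copies of $k\Gaone$ (resp.\ $k\Gam$, up to parity shift), it is enough to treat $M = k\Gaone$ and $M = k\Gam$ themselves, where $1_M \cup y^2$ is detected by restricting the resolution \eqref{eq:Bfreeresolution} of $k$ to these quotients — or more directly, one exhibits a finite $\Pone$-free resolution of $k\Gaone$ and of $k\Gam$ (each has a two-term such resolution, as above), which forces $\pd_{\Pone}$ to be finite.

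For the converse, ``finite projective dimension implies free'', I would argue contrapositively: if $M$ is not free over $k\Gaone$, I want $\pd_{\Pone}(M) = \infty$. Here the key input is that $M$, viewed over $\Pone$, is pulled back along $\Pone \twoheadrightarrow k\Gaone$, so it is in particular annihilated by $v$ and by $u^p$; combined with $u^p = -v^2$ in $\Pone$ this is consistent. The cleanest mechanism: by Proposition \ref{prop:Poneprojdim} I must show $1_M \cup y^2 \neq 0$ in $\Ext^2_{\Pone}(M,M)$ whenever $M$ is not free over $k\Gaone$. Use the change-of-rings spectral sequence or a direct resolution argument: resolve $k\Gaone$ over $\Pone$ by the two-term complex, tensor with $M$ over $k\Gaone$, and identify $\Ext^\bullet_{\Pone}(M,M)$ with the cohomology of a complex built from $\Ext^\bullet_{k\Gaone}(M,M)$ tensored with a polynomial/exterior generator coming from the hypersurface element $v$; the cup product by $y^2$ is (up to a unit) the periodicity operator of the hypersurface $k\Gaone \cong \Pone/\subgrp{v}$, which acts invertibly on the stable part $\widehat{\Ext}^\bullet_{k\Gaone}(M,M)$, and this is nonzero precisely when $M$ is not free. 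The $k\Gam$ case is entirely parallel with the roles of $u$ and $v$ interchanged (here one uses $\Pone / \subgrp{u} = k[v]/\subgrp{v^2}$ and the fact that $u^{p-1}$ is, up to the relation, the relevant hypersurface parameter), so I would do one case in detail and remark that the other is symmetric.

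The main obstacle I anticipate is bookkeeping the parity shifts and signs: $\Pone$ is a genuine superalgebra, the resolution \eqref{eq:Bfreeresolution} involves $\Pi$ in each term, and the matrix-factorization maps $\ol\varphi, \ol\psi$ mix the two parities, so carefully identifying ``free over $k\Gaone$'' or ``free over $k\Gam$'' with vanishing of the periodicity class $1_M \cup y^2$ — rather than getting an answer off by a parity twist — will require care. A clean way to sidestep most of this is to use \eqref{eq:pdequal} to forget the $\Z_2$-grading entirely (so $\Pone$ becomes the ordinary hypersurface ring $k[u,v]/\subgrp{u^p+v^2}$, $k\Gaone$ becomes $k[u]/\subgrp{u^p}$, $k\Gam$ becomes $k[v]/\subgrp{v^2}$), and then quote the standard theory of ordinary hypersurface rings: a module over $k[u,v]/\subgrp{u^p+v^2}$ pulled back from $k[u]/\subgrp{u^p}$ has finite projective dimension over the hypersurface if and only if it has finite projective dimension over $k[u]/\subgrp{u^p}$ (using that $u^p + v^2$ together with $v$ is a regular sequence making $k[u]/\subgrp{u^p}$ a complete intersection quotient in which $v$ is the extra relation), and over the local complete intersection $k[u]/\subgrp{u^p}$ finite projective dimension forces freeness. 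I would likely present the argument this ungraded way and then note the superized conclusion follows via \eqref{eq:pdequal} and the self-injectivity of $k\Gaone$ and $k\Gam$.
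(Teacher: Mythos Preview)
Your approach is correct and genuinely different from the paper's. The paper does not touch the matrix-factorization resolution or hypersurface change-of-rings at all; instead it converts the question, via Proposition \ref{prop:injprojdimequalities}, into comparing $\id_{\Mone}(M)$ with $\id_{\Gaone}(M)$ (resp.\ $\id_{\Gam}(M)$), and then invokes a degenerate Lyndon--Hochschild--Serre spectral sequence for a normal subsupergroup $Q \trianglelefteq \Mone$ with $\Mone/Q \cong \Gaone$ (resp.\ $Q^-$ with quotient $\Gam$), established in \cite[\S3.3]{Drupieski:2017b}. Since $\Hbul(Q,k)$ is an exterior algebra on one odd class, the $E_2$-page shows $\opH^i(\Mone,M)$ vanishes for $i \gg 0$ iff $\opH^i(\Gaone,M)$ does, which by unipotence gives the claim. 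Your route via \eqref{eq:pdequal} and the First Change of Rings theorem (for the regular element $v$, resp.\ $u$, in the domain $\Pone$) is more self-contained and avoids importing the spectral sequence machinery from the companion paper; the paper's route, by contrast, stays within the representation-theoretic framework and reuses structural results about $\Mone$ already on hand.

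Two minor slips to clean up: the short exact sequence for $k\Gam$ should use multiplication by $u$, not $u^{p-1}$ (one has $\Pone/\subgrp{u} = k\Gam$, whereas $\Pone/\subgrp{u^{p-1}}$ is larger); and the parity shifts are reversed---multiplication by the odd element $v$ requires a $\Pi$ on the source, while multiplication by the even element $u$ does not. Neither affects the ungraded argument you ultimately rely on.
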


\begin{proof}
As in the paragraph preceding Lemma \ref{lemma:Extrestrictioniso}, we consider $M$ as a rational $\Mone$-supermodule. Then by Proposition \ref{prop:injprojdimequalities} and the fact that the Hopf algebra $k\Gaone$ is self-injective, the first assertion of the lemma is equivalent to showing that $\id_{\Mone}(M) < \infty$ if and only if $\id_{\Gaone}(M) < \infty$. Recall from \cite[\S3.3.2]{Drupieski:2017b} that there exists a normal subsupergroup $Q \leq \Mone$ such that $\Mone/Q \cong \Gaone$, and such that the corresponding Lyndon--Hochschild--Serre spectral sequence with coefficients in $M$ takes the form
	\[
	E_2^{i,j}(M) = \opH^i(\Gaone,M) \otimes \opH^j(Q,k) \Rightarrow \opH^{i+j}(\Mone,M).
	\]
Furthermore, all differentials in this spectral sequence from the $E_2$-page onward are zero, and the cohomology ring $\Hbul(Q,k)$ is an exterior algebra generated by a single class $[\wt{\tau}] \in \opH^1(Q,k)_{\one}$. Then $\opH^i(\Mone,M) = 0$ for all $i \gg 0$ if and only if $\opH^i(\Gaone,M) = 0$ for all $i \gg 0$. Since $\Mone$ and $\Gaone$ are both unipotent, this proves the first assertion of the lemma. The proof of the second assertion of the lemma is entirely similar, using instead the LHS spectral sequence for the extension $Q^- \rightarrow \Mone \rightarrow \Gam$, as described in \cite[\S3.3.3]{Drupieski:2017b}. 
\end{proof}

\begin{remark} \label{remark:N1GM}
Suppose the field $k$ is algebraically closed, and let $G$ be an algebraic $k$-supergroup scheme. In our previous paper \cite{Drupieski:2017b}, we showed that the set $\NrG := \Hom_{Grp/k}(\Mr,G)$ of $k$-super\-group scheme homomorphisms $\phi: \Mr \rightarrow G$ admits the structure of an affine algebraic variety. We further defined, for $M$ a rational $G$-supermodule, the subset
	\[
	\NoneG_M = \set{ \phi \in \NoneG : \id_{\Mone}(\phi^* M) = \infty}.
	\]
Here $\phi^*M$ denotes the pullback of $M$ along $\phi$. Now for $G$ finite, $\NrG$ identifies by Remark \ref{remark:Vrgpurelyeven} with the set of closed points of the scheme $\Vrg$, and for $M$ finite-dimensional, $\NoneG_M$ identifies by Proposition \ref{prop:injprojdimequalities} with the set of closed points of the scheme $V_1(G)_M$. Then Proposition \ref{prop:VrgMclosed} implies that $\NoneG_M$ is Zariski closed in $\NoneG$, answering Question 3.3.5 of \cite{Drupieski:2017b}.
\end{remark}

\section{Cohomological support schemes} \label{section:supportschemes}

\subsection{The cohomological spectrum}

Our goal in Section \ref{section:supportschemes} is to relate, for $G$ an infinitesimal unipotent $k$-supergroup scheme of height $\leq r$, the cohomological support scheme of a finite-dimen\-sional rational $G$-supermodule $M$ to the scheme $\Vrg_M$ defined in Proposition \ref{prop:VrgMclosed}. First we recall some basic definitions.

\begin{definition}[Cohomological spectrum and cohomological variety of $G$] \label{def:cohomologicalspectrum}
Given a finite $k$-super\-group scheme $G$, the \emph{cohomological spectrum} of $G$ is the affine scheme
	\[
	\abs{G} = \Spec\left( H(G,k) \right).
	\]
Equivalently, $\abs{G}$ is the prime ideal spectrum of the full cohomology ring $\Hbul(G,k)$; cf.\ \cite[\S2.2]{Drupieski:2016a}. In consequence of the main theorem of \cite{Drupieski:2016}, $H(G,k)$ is a finitely-generated commutative $k$-algebra. Thus if $k$ is algebraically closed, the maximal ideal spectrum of $H(G,k)$, $\Max(H(G,k))$, is an affine algebraic variety, which we call the \emph{cohomological variety} of $G$. By abuse of notation, we also denote this affine variety by $\abs{G}$, indicating through the context whether we mean the affine scheme or its affine variety of closed points.
\end{definition}

Now let $M$ be a finite-dimensional rational $G$-supermodule, and set $\Lambda = \End_k(M)$. Then $\Lambda$ is a unital rational $G$-algebra, and the unit map $1_\Lambda: k \rightarrow \Lambda$ is a $G$-supermodule homomorphism. Let $\rho_\Lambda: \Hbul(G,k) \rightarrow \Hbul(G,\Lambda)$ be the map in cohomology induced by $1_\Lambda$, and set $I_M = \ker(\rho_\Lambda)$. By abuse of notation, we also denote $I_M \cap H(G,k)$ by $I_M$. Under the adjoint associativity isomorphism $\Hbul(G,\Hom_k(M,M)) \cong \Ext_G^\bullet(M,M)$, $\rho_\Lambda$ identifies with the ring homomorphism $\Phi_M: \Hbul(G,k) \rightarrow \Ext_G^\bullet(M,M)$ defined by $\Phi_M(z) = 1_M \cup z$. (Recall that multiplication in $\Ext_G^\bullet(M,M)$ is defined via the Yoneda composition of extensions.)

\begin{definition}[Support scheme and support variety of a module] \label{def:supportscheme}
The \emph{cohomological support scheme} of $M$ is the Zariski closed subset $\abs{G}_M$ of the affine scheme $\abs{G}$ defined by the ideal $I_M$:
	\[
	\abs{G}_M = \Spec\left( H(G,k)/I_M \right).
	\]
If the field $k$ is algebraically closed, then we define the \emph{cohomological support variety} of $M$ to be $\Max( H(G,k)/I_M)$, the closed subvariety of $\Max(H(G,k))$ defined by $I_M$. By abuse of notation, we also denote the cohomological support variety of $M$ by $\abs{G}_M$, indicating through the context whether we mean the affine scheme or its affine variety of closed points.
\end{definition}

The $k$-algebra homomorphism $\psi_r: H(G,k) \rightarrow k[\Vrg]$ defines a morphism of $k$-schemes
	\[
	\Psi_r: \Vrg \rightarrow \abs{G}.
	\]
If $G$ is a unipotent infinitesimal $k$-supergroup scheme of height $\leq r$, then we can show that $\Psi_r$ is a universal homeomorphism.

\begin{theorem} \label{thm:psiuniversalhomeo}
Let $G$ be an infinitesimal unipotent $k$-super\-group scheme of height $\leq r$. Then the kernel of the homomorphism
	\[
	\psi_r: H(G,k) \rightarrow k[\Vrg]
	\]
is a locally nilpotent ideal, and the image of $\psi_r$ contains the $p^r$-th power of each element of $k[\Vrg]$. Consequently, the associated morphism of schemes $\Psi_r: \Vrg \rightarrow \abs{G}$ is a universal homeomorphism.
\end{theorem}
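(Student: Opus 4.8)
The strategy is to establish the two algebraic claims about $\psi_r$ and then deduce the topological conclusion from standard facts. The statement that the image of $\psi_r$ contains every $p^r$-th power is already handled by Lemma~\ref{lemma:ontoprthpowers}, so the substantive new content is that $\ker(\psi_r)$ is locally nilpotent. Once both are in hand, the final sentence follows from general nonsense: a homomorphism of finitely-generated commutative $k$-algebras whose kernel is (locally) nilpotent and whose image contains all $q$-th powers for $q = p^r$ induces a bijective morphism of schemes, and in fact a universal homeomorphism, because after killing the nilradical one is looking at an integral extension (generated by $q$-th roots) that is purely inseparable, and such extensions are universal homeomorphisms; cf.\ the analogous argument of Suslin--Friedlander--Bendel. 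I would spell this out by passing to $H(G,k)_{red} \to k[\Vrg]_{red}$, noting the image contains all $p^r$-th powers of the target, hence the extension is integral and purely inseparable, hence a universal homeomorphism on spectra, and that nilpotents do not affect the underlying topological space or its behavior under base change.

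The heart of the matter is therefore local nilpotence of $\ker(\psi_r)$. Since $H(G,k)$ is a finitely-generated graded $k$-algebra and $\psi_r$ multiplies degrees by $\tfrac{p^r}{2}$ (Proposition~\ref{prop:psirgradedmap}), local nilpotence of the kernel is equivalent to showing that $\ker(\psi_r)$ contains no nonzero homogeneous element that is not nilpotent; equivalently, that $\psi_r$ becomes injective modulo the nilradical, i.e., that the induced map $H(G,k)_{red} \to k[\Vrg]_{red}$ is injective. The plan is to reduce to the elementary case. First I would extend scalars to assume $k$ is algebraically closed (Remark~\ref{remark:psir}(1) shows $\psi_{r,k}\otimes_k K = \psi_{r,K}$, and local nilpotence can be checked after faithfully flat base change). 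Then I would invoke the detection theorem of Benson--Iyengar--Krause--Pevtsova \cite{Benson:2018}: for $G$ infinitesimal, a cohomology class in $H(G,k)$ is nilpotent if and only if its restriction to every elementary subgroup scheme $E \leq G$ is nilpotent. By Corollary~\ref{cor:infinitesimalelementary} (or rather Proposition~\ref{prop:Prquotients}), the relevant elementary subgroups are exactly the multiparameter supergroups of height $\leq r$ occurring in $G$, and by Lemma~\ref{lemma:bsvrg}\eqref{item:union} the scheme $\Vrg$ is the union of the images of the $\bsvr(E)$ for such $E$. Naturality of $\psi_r$ (Lemma~\ref{lemma:psirnatural}) then gives a commuting square relating $\psi_{r,G}$ to $\psi_{r,E}$ via restriction in cohomology and the closed embedding $\Vrg_E \hookrightarrow \Vrg$. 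If $z \in \ker(\psi_{r,G})$ is homogeneous, its restriction $z|_E$ lies in $\ker(\psi_{r,E})$ for every such $E$; granting injectivity of $\psi_{r,E}$ modulo nilpotents (the elementary case), each $z|_E$ is nilpotent, hence $z$ is nilpotent by the detection theorem.

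This reduces everything to the case where $G$ itself is an infinitesimal elementary (hence multiparameter) supergroup of height $\leq r$. In that case the needed injectivity-mod-nilpotents statement for $\psi_{r,E}$ should follow from the explicit computations of cohomology and of the scheme $\Vrg_E$ carried out in the authors' earlier papers \cite{Drupieski:2017a, Drupieski:2017b}: there one has an explicit description of both $H(E,k)$ and $k[\Vrg_E]$, and $\psi_r$ is a concrete polynomial map whose kernel one can see directly is contained in the nilradical (this is the super analogue of the SFB computation for $\G_{a(r)}$ and its quotients). Concretely I would go through the list in Corollary~\ref{cor:infinitesimalelementary} --- $\Gar$, $\Gar \times \Gam$, $\Mrs$, $\Mrseta$ --- and in each case match $\psi_r$ against the known presentation; the case $\Gar$ is exactly \cite[\S1.14]{Suslin:1997} via Remark~\ref{remark:psir}\eqref{item:psiragrees}, and the remaining cases are variations already essentially recorded in \cite[\S3.5]{Drupieski:2017b}.

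The main obstacle I anticipate is not any single computation but correctly assembling the reduction: one must be careful that the detection theorem of \cite{Benson:2018} is applied in the right form (detection of nilpotence for the \emph{infinitesimal} group $G$ by its multiparameter subgroups, and that the $r$ in $\Pr$ is large enough that every such subgroup has height $\leq r$), and that the naturality square for $\psi_r$ together with the union decomposition of $\Vrg$ genuinely forces nilpotence rather than merely local nilpotence of each graded piece. A secondary technical point is checking that local nilpotence (rather than nilpotence) is what one gets and that it suffices for the universal homeomorphism conclusion --- this is fine because $H(G,k)$ is noetherian, so a locally nilpotent ideal in it is nilpotent, but I would state it carefully. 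Once the reduction is clean, the elementary case and the final topological deduction are routine.
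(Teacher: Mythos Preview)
Your proposal is correct and follows essentially the same route as the paper: the image statement via Lemma~\ref{lemma:ontoprthpowers}, the kernel statement via naturality (Lemma~\ref{lemma:psirnatural}) together with the BIKP detection theorem to reduce to the elementary case, the elementary case itself via \cite[Corollary~3.5.5]{Drupieski:2017b} (after identifying $k[\Vrg]_{red}$ with $k[\NrG]$ as in Lemma~\ref{lemma:kbsvrg=kNrG}), and the universal homeomorphism conclusion from a standard reference. The one point on which the paper is more careful than your sketch is the invocation of \cite[Theorem~1.2]{Benson:2018}: rather than first base-changing so that $k$ is algebraically closed and then checking only elementary $E \leq G$ over that fixed field, the paper varies over \emph{all} extension fields $K/k$ and all elementary $E \leq G_K$, which is what the detection theorem actually demands; you anticipate this issue yourself, and the remedy is simply to keep $K$ variable in the naturality square rather than fixing it once and for all.
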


\begin{proof}
By Lemma \ref{lemma:ontoprthpowers}, the homomorphism $\psi_r$ contains in its image the $p^r$-th power of each element of $k[\Vrg]$. The argument showing that $\psi_r$ is injective modulo nilpotents is essentially already given in \cite{Drupieski:2017b}, but for the sake of completeness we repeat the argument here.

Since $\psi_r$ is a map of graded $k$-algebras (that multiplies degrees by $\frac{p^r}{2}$) by Proposition \ref{prop:psirgradedmap}, it suffices to consider a homogeneous element $z \in H(G,k)$ such that $\psi_r(z) = 0$. Let $K$ be an algebraically closed extension field of $k$, and let $E$ be an elementary sub\-super\-group scheme of $G_K$, i.e., $E$ is a (infinitesimal, of height $\leq r$) unipotent multiparameter $K$-supergroup scheme that occurs as a (closed) sub\-super\-group scheme of $G_K$. Denote the closed embedding of $E$ into $G_K$ by $\nu: E \hookrightarrow G_K$. Then by Lemma \ref{lemma:psirnatural}, there exists a commutative diagram
\[
\xymatrix{
H(G_K,K) \ar@{->}[r]^{\psi_{r,K}} \ar@{->}[d]^{\nu^*} & K[V_r(G_K)] \ar@{->>}[d]^{\nu^*} \ar@{->>}[r] & K[V_r(G_K)]_{red} \ar@{->>}[d]^{\nu^*} \\
H(E,K) \ar@{->}[r]^{\psi_{r,K}} & K[V_r(E)] \ar@{->>}[r] & K[V_r(E)]_{red}
}
\]
in which the unlabeled horizontal arrows are the canonical quotient maps. The second and third vertical arrows are surjections by Lemma \ref{lemma:bsvrg}. By Lemma \ref{lemma:kbsvrg=kNrG}, the reduced rings $K[V_r(G_K)]_{red}$ and $K[V_r(E)]_{red}$ identify with the algebras $K[\Nr(G_K)]$ and $K[\Nr(E)]$, respectively. Under this identification, the composite homomorphism formed by the bottom row of the diagram identifies with the map $\psi_r: H(E,K) \rightarrow K[\Nr(E)]$ defined in \cite[\S2.4]{Drupieski:2017b}. Then by \cite[Corollary 3.5.5]{Drupieski:2017b}, composition along the bottom row of the diagram is injective modulo nilpotents. Denote the image of $z$ under the base change map $H(G,k) \rightarrow H(G,k) \otimes_k K = H(G_K,K)$ by $z_K$, and similarly write $\psi_r(z)_K$ for the image of $\psi_r(z)$ under the base change map $k[\Vrg] \rightarrow k[\Vrg] \otimes_k K = K[V_r(G_K)]$. Then $\psi_{r,K}(z_K) = \psi_r(z)_K = 0$, so the commutativity of the diagram implies that $\nu^*(z_K)$ is nilpotent in $H(E,K)$. Since $K$ and $E$ were arbitrary, this implies by \cite[Theorem 1.2]{Benson:2018} that $z$ is nilpotent.\footnote{While the statement of \cite[Theorem 1.2]{Benson:2018} does not require the extension field $K$ to be algebraically closed, nothing is harmed by further extending any given field to its algebraic closure.} Now $\Psi_r$ is a universal homeomorphism by \cite[\href{https://stacks.math.columbia.edu/tag/0CNF}{Tag 0CNF}]{stacks-project}.
\end{proof}

\begin{corollary} \label{cor:subgroupspectrum}
Let $\iota: H \hookrightarrow G$ be a closed embedding of infinitesimal unipotent $k$-supergroup schemes of height $\leq r$. Then the restriction map on reduced rings,
	\[
	(\iota^*)_{red}: H(G,k)_{red} \rightarrow H(H,k)_{red},
	\]
contains in its image the $p^r$-th power of each element of $H(H,k)_{red}$. Consequently, the induced morphism of schemes $\iota_*: \abs{H} \rightarrow \abs{G}$ is finite and universally injective.
\end{corollary}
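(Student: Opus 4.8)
The plan is to derive the corollary formally from Theorem~\ref{thm:psiuniversalhomeo} and the naturality of $\psi_r$; essentially all of the real work is already contained in that theorem. First I would record the commutative square supplied by Lemma~\ref{lemma:psirnatural} for the closed embedding $\iota\colon H \hookrightarrow G$,
\[
\xymatrix{
H(G,k) \ar@{->}[r]^{\psi_{r,G}} \ar@{->}[d]_{\iota^*} & k[\Vrg] \ar@{->>}[d]^{\pi} \\
H(H,k) \ar@{->}[r]^{\psi_{r,H}} & k[V_r(H)],
}
\]
in which $\pi$ is the comorphism of the closed embedding $V_r(H) \hookrightarrow \Vrg$ coming from the functoriality of Lemma~\ref{lemma:bsvrg}\eqref{item:bsvrgaffine}; in particular $\pi$ is surjective, and so is its reduction, which I also denote $\pi$. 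Since $H$ is again an infinitesimal unipotent $k$-supergroup scheme of height $\leq r$, Theorem~\ref{thm:psiuniversalhomeo} applies to both $G$ and $H$: the kernel of each $\psi_r$ is locally nilpotent, hence nilpotent because $H(G,k)$ and $H(H,k)$ are finitely generated $k$-algebras by \cite{Drupieski:2016}, and the image of each $\psi_r$ contains all $p^r$-th powers. Passing to reduced rings, $\psi_{r,G}$ and $\psi_{r,H}$ therefore induce \emph{injective} homomorphisms $\ol{\psi}_{r,G}\colon H(G,k)_{red}\hookrightarrow k[\Vrg]_{red}$ and $\ol{\psi}_{r,H}\colon H(H,k)_{red}\hookrightarrow k[V_r(H)]_{red}$ whose images still contain the $p^r$-th power of every element.

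Next I would run the diagram chase on the reduced square. Given $b\in H(H,k)_{red}$, choose $d\in k[\Vrg]_{red}$ with $\pi(d)=\ol{\psi}_{r,H}(b)$, using surjectivity of $\pi$, and then choose $z\in H(G,k)_{red}$ with $\ol{\psi}_{r,G}(z)=d^{p^r}$, using that the image of $\ol{\psi}_{r,G}$ contains all $p^r$-th powers. Commutativity of the reduced square now gives
\[
\ol{\psi}_{r,H}\bigl((\iota^*)_{red}(z)\bigr)=\pi\bigl(\ol{\psi}_{r,G}(z)\bigr)=\pi(d^{p^r})=\pi(d)^{p^r}=\ol{\psi}_{r,H}(b)^{p^r}=\ol{\psi}_{r,H}(b^{p^r}),
\]
and injectivity of $\ol{\psi}_{r,H}$ forces $(\iota^*)_{red}(z)=b^{p^r}$. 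This proves the first assertion of the corollary.

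The geometric conclusions are then formal. Because $H(H,k)_{red}$ is a finitely generated $k$-algebra each of whose generators has a $p^r$-th power in the image of $(\iota^*)_{red}$, it is integral over that image, hence, being finitely generated, module-finite over it; a short nilradical-filtration argument then promotes this to the statement that $H(H,k)$ itself is module-finite over $\iota^*\bigl(H(G,k)\bigr)$, so that $\iota_*\colon\abs{H}\to\abs{G}$ is finite. For universal injectivity I would observe that in characteristic $p$ the identity $(x_1+\dots+x_n)^{p^r}=x_1^{p^r}+\dots+x_n^{p^r}$ shows the property ``every element has a $p^r$-th power in the image'' is stable under arbitrary base change of the ring map $(\iota^*)_{red}$; and any homomorphism $\phi\colon A\to B$ with this property is injective on prime spectra, since $b\in\fp$ implies $b^{p^r}=\phi(a)\in\fp$, which forces $a$ into the prime of $A$ lying under $\fp$ and hence $b^{p^r}$, and therefore $b$, into every prime of $B$ lying over it. Thus $\iota_*$ is universally injective. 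The only step genuinely demanding care is the bookkeeping with nilradicals when transferring statements between $H(-,k)$ and $H(-,k)_{red}$ (and between $k[\Vrg]$ and $k[\Vrg]_{red}$); the homeomorphism assertions of Theorem~\ref{thm:psiuniversalhomeo} make this routine, and no new difficulty beyond that theorem arises here.
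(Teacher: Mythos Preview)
Your proof is correct and follows essentially the same approach as the paper: set up the commutative square from Lemma~\ref{lemma:psirnatural}, pass to reduced rings, and run the diagram chase using surjectivity of the right vertical map together with the two conclusions of Theorem~\ref{thm:psiuniversalhomeo}. The paper simply asserts the finite and universally injective consequences without further comment, whereas you spell them out; your justifications there are fine.
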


\begin{proof}
By Lemma \ref{lemma:psirnatural}, the embedding $\iota: H \hookrightarrow G$ gives rise to the commutative diagram
\[
\xymatrix@C+1em{
H(G,k)_{red} \ar@{->}[r]^{(\psi_{r,G})_{red}} \ar@{->}[d]^{(\iota^*)_{red}} & k[V_r(G)]_{red} \ar@{->>}[d]^{(\iota^*)_{red}} \\
H(H,k)_{red} \ar@{->}[r]^{(\psi_{r,H})_{red}} & k[V_r(H)]_{red},
}
\]
in which the right-hand vertical arrow is a surjection by Lemma \ref{lemma:bsvrg}. For improved legibility we henceforth omit the subscript `$red$' from the morphisms in the diagram. So let $z \in H(H,k)_{red}$. Then $\psi_{r,H}(z) = \iota^*(x)$ for some $x \in k[\Vrg]_{red}$. By Theorem \ref{thm:psiuniversalhomeo}, the homomorphism $\psi_{r,G}$ contains in its image the $p^r$-th power of each element of $k[\Vrg]_{red}$, so there exists $z' \in H(G,k)_{red}$ such that $\psi_{r,G}(z') = x^{p^r}$. Now by the commutativity of the diagram,
	\begin{align*}
	\psi_{r,H}(\iota^*(z') - z^{p^r}) &= (\psi_{r,H} \circ \iota^*)(z') - \psi_{r,H}(z^{p^r}) \\
	&= (\iota^* \circ \psi_{r,G})(z') - \psi_{r,H}(z)^{p^r} \\
	&= \iota^*(x^{p^r}) - \iota^*(x)^{p^r} = 0.
	\end{align*}
But Theorem \ref{thm:psiuniversalhomeo} implies that the map of reduced rings $\psi_{r,H}: H(H,k)_{red} \rightarrow k[V_r(H)]_{red}$ is an injection, and hence $\iota^*(z') = z^{p^r}$.
\end{proof}

Our goal in the rest of Section \ref{section:supportschemes} is to show, for $G$ infinitesimal unipotent of height $\leq r$ and $M$ a finite-dimensional rational $G$-supermodule, that the morphism of schemes $\Psi_r: \Vrg \rightarrow \abs{G}$ satisfies $\Psi_r^{-1}( \abs{G}_M ) = \Vrg_M$. Equivalently, let $\calJ_M = \sqrt{J_M} \subset k[\Vrg]$ be the ($\Z[\frac{p^r}{2}]$-homogeneous) ideal of functions vanishing on $\Vrg_M$, and set $\calI_M = \sqrt{I_M} \subset H(G,k)$. Then we want to show that the algebra homomorphism $\psi_r: H(G,k) \rightarrow k[\Vrg]$ satisfies $\psi_r^{-1}(\calJ_M) = \calI_M$. Our strategy mimics the approach of Suslin, Friedlander, and Bendel \cite{Suslin:1997a}, arguing first for height-one infinitesimal unipotent supergroup schemes, and then deducing the general case.

\subsection{Height-one infinitesimal unipotent supergroups}

\begin{lemma} \label{lemma:psiIMinJM}
Let $G$ be a height-one infinitesimal $k$-supergroup scheme, and let $M$ be a rational $G$-super\-module. Then the homomorphism $\psi = \psi_1: H(G,k) \rightarrow k[V_1(G)]$ satisfies $\psi(\calI_M) \subseteq \calJ_M$, and hence the associated morphism of schemes $\Psi: V_1(G) \rightarrow \abs{G}$ satisfies $\Psi(V_1(G)_M) \subseteq \abs{G}_M$.
\end{lemma}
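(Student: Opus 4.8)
The plan is to reduce $\psi(\calI_M)\subseteq\calJ_M$ to a short computation inside $\Ext_{\Pone\otimes_kA}^\bullet(M_A,M_A)$, where $A=k[V_1(G)]$ and $M_A=M\otimes_kA$ is regarded as a $\Pone\otimes_kA$-module via $u_G$ exactly as in Proposition~\ref{prop:VrgMclosed}; the structural fact driving the argument is that $H(G,k)$ consists of classes whose superdegree equals their cohomological degree. Since $I_M$ is a homogeneous ideal for the cohomological grading, $\calJ_M=\sqrt{J_M}$ is radical, and $\psi$ is a $k$-algebra homomorphism, it suffices to prove $\psi(z)\in\calJ_M$ for each homogeneous $z\in\opH^n(G,k)_{\ol n}$ with $1_M\cup z=0$ in $\Ext_G^\bullet(M,M)$; the case $n=0$ is immediate, so I would assume $n\ge 1$.

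The key identity to establish is $u_G^*\iota(z)=\psi(z)\cdot y^n$ in $\Hbul^n(\Pone\otimes_kA,A)\cong\Hbul^n(\Pone,k)\otimes_kA$. Indeed, $A$ is purely even, so $\iota(z)=z\otimes1$ has superdegree $\ol n$, and $u_G^*$ preserves superdegree because $u_G$ is a homomorphism of superalgebras; but by \eqref{eq:Prcohomology} with $r=1$ the space $\Hbul^n(\Pone,k)$ has $k$-basis $\{y^n,\,y^{n-1}\lambda_1\}$ with $\ol{y^n}=\ol n$ and $\ol{y^{n-1}\lambda_1}=\ol{n-1}\ne\ol n$, so $(\Hbul^n(\Pone,k)\otimes_kA)_{\ol n}=A\cdot y^n$, and the coefficient of $y^n$ is $\psi(z)$ by the definition of $\psi=\psi_1$ (cf.\ Proposition~\ref{prop:psirgradedmap}). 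Pushing the relation $1_M\cup z=0$ forward along base change to $A$ and then along restriction through $u_G$ (both ring maps sending $1_M\mapsto 1_{M_A}$ and compatible with cup products), naturality of the cup product gives
\[
\psi(z)\cdot(1_{M_A}\cup y^n)=u_G^*\bigl(1_{M_A}\cup\iota(z)\bigr)=0\qquad\text{in }\Ext_{\Pone\otimes_kA}^n(M_A,M_A).
\]

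To conclude I would invoke Proposition~\ref{prop:Poneprojdim} over the coefficient ring $A$: running its proof with the matrix-factorization resolution of Remark~\ref{remark:flatbasechange} (which has finite rank in each degree, so no hypothesis on $\dim_kM$ is needed), cup product with $y$ is an $A$-linear isomorphism $\Ext_{\Pone\otimes_kA}^i(M_A,M_A)\to\Ext_{\Pone\otimes_kA}^{i+1}(M_A,M_A)$ for all $i\ge 2$, hence cup product with $y^{2n-2}$ is an $A$-linear isomorphism $\Ext^2\to\Ext^{2n}$ carrying $1_{M_A}\cup y^2$ to $1_{M_A}\cup y^{2n}$, so $\ann_A(1_{M_A}\cup y^{2n})=\ann_A(1_{M_A}\cup y^2)=J_M$. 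Since Yoneda composition is $A$-bilinear and $(1_{M_A}\cup y^n)\circ(1_{M_A}\cup y^n)=\pm(1_{M_A}\cup y^{2n})$, the displayed identity yields $\psi(z)^2\cdot(1_{M_A}\cup y^{2n})=0$, whence $\psi(z)^2\in J_M$ and $\psi(z)\in\sqrt{J_M}=\calJ_M$. This proves $\psi(\calI_M)\subseteq\calJ_M$; as $\abs{G}_M$ and $V_1(G)_M$ are the closed subsets defined by $\calI_M$ and $\calJ_M$, the dual morphism $\Psi\colon V_1(G)\to\abs{G}$ satisfies $\Psi(V_1(G)_M)\subseteq\abs{G}_M$. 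The only genuinely delicate point is the base change in this last step—confirming that the periodicity isomorphisms of Proposition~\ref{prop:Poneprojdim} survive passage to the graded ring $A$ with no finiteness hypothesis on $M$, and that the base-change and restriction maps intertwine the cup products as claimed; note that unipotence of $G$ is nowhere used, consistent with the stated generality of the lemma.
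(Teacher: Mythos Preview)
Your proof is correct and rests on the same key identity as the paper's---namely, that for height one the superdegree constraint forces $u_G^*\iota(z)=\psi(z)\cdot y^n$---together with the periodicity of Proposition~\ref{prop:Poneprojdim}. The only real difference is packaging: the paper argues pointwise, fixing $\fs\in V_1(G)_M$ and using that $1_{M_{k(\fs)}}\cup y^n\neq 0$ (for $n\ge 2$, after replacing $z$ by a power) to conclude $\psi(z)(\fs)=0$, whereas you work globally over $A=k[V_1(G)]$ and identify $\ann_A(1_{M_A}\cup y^{2n})=J_M$ directly via the $A$-linear periodicity isomorphism. Your squaring trick to pass from $y^n$ to $y^{2n}$ (landing in degree $\ge 2$ where periodicity applies) neatly handles the $n=1$ case that the paper sidesteps by raising $z$ to a power; the base-change concern you flag is harmless, since the matrix-factorization resolution and the chain map $\phi$ of Proposition~\ref{prop:Poneprojdim} are defined over $k$ and simply tensor up to $A$.
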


\begin{proof}
Set $\psi = \psi_1$, and let $z \in \calI_M$. Since $\psi$ is a map of graded rings, and since $\calI_M$ and $\calJ_M$ are homogeneous ideals (because they are the radicals of homogeneous ideals), we may assume that $z$ is homogeneous, say, $z \in \opH^n(G,k)_{\ol{n}}$. Replacing $z$ with some power of $z$ if necessary, we may further assume that $z \in I_M$ and $n \geq 2$. Next recall that $\Hbul(\Pone,k) \cong k[y] \gotimes \Lambda(\lambda)$, where $y \in \opH^1(\Pone,k)_{\one}$ and $\lambda \in \opH^1(\Pone,k)_{\zero}$. The homomorphism $u_G^* \circ \iota$ in the definition of $\psi$ preserves both the cohomological and super degrees of elements, so it follows by the definition of $\psi$ that $(u_G^* \circ \iota)(z) = y^n \otimes \psi(z) \in \Hbul(\Pone,k) \otimes k[V_1(G)]$. Then given a point $\fs \in V_1(G)$ and the corresponding Hopf superalgebra homomorphism $\nu_\fs: \Pone \otimes_k k(\fs) \rightarrow kG \otimes_k k(\fs)$, the induced map in cohomology $\nu_{\fs}^*: H(G_{k(\fs)},k(\fs)) \rightarrow \Hbul(\Pone \otimes_k k(\fs),k(\fs))$ satisfies $\nu_{\fs}^*(z_{k(\fs)}) = y^n \cdot \psi(z)(\fs)$. Here $\psi(z)(\fs)$ denotes the image of $\psi(z)$ under the $k$-algebra homomorphism $\phi_{\fs}: k[V_1(G)] \rightarrow k(\fs)$. More generally, $\nu_{\fs}$ induces a $k(\fs)$-superalgebra homomorphism
	\[
	\nu_{\fs}^*: \Ext_{G_{k(\fs)}}^\bullet(M_{k(\fs)},M_{k(\fs)}) \rightarrow \Ext_{\Pone \otimes_k k(\fs)}^\bullet(M_{k(\fs)},M_{k(\fs)})
	\]
that is compatible in the evident fashion with the right cup product actions of $\Hbul(G_{k(\fs)},k(\fs))$ and $\Hbul(\Pone \otimes_k k(\fs),k(\fs))$. In particular,
	\[
	\nu_{\fs}^*((1_M \cup z)_{k(\fs)}) = \nu_{\fs}^*(1_{M_{k(\fs)}} \cup z_{k(\fs)}) = 1_{M_{k(\fs)}} \cup \nu_{\fs}^*(z_{k(\fs)}) = (1_{M_{k(\fs)}} \cup y^n) \cdot \psi(z)(\fs).
	\]
By Proposition \ref{prop:Poneprojdim} and the assumption that $n \geq 2$, the cup product $1_{M_{k(\fs)}} \cup y^n$ is nonzero if and only if $\fs \in V_1(G)_M$. But $1_M \cup z = 0$ by the assumption that $z \in I_M$, so we deduce that $\psi(z)(\fs) = 0$ for all $\fs \in V_1(G)_M$, and hence $\psi(z) \in \calJ_M$.
\end{proof}

\begin{lemma} \label{lemma:psiinverseheightone}
Let $G$ be a height-one infinitesimal unipotent $k$-supergroup scheme, and let $M$ be a finite-dimensional rational $G$-supermodule. Then the homomorphism $\psi = \psi_1: H(G,k) \rightarrow k[V_1(G)]$ satisfies $\psi^{-1}(\calJ_M) = \calI_M$, and hence the associated morphism of schemes $\Psi: V_1(G) \rightarrow \abs{G}$ satisfies
	\[
	\Psi^{-1}(\abs{G}_M) = V_1(G)_M.
	\]
\end{lemma}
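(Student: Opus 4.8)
The plan is to prove the non-obvious inclusion $\psi^{-1}(\calJ_M) \subseteq \calI_M$, the reverse inclusion being exactly Lemma \ref{lemma:psiIMinJM}. Note first that $z \in \calI_M = \sqrt{I_M}$ if and only if the class $1_M \cup z$ is nilpotent in $\Ext_G^\bullet(M,M) \cong \Hbul(G,\End_k M)$, since $I_M$ is the kernel of the ring map $z \mapsto 1_M \cup z$. As $\psi = \psi_1$ is a graded homomorphism by Proposition \ref{prop:psirgradedmap} and $\calI_M$, $\calJ_M$ are homogeneous, it suffices to treat a homogeneous $z \in H(G,k)$ with $\psi(z) \in \calJ_M$ and show that $1_M \cup z$ is nilpotent. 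First I would reduce to $k$ algebraically closed: Remark \ref{remark:psir}(1) gives $\psi_{1,k} \otimes_k \ol{k} = \psi_{1,\ol{k}}$; faithful flatness of the base-change maps on $H(G,k)$ and on $k[V_1(G)]$ gives $\sqrt{I_{M_{\ol{k}}}} \cap H(G,k) = \calI_M$; and flat base change (Remark \ref{remark:flatbasechange}) together with Proposition \ref{prop:Poneprojdim} shows that $V_1(G_{\ol{k}})_{M_{\ol{k}}}$ and $V_1(G)_M$ have the same underlying space, so that likewise $\sqrt{J_{M_{\ol{k}}}} \cap k[V_1(G)] = \calJ_M$. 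Hence the statement over $k$ follows from the statement over $\ol{k}$.

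Assume now $k = \ol{k}$. The essential input is the detection theorem of Benson, Iyengar, Krause, and Pevtsova \cite{Benson:2018}: the class $1_M \cup z$ is nilpotent provided that for every field extension $K/k$ and every elementary subsupergroup scheme $E$ of $G_K$, the image of $1_{M_K} \cup z_K$ in $\Ext_E^\bullet(M_K\da E, M_K\da E)$ is nilpotent. (For $z \in H(G,k)$ this is \cite[Theorem 1.2]{Benson:2018}; the version for the module-valued class $1_M \cup z$ can be obtained from the BIKP detection of projectivity of supermodules by restriction to elementary subsupergroups.) Because $G$ is infinitesimal unipotent of height $\leq 1$, Corollary \ref{cor:infinitesimalelementary} shows each such $E$ is isomorphic to one of $\Gaone$, $\Gam$, $\Gaone \times \Gam$, or $\M_{1;s}$ with $s \geq 1$. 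By the naturality of $\psi_1$ (Lemma \ref{lemma:psirnatural}) and of the support schemes (Proposition \ref{prop:VrgMclosed}), together with the fact (Lemma \ref{lemma:bsvrg}) that $V_1(E) \hookrightarrow V_1(G_K)$ is a closed embedding with $V_1(E)_{M_K\da E} = V_1(E) \cap V_1(G_K)_{M_K}$, the hypothesis $\psi(z) \in \calJ_M$ passes to $\psi_{1,E}(z_K\da E) \in \calJ_{M_K\da E}$. In this way the whole problem is reduced to the case in which $G$ is itself one of these four elementary supergroups.

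The final step is to verify the lemma for each elementary $G$ by a direct computation. For each of them $H(G,k)$ and $V_1(G)$ are explicit (for instance $H(\Gaone,k) \cong k[x]$ with $\deg x = 2$ and $V_1(\Gaone) \cong \mathbb{A}^1$, while in general $V_1(G)$ is the ``super restricted nullcone'' $\{(x,\xi): x^p + \xi^2 = 0\}$ attached to $kG$), and $\psi_1$ is computed as in \cite[1.14]{Suslin:1997} in the purely even case and by the directly analogous calculation otherwise. One then rewrites the infinite-projective-dimension condition defining $V_1(G)_M$ by means of Proposition \ref{prop:Poneprojdim} and Lemma \ref{lemma:finiteifffree}, and invokes Proposition \ref{prop:secondradical} to see that this condition is unchanged under the deformation $u \mapsto u + \beta\gamma$; this is exactly what forces $V_1(G)_M$ to be the full $\Psi$-preimage of a closed conical subset of $\abs{G}$, and comparing with the cup-product description of $\abs{G}_M$ yields $\psi^{-1}(\calJ_M) = \calI_M$. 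I expect this last step to be the main obstacle, and in particular the matching of $\abs{G}_M$ with $V_1(G)_M$ for the non-purely-even groups $\Gam$ and $\M_{1;s}$: here the hypersurface structure of $\Pone$ (via Proposition \ref{prop:Poneprojdim}) and the super analogue of the Friedlander--Pevtsova deformation lemma (Proposition \ref{prop:secondradical}) do the decisive work, and the parity signs have to be tracked with care throughout. A secondary delicate point is the passage from the BIKP detection theorems to nilpotence of the module-valued class $1_M \cup z$.
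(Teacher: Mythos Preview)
Your overall strategy---reduce via BIKP nilpotence detection to elementary subsupergroups $E \leq G_K$, then handle each elementary type---is exactly the paper's approach. But your proposed treatment of the elementary case misidentifies the key tool. Proposition \ref{prop:secondradical} is the deformation lemma used for height $\geq 2$ (see Proposition \ref{prop:psirinvelementary}), where the composite $K\Pone \hookrightarrow K\Pr \to K\Mrs$ sends $u$ to a complicated expression that must be compared, modulo squares of $p$-nilpotent elements, with a simpler one; in height one there is no such deformation to perform. The paper instead dispatches the elementary case by citing prior work: via Remark \ref{remark:N1GM}, \cite[Theorem 3.4.1]{Drupieski:2017b} already establishes $\Psi^{-1}(\abs{E}_{M_K}) = V_1(E)_{M_K}$ at the level of varieties over an algebraically closed field $K$. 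Combined with the homeomorphism of Theorem \ref{thm:psiuniversalhomeo} and the fact that $H(E,K)$ is a finitely-generated commutative ring (so that a radical ideal is the intersection of the maximal ideals containing it), this yields $\nu^*(z_K) \in \calI_{M_K}^E$ directly. So the computation you flag as the main obstacle was in fact carried out in the authors' previous paper, not redone here.

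On the BIKP side, the precise result invoked is \cite[Theorem 8.4]{Benson:2018}, the nilpotence detection theorem for classes in $\Hbul(G,\Lambda)$ with $\Lambda = \End_k(M)$ a finite-dimensional unital $G$-algebra; you do not need to manufacture a module-valued version from Theorem 1.2 and projectivity detection. Finally, your preliminary reduction to $k = \ol{k}$ is harmless but unnecessary, since BIKP still requires ranging over all field extensions $K/k$; the paper simply works with arbitrary $k$ and algebraically closed extensions $K$ throughout.
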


\begin{proof}
We have already shown in Lemma \ref{lemma:psiIMinJM} that $\psi(\calI_M) \subseteq \calJ_M$. For the reverse inclusion, let $z \in H(G,k)$, and suppose $\psi(z) \in \calJ_M$. We want to show that $z \in \calI_M$, or equivalently, in the notation preceding Definition \ref{def:supportscheme}, that $\rho_{\Lambda}(z)$ is nilpotent in the algebra $\Hbul(G,\Lambda) = \Hbul(G,\End_k(M))$. Replacing $z$ with some power of $z$ if necessary, we may assume that $\psi(z) \in J_M$.

Let $K$ be an algebraically closed field extension of $k$, and let $E$ be a nontrivial elementary sub\-super\-group scheme of $G_K$. Then $E$ is a height-one infinitesimal elementary $K$-supergroup scheme, and hence is isomorphic by Corollary \ref{cor:infinitesimalelementary} to one of $\Gaone$, $\Ga^-$, or $\Mones$ for some $s \geq 1$. Denote the closed embedding of $E$ into $G_K$ by $\nu: E \hookrightarrow G_K$, and consider $M_K$ as a rational $E$-supermodule by pulling back along $\nu$. Let $\calJ_{M_K}^E \subset K[V_1(E)]$ be the radical ideal that defines $V_1(E)_{M_K}$ as a Zariski closed subset of $V_1(E)$. Then by Lemma \ref{lemma:psirnatural} and Proposition \ref{prop:VrgMclosed}, the homomorphism $\nu$ gives rise to the commutative diagram
\[
\xymatrix{
H(G_K,K) \ar@{->}[r]^{\psi} \ar@{->}[d]^{\nu^*} & K[V_1(G_K)] \ar@{->>}[d]^{\nu^*} \ar@{->>}[r] & K[V_1(G_K)]/\calJ_{M_K} \ar@{->>}[d]^{\nu^*} \\
H(E,K) \ar@{->}[r]^{\psi_E} & K[V_1(E)] \ar@{->>}[r] & K[V_1(E)]/\calJ_{M_K}^E,
}
\]
in which the unlabeled arrows are the canonical quotient maps. By Remark \ref{remark:VrgMK}, $\psi(z_K) = \psi(z)_K \in (J_M)_K \subseteq J_{M_K} \subseteq \calJ_{M_K}$. Then by the commutativity of the diagram, $\psi_E(\nu^*(z_K)) \in \calJ_{M_K}^E$.

Next, let $\calI_{M_K}^E \subset H(E,K)$ be the radical ideal that defines $\abs{E}_{M_K}$ as a Zariski closed subset of $\abs{E}$. Since $K$ is algebraically closed, we can consider $\abs{E}$ and $\abs{E}_{M_K}$ as affine algebraic varieties. Then applying Remark \ref{remark:N1GM}, we can interpret \cite[Theorem 3.4.1]{Drupieski:2017b} as saying that the morphism of affine varieties $\Psi: V_1(E) \rightarrow \abs{E}$ induced by $\psi_E: H(E,K) \rightarrow K[V_1(E)]$ satisfies $\Psi^{-1}(\abs{E}_{M_K}) = V_1(E)_{M_K}$. Theorem \ref{thm:psiuniversalhomeo} implies that $\Psi: V_1(E) \rightarrow \abs{E}$ is a homeomorphism of affine varieties, so this means that the map $\fm \mapsto \psi_E^{-1}(\fm)$ defines a bijection of maximal ideal spectra
	\[
	\Max(K[V_1(E)]/\calJ_{M_K}^E) \rightarrow \Max(H(E,K)/\calI_{M_K}^E).
	\]
Now because $\psi_E(\nu^*(z_K)) \in \calJ_{M_K}^E$, it follows that $\nu^*(z_K) \in \fn$ for every maximal ideal $\fn \subset H(E,k)$ such that $\fn \supseteq \calI_{M_K}^E$. But $\calI_{M_K}^E$ is a radical ideal and $H(E,K)$ is a finitely-generated commutative (in the ordinary non-super sense) ring over the field $K$, so this implies that $\nu^*(z_K) \in \calI_{M_K}^E$. Equivalently, $\rho_{\Lambda_K}(\nu^*(z_K)) = \nu^*(\rho_{\Lambda_K}(z_K)) = \nu^*(\rho_{\Lambda}(z)_K)$ is nilpotent in the algebra $\Hbul(E,\Lambda_K)$. Finally, since $K$ and $E$ were arbitrary, this implies by \cite[Theorem 8.4]{Benson:2018} that $\rho_{\Lambda}(z)$ is nilpotent in the algebra $\Hbul(G,\Lambda)$, which is what we wanted to show.
\end{proof}

\subsection{Support calculations for \texorpdfstring{$\Gar$, $\Mrs$, and $\Mrseta$}{Gar, Mrs, and Mrseta}}

Throughout this subsection assume that $r \geq 2$. Then by Corollary \ref{cor:infinitesimalelementary}, each height-$r$ infinitesimal elementary $k$-supergroup scheme $G$ is isomorphic to either $\Gar$ or $\Mrseta$ for some $s \geq 1$ and some $\eta \in k$. Our goal in this subsection is to show for each such $G$ and each finite-dimensional rational $G$-supermodule $M$ that the morphism of schemes $\Psi = \Psi_r: \Vrg \rightarrow \abs{G}$ satisfies $\Psi^{-1}(\abs{G}_M) = \Vrg_M$. Our argument parallels the reasoning given by Suslin, Friedlander, and Bendel \cite[\S6]{Suslin:1997a}.

\begin{lemma} \label{lemma:unipotentannihilator}
Let $G$ be a finite unipotent $k$-supergroup scheme, and let $M$ be a finite-dimensional rational $G$-supermodule. Let $L_M \subset H(G,k)$ be the annihilator in $H(G,k)$ for the left (equivalently, right) cup product action of $H(G,k)$ on $\Hbul(G,M)$. Then $\sqrt{L_M} = \calI_M$.
\end{lemma}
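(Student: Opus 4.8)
The plan is to prove the two inclusions $\calI_M \subseteq \sqrt{L_M}$ and $\sqrt{L_M} \subseteq \calI_M$ separately; only the second will use that $G$ is unipotent. Since $H(G,k)$ is commutative the left and right annihilators of $\Hbul(G,M)$ coincide, so $L_M$ is unambiguous. Throughout I use that $H(G,k)$ is a finitely-generated commutative, hence noetherian, $k$-algebra and that $\Hbul(G,M)$ and $\Hbul(G,\End_k M)$ are finitely-generated $H(G,k)$-modules by \cite{Drupieski:2016}, so that the support of either module is precisely the closed subset $V(\ann_{H(G,k)}(-))$ of $\Spec H(G,k)$ cut out by its annihilator.

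For $\calI_M \subseteq \sqrt{L_M}$ I would observe that the cup-product action of $H(G,k)$ on $\Hbul(G,M) = \Ext_G^\bullet(k,M)$ factors through $\Phi_M \colon H(G,k) \to \Ext_G^\bullet(M,M)$, $z \mapsto 1_M \cup z$: for $z \in H(G,k)$ and $\xi \in \Ext_G^\bullet(k,M)$, the class $z \cup \xi$ agrees, up to a sign depending only on degrees, with the Yoneda composite $\Phi_M(z) \circ \xi$, which is the standard compatibility between cup products and Yoneda composition, obtained by pushing $1_M \cup z \cup \xi$ forward along the evaluation map $\End_k(M) \otimes M \to M$. Hence $I_M = \ker \Phi_M$ annihilates $\Hbul(G,M)$, i.e.\ $I_M \subseteq L_M$, and passing to radicals gives $\calI_M = \sqrt{I_M} \subseteq \sqrt{L_M}$. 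The same computation with $1_M$ in place of $\xi$ shows that $z$ annihilates $\Ext_G^\bullet(M,M)$ if and only if $\Phi_M(z) = \Phi_M(z) \circ 1_M = 0$, so that $\ann_{H(G,k)} \Ext_G^\bullet(M,M) = I_M$; in particular $\mathrm{Supp}_{H(G,k)} \Ext_G^\bullet(M,M) = V(I_M) = V(\calI_M)$.

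For the reverse inclusion the key input is that, since $G$ is unipotent, up to isomorphism and parity change the trivial module is the only irreducible rational $G$-supermodule. Thus $M^\#$ has a composition series whose successive quotients are each isomorphic to $k$ or $\bsPi(k)$, and applying the exact functor $M \otimes -$ produces a filtration of $M \otimes M^\# \cong \End_k(M)$ whose successive quotients are each isomorphic to $M$ or $\bsPi(M)$. Now chase the long exact sequences obtained by applying $\Hbul(G,-)$: at any prime $\fp \not\supseteq L_M$ one has $\Hbul(G,M)_{\fp} = 0$, hence also $\Hbul(G,\bsPi M)_{\fp} = 0$ (parity change does not alter the $H(G,k)$-annihilator), and then induction up the filtration forces $\Hbul(G,\End_k M)_{\fp} = 0$. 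Therefore $\mathrm{Supp}_{H(G,k)} \Ext_G^\bullet(M,M) = \mathrm{Supp}_{H(G,k)} \Hbul(G,\End_k M) \subseteq \mathrm{Supp}_{H(G,k)} \Hbul(G,M)$, which by the previous paragraph reads $V(I_M) \subseteq V(L_M)$, i.e.\ $\sqrt{L_M} \subseteq \sqrt{I_M} = \calI_M$. Combined with the first inclusion this yields $\sqrt{L_M} = \calI_M$.

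I do not expect a serious obstacle once unipotence is in play; the only points requiring care are bookkeeping. One must verify, using the conventions of Section \ref{subsec:paritychange}, that $M \otimes \bsPi(k) \cong \bsPi(M)$ as rational $G$-supermodules and that $\bsPi$ preserves $H(G,k)$-annihilators (hence supports), and one must track the sign in $z \cup \xi = \pm\,\Phi_M(z) \circ \xi$. With these settled the argument is a routine localization and long-exact-sequence chase, and the unipotence hypothesis is used exactly where it is essential: to guarantee that the simple constituents of $M^\#$ are trivial up to parity, so that tensoring with $M$ does not introduce support outside $\mathrm{Supp}_{H(G,k)}\Hbul(G,M)$.
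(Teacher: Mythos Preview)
Your proof is correct and follows essentially the same strategy as the paper: both prove $\calI_M \subseteq \sqrt{L_M}$ via the factorization of the cup product action through $\Phi_M$, and both prove $\sqrt{L_M} \subseteq \calI_M$ by exploiting unipotence to filter $\Ext_G^\bullet(M,M)$ by parity shifts of $\Hbul(G,M)$.

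The implementations diverge slightly. The paper filters $M$ in the \emph{first} variable of $\Ext_G^\bullet(-,M)$ (using a composition series for $M$ with factors $k$ and $\bsPi(k)$, together with the odd isomorphism $\Ext_G^\bullet(\bsPi(k),M) \simeq \Ext_G^\bullet(k,M)$), and then argues directly by induction on the length that $z^{\dim_k M} \in I_M$ for any $z \in L_M$. You instead filter $M^\#$ and tensor with $M$, placing the filtration in the \emph{second} variable of $\Hbul(G,-)$, and then pass through a localization/support argument. Your route works but is a little less economical: it invokes the finite-generation theorem of \cite{Drupieski:2016} to equate supports with zero-sets of annihilators, whereas the paper's direct power-of-$z$ argument needs no such input. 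Both approaches rely on the same parity bookkeeping that you correctly flag (either $\Ext_G^\bullet(\bsPi(k),M)$ or $\Hbul(G,\bsPi(M))$ has the same $H(G,k)$-annihilator as $\Hbul(G,M)$), and this is handled in the paper by the identification \eqref{eq:oddhom}.
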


\begin{proof}
Rational cohomology for $G$ identifies with cohomology for the Hopf superalgebra $kG$, so the comment about the left and right annihilators in $H(G,k)$ of $\Hbul(G,M)$ coinciding follows from \cite[Proposition 2.3.5]{Drupieski:2016a}. Next, the left cup product action of $H(G,k)$ on $\Hbul(G,M)$ factors through the cup product action of $H(G,k)$ on $\Ext_G^\bullet(M,M)$ (see again \cite[Proposition 2.3.5]{Drupieski:2016a}), so $\calI_M \subseteq \sqrt{L_M}$. Finally, let $z \in L_M$. Taking $A = kG$, the odd isomorphism of \eqref{eq:oddhom} extends for each pair of rational $G$-supermodules $V$ and $W$ to an odd isomorphism $\Ext_G^\bullet(\bsPi(V),W) \simeq \Ext_G^\bullet(V,W)$, which the reader can check is compatible with left Yoneda multiplication by $\Ext_G^\bullet(W,W)$. Taking $V = k$ and $W = M$, this implies that $L_M$ is also the annihilator in $H(G,k)$ of $\Ext_G^\bullet(\bsPi(k),M)$. Now since $G$ is unipotent, it follows that $M$ admits a $G$-supermodule filtration of length $d = \dim_k(M)$ such that each section of the filtration is (even) isomorphic to either the trivial $G$-supermodule $k$ or its parity shift $\bsPi(k)$. Then applying the long exact sequence in cohomology in the first variable, it follows by induction on $d$ that $z^d \in I_M \subseteq \calI_M$, and hence $\sqrt{L_M} \subseteq \calI_M$.
\end{proof}

Let $A \in \calg_k$ be a purely even commutative $k$-algebra, and let $X = \Spec(A)$ be the corresponding affine $k$-scheme. Recall that points of $X$ correspond bijectively to equivalence classes of morphisms $\Spec(K) \rightarrow X$ for $K$ a field extension of $k$ \cite[\href{http://stacks.math.columbia.edu/tag/01J9}{Tag 01J9}]{stacks-project}. Equivalently, points of $X$ correspond to equivalence classes of $k$-algebra homomorphisms $A \rightarrow K$. Here two $k$-algebra homomorphisms $A \rightarrow K$ and $A \rightarrow L$ are equivalent if there exists a common field extension $\Omega$ of $K$ and $L$ such that the composites $A \rightarrow K \rightarrow \Omega$ and $A \rightarrow L \rightarrow \Omega$ are equal.\footnote{If $K$ and $L$ are both field extensions of $k$, then a common overfield $\Omega$ can always be found as a quotient of the commutative $k$-algebra $K \otimes_k L$.} Each equivalence class is a partially ordered set with $(A \rightarrow K) \leq (A \rightarrow L)$ if and only if $L$ is an extension field of $K$. Then the equivalence class corresponding to a point $x \in X$ contains a unique (up to unique isomorphism) minimal representative given by the canonical $k$-algebra homomorphism $A \rightarrow k(x)$. Thus,
	\[
	X = \varinjlim_K X(K) = \varinjlim_K \Hom_{\alg/k}(A,K),
	\]
where the limit is taken over the category whose objects are the field extensions of $K$ and whose morphisms are the injective $k$-algebra homomorphisms.

\begin{proposition} \label{prop:psirinvelementary}
Suppose $r \geq 2$. Let $G$ be a height-$r$ infinitesimal elementary $k$-supergroup scheme, and let $M$ be a finite-dimensional rational $G$-supermodule. Then the morphism of schemes $\Psi_r: \Vrg \rightarrow \abs{G}$ induced by the homomorphism $\psi_r : H(G,k) \rightarrow k[\Vrg]$ satisfies
	\[
	\Psi_r^{-1}(\abs{G}_M) = \Vrg_M.
	\]
\end{proposition}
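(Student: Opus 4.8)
The plan is to follow the strategy of Suslin, Friedlander, and Bendel in \cite[\S6]{Suslin:1997a}. By Corollary \ref{cor:infinitesimalelementary} there are two cases, $G \cong \Gar$ and $G \cong \Mrseta$ for some $s \geq 1$ and $\eta \in k$, and it is harmless to first extend scalars to an algebraically closed field: rational cohomology commutes with flat base change, and by Remark \ref{remark:VrgMK} the defining ideal $J_M$ of $\Vrg_M$ commutes with field extension for $M$ finite-dimensional, as does $I_M = \ker(\rho_\Lambda)$. Thus it suffices to prove the equality of radical ideals $\psi_r^{-1}(\calJ_M) = \calI_M$ in $H(G,k)$ after tensoring with $\ol{k}$, and one can split the containments into the two inclusions $\Vrg_M \subseteq \Psi_r^{-1}(\abs{G}_M)$ and $\Psi_r^{-1}(\abs{G}_M) \subseteq \Vrg_M$.

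The containment $\Vrg_M \subseteq \Psi_r^{-1}(\abs{G}_M)$, i.e.\ $\psi_r(\calI_M) \subseteq \calJ_M$, is the cohomological direction, and I would prove it by generalizing Lemma \ref{lemma:psiIMinJM}. Given a homogeneous $z \in I_M$ of cohomological degree $n \geq 2$ and superdegree $\ol{n}$ and a point $\fs \in \Vrg_M$ with residue field $K = k(\fs)$, one transports $1_M \cup z = 0$ through $u_G$, evaluates at $\fs$, and restricts to $\Pone$ along $\iota$; since the only class in $\Hbul(\Pone,k)$ of cohomological degree $n$ and superdegree $\ol{n}$ is $y^n$ (see \eqref{eq:Prcohomology} for $r=1$), the image of $z$ is forced to equal $y^n \cdot \psi_r(z)(\fs)$, and Proposition \ref{prop:Poneprojdim} applied to the $\Pone_K$-module $M_K$ (which has infinite projective dimension because $\fs\in\Vrg_M$) forces $\psi_r(z)(\fs)=0$. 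The one new subtlety relative to the height-one case is that $\iota$ is no longer a homomorphism of Hopf superalgebras when $r \geq 2$, so one must take care that the relevant compatibilities of cup products survive restriction; this is handled by first passing to the multiparameter subsupergroup of $G_K$ cut out by $\nu_\fs$ and using the explicit description of the restriction map $\Hbul(\Pr,k) \to \Hbul(\Pone,k)$ from \cite{Drupieski:2017a,Drupieski:2017b}. For the reverse containment in the case $G \cong \Gar$, the group $\Gar$ is purely even, so by Remark \ref{remark:Vrgpurelyeven}, Lemma \ref{lemma:finiteifffree}, and Remark \ref{remark:psir}\eqref{item:psiragrees} the schemes $\Vrg$, $\abs{G}$, $\Vrg_M$ and the map $\psi_r$ all coincide with the objects of the same name of Suslin, Friedlander, and Bendel, and one may assume $M$ purely even; the desired equality is then the special case of their theorem for infinitesimal group schemes.

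The case $G \cong \Mrseta$ is the heart of the argument, and the plan is to argue by contradiction at a point: suppose $\fs \in \Vrg$ satisfies $\Psi_r(\fs) \in \abs{G}_M$ but $\fs \notin \Vrg_M$. With $K = k(\fs)$, the point $\fs$ corresponds to a multiparameter homomorphism $\nu_\fs\colon \Pr_K \to kG_K$, and membership in $\Vrg_M$ depends only on the composite $\mu_\fs := \nu_\fs\circ(\iota\otimes 1)\colon \Pone_K \to kG_K$, determined by the primitive element $\nu_\fs(v)$ and the divided-power term $\nu_\fs(u_{r-1})$. Using the coproduct formula \eqref{eq:coproductgammaell} together with the explicit presentation of $k\Mrseta$, I would write $\mu_\fs$, after a further field extension of $K$, so that the pullback of $M$ along it becomes the pullback of $M$ along a homomorphism of the shape $u \mapsto \alpha + \beta_1\gamma_1 + \cdots + \beta_\ell\gamma_\ell$, $v \mapsto \delta$, into a ring of the type appearing in Proposition \ref{prop:secondradical}. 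Iterating that proposition then shows that the projective dimension over $\Pone$ of the pullback of $M$ is unchanged upon deleting the perturbation terms, so one reduces to the leading-term homomorphism $u \mapsto \alpha$, which factors through one of the height-one closed subsupergroups $\Gaone$, $\Gam$, or $\M_{1;s'}$ of $G$. On such a height-one subsupergroup $E$ one has $\Psi^{-1}(\abs{E}_M) = V_1(E)_M$ by Lemma \ref{lemma:psiinverseheightone}; feeding this through the naturality of $\psi$ (Lemma \ref{lemma:psirnatural}), the description $\calI_M = \sqrt{L_M}$ of Lemma \ref{lemma:unipotentannihilator}, and the compatibility of $u_G$ and $\psi_r$ with the $\Z[\frac{p^r}{2}]$-gradings (Lemma \ref{lemma:uGgradedmap}, Remark \ref{remark:Zpr2components}) yields the contradiction.

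The main obstacle is precisely this last step for $\Mrseta$: putting $\mu_\fs$ into a normal form to which the ``second radical'' lemma (Proposition \ref{prop:secondradical}) applies—which requires the divided-power bookkeeping for $\Pr$ and a controlled treatment of the non-unipotent, perturbed strata of $\Vrg$—and then verifying that the resulting reduction to a height-one subsupergroup is compatible with the maps $\psi_r$ and with the degree-multiplying-by-$\frac{p^r}{2}$ gradings, so that Lemma \ref{lemma:psiinverseheightone} can genuinely be transported back to a statement about the height-$r$ group $G$. This is where the argument diverges most sharply from, and is appreciably more delicate than, its classical counterpart in \cite[\S6]{Suslin:1997a}; in particular the fact that $\iota$ is not a Hopf map for $r \geq 2$ is what makes the passage between the non-cohomological support (defined over the height-one algebra $\Pone$) and the cohomological support (defined over the height-$r$ group $G$) require the full deformation machinery rather than a direct comparison.
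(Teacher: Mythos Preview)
Your overall plan---reduce to $K$-points, handle $\Gar$ via the classical theory, and for $\Mrseta$ use Proposition~\ref{prop:secondradical} to simplify the $\Pone$-pullback---is in the right spirit, but you are missing the central idea that makes the argument go through: the \emph{algebra} (non-Hopf) isomorphism
\[
k\Mrs \;\cong\; (k\Gaone)^{\otimes(r-1)} \otimes k\Mones \;=\; k\Grs,
\]
and its $\eta \neq 0$ analogue $k\Mrseta \cong k\M_{r-1;s+1} \cong k\Grmsp$. Because Yoneda composition does not depend on the coproduct, this isomorphism identifies $H(\Mrs,k) \cong H(\Grs,k)$ as rings and, via Lemma~\ref{lemma:unipotentannihilator}, carries $\abs{\Mrs}_M$ to $\abs{\Grs}_{\wt{M}}$. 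Since $\Grs$ has height one, Lemma~\ref{lemma:psiinverseheightone} applies to it directly. The paper then writes down \emph{explicit} formulas on $K$-points for $\Psi_r$, for $\Psi_1$, and for a comparison map $h: V_r(\Mrs) \to V_1(\Grs)$ making the obvious square commute; Proposition~\ref{prop:secondradical} is used only at the very end, to compare the two $\Pone$-actions in \eqref{eq:rhoMrs}--\eqref{eq:rhoGrs} and conclude $h^{-1}(V_1(\Grs)_{\wt{M}}) = V_r(\Mrs)_M$.

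Your attempt to prove the inclusion $\psi_r(\calI_M) \subseteq \calJ_M$ by directly generalizing Lemma~\ref{lemma:psiIMinJM} does not work as stated: for $r \geq 2$ the image $\nu_\fs^*(z_K)$ lands in $\Hbul(\Pr,K)$, not $\Hbul(\Pone,K)$, and has many terms besides $y^n$ (cf.\ \eqref{eq:uGiotaz}); since $\iota$ is not a Hopf map there is no restriction homomorphism of cohomology rings $\Hbul(\Pr,K) \to \Hbul(\Pone,K)$ carrying the cup product with $y^n$ to the one controlling $\Vrg_M$. Your proposed fix (``pass to the multiparameter subsupergroup cut out by $\nu_\fs$'') does not close this gap. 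Likewise, your reverse-direction plan reduces to height-one \emph{closed subsupergroups} of $G$, whereas the paper reduces to a height-one group $\Grs$ that is merely \emph{isomorphic as a superalgebra} to $G$---a crucial distinction. The algebra isomorphism is exactly what lets one avoid the obstacle you identified at the end of your proposal.
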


We consider separately the cases $G = \Gar$, $G = \Mrs$, and $G = \Mrseta$ with $0 \neq \eta \in k$.

\begin{proof}[Proof of Proposition \ref{prop:psirinvelementary} for $G = \Gar$]
This case follows from Remark \ref{remark:psir}\eqref{item:psiragrees} and the classical calculation of \cite[Proposition 6.5]{Suslin:1997a}.
\end{proof}

\begin{proof}[Proof of Proposition \ref{prop:psirinvelementary} for $G = \Mrs$]
Suppose $G = \Mrs$ for some integer $s \geq 1$. Set $\Grs = (\Gaone)^{\times (r-1)} \times \Mones$. Then as $k$-superalgebras (but not as Hopf superalgebras),
	\[
	k\Mrs = k[u_0,\ldots,u_{r-1},v]/\subgrp{u_0^p,\ldots,u_{r-2}^p,u_{r-1}^p + v^2,u_{r-1}^{p^s}} \cong (k\Gaone)^{\otimes (r-1)} \otimes k\Mones = k\Grs.
	\]
Specifically, for $1 \leq i \leq r-1$, the $i$-th tensor factor of $k\Gaone$ corresponds to the $k$-subalgebra of $k\Mrs$ generated by $u_{i-1}$, and the factor of $k\Mones$ corresponds to the $k$-subalgebra of $k\Mrs$ generated by $u_{r-1}$ and $v$. Then the category of $k\Mrs$-supermodules is equivalent (though not tensor equivalent) to the category of $k\Grs$-supermodules. Given a $k\Mrs$-supermodule $M$, let $\wt{M}$ denote the same module considered as a $k\Grs$-supermodule via the isomorphism $k\Mrs \cong k\Grs$. Then $\Hbul(\Mrs,M) \cong \Hbul(\Grs,\wt{M})$. In particular, $H(\Mrs,k) \cong H(\Grs,k)$ as $k$-algebras because multi\-pli\-cation in the cohomology ring is given by Yoneda composition of extensions (cf.\ \cite[\S2.3]{Drupieski:2016a}), which does not depend on the Hopf structures of $k\Mrs$ or $k\Grs$. Similarly, the isomorphism $\Hbul(\Mrs,M) \cong \Hbul(\Grs,\wt{M})$ is compatible with the right cup product actions of $H(\Mrs,k)$ and $H(\Grs,k)$, since by \cite[Proposition 2.3.5]{Drupieski:2016a} the cup products are given by Yoneda compositions. Then it follows from Lemma \ref{lemma:unipotentannihilator} that the algebra isomorphism $H(\Mrs,k) \cong H(\Grs,k)$ induces an isomorphism of schemes $\abs{\Mrs} \cong \abs{\Grs}$, which maps $\abs{\Mrs}_M$ onto $\abs{\Grs}_{\wt{M}}$. Since $\Grs$ is a height-one infinitesimal unipotent $k$-supergroup scheme, we get by Lemma \ref{lemma:psiinverseheightone} that the morphism $\Psi_1: V_1(\Grs) \rightarrow \abs{\Grs}$ satisfies $\Psi_1^{-1}(\abs{\Grs}_{\wt{M}}) = V_1(\Grs)_{\wt{M}}$. Our strategy will be to use the result for $\Grs$ and the isomorphism $\abs{\Mrs}_M \cong \abs{\Grs}_{\wt{M}}$ to deduce that $\Psi_r^{-1}(\abs{\Mrs}_M) = V_r(\Mrs)_M$.

To show that $\Psi_r^{-1}(\abs{\Mrs}_M) = V_r(\Mrs)_M$, it suffices by the discussion preceding the proposition to show that the statement holds at the level of $K$-points for each field extension $K/k$. That is, it suffices to show for each field extension $K/k$ that the morphism on $K$-points,
	\[
	\Psi_r(K) : V_r(\Mrs)(K) = \Hom_{\alg/k}(k[V_r(\Mrs)],K) \stackrel{\psi_r^*}{\longrightarrow} \Hom_{\alg/k}(H(\Mrs,k),K) = \abs{\Mrs}(K),
	\]
satisfies $\Psi_r(K)^{-1}(\abs{\Mrs}_M(K)) = V_r(\Mrs)_M(K)$. So for the remainder of the proof fix an extension field $K$ of $k$. By abuse of notation, we may simply write $V_r(\Mrs)$ for the set of $K$-points of $V_r(\Mrs)$, and similarly for the sets of $K$-points of $\abs{\Mrs}$, $V_1(\Grs)$, and $\abs{\Grs}$. Then
	\[
	\abs{\Grs} \cong \abs{\Mrs} = \set{ (d,c_1,\ldots,c_r,e) \in K^{r+2} : c_r = d^2 \text{ if $s \geq 2$, and } e = 0 \text{ if $s = 1$}}.
	\]
Here we make the identification
	\[
	\Hbul(\Mrs,k) = \begin{cases}
	k[x_1,\ldots,x_r,y] \gotimes \Lambda(\lambda_1,\ldots,\lambda_r) & \text{if $s=1$,} \\
	k[x_1,\ldots,x_r,y,w_s]/\subgrp{x_r-y^2} \gotimes \Lambda(\lambda_1,\ldots,\lambda_r) & \text{if $s \geq 2$,}
	\end{cases}
	\]
as in \cite[Proposition 3.2.1]{Drupieski:2017a}. Then a point $(d,\ul{c},e) = (d,c_1,\ldots,c_r,e) \in \abs{\Mrs}$ corresponds to the unique $k$-algebra map $\phi = \phi_{(d,\ul{c},e)}: H(\Mrs,k) \rightarrow K$ such that $\phi(x_i) = c_i$ for $1 \leq i \leq r$, $\phi(y) = d$, and $\phi(w_s) = e$ if $s \geq 2$. Next, since $\Mrs$ is unipotent, we get by Lemma \ref{lemma:kbsvrg=kNrG} and \cite[Proposition 2.2.2]{Drupieski:2017b} that $V_r(\Mrs)(K) \cong \Hom_{Grp/K}(\Mr \otimes_k K,\Mrs \otimes_k K)$, the set of $K$-super\-group scheme homomorphisms $\Mr \otimes_k K \rightarrow \Mrs \otimes_k K$, and hence
	\[
	V_r(\Mrs) \cong \set{ (\mu,a_0,\ldots,a_{r-1},b) \in K^{r+2}: \mu^2 = a_0^{p^r} \text{ if $s \geq 2$, and } b = 0 \text{ if $s=1$}}.
	\]
Given a point $(\mu,\ul{a},b) = (\mu,a_0,\ldots,a_{r-1},b) \in V_r(\Mrs)$, the corresponding comorphism $\phi = \phi_{(\mu,\ul{a},b)}: K[\Mrs] \rightarrow K[\Mr]$ is the unique map of Hopf $K$-superalgebras such that $\phi(\tau) = \mu \cdot \tau$, $\phi(\theta) = \sum_{i=0}^{r-1} a_i \theta^{p^i}$, $\phi(\sigma_i) = a_0^{ip^{r-1}} \cdot \sigma_i$ for $0 \leq i < p^{s-1}$, and $\phi(\sigma_{p^{s-1}}) = a_0^{p^{r+s-2}} \cdot \sigma_{p^{s-1}} + b \cdot \sigma_1$. Similarly, $V_1(\Grs)(K) \cong \Hom_{Grp/K}(\Mone \otimes_k K,\Grs \otimes_k K)$. Then it follows from \cite[Proposition 2.2.2]{Drupieski:2017b} that
	\[
	V_1(\Grs) \cong \set{ (\mu,a_0,\ldots,a_{r-1},b) \in K^{r+2}: \mu^2 = a_{r-1}^p \text{ if $s \geq 2$, and } b = 0 \text{ if $s=1$}}.
	\]
More specifically, write
	\begin{align*}
	K[\Grs] &= K[\Gaone]^{\otimes (r-1)} \otimes K[\Mones] = K[\theta_0]/\subgrp{\theta_0^p} \otimes \cdots \otimes K[\theta_{r-2}]/\subgrp{\theta_{r-2}^p} \otimes K[\Mones], \text{ and} \\
	K[\Mones] &= \textstyle K[\tau,\sigma_1,\ldots,\sigma_{p^s-1}]/\subgrp{\tau^2 = 0 \text{ and } \sigma_i \sigma_j = \binom{i+j}{i} \sigma_{i+j} \text{ for $1 \leq i,j < p^s$}}.
	\end{align*}
Then the comorphism $\phi = \phi_{(\mu,\ul{a},b)}: K[\Grs] \rightarrow K[\Mone]$ corresponding to the point $(\mu,\ul{a},b) \in V_1(\Grs)$ is the unique map of Hopf $K$-superalgebras such that $\phi(\theta_i) = a_i \cdot \theta$ for $0 \leq i \leq r-2$, $\phi(\tau) = \mu \cdot \tau$, $\phi(\sigma_i) = a_{r-1}^i$ for $0 \leq i < p^{s-1}$, and $\phi(\sigma_{p^{s-1}}) = (a_{r-1})^{p^{s-1}} \cdot \sigma_{p^{s-1}} + b \cdot \sigma_1$.

Given $(\mu,\ul{a},b) \in V_r(\Mrs)$, let $\whphi_{(\mu,\ul{a},b)}: k[V_r(\Mrs)] \rightarrow K$ be the corresponding map of $k$-algebras, and let $\nu_{(\mu,\ul{a},b)} = \phi_{(\mu,\ul{a},b)}^*: \Mr \otimes_k K \rightarrow \Mrs \otimes_k K$ be the corresponding map of supergroups. Let $z \in \opH^n(\Mrs,k)_{\ol{n}}$. Then by the definition of $\psi_r$, the scalar $\whphi_{(\mu,\ul{a},b)} \circ \psi_r(z) \in K$ is equal to the coefficient of $y^n$ in $\nu_{(\mu,\ul{a},b)}^*(z_K) \in \opH^n(\Mr,k) \otimes_k K = \opH^n(\Pr,k) \otimes_k K$. Applying \cite[Lemma 3.1.1]{Drupieski:2017b} and the description of the inflation map $\Hbul(\Mrs,k) \rightarrow \Hbul(\Mr,k) = \Hbul(\Pr,k)$ in \cite[Proposition 3.2.1]{Drupieski:2017a}, it then follows that the map on $K$-points $\Psi_r: V_r(\Mrs) \rightarrow \abs{\Mrs}$ is given by
	\[
	\Psi_r(\mu,a_0,\ldots,a_{r-1},b) = (\mu,a_{r-1}^p,a_{r-2}^{p^2},\ldots,a_0^{p^r},b^p).
	\]
By similar reasoning, it follows that the map on $K$-points $\Psi_1: V_1(\Grs) \rightarrow \abs{\Grs}$ is given by
	\[
	\Psi_1(\mu,a_0,\ldots,a_{r-1},b) = (\mu,a_0^p,\ldots,a_{r-1}^p,b^p).
	\]
Define $h: V_r(\Mrs) \rightarrow V_1(\Grs)$ on $K$-points by $h(\mu,a_0,\ldots,a_{r-1},b) = (\mu,a_{r-1},a_{r-2}^p,\ldots,a_0^{p^{r-1}},b)$. Then the following diagram on sets of $K$-points commutes:
	\[
	\xymatrix{
	V_r(\Mrs) \ar@{->}[r]^{\Psi_r} \ar@{->}[d]^{h} & \abs{\Mrs} \ar@{->}[d]^{=} \\
	V_1(\Grs) \ar@{->}[r]^{\Psi_1} & \abs{\Grs}.
	}
	\]
Since $\Psi_1^{-1}(\abs{\Grs}_{\wt{M}}) = V_1(\Grs)_{\wt{M}}$, this implies that $\Psi_r^{-1}(\abs{\Mrs}_M) = h^{-1}(V_1(\Grs)_{\wt{M}})$.

Let $(\mu,\ul{a},b) \in V_r(\Mrs)$. Then the map of Hopf $K$-superalgebras $\rho_{(\mu,\ul{a},b)}: K\Pr \rightarrow K\Mrs$ labeled by $(\mu,\ul{a},b)$ is the induced via duality by the map $\phi_{(\mu,\ul{a},b)}: K[\Mrs] \rightarrow K[\Mr]$. Then in terms of the $K$-basis \eqref{eq:gammaell} for $K\Pr$, one can check 
	\begin{equation} \label{eq:rhoMrs}
	\begin{split}
	\rho_{(\mu,\ul{a},b)}(v) &= \mu \cdot v, \quad \text{and} \\
	\rho_{(\mu,\ul{a},b)}(u_{r-1}) &= b \cdot u_{r-1}^{p^{s-1}} + \sum \binom{i}{i_0,\ldots,i_{r-1}} a_0^{i_0} \cdots a_{r-1}^{i_{r-1}} \cdot \gamma_i,
	\end{split}
	\end{equation} 
where the sum is over all integers $0 < i \leq p^{r-1}$ and $i_0,\ldots,i_{r-1} \in \N$ such that $i_0+\cdots+i_{r-1} = i$ and $i_0+i_1p+\cdots+i_{r-1}p^{r-1}=p^{r-1}$, and $\binom{i}{i_0,\ldots,i_{r-1}}$ denotes the usual multinomial coefficient. In particular, $\rho_{(\mu,\ul{a},b)}(u_{r-1})$ is congruent modulo the square of the ideal $\subgrp{u_0,\ldots,u_{r-2}} \subset K\Mrs$ to
	\begin{equation} \label{eq:rhoMrsmodrad2}
	b \cdot u_{r-1}^{p^{s-1}} + a_0^{p^{r-1}} \cdot u_{r-1} + a_1^{p^{r-2}} \cdot u_{r-2} + \cdots + a_{r-2}^p \cdot u_1 + a_{r-1} \cdot u_0.
	\end{equation}
Similarly, one can check the map of Hopf $K$-superalgebras $K\Pone \rightarrow K\Grs \cong K\Mrs$ labeled by $h(\mu,\ul{a},b) = (\mu,a_{r-1},a_{r-2}^p,\ldots,a_0^{p^{r-1}},b) \in V_1(\Grs)$ is given by
	\begin{equation} \label{eq:rhoGrs}
	\begin{split}
	v &\mapsto \mu \cdot v, \quad \text{and} \\
	u &\mapsto b \cdot u_{r-1}^{p^{s-1}} + a_0^{p^{r-1}} \cdot u_{r-1} + a_1^{p^{r-2}} \cdot u_{r-2} + \cdots + a_{r-2}^p \cdot u_1 + a_{r-1} \cdot u_0.
	\end{split}
	\end{equation}
From this we deduce that
	\begin{align*}
	\Psi_r^{-1}(\abs{\Mrs}_M) &= h^{-1}(V_1(\Grs)_{\wt{M}}) \\
	&= \{ (\mu,a_0,\ldots,a_{r-1},b) \in V_r(\Mrs) : \pd_{K\Pone}(M_K) = \infty \\
	&\phantom{= \{ \ } \text{when the action of $\Pone$ on $M_K$ is given by \eqref{eq:rhoGrs}} \}
	\end{align*}
Since the ideal $\subgrp{u_0,\ldots,u_{r-2}} \subset K\Mrs$ is generated by $p$-nilpotent elements, Proposition \ref{prop:secondradical} and the observation \eqref{eq:rhoMrsmodrad2} imply that $\pd_{K\Pone}(M_K) = \infty$ when the action of $K\Pone$ is specified by \eqref{eq:rhoGrs} if and only if $\pd_{K\Pone}(M_K) = \infty$ when the action of $K\Pone$ is specified by the inclusion $K\Pone \hookrightarrow K\Pr$ and the formulas \eqref{eq:rhoMrs}. So $\Psi_r^{-1}(\abs{\Mrs}_M) = V_r(\Mrs)_M$.
\end{proof}

\begin{proof}[Proof of Proposition \ref{prop:psirinvelementary} for $G = \Mrseta$ with $0 \neq \eta \in k$]
This reasoning for this case exactly parallels (down to the results cited for justification at each step) the reasoning for $G = \Mrs$, so we will just describe the details that are different. By \cite[Remark 3.1.8(4)]{Drupieski:2017a}, the $K$-superalgebra map $\pi: K\Mrseta \rightarrow K\M_{r-1;s+1}$ defined on generators by $\pi(v) = v$, $\pi(u_i) = u_{i-1}$ for $1 \leq i \leq r-1$, and $\pi(u_0) = (-\eta^{-1}) \cdot u_{r-2}^{p^s}$, is an isomorphism. Then the categories of $k\Mrseta$-supermodules and $k\Grmsp$-super\-modules are equivalent. Given a $k\Mrseta$-supermodule $M$, write $\wt{M}$ for $M$ considered as a $k\Grmsp$-super\-module via the superalgebra  isomorphism $k\Mrseta \cong k\M_{r-1;s+1} \cong k\Grmsp$. Then $\abs{\Mrseta}_M \cong \abs{\Grmsp}_{\wt{M}}$.  Next,
	\[
	\Hbul(\Mrseta,k) \cong k[x_1,\ldots,x_{r-1},y,w]/\subgrp{x_{r-1}-y^2} \gotimes \Lambda(\lambda_1,\ldots,\lambda_{r-1}),
	\]
so the set of $K$-points of $\abs{\Mrseta} \cong \abs{\Grmsp}$ is given by
	\[
	\abs{\Grmsp} \cong \abs{\Mrseta} = \set{ (d,c_1,\ldots,c_{r-1},e) \in K^{r+1}: c_{r-1} = d^2},
	\]
with a point $(d,\ul{c},e)$ corresponding to the unique $k$-algebra map $\phi = \phi_{(d,\ul{c},e)}: H(\Mrseta,k) \rightarrow K$ such that $\phi(x_i) = c_i$ for $1 \leq i \leq r-1$, $\phi(y) = d$, and $\phi(w) = e$.

As in the case $G = \Mrs$, we get $V_r(\Mrseta)(K) \cong \Hom_{Grp/K}(\Mr \otimes_k K,\Mrseta \otimes_k K)$, so
	\[
	V_r(\Mrseta) \cong \set{ (\mu,a_0,\ldots,a_{r-1}) \in K^{r+1} : \mu^2 = a_0^{p^r}}.
	\]
The Hopf superalgebra structure of $k[\Mrseta]$ is described for $r \geq 2$ in \cite[Lemma 2.2.1]{Drupieski:2017b}. In particular, $k[\Mrseta] \cong k[\Mrs]$ as $k$-superalgebras, and then via this identification, the comorphism $\phi = \phi_{(\mu,\ul{a})}: K[\Mrseta] \rightarrow K[\Mr]$ corresponding to the point $(\mu,\ul{a}) \in V_r(\Mrseta)$ is specified by the formulas $\phi(\tau) = \mu \cdot \tau$, $\phi(\theta) = (\sum_{i=0}^{r-1} a_i \cdot \theta^{p^i}) - \eta (a_0^{p^{r+s-1}} \cdot \sigma_{p^s})$, and $\phi(\sigma_i) = a_0^{ip^{r-1}} \cdot \sigma_i$ for $0 \leq i < p^s$. The morphism on $K$-points $\Psi_r: V_r(\Mrseta) \rightarrow \abs{\Mrseta}$ is given by
	\[
	\Psi_r(\mu,a_0,\ldots,a_{r-1}) = (\mu,a_{r-2}^{p^2},a_{r-3}^{p^3},\ldots,a_0^{p^r},(-\eta^{-1})^p \cdot a_{r-1}^p).
	\]
On the other hand, the set of $K$-points of $V_1(\Grmsp)$ is given by
	\[
	V_1(\Grmsp) = \set{ (\mu,a_0,\ldots,a_{r-2},b) \in K^{r+1}: \mu^2 = a_{r-2}^p},
	\]
and the morphism $\Psi_1: V_1(\Grmsp) \rightarrow \abs{\Grmsp}$ on $K$-points is given by
	\[
	\Psi_1(\mu,a_0,\ldots,a_{r-2},b) = (\mu,a_0^p,\ldots,a_{r-2}^p,b^p).
	\]
Then the map $h: V_r(\Mrseta) \rightarrow V_1(\Grmsp)$ that makes the square
	\begin{equation} \label{eq:Mrsetasquare}
	\vcenter{\xymatrix{
	V_r(\Mrseta) \ar@{->}[r]^{\Psi_r} \ar@{->}[d]^{h} & \abs{\Mrseta} \ar@{->}[d]^{=} \\
	V_1(\Grmsp) \ar@{->}[r]^{\Psi_1} & \abs{\Grmsp}
	}}
	\end{equation}
commute is defined by $h(\mu,a_0,\ldots,a_{r-1}) = (\mu,a_{r-2}^p,a_{r-3}^{p^2},\ldots,a_0^{p^{r-1}},(-\eta^{-1}) \cdot a_{r-1})$.

Let $(\mu,\ul{a}) \in V_r(\Mrseta)$. Then the map $\rho_{(\mu,\ul{a})}: K\Pr \rightarrow K\Mrseta$ labeled by $(\mu,\ul{a})$ satisfies
	\begin{equation} \label{eq:rhoMrseta}
	\begin{split}
	\rho_{(\mu,\ul{a})}(v) &= \mu \cdot v, \quad \text{and} \\
	\rho_{(\mu,\ul{a})}(u_{r-1}) &= \sum \binom{i}{i_0,\ldots,i_{r-1}} a_0^{i_0} \cdots a_{r-1}^{i_{r-1}} \cdot \gamma_i,
	\end{split}
	\end{equation}
where the sum is over all integers $0 < i \leq p^{r-1}$ and $i_0,\ldots,i_{r-1} \in \N$ such that $i_0+\cdots+i_{r-1}=i$ and $i_0 + i_1p+\cdots+i_{r-1}p^{r-1} = p^{r-1}$. In particular, $\rho_{(\mu,\ul{a})}(u_{r-1})$ is congruent modulo the square of the ideal $\subgrp{u_0,\ldots,u_{r-2}} \subset K\Mrseta$ to
	\begin{equation} \label{eq:rhoMrsetamodrad}
	a_0^{p^{r-1}} \cdot u_{r-1} + a_1^{p^{r-2}} \cdot u_{r-2} + \cdots + a_{r-1} \cdot u_0.
	\end{equation}
The Hopf superalgebra map $\rho: K\Pone \rightarrow K\Grmsp$ labeled by $h(\mu,\ul{a})$ is given by
	\begin{equation} \label{eq:rhoGrmsp}
	\begin{split}
	\rho(v) &= \mu \cdot v, \quad \text{and} \\
	\rho(u) &= (-\eta^{-1}) a_{r-1} \cdot u_{r-2}^{p^s} + a_0^{p^{r-1}} \cdot u_{r-2} + \cdots + a_{r-3}^{p^2} \cdot u_1 + a_{r-2}^p \cdot u_0.
	\end{split}
	\end{equation}
Composing with the inverse isomorphism $\pi^{-1}: K\Grmsp \cong K\M_{r-1;s+1} \rightarrow K\Mrseta$, we get
	\begin{equation}
	\begin{split}
	(\pi^{-1} \circ \rho)(v) &= \mu \cdot v, \quad \text{and} \\
	(\pi^{-1} \circ \rho)(u) &= a_{r-1} \cdot u_0 + a_0^{p^{r-1}} \cdot u_{r-1} + \cdots + a_{r-3}^{p^2} \cdot u_2 + a_{r-2}^p \cdot u_1.
	\end{split}
	\end{equation}
Since the ideal $\subgrp{u_0,\ldots,u_{r-2}} \subset K\Mrseta$ is generated by $p$-nilpotent elements, we deduce by the observation \eqref{eq:rhoMrsetamodrad} and Proposition \ref{prop:secondradical} that $\pd_{K\Pone}(M_K) = \infty$ when the action of $K\Pone$ is specified by \eqref{eq:rhoGrmsp} if and only if $\pd_{K\Pone}(M_K) = \infty$ when the action of $K\Pone$ is specified by the inclusion $K\Pone \hookrightarrow K\Pr$ and the formulas \eqref{eq:rhoMrseta}. Then by the commutativity of the square \eqref{eq:Mrsetasquare} and the equality $\Psi_1^{-1}(\abs{\Grmsp}_{\wt{M}}) = V_1(\Grmsp)_{\wt{M}}$, we deduce that $\Psi_r^{-1}(\abs{\Mrseta}_M) = V_r(\Mrseta)_M$.
\end{proof}

\begin{corollary} \label{cor:psiinverse}
Let $r \geq 1$ be arbitrary, let $G$ be an infinitesimal elementary $k$-supergroup scheme of height $\leq r$, and let $M$ be a finite-dimensional rational $G$-supermodule. Then the homomorphism $\psi_r : H(G,k) \rightarrow k[\Vrg]$ satisfies $\psi_r^{-1}(\calJ_M) = \calI_M$.
\end{corollary}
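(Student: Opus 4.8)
The plan is to assemble the statement from the cases already treated, using the classification of infinitesimal elementary supergroups. By Corollary~\ref{cor:infinitesimalelementary}, $G$ is unipotent and is isomorphic to $\G_{a(r')}$, $\G_{a(r')} \times \Gam$, $\M_{r';s}$, or (when $r' \geq 2$) $\M_{r';s,\eta}$ with $\eta \neq 0$, for a uniquely determined height $r' = r'(G) \leq r$. Since $\calI_M$ and $\calJ_M$ are radical, the asserted equality $\psi_r^{-1}(\calJ_M) = \calI_M$ is, as observed just before the height-one subsection, equivalent to $\Psi_r^{-1}(\abs{G}_M) = \Vrg_M$. When $r' = r$ this is exactly Proposition~\ref{prop:psirinvelementary} if $r \geq 2$, and is Lemma~\ref{lemma:psiinverseheightone} if $r = 1$ (a height-one elementary supergroup being unipotent). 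So the only real work is to reduce the case $r' < r$ to the case $r' = r$.

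For that I would use the canonical Hopf superalgebra quotient $q\colon \Pr \twoheadrightarrow \P_{r'}$ obtained by iterating $\Pr/\subgrp{u_0} \cong \P_{r-1}$; it has kernel $\subgrp{u_0, \dots, u_{r-r'-1}}$, carries $u_{r-1}$ to $u_{r'-1}$, and so satisfies $q \circ \iota = \iota'$ for the canonical embeddings $\iota\colon \Pone \hookrightarrow \Pr$ and $\iota'\colon \Pone \hookrightarrow \P_{r'}$ of Section~\ref{subsec:definesupportscheme}. By Lemma~\ref{lemma:HomRS}, $q$ induces a closed embedding $q_*\colon V_{r'}(G) \hookrightarrow \Vrg$, and the crux of the reduction is that $q_*$ is an isomorphism. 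Since the conclusion concerns only the underlying reduced schemes, it is enough to show $q_*$ is surjective on $K$-points for $K$ algebraically closed, i.e.\ that every Hopf $K$-superalgebra homomorphism $\rho\colon \Pr \otimes_k K \to kG \otimes_k K$ factors through $q \otimes 1$. By the proof of Proposition~\ref{prop:Prquotients}, if $\rho(u_0 \otimes 1) \neq 0$ then the Hopf subalgebra $\rho(\Pr \otimes_k K)$ of $kG \otimes_k K$ is the group algebra of a multiparameter supergroup of height $r$, which is impossible since $kG \otimes_k K$ has height $r' < r$; hence $\rho(u_0 \otimes 1) = 0$, so $\rho$ factors through $\P_{r-1}$, and iterating $r - r'$ times shows $\rho$ factors through $\P_{r'}$.

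Once $q_*$ is an isomorphism, the identity $q \circ \iota = \iota'$ shows that the $\Pone$-module structures on $M \otimes_k k(\fs)$ used to define $\Vrg_M$ and $V_{r'}(G)_M$ match under $q_*$, so $q_*$ restricts to an isomorphism $V_{r'}(G)_M \xrightarrow{\sim} \Vrg_M$; and the universal property of $u_G$ together with the description of the inflation map $\Hbul(\P_{r'}, k) \to \Hbul(\Pr, k)$ (as in the proof of Proposition~\ref{prop:psirgradedmap}) gives $q_*^* \circ \psi_{r,G} = \psi_{r',G}$. The case $r' < r$ then follows from the case $r' = r$ applied with parameter $r'$. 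The one genuinely substantive point is the preceding paragraph --- that $q$ induces an isomorphism on functors of multiparameter supergroups when $r' < r$; everything else is bookkeeping around Proposition~\ref{prop:psirinvelementary}, Lemma~\ref{lemma:psiinverseheightone}, and Corollary~\ref{cor:infinitesimalelementary}.
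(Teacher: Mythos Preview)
Your proposal is correct and follows essentially the same approach as the paper: reduce to the case $r' = r$ (handled by Lemma~\ref{lemma:psiinverseheightone} and Proposition~\ref{prop:psirinvelementary}) by showing the closed embedding $V_{r'}(G) \hookrightarrow \Vrg$ is an equality, using the classification of Hopf superalgebra quotients of $\Pr$. The paper phrases the key step via Lemma~\ref{lemma:bsvrg} (any $K$-point of $\Vrg$ factors through some multiparameter $E \leq G_K$, necessarily of height $\leq r'$) rather than arguing directly that $\rho(u_0) = 0$, and it defers the compatibility $q_*^* \circ \psi_{r,G} = \psi_{r',G}$ to an external reference, but the substance is the same; note also that your restriction to algebraically closed $K$ is unnecessary, since the argument from Proposition~\ref{prop:Prquotients} works over any field.
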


\begin{proof}
If the height of $G$ is exactly $r$, then the result immediately follows from either Lemma \ref{lemma:psiinverseheightone} or Proposition \ref{prop:psirinvelementary}. Otherwise, suppose $G$ is infinitesimal of height $r'$, with $r' < r$. The canonical projection map $\Pr \twoheadrightarrow \P_{r'}$ defines by Lemma \ref{lemma:HomRS} a closed embedding $V_{r'}(G) \hookrightarrow \Vrg$. If $K/k$ is a field extension and if $\nu \in \Vrg(K) = V_r(G_K)(K)$, then we see by Lemma \ref{lemma:bsvrg} that $\nu: K\Pr \rightarrow KG$ factors through the canonical quotient map $K\Pr \twoheadrightarrow KE$ for some closed multi\-parameter $K$-subsupergroup scheme $E$ of $G_K$. Then $E$ must also be infinitesimal of height $\leq r'$, and hence the canonical quotient map $K\Pr \twoheadrightarrow KE$ factors through the canonical quotient map $K\Pr \twoheadrightarrow K\P_{r'}$. Thus $\nu$ is in the image of the map on $K$-points $V_{r'}(G)(K) \hookrightarrow V_r(G)(K)$. Since $K$ was arbitrary, this implies by the discussion preceding Proposition \ref{prop:psirinvelementary} that the closed embedding $V_{r'}(G) \hookrightarrow V_r(G)$ is an equality. Now the claim for $\psi_r$ follows from the corresponding result for $\psi_{r'}: H(G,k) \rightarrow k[V_{r'}(G)]$; cf.\ the proof of \cite[Corollary 3.5.5]{Drupieski:2017b}.
\end{proof}

\subsection{Arbitrary infinitesimal unipotent supergroups}

We now apply the results of the previous subsections to deduce the analogue of Lemma \ref{lemma:psiinverseheightone} and Proposition \ref{prop:psirinvelementary} for arbitrary infinitesimal unipotent $k$-super\-group schemes.

\begin{theorem} \label{thm:Psirinv}
Let $G$ be an infinitesimal unipotent $k$-supergroup scheme of height $\leq r$, and let $M$ be a finite-dimensional rational $G$-supermodule. Then the homomorphism $\psi = \psi_r: H(G,k) \rightarrow k[\Vrg]$ satisfies $\psi^{-1}(\calJ_M) = \calI_M$, and hence the associated morphism of schemes $\Psi_r: \Vrg \rightarrow \abs{G}$ satisfies $\Psi_r^{-1}(\abs{G}_M) = \Vrg_M$. Thus, $\Psi_r$ restricts to a homeomorphism
	\[
	\Psi_r: \Vrg_M \stackrel{\sim}{\rightarrow} \abs{G}_M.
	\]
\end{theorem}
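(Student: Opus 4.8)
The plan is to reduce the general case to the elementary case via the detection theorems of BIKP, mirroring the Suslin--Friedlander--Bendel strategy but now using the machinery assembled in the preceding subsections. First I would record that $\psi_r$ is a graded homomorphism (Proposition \ref{prop:psirgradedmap}), so both $\calJ_M = \sqrt{J_M}$ and $\calI_M = \sqrt{I_M}$ being radicals of homogeneous ideals, it suffices to test membership on homogeneous elements. The easy inclusion $\psi_r(\calI_M) \subseteq \calJ_M$ I would obtain by combining naturality of $\psi_r$ (Lemma \ref{lemma:psirnatural}) and of the support schemes under pullback (Proposition \ref{prop:VrgMclosed}) with the known elementary case: for any field extension $K/k$ and any infinitesimal elementary subsupergroup scheme $E \hookrightarrow G_K$ of height $\leq r$, Corollary \ref{cor:psiinverse} gives $\psi_{r,E}^{-1}(\calJ_{M_K}^E) = \calI_{M_K}^E$, and composing with the restriction maps $\nu^\ast$ along the commutative square from Lemma \ref{lemma:psirnatural} shows that if $z \in \calI_M$ then $\psi_r(z)$ restricts into every $\calJ_{M_K}^E$; since $\Vrg(k)$ is covered by the images of the $\bsvr(E)$ (Lemma \ref{lemma:bsvrg}(\ref{item:union})), one concludes $\psi_r(z) \in \calJ_M$.

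For the reverse inclusion $\psi_r^{-1}(\calJ_M) \subseteq \calI_M$ — which is the substantive direction — I would argue exactly as in the proof of Lemma \ref{lemma:psiinverseheightone}, but replacing the single group $G$ of height one by passage through elementary subsupergroup schemes of arbitrary height $\leq r$. Let $z \in H(G,k)$ with $\psi_r(z) \in \calJ_M$; after replacing $z$ by a power we may assume $\psi_r(z) \in J_M$. Fix an algebraically closed field extension $K/k$ and an infinitesimal elementary $K$-subsupergroup scheme $\nu: E \hookrightarrow G_K$ of height $\leq r$; by Corollary \ref{cor:infinitesimalelementary} such $E$ is one of $\Gas$, $\Gas \times \Gam$, $\Mrs$, or $\Mrseta$. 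Using Remark \ref{remark:VrgMK} to see $\psi_r(z)_K \in (J_M)_K \subseteq J_{M_K} \subseteq \calJ_{M_K}$, and the commutative diagram relating $\psi_{r,G_K}$, $\psi_{r,E}$ and the restriction maps $\nu^\ast$ (plus surjectivity of $\nu^\ast$ on $k[\Vrg]$ from Lemma \ref{lemma:bsvrg}), I get $\psi_{r,E}(\nu^\ast(z_K)) \in \calJ_{M_K}^E$. Now invoke Corollary \ref{cor:psiinverse} for $E$ together with Theorem \ref{thm:psiuniversalhomeo} applied to $E$ (so $\Psi_{r,E}$ is a homeomorphism and $\psi_{r,E}$ is injective modulo nilpotents on reduced rings): these force $\nu^\ast(z_K) \in \calI_{M_K}^E$, i.e.\ $\nu^\ast(\rho_\Lambda(z)_K)$ is nilpotent in $\Hbul(E,\Lambda_K)$. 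Since $K$ and $E$ were arbitrary, the detection theorem \cite[Theorem 8.4]{Benson:2018} (nilpotence of cohomology classes is detected on elementary subsupergroups) yields that $\rho_\Lambda(z)$ is nilpotent in $\Hbul(G,\End_k(M))$, i.e.\ $z \in \calI_M$.

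Finally, $\psi_r^{-1}(\calJ_M) = \calI_M$ translates directly, via the definitions of $\abs{G}_M = \Spec(H(G,k)/I_M)$ and $\Vrg_M = \Spec(k[\Vrg]/J_M)$ and the fact that $\Psi_r$ is the morphism of spectra induced by $\psi_r$, into $\Psi_r^{-1}(\abs{G}_M) = \Vrg_M$ as closed subschemes; and since $\Psi_r: \Vrg \to \abs{G}$ is a universal homeomorphism by Theorem \ref{thm:psiuniversalhomeo}, its restriction to the preimage $\Vrg_M$ of the closed set $\abs{G}_M$ is again a homeomorphism onto $\abs{G}_M$. (To upgrade this to a \emph{finite} homeomorphism, as in the companion statement Theorem \ref{thm:Psirinv} quoted in the introduction, one also restricts the finiteness assertion of Corollary \ref{cor:subgroupspectrum}; but the homeomorphism statement already follows from what is above.) The main obstacle is the reverse inclusion, and within it the critical input is the BIKP detection theorem for nilpotence \cite[Theorem 8.4]{Benson:2018}: everything else is bookkeeping with naturality squares and the elementary-case results of the previous subsection, whereas the passage from ``nilpotent on every elementary $E$ over every $K$'' to ``nilpotent on $G$'' is exactly where deep input is unavoidable.
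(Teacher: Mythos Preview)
Your proof plan is correct and follows essentially the same approach as the paper: both directions reduce to the elementary case (Corollary~\ref{cor:psiinverse}) via the naturality square of Lemma~\ref{lemma:psirnatural}, with the reverse inclusion invoking the BIKP detection theorem \cite[Theorem~8.4]{Benson:2018} exactly as you describe. The one point to tighten is in the forward inclusion: writing ``$\Vrg(k)$ is covered by the images of the $\bsvr(E)$'' is not quite enough, since one must check that $\psi_r(z)$ vanishes at every prime $\fs\in\Vrg_M$, not just at $k$-points---the paper handles this by working over the residue field $K=k(\fs)$, factoring $\nu_\fs$ as $K\Pr\twoheadrightarrow KE\hookrightarrow KG_K$ through an elementary $E\leq G_K$, and then verifying from the universal property of $u_{G_K}$ that $\phi_\fs = \phi_\pi\circ\nu^*$, which is the only step your sketch does not spell out.
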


\begin{proof}
First suppose $z \in \calI_M$; we want to show for each $\fs \in \Vrg_M$ that $\phi_{\fs}(\psi(z)) = 0$. So fix $\fs \in \Vrg_M$, and set $K = k(\fs)$. Let $\nu_\fs: K\Pr \rightarrow KG_K$ and $\iota_K: K\Pone \hookrightarrow K\Pr$ be the maps as defined in the beginning of Section \ref{subsec:definesupportscheme}. Then $\pd_{K\Pone}(\iota_K^* \nu_\fs^* M_K) = \infty$ by the definition of $\Vrg_M$. Here we have written $\iota_K^* \nu_{\fs}^* M_K$ to emphasize that the $K\Pone$-super\-module structure on $M_K$ is obtained by pulling back first along $\nu_{\fs}$ and then along $\iota_K$. By Lemma \ref{lemma:kbsvrg=kNrG}, $\nu_\fs$ factors for some integer $s \geq 1$ through the canonical quotient map $K\Pr \twoheadrightarrow K\Mrs$, and hence factors as $K\Pr \twoheadrightarrow KE \hookrightarrow KG_K$ for some elementary $K$-super\-group scheme $E$. Here $\pi: K\Pr \twoheadrightarrow KE$ is the canonical quotient map, and $\nu: KE \hookrightarrow KG_K$ is an injective Hopf super\-algebra map. Then $E$ identifies with a closed $K$-sub\-super\-group scheme of $G_K$, and so is also infinitesimal of height $\leq r$. Now $\nu^*(z_K) \in \calI_{\nu^*M_K}^E$, so $\nu^*(\psi(z)_K) = \psi_E(\nu^*(z_K)) \in \calJ_{\nu^*M_K}^E$ by Corollary \ref{cor:psiinverse}. Then $\phi_{\fp}(\nu^*(\psi(z)_K)) = 0$ for all points $\fp \in V_r(E)_{\nu^*M_K}$. By a slight abuse of notation, let $\pi \in V_r(E)$ denote the point corresponding as in the discussion preceding Proposition \ref{prop:psirinvelementary} to the equivalence class of $\pi: K\Pr \twoheadrightarrow KE$, and let $\phi_{\pi}: K[V_r(E)] \rightarrow K$ be the corresponding $K$-algebra homomorphism. Since $\nu_{\fs} = \nu \circ \pi$, then $\pd_{K\Pone}(\iota_K^* \pi^*\nu^* M_K) = \pd_{K\Pone}(\iota_K^* \nu_\fs^* M_K) = \infty$, and hence $\pi \in V_r(E)_{\nu^* M_K}$. Then $\phi_{\pi}(\nu^*(\psi(z)_K)) = 0$. So to show that $\phi_\fs(\psi(z)) = 0$, it suffices to show that $\phi_{\fs} = \phi_\pi \circ \nu^*: K[V_r(G_K)] \rightarrow K$, where by abuse of notation we have also written $\phi_{\fs}$ for the unique extension of $\phi_\fs$ to a $K$-algebra map $K[V_r(G_K)] = k[\Vrg] \otimes_k K \rightarrow K$. On the one hand, $\phi_{\fs}: K[V_r(G_K)] \rightarrow K$ is the unique $K$-algebra map such that $u_{G_K} \otimes_{\phi_{\fs}} K = \nu_{\fs} : K\Pr \rightarrow KG_K$. On the other hand, consider the universal Hopf superalgebra map $u_E: K\Pr \otimes_K K[V_r(E)] \rightarrow KE \otimes_K K[V_r(E)]$. One can check 
	\[
	(\nu \otimes 1) \circ u_E = u_{G_K} \otimes_{\nu^*} K[V_r(E)]: K\Pr \otimes_K K[V_r(E)] \rightarrow KG_K \otimes K[V_r(E)].
	\]
Then
	\begin{align*}
	u_{G_K} \otimes_{\phi_{\pi} \circ \nu^*} K &= (u_{G_K} \otimes_{\nu^*} K[V_r(E)]) \otimes_{\phi_{\pi}} K \\
	&= [(\nu \otimes 1) \circ u_E] \otimes_{\phi_{\pi}} K \\
	&= \nu \circ (u_E \otimes_{\phi_{\pi}} K) \\
	&= \nu \circ \pi = \nu_{\fs},
	\end{align*}
and hence $\phi_{\fs} = \phi_{\pi} \circ \nu^*$ by the uniqueness of $\phi_{\fs}$.

Now suppose that $\psi(z) \in \calJ_M$. Let $K$ be an extension field of $k$, and let $E$ be an (infinitesimal) elementary $K$-sub\-super\-group scheme of $G_K$. Denote the closed embedding of $E$ into $G_K$ by $\nu: E \hookrightarrow G_K$. As in the proof of Lemma \ref{lemma:psiinverseheightone}, one gets $\psi(z)_K \in \calJ_{M_K}$, and hence $\nu^*(\psi(z)_K) = \psi_E(\nu^*(z_K)) \in \calJ_{M_K}^E$. This implies by Lemma \ref{lemma:psiinverseheightone} and Proposition \ref{prop:psirinvelementary} that $\nu^*(z_K) \in \calI_{M_K}^E$, or equivalently that $\rho_{\Lambda_K}(\nu^*(z_K)) = \nu^*(\rho_{\Lambda}(z)_K)$ is nilpotent in the algebra $\Hbul(E,\Lambda_K)$. Here $\Lambda = \End_k(M)$. Now since $K$ and $E$ were arbitrary, this implies by \cite[Theorem 8.4]{Benson:2018} that $\rho_{\Lambda}(z)$ is nilpotent in the algebra $\Hbul(G,\Lambda)$, and hence $z \in \calI_M$.
\end{proof}

\section{Applications and Examples}\label{S:Applications}

\subsection{Consequences} \label{SS:applications}

In this section we collect some consequence of Theorems \ref{thm:psiuniversalhomeo} and \ref{thm:Psirinv}. Having established the requisite precursors, most of the results follow more or less formally from arguments already in the literature, so we omit many of the details. The first two results below follow by exactly the same lines of reasoning as Proposition~7.1 and Theorem~7.2 of \cite{Suslin:1997a}.

\begin{theorem}[Naturality of supports] \label{thm:naturality}
Let $f: H \rightarrow G$ be a homomorphism of infinitesimal uni\-potent $k$-supergroup schemes, and let $M$ be a finite-dimensional rational $G$-supermodule, considered also as a rational $H$-super\-module via pullback along $f$. Then the induced morphism of schemes $f_{*}: \abs{H} \to \abs{G}$ satisfies
\[
f^{-1}_{*}\left(\abs{G}_{M} \right) = \abs{H}_{M}.
\]
In particular, if $f$ is a closed embedding, and if we identify $\abs{H}$ with a subset of $\abs{G}$ via the injection $f_*: \abs{H} \rightarrow \abs{G}$ of Corollary \ref{cor:subgroupspectrum}, then $\abs{H}_M = \abs{H} \cap \abs{G}_M$.
\end{theorem}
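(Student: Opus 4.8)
The plan is to deduce the statement formally from Theorem \ref{thm:Psirinv}, the naturality of $\psi_r$ (Lemma \ref{lemma:psirnatural}), and the corresponding naturality statement for the schemes $\Vrg_M$ (Proposition \ref{prop:VrgMclosed}). First fix an integer $r$ large enough that both $H$ and $G$ have height $\leq r$; since $\abs{H}_M$ and $\abs{G}_M$ are defined purely cohomologically, this choice is harmless. Because $H$ and $G$ are infinitesimal, hence finite, the rational $G$- and $H$-supermodule structures on $M$ agree with $kG$- and $kH$-supermodule structures, so Proposition \ref{prop:VrgMclosed} applies and gives $f_*^{-1}(\Vrg_M) = V_r(H)_M$ for the induced morphism of schemes $f_*: V_r(H) \to \Vrg$.

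Next, by Lemma \ref{lemma:psirnatural} the square relating $\psi_{r,G}$, $\psi_{r,H}$, and the restriction maps on cohomology commutes, and applying $\Spec$ to it yields the commutative diagram of schemes
\[
\xymatrix{
V_r(H) \ar[r]^{f_*} \ar[d]_{\Psi_{r,H}} & \Vrg \ar[d]^{\Psi_{r,G}} \\
\abs{H} \ar[r]^{f_*} & \abs{G},
}
\]
where, following Proposition \ref{prop:VrgMclosed} and Corollary \ref{cor:subgroupspectrum}, I write $f_*$ for the induced morphism in both rows. Using commutativity of this square together with Theorem \ref{thm:Psirinv} (applied to $G$ and to $H$) and the identity $f_*^{-1}(\Vrg_M) = V_r(H)_M$, one computes
\[
\Psi_{r,H}^{-1}\bigl(f_*^{-1}(\abs{G}_M)\bigr) = f_*^{-1}\bigl(\Psi_{r,G}^{-1}(\abs{G}_M)\bigr) = f_*^{-1}(\Vrg_M) = V_r(H)_M = \Psi_{r,H}^{-1}(\abs{H}_M).
\]
By Theorem \ref{thm:psiuniversalhomeo}, $\Psi_{r,H}$ is a universal homeomorphism, in particular a surjection on points; hence any two subsets of $\abs{H}$ with the same preimage under $\Psi_{r,H}$ coincide, and we conclude $f_*^{-1}(\abs{G}_M) = \abs{H}_M$.

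For the final assertion, take $f = \iota$ to be a closed embedding. By Corollary \ref{cor:subgroupspectrum}, $\iota_*: \abs{H} \to \abs{G}$ is finite and universally injective, hence a homeomorphism onto a closed subset of $\abs{G}$; identifying $\abs{H}$ with $\iota_*(\abs{H}) \subseteq \abs{G}$, the set $\iota_*^{-1}(\abs{G}_M)$ becomes $\abs{H} \cap \abs{G}_M$, and the equality $\iota_*^{-1}(\abs{G}_M) = \abs{H}_M$ just proved gives $\abs{H}_M = \abs{H} \cap \abs{G}_M$. I do not anticipate a genuine obstacle: the argument is entirely formal given the cited results. The only points requiring care are verifying that the $\psi_r$-naturality square of Lemma \ref{lemma:psirnatural} dualizes to the displayed square of schemes with the arrows as drawn, and invoking the surjectivity of $\Psi_{r,H}$ (from Theorem \ref{thm:psiuniversalhomeo}) to cancel it from the preimage identity.
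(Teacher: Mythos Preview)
Your proof is correct and follows essentially the same approach as the paper, which simply cites \cite[Proposition~7.1]{Suslin:1997a} for the identical formal diagram-chase using Theorem~\ref{thm:Psirinv}, Lemma~\ref{lemma:psirnatural}, Proposition~\ref{prop:VrgMclosed}, and the homeomorphism of Theorem~\ref{thm:psiuniversalhomeo}.
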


\begin{theorem}[Tensor product property] \label{thm:tensorproductproperty}
Let $G$ be an infinitesimal unipotent $k$-supergroup scheme, and let $M$ and $N$ be finite-dimensional rational $G$-supermodules. Then 
\begin{equation} \label{eq:tensorproductproperty}
\abs{G}_{M\otimes N} = \abs{G}_M \cap \abs{G}_N.
\end{equation}
\end{theorem}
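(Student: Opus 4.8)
The plan is to reduce \eqref{eq:tensorproductproperty} to the non-cohomological description of support and thence to an intrinsic statement about $\Pone$. First I would observe that, by Theorem~\ref{thm:Psirinv}, $\Psi_r$ restricts to homeomorphisms $\Vrg_M\xrightarrow{\sim}\abs{G}_M$, $\Vrg_N\xrightarrow{\sim}\abs{G}_N$, and $\Vrg_{M\otimes N}\xrightarrow{\sim}\abs{G}_{M\otimes N}$, while $\Psi_r$ is itself a homeomorphism of $\Vrg$ onto $\abs{G}$ by Theorem~\ref{thm:psiuniversalhomeo} and so is injective on points; hence $\Psi_r(\Vrg_M\cap\Vrg_N)=\abs{G}_M\cap\abs{G}_N$, and \eqref{eq:tensorproductproperty} becomes the equality $\Vrg_{M\otimes N}=\Vrg_M\cap\Vrg_N$. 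The inclusion $\subseteq$ is formal, holding for any finite $k$-supergroup scheme: the $G$-algebra map $\End_k(M)\hookrightarrow\End_k(M)\otimes\End_k(N)=\End_k(M\otimes N)$, $\phi\mapsto\phi\otimes 1_N$, induces on cohomology a unital ring homomorphism whose composite with $\rho_{\End_k M}\colon\Hbul(G,k)\to\Hbul(G,\End_k M)$ is $\rho_{\End_k(M\otimes N)}$; thus $I_M\subseteq I_{M\otimes N}$, so $\abs{G}_{M\otimes N}\subseteq\abs{G}_M$, and symmetrically $\abs{G}_{M\otimes N}\subseteq\abs{G}_N$. So the real work is to prove $\Vrg_M\cap\Vrg_N\subseteq\Vrg_{M\otimes N}$.

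I would argue this pointwise. Fix $\fs\in\Vrg$, put $K=k(\fs)$, and let $\nu_\fs\colon K\Pr\to KG_K$ be the associated Hopf $K$-superalgebra map. As in the proof of Theorem~\ref{thm:Psirinv}, Lemma~\ref{lemma:kbsvrg=kNrG} lets me factor $\nu_\fs$ as $K\Pr\twoheadrightarrow KE\hookrightarrow KG_K$ with $E$ an infinitesimal elementary $K$-subsupergroup scheme of $G_K$ of height $\le r$; since $KE\hookrightarrow KG_K$ is a Hopf homomorphism, restriction to $KE$ is monoidal, so the pullback of $M_K\otimes_K N_K$ to $KE$ is the tensor product of the pullbacks of $M_K$ and $N_K$. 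Since the point of $V_r(E)$ corresponding to $\pi\colon K\Pr\twoheadrightarrow KE$ lies in $V_r(E)_{\nu^*M_K}$ precisely when $\fs\in\Vrg_M$ (and likewise for $N$ and for $M\otimes N$, because the resulting $\Pone$-module structures depend only on $\nu_\fs$), the problem reduces to the case $G=E$ elementary. Here the explicit formulas recorded in the proof of Proposition~\ref{prop:psirinvelementary} --- equations \eqref{eq:rhoMrs}, \eqref{eq:rhoMrseta}, and their analogues for $\Gar$ and $\Gar\times\Gam$ --- show that the $\Pone$-action determining membership in support is given by $v\mapsto\mu v$ and $u_{r-1}\mapsto\theta+\delta$, where $\theta$ is the linear part displayed in \eqref{eq:rhoMrsmodrad2} and $\delta$ lies in the square of the (nilpotent) augmentation ideal of $KE$. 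Because $v$ and $u_{r-1}^p$ are primitive in $\Pr$, the operator $\theta$ acts on a tensor product, modulo decomposable corrections, as $\theta\otimes 1+1\otimes\theta$; invoking Proposition~\ref{prop:secondradical} to absorb $\delta$ and those corrections, I would conclude that $\pd_{K\Pone}$ of the pullback of $M_K\otimes_K N_K$ is infinite if and only if $\pd_{K\Pone}(P\otimes_K Q)$ is, where $P$ and $Q$ are the pullbacks of $M_K$ and $N_K$ to $\Pone$ and $P\otimes_K Q$ carries the coproduct $\Pone$-supermodule structure. Everything then reduces to the intrinsic claim: \emph{if $P,Q$ are finite-dimensional $\Pone$-supermodules over a field $K$ with $\pd_{\Pone\otimes_k K}(P)=\pd_{\Pone\otimes_k K}(Q)=\infty$, then $\pd_{\Pone\otimes_k K}(P\otimes_K Q)=\infty$.}

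To settle this claim, the easy half --- that $\pd_{\Pone}(P\otimes_K Q)$ is finite once $\pd_{\Pone}(P)$ or $\pd_{\Pone}(Q)$ is --- follows because Proposition~\ref{prop:Poneprojdim} forces finite projective dimension to be at most $1$, and tensoring over the field $K$ a free resolution of length at most $1$ of a factor with the other factor gives such a resolution of $P\otimes_K Q$ (a free $\Pone$-supermodule tensored over $K$ with any supermodule is again free, $\Pone$ being a Hopf superalgebra). For the substantive half, suppose $\pd_{\Pone}(P)=\pd_{\Pone}(Q)=\infty$. When $p\nmid\dim_K Q$ there is a quick argument: the trivial module $K$ is a direct summand of $Q\otimes_K Q^\#$, so $P$ is a direct summand of $(P\otimes_K Q)\otimes_K Q^\#$, and if $\pd_{\Pone}(P\otimes_K Q)$ were finite then tensoring its finite free resolution with $Q^\#$ would force $\pd_{\Pone}(P)<\infty$, a contradiction. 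The genuinely hard case is $p\mid\dim_K P$ and $p\mid\dim_K Q$: here one must argue directly, for instance by combining Proposition~\ref{prop:Poneprojdim} with the K\"unneth isomorphism $\Ext_{\Pone\otimes_K\Pone}^\bullet(P\boxtimes Q,P\boxtimes Q)\cong\Ext_{\Pone}^\bullet(P,P)\gotimes\Ext_{\Pone}^\bullet(Q,Q)$ and restriction along the diagonal $\Pone\to\Pone\otimes_K\Pone$ --- using that $y\in\opH^1(\Pone,k)_{\one}$ is primitive, so $\Delta^*(y^2)=y^2\otimes 1+2(y\otimes y)+1\otimes y^2$, together with the hypothesis that $1_P\cup y^2$ and $1_Q\cup y^2$ are nonzero --- or, in the sub-case where the $\Pone$-action on $P$ and on $Q$ factors through $\Pone\twoheadrightarrow k\Gaone$ or $\Pone\twoheadrightarrow k\Gam$, by reducing via Lemma~\ref{lemma:finiteifffree} to the classical tensor product property over $k[t]/\subgrp{t^p}$ or $k[t]/\subgrp{t^2}$, which holds because the dimension of a tensor product of two Jordan blocks of length less than $p$ is coprime to $p$ and hence such a tensor product is nonfree. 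Controlling $\pd_{\Pone}$ of a Hopf tensor product when a factor has dimension divisible by $p$ is the main obstacle, and it is where the argument will demand the most care.
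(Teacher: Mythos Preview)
Your strategy matches the paper's, which simply cites \cite[Theorem~7.2]{Suslin:1997a}: reduce via Theorems~\ref{thm:psiuniversalhomeo} and~\ref{thm:Psirinv} to the equality $\Vrg_{M\otimes N}=\Vrg_M\cap\Vrg_N$, handle $\subseteq$ formally, and for $\supseteq$ work pointwise, linearizing the coproduct action of $u_{r-1}$ via the perturbation lemma so as to reduce to an intrinsic tensor product property for $\Pone$-supermodules. You have correctly isolated this last step as the crux.

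The gap is that you do not actually establish the $\Pone$ tensor product property. Your trace-splitting argument for $p\nmid\dim_K Q$ is fine, but in the residual case $p\mid\dim_K P$ and $p\mid\dim_K Q$ neither of your sketches closes. The K\"unneth-plus-diagonal route would require showing that restriction along $\Delta\colon\Pone\to\Pone\otimes\Pone$ does not annihilate the class $(1_P\cup y^2)\boxtimes 1_Q$ in $\Ext^2_{\Pone}(P\otimes Q,P\otimes Q)$, and there is no a priori reason this restriction is injective on such Ext groups; knowing $1_P\cup y^2\neq 0$ and $1_Q\cup y^2\neq 0$ separately does not force their ``product'' to survive. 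Your fallback via Lemma~\ref{lemma:finiteifffree} covers only $\Pone$-modules on which $u$ or $v$ acts as zero, which excludes, for example, any $k\Mones$-module with $s\geq 2$ on which both act nontrivially. In the classical Suslin--Friedlander--Bendel argument the analogous endpoint is the tensor product property over $k[t]/\subgrp{t^p}$, which is elementary because indecomposables are Jordan blocks and two blocks of size $<p$ tensor to a module of dimension coprime to $p$; no such classification of indecomposables is available over the hypersurface $\Pone$, so this step requires a genuinely different idea that you have not supplied. A secondary imprecision: Proposition~\ref{prop:secondradical} as stated concerns a very particular ambient ring $R$, whereas you need to perturb the $u$-action on $M\otimes N$ by an element of the square of the augmentation ideal of $kE\otimes kE$; the underlying input \cite[Theorem~2.1]{Avramov:2018} is general enough to handle this, but it is not literally an instance of the proposition as written.
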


Applying Theorem \ref{thm:Psirinv} to \eqref{eq:tensorproductproperty}, one obtains:

\begin{corollary} \label{cor:Vrgtensorproduct}
Let $G$ be an infinitesimal unipotent $k$-supergroup scheme of height $\leq r$, and let $M$ and $N$ be finite-dimensional rational $G$-supermodules. Then
\begin{equation} \label{eq:Vrgtensorproduct}
\Vrg_{M\otimes N} = \Vrg_M \cap \Vrg_N.
\end{equation}
\end{corollary}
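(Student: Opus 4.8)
The plan is to deduce the identity formally from the tensor product property for cohomological support schemes (Theorem~\ref{thm:tensorproductproperty}) together with the set-level identity $\Psi_r^{-1}(\abs{G}_L) = \Vrg_L$ supplied by Theorem~\ref{thm:Psirinv}. Since $G$ is infinitesimal unipotent of height $\leq r$ and $M$, $N$, and $M \otimes N$ are all finite-dimensional rational $G$-supermodules, Theorem~\ref{thm:Psirinv} applies to each of them: the morphism of schemes $\Psi_r: \Vrg \to \abs{G}$ induced by $\psi_r$ satisfies $\Psi_r^{-1}(\abs{G}_M) = \Vrg_M$, $\Psi_r^{-1}(\abs{G}_N) = \Vrg_N$, and $\Psi_r^{-1}(\abs{G}_{M\otimes N}) = \Vrg_{M\otimes N}$.

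Next I would record the elementary fact that taking preimages under any map of (the underlying sets of) schemes preserves intersections, so combining the three equalities above with Theorem~\ref{thm:tensorproductproperty} gives
\[
\Vrg_{M\otimes N} = \Psi_r^{-1}\!\left(\abs{G}_{M\otimes N}\right) = \Psi_r^{-1}\!\left(\abs{G}_M \cap \abs{G}_N\right) = \Psi_r^{-1}\!\left(\abs{G}_M\right) \cap \Psi_r^{-1}\!\left(\abs{G}_N\right) = \Vrg_M \cap \Vrg_N,
\]
which is exactly \eqref{eq:Vrgtensorproduct}.

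Because each step is purely formal, there is essentially no obstacle: all of the substantive work has already been carried out in Theorems~\ref{thm:tensorproductproperty} and~\ref{thm:Psirinv} (which in turn rest on Theorem~\ref{thm:psiuniversalhomeo} and the detection theorems of \cite{Benson:2018}). The only point worth noting is that one does not need $\Psi_r$ to be a homeomorphism for this argument, merely the equality of subsets $\Psi_r^{-1}(\abs{G}_L) = \Vrg_L$; alternatively, one could phrase the entire computation at the level of radical ideals in $k[\Vrg]$, using the relation $\psi_r^{-1}(\calJ_L) = \calI_L$ in place of $\Psi_r^{-1}(\abs{G}_L) = \Vrg_L$ together with the ideal-theoretic form of Theorem~\ref{thm:tensorproductproperty}.
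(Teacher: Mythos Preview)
Your proposal is correct and is exactly the argument the paper intends: the paper simply says ``Applying Theorem~\ref{thm:Psirinv} to \eqref{eq:tensorproductproperty}, one obtains'' the corollary, and your chain of equalities spells this out. There is nothing to add.
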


Next, let $G$ be a finite $k$-supergroup scheme, and let $(P_\bullet,d)$ be a minimal projective resolution of the trivial module $k$ in the category $(\fsmod_{kG})_{\ev}$. Set $\Omega^n(k) = \ker(d: P_n \rightarrow P_{n-1})$. For each $n \in \N$ one has $\opH^n(G,k) = \opH^n(\Hom_{kG}(P_\bullet,k)) \cong \Hom_{kG}(\Omega^n(k),k)$, so given a homogeneous cohomology class $\zeta \in \opH^n(G,k)$, there exists a representative linear map $\wh{\zeta}: \Omega^n(k) \rightarrow k$ of the same parity as $\zeta$. Set $L_\zeta = \ker(\wh{\zeta})$. Then $L_\zeta$ is a finite-dimensional $kG$-supermodule. Given a homogeneous ideal $I \subset H(G,k)$, let $Z(I) \subseteq \abs{G}$ be the Zariski closed, conical subset of $\abs{G}$ defined by $I$.

The next theorem follows from essentially a word-for-word repetition of the proof of \cite[Theorem 2.5]{Feldvoss:2010} (see also \cite{Feldvoss:2015}), making the following substitutions: the Hopf algebra $A$ is replaced with the Hopf superalgebra $kG$, $\opH^{\ev}(A,k)$ is replaced with $H(G,k)$, and instead of considering maximal ideals $\fm \in \Max(\opH^{\ev}(A,k))$ one instead considers prime ideals $\fp \in \Spec(H(G,k))$. In this context, the finite-generation assumption of \cite{Feldvoss:2010} holds by the main theorem of \cite{Drupieski:2016}, and the additional quasi\-tri\-ang\-ularity hypothesis of \cite{Feldvoss:2015} is obviated by the fact $kG$ is (super)cocommutative. The proof of \cite[Theorem 2.5]{Feldvoss:2010} also uses some general properties of relative support varieties \cite[Proposition 2.4(4)--(5)]{Feldvoss:2010}; super analogues of these results are stated in \cite[\S2.3]{Drupieski:2016a}, and these properties hold more generally at the level of schemes via precisely the same proofs.

\begin{theorem}[Realization]
Let $G$ be an infinitesimal unipotent $k$-supergroup scheme, and let $\zeta \in \opH^n(G,k)$ be a homogeneous element. Then $\abs{G}_{L_\zeta} = Z(\subgrp{\zeta})$, where $\subgrp{\zeta}$ is the homogeneous ideal of $\opH^n(G,k)$ generated by $\zeta$. More generally, let $W \subseteq \abs{G}$ be a Zariski closed, conical subset of $\abs{G}$ defined by the homogeneous ideal $I = \subgrp{\zeta_1,\ldots,\zeta_t}$. Then
	\[
	W = Z(I) = Z(\subgrp{\zeta_1}) \cap \cdots \cap Z(\subgrp{\zeta_t}) = \abs{G}_{L_{\zeta_1}} \cap \cdots \cap \abs{G}_{L_{\zeta_t}} = \abs{G}_{L_{\zeta_1} \otimes \cdots \otimes L_{\zeta_t}}.
	\]
Thus, a subset $W \subseteq \abs{G}$ is of the form $\abs{G}_M$ for some finite-dimensional rational $G$-supermodule $M$ if and only if $W$ is a Zariski closed, conical subset of $\abs{G}$.
\end{theorem}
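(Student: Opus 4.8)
The plan is to reduce everything to the $L_\zeta$ construction and the tensor product property (Theorem~\ref{thm:tensorproductproperty}), mimicking the classical arguments of Feldvoss--Witherspoon. First I would establish the base case: for a single homogeneous class $\zeta \in \opH^n(G,k)$, the short exact sequence $0 \to L_\zeta \to \Omega^n(k) \to k \to 0$ (with the last map $\wh\zeta$, followed by projection of parity if needed) together with the fact that $\Omega^n(k)$ is a syzygy of the trivial module, lets one compute $\abs{G}_{L_\zeta}$. Using that $\abs{G}_{\Omega^n(k)} = \abs{G}_k = \abs{G}$ and the long exact sequence in cohomology obtained by applying $\Hbul(G,-)$ (or $\Ext_G^\bullet(-,N)$) to this sequence, one shows that a prime $\fp \in \Spec(H(G,k))$ lies in $\abs{G}_{L_\zeta}$ if and only if the image of $\zeta$ in the localization $H(G,k)_\fp$ is not a unit, i.e.\ if and only if $\fp \supseteq \subgrp{\zeta}$. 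This is precisely $Z(\subgrp{\zeta})$. Here I would invoke the scheme-level analogues of \cite[Proposition 2.4(4)--(5)]{Feldvoss:2010} noted in the paragraph preceding the theorem, and the finite-generation of $H(G,k)$ from \cite{Drupieski:2016} that makes $\abs{G}$ a genuine (noetherian) scheme.

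Next I would handle a finite intersection. Given a homogeneous ideal $I = \subgrp{\zeta_1,\dots,\zeta_t}$, the equality $Z(I) = Z(\subgrp{\zeta_1}) \cap \cdots \cap Z(\subgrp{\zeta_t})$ is immediate from commutative algebra (a prime contains $I$ iff it contains each generator). Combining this with the base case gives $Z(I) = \abs{G}_{L_{\zeta_1}} \cap \cdots \cap \abs{G}_{L_{\zeta_t}}$, and then the tensor product property (Theorem~\ref{thm:tensorproductproperty}), applied $t-1$ times, converts the intersection of supports into the support of the tensor product: $\abs{G}_{L_{\zeta_1}} \cap \cdots \cap \abs{G}_{L_{\zeta_t}} = \abs{G}_{L_{\zeta_1} \otimes \cdots \otimes L_{\zeta_t}}$. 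This establishes the displayed chain of equalities.

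Finally, for the last sentence: any $\abs{G}_M$ is Zariski closed and conical by construction --- closedness is by definition of the support scheme via the ideal $I_M$, and conicality follows because $I_M$ is a homogeneous ideal of the graded ring $H(G,k)$ (the cup-product map $\Phi_M$ is a homomorphism of graded rings, so its kernel is homogeneous). Conversely, any Zariski closed conical $W$ is defined by a homogeneous ideal $I$, which is finitely generated since $H(G,k)$ is noetherian; writing $I = \subgrp{\zeta_1,\dots,\zeta_t}$ with the $\zeta_i$ homogeneous, the module $M = L_{\zeta_1} \otimes \cdots \otimes L_{\zeta_t}$ then satisfies $\abs{G}_M = W$ by the chain of equalities just proved.

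The main obstacle I anticipate is verifying the base case $\abs{G}_{L_\zeta} = Z(\subgrp{\zeta})$ cleanly at the level of schemes (rather than just varieties over an algebraically closed field): one must check that the relevant properties of relative/rank-variety-style support --- in particular the behavior of supports under the defining short exact sequence of $L_\zeta$ and the localization criterion --- carry over verbatim from the Hopf-algebra setting of \cite{Feldvoss:2010,Feldvoss:2015} to the Hopf-superalgebra setting, with maximal ideals replaced by prime ideals. The supercocommutativity of $kG$ is what makes this routine (it removes the quasitriangularity hypothesis of \cite{Feldvoss:2015}), but one should be careful with parity shifts: $\wh\zeta$ may be an odd map, so the sequence defining $L_\zeta$ should be read in $(\fsmod_{kG})_{\ev}$ using the identification \eqref{eq:oddhom}, and the trivial module in the unipotent setting is unique only up to parity change --- none of which affects spectra, but all of which must be said.
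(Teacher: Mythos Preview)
Your proposal is correct and follows essentially the same approach as the paper: the paper likewise defers to a word-for-word repetition of the Feldvoss--Witherspoon argument \cite[Theorem 2.5]{Feldvoss:2010}, with Hopf algebras replaced by Hopf superalgebras, $\opH^{\ev}(A,k)$ by $H(G,k)$, maximal ideals by prime ideals, and with finite generation supplied by \cite{Drupieski:2016} and supercocommutativity obviating the quasitriangularity hypothesis of \cite{Feldvoss:2015}. Your anticipated obstacles (scheme-level versus variety-level arguments, parity shifts in the defining sequence of $L_\zeta$) are exactly the points the paper flags as routine adaptations handled by the super analogues in \cite[\S2.3]{Drupieski:2016a}.
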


Lemma \ref{lemma:bsvrg} describes how $\Vrg(k)$ is stratified by pieces coming from the multiparameter $k$-subsupergroup schemes of $G$. The next theorem translates this to the support variety $\abs{G}_M$.

\begin{theorem} \label{theorem:supportvarietyunion}
Suppose $k$ is algebraically closed. Let $G$ be an infinitesimal unipotent $k$-supergroup scheme of height $\leq r$, and let $M$ be a finite-dimensional rational $G$-supermodule. Then the support variety $\abs{G}_M$ (i.e., the set of $k$-points of the scheme of the same name) can be written as
	\[
	\abs{G}_M = \bigcup_{E \leq G} \res_{G,E}^*(\abs{E}_M),
	\]
where the union is taken over all multiparameter (closed) $k$-subsupergroup schemes $E$ of $G$, and $\res_{G,E}: H(G,k) \rightarrow H(E,k)$ is the restriction map induced by the embedding $E \hookrightarrow G$.
\end{theorem}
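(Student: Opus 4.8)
The plan is to transport the stratification of $\Vrg$ provided by Lemma \ref{lemma:bsvrg}\eqref{item:union} across the homeomorphism $\Psi_r$ of Theorem \ref{thm:Psirinv}. Since $k$ is algebraically closed I work throughout with $k$-points, so that $\abs{G}_M$, $\Vrg_M$, and each $V_r(E)_M$ are affine varieties. By Theorem \ref{thm:Psirinv} the morphism $\Psi_r$ restricts to a homeomorphism $\Vrg_M \stackrel{\sim}{\rightarrow} \abs{G}_M$; in particular $\abs{G}_M(k) = \Psi_{r,G}(\Vrg_M(k))$. It therefore suffices (i) to express $\Vrg_M(k)$ as the union of the subvarieties $V_r(E)_M(k)$ over the multiparameter closed subsupergroups $E \leq G$, and then (ii) to check that $\Psi_{r,G}$ carries each piece $V_r(E)_M(k)$ onto $\res_{G,E}^*(\abs{E}_M(k))$.

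For (i), recall from Lemma \ref{lemma:bsvrg}\eqref{item:union} that, at the level of $k$-points, $\Vrg(k) = \bsvrg(k) = \bigcup_{E \leq G} \bsvr(E)(k) = \bigcup_{E \leq G} V_r(E)(k)$, where the union runs over the multiparameter closed subsupergroups $E$ of $G$---which, because $G$ is infinitesimal of height $\leq r$, automatically all have height $\leq r$---and each $V_r(E)$ is identified with its image under the closed embedding $V_r(E) \hookrightarrow \Vrg$. A crucial observation here is that a closed subsupergroup of a unipotent supergroup is again unipotent, so each such $E$ is itself infinitesimal unipotent of height $\leq r$; this is what makes Theorems \ref{thm:psiuniversalhomeo} and \ref{thm:Psirinv} available for $E$ and not merely for $G$. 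By the last sentence of Proposition \ref{prop:VrgMclosed}, the embedding $E \hookrightarrow G$ satisfies $V_r(E)_M = V_r(E) \cap \Vrg_M$. Intersecting the displayed identity with $\Vrg_M(k)$ and using $\Vrg_M \subseteq \Vrg$ then gives $\Vrg_M(k) = \bigcup_{E \leq G} V_r(E)_M(k)$.

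For (ii), I invoke the naturality of $\psi_r$ (Lemma \ref{lemma:psirnatural}): applied to the closed embedding $\iota: E \hookrightarrow G$ it yields a commuting square of $k$-algebra homomorphisms relating $\psi_{r,G}$, $\psi_{r,E}$, the restriction $\res_{G,E}: H(G,k) \rightarrow H(E,k)$, and the comorphism $k[\Vrg] \rightarrow k[V_r(E)]$ of the embedding $V_r(E) \hookrightarrow \Vrg$. Taking prime spectra, this square says precisely that the restriction of $\Psi_{r,G}$ to $V_r(E)$ equals $\res_{G,E}^* \circ \Psi_{r,E}$. Applying Theorem \ref{thm:Psirinv} to the infinitesimal unipotent supergroup $E$ of height $\leq r$ gives $\Psi_{r,E}(V_r(E)_M(k)) = \abs{E}_M(k)$, hence $\Psi_{r,G}(V_r(E)_M(k)) = \res_{G,E}^*(\abs{E}_M(k))$. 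Combining with $\abs{G}_M(k) = \Psi_{r,G}(\Vrg_M(k)) = \bigcup_{E \leq G} \Psi_{r,G}(V_r(E)_M(k))$ completes the argument.

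Once Theorems \ref{thm:psiuniversalhomeo} and \ref{thm:Psirinv} are in place this is largely bookkeeping; the step I would flag as the essential point (rather than a serious obstacle) is the reduction of $G$ to its multiparameter closed subsupergroups via Lemma \ref{lemma:bsvrg}\eqref{item:union} together with the remark that each such subsupergroup inherits unipotence, so that the non-cohomological description $\Psi_{r,E}: V_r(E)_M \stackrel{\sim}{\rightarrow} \abs{E}_M$ is legitimate for every $E$ appearing in the union. A routine housekeeping point that should nonetheless be stated is that all the identifications above ($\Vrg(k) = \bsvrg(k)$, the inclusion $V_r(E)(k) \subseteq \Vrg(k)$, and the naturality square on $k$-points) are taken over the same fixed algebraic closure and are mutually compatible.
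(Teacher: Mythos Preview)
Your proof is correct and follows essentially the same route as the paper: stratify $\Vrg(k)$ by Lemma \ref{lemma:bsvrg}\eqref{item:union}, transport across $\Psi_r$ using the naturality of $\psi_r$ (Lemma \ref{lemma:psirnatural}), and identify each piece with $\res_{G,E}^*(\abs{E}_M)$. The only organizational difference is that the paper first establishes $\abs{G} = \bigcup_{E} \res_{G,E}^*(\abs{E})$ at the level of the ambient varieties and then invokes Theorem \ref{thm:naturality} to intersect with $\abs{G}_M$, whereas you work with the support varieties throughout by intersecting with $\Vrg_M$ first via Proposition \ref{prop:VrgMclosed} and then applying Theorem \ref{thm:Psirinv} to each $E$; these are equivalent repackagings of the same ingredients.
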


\begin{proof}
First, since $k$ is algebraically closed, the universal homeomorphism of Theorem \ref{thm:psiuniversalhomeo} induces a homeomorphism of affine algebraic varieties $\Vrg(k) \simeq \abs{G}$, which is natural with respect to subgroup inclusions by Lemma \ref{lemma:psirnatural}. Next, $\Vrg(k) = \bigcup_{E \leq G} V_r(E)(k)$ by Lemma \ref{lemma:bsvrg}. Transporting this decomposition across the homeomorphism, we get $\abs{G} = \bigcup_{E \leq G} \res_{G,E}^*(\abs{E})$. Finally, by naturality of supports (Theorem \ref{thm:naturality}), it follows that this decomposition of $\abs{G}$ restricts to the decomposition $\abs{G}_M = \bigcup_{E \leq G} \res_{G,E}^*(\abs{E}_M)$.
\end{proof}

The following corollary should be contrasted with the projectivity detection theorem of Benson, Iyengar, Krause, and Pevtsova \cite{Benson:2018}, which in general requires the consideration of field extensions (but is also applicable to infinite-dimensional modules).

\begin{corollary}
Suppose $k$ is algebraically closed. Let $G$ be an infinitesimal unipotent $k$-super\-group scheme, and let $M$ be a finite-dimensional rational $G$-supermodule. Then $M$ is projective as a $G$-super\-module if and only if $M$ is projective as an $E$-supermodule for each elementary subsupergroup scheme $E$ of $G$.
\end{corollary}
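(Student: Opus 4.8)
The plan is to deduce both directions from the single observation that, for a finite-dimensional rational supermodule $M$ over an infinitesimal unipotent $k$-supergroup scheme $H$, the module $M$ is projective if and only if the cohomological support scheme $\abs{H}_M$ is zero-dimensional (equivalently, consists only of the point cut out by the augmentation ideal of $H(H,k)$). First I would record this equivalence. Since $kH$ is a finite-dimensional, hence self-injective, Hopf superalgebra, Lemma \ref{lemma:idpdequal} gives $\pd_H(M) = \id_H(M)$, and $M$ is projective precisely when this number is finite; by Lemma \ref{lemma:idGunipotent} it is finite exactly when $\Ext_H^n(M,k) = 0$ for $n \gg 0$. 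Now $H(H,k)$ is a finitely-generated commutative $k$-algebra by the main theorem of \cite{Drupieski:2016}, and $\Ext_H^\bullet(M,k)$ is a finitely-generated graded module over it whose support is contained in $\abs{H}_M$ (cf.\ \cite[\S2.3]{Drupieski:2016a}), so the vanishing in large degrees holds if and only if $\abs{H}_M$ is zero-dimensional. Conversely, if $M$ is projective then $1_M \cup \zeta = 0$ for every $\zeta \in H(H,k)$ of positive degree, so $\abs{H}_M$ reduces to the single point cut out by the augmentation ideal.

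For the forward implication, suppose $M$ is projective over $kG$, so that $\abs{G}_M$ is a single point. Let $E$ be an elementary subsupergroup scheme of $G$; being a closed subsupergroup of the infinitesimal unipotent group $G$, it is again infinitesimal unipotent, so Theorem \ref{thm:naturality} applies to the inclusion $\iota\colon E \hookrightarrow G$ and gives $\abs{E}_M = \iota_*^{-1}(\abs{G}_M)$. By Corollary \ref{cor:subgroupspectrum} the morphism $\iota_*\colon \abs{E} \to \abs{G}$ is finite and universally injective, so the fibre $\iota_*^{-1}(\abs{G}_M)$ is again a single point; hence $\abs{E}_M$ is zero-dimensional and $M$ is projective over $kE$ by the equivalence of the previous paragraph.

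For the converse I would invoke Theorem \ref{theorem:supportvarietyunion}, which (using that $k$ is algebraically closed) expresses $\abs{G}_M$ as the union of the images $\res_{G,E}^*(\abs{E}_M)$ over the multiparameter closed subsupergroup schemes $E$ of $G$. The point to check — and the place where the argument is fiddliest rather than deep — is that, because $G$ is infinitesimal and unipotent, its multiparameter closed subsupergroup schemes are exactly its elementary ones: any closed subsupergroup of $G$ has local group algebra, and a unipotent multiparameter supergroup scheme must be one of $\Gar$, $\Gar \times \Gam$, or an $\Mrfeta$ whose $p$-polynomial $f$ is a scalar multiple of a single monomial (by the non-unipotency criterion recalled in Section \ref{subsec:somesupergroups}), the last being isomorphic to one of the $\Mrs$ or $\Mrseta$ of Corollary \ref{cor:infinitesimalelementary}; conversely every supergroup scheme listed in Corollary \ref{cor:infinitesimalelementary} is multiparameter by Definition \ref{def:multiparameter}. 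Granting this, if $M$ is projective over $kE$ for every elementary $E \leq G$, then each $\abs{E}_M$ in the union is a single point, each $\res_{G,E}^*(\abs{E}_M)$ is the augmentation point of $\abs{G}$ (restriction sends the augmentation point to the augmentation point), and so $\abs{G}_M$ is zero-dimensional; by the first paragraph, $M$ is projective over $kG$. The only input beyond the already-established Theorems \ref{thm:Psirinv}, \ref{thm:naturality}, and \ref{theorem:supportvarietyunion} is the standard fact that triviality of the support scheme detects projectivity, so I expect no serious obstacle.
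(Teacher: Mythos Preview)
Your proof is correct and follows essentially the same approach as the paper: both arguments hinge on the equivalence ``$M$ projective $\iff \abs{G}_M$ trivial'' together with the stratification Theorem~\ref{theorem:supportvarietyunion}. The paper is more terse---it cites \cite[Proposition 2.3.13]{Drupieski:2016a} directly for the projectivity criterion and handles both implications at once via the union formula and the injectivity of $\res_{G,E}^*$ from Corollary~\ref{cor:subgroupspectrum}---whereas you treat the two directions separately (using Theorem~\ref{thm:naturality} for the forward one) and spell out the identification of unipotent multiparameter subsupergroups with elementary ones, a point the paper passes over silently.
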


\begin{proof}
By \cite[Proposition 2.3.13]{Drupieski:2016a}, $M$ is projective as a $G$-supermodule if and only if the support variety $\abs{G}_M$ is trivial. Since $\abs{G}_M = \bigcup_{E \leq G} \res_{G,E}^*(\abs{E}_M)$, and since $\res_{G,E}^*: \abs{E} \rightarrow \abs{G}$ is injective by Corollary \ref{cor:subgroupspectrum}, this implies that $\abs{G}_M$ is trivial if and only if each $\abs{E}_M$ is trivial, i.e., if and only if $M$ is projective as an $E$-supermodule for each elementary subsupergroup $E$ of $G$.
\end{proof}

As mentioned in the introduction, a stratification result like Theorem \ref{theorem:supportvarietyunion} already appears in the literature in the context of finite-dimensional graded connected cocommutative Hopf algebras; see 
\cite[Theorem 3.2]{Nakano:1998} and \cite[Theorem 1.4]{Palmieri:1997}. These earlier stratification theorems rely on a general $F$-surjectivity theorem stated in \cite[Theorem 4.1]{Palmieri:1997}. However, the proof of the $F$-surjectivity theorem implicitly requires that the Hopf algebra in question has only finitely many graded Hopf subalgebras, and as we show in Example \ref{example:Fsurjectivity} below, $F$-surjectivity need not hold when this assumption is not satisfied. We expect results like \cite[Theorem 3.2]{Nakano:1998} and \cite[Theorem 1.4]{Palmieri:1997} should hold in general. As evidence, we note the group algebras of the supergroups appearing in Theorem \ref{theorem:supportvarietyunion} are quasi-elementary in the sense of \cite[Definition 2.4]{Nakano:1998} (provided one weakens the definition to also allow Hopf sub-superalgebras, and not just $\Z$-graded Hopf subalgebras), and hence are of the type considered in \cite{Nakano:1998,Palmieri:1997}. However, the results of the present paper do not definitively settle the issue: the inclusions appearing in \cite{Nakano:1998,Palmieri:1997} are maps of $\Z$-graded Hopf algebras (i.e., maps which preserve the $\Z$-gradings), whereas the inclusions $kE \leq kG$ appearing in Theorem \ref{theorem:supportvarietyunion} are only maps of Hopf superalgebras. Thus Theorem \ref{theorem:supportvarietyunion} is not directly comparable with the stratification theorems described in \cite{Nakano:1998,Palmieri:1997}.

\begin{example} \label{example:Fsurjectivity}
Let $k$ be an infinite field of characteristic $2$, and let $A = k[u,v]/\subgrp{u^2,v^2}$. We consider $A$ as a finite-dimensional graded connected cocommutative Hopf algebra with $u$ and $v$ each primitive of degree $1$. Given scalars $\lambda,\mu \in k$ not both $0$, let $A_{\lambda,\mu}$ be the (graded) Hopf subalgebra of $A$ generated by $w_{\lambda,\mu} := \lambda \cdot s + \mu \cdot t$. Then $A_{\lambda,\mu} \cong k[w_{\lambda,\mu}]/\subgrp{w_{\lambda,\mu}^2}$, and one can check that every nontrivial proper graded Hopf subalgebra of $A$ is of the form $A_{\lambda,\mu}$ for some $\lambda,\mu \in k$, with $A_{\lambda,\mu} = A_{\lambda',\mu'}$ if and only if $\lambda \mu' = \lambda' \mu$. In particular, since the field $k$ is infinite, $A$ contains infinitely many distinct proper graded Hopf subalgebras.

The cohomology ring $\Hbul(A_{\lambda,\mu},k)$ can be computed using the free resolution
	\[
	\cdots \rightarrow A_{\lambda,\mu} \rightarrow A_{\lambda,\mu} \rightarrow A_{\lambda,\mu} \rightarrow k,
	\]
in which the rightmost arrow is the augmentation map and the remaining arrows are multiplication by $w_{\lambda,\mu}$. Then $\Hbul(A_{\lambda,\mu},k) \cong k[z_{\lambda,\mu}]$ with $z_{\lambda,\mu}$ of cohomological degree $1$. Specifically, $z_{\lambda,\mu}$ corresponds to the functional $A_{\lambda,\mu} \rightarrow k$ that is linearly dual to $w_{\lambda,\mu}$. Since $A = A_{1,0} \otimes A_{0,1}$, the K\"{u}nneth theorem then gives $\Hbul(A,k) \cong k[x,y]$ with $x$ and $y$ each of cohomological degree $1$, and $x$ and $y$ can be interpreted as the functionals $A \rightarrow k$ that are linearly dual to $u$ and $v$, respectively. Then the restriction map in cohomology $\res_{\lambda,\mu}: \Hbul(A,k) \rightarrow \Hbul(A_{\lambda,\mu},k)$ is given by $\res_{\lambda,\mu}(x) = \lambda \cdot z_{\lambda,\mu}$ and $\res_{\lambda,\mu}(y) = \mu \cdot z_{\lambda,\mu}$.

Now let $\mathcal{C} = \set{k} \cup \set{A_{0,1}} \cup \set{A_{1,\mu}: \mu \in k}$ be the set of all proper graded Hopf subalgebras of $A$, considered as a partially ordered set by subalgebra inclusion. In fact, the only subalgebra inclusions in $\mathcal{C}$ are of the form $k \hookrightarrow B$, so for $i \geq 1$ it follows that
	\[
	\varprojlim_{B \in \mathcal{C}} \opH^i(B,k) = \prod_{B \in \mathcal{C}} \opH^i(B,k).
	\]
Define $z = (z_B)_{B \in \mathcal{C}} \in \prod_{B \in \mathcal{C}} \opH^1(B,k)$ by $z_k = 0$, $z_{A_{0,1}} = z_{0,1}$, and $z_{A_{1,\mu}} = z_{1,\mu}$ for $\mu \in k$. Now it is straightforward to check that no power of $z$ lies in the image of the natural algebra map
	\[
	q: \Hbul(A,k) \rightarrow \varprojlim_{B \in \mathcal{C}} \Hbul(B,k),
	\]
and hence $q$ is not an $F$-surjection.  While we did not look for them, we expect similar examples exist in odd characteristic as well.
\end{example}

\subsection{Example: \texorpdfstring{$\Gaone \times \Gam$}{Ga1 and Ga-}} \label{subsec:examples}

By Lemma \ref{lemma:finiteifffree}, the support theories for $\Gaone$ and $\Gam$ reduce to the type of freeness conditions that one sees already in the classical theory for restricted Lie algebras. The supergroup $\Moneone = \Gaone \times \Gam$ is thus the first example where new purely `super' phenomena emerge and we can no longer expect a freeness condition to suffice.

Throughout this section we write $\Pone = k[u,v]/\subgrp{u^p+v^2}$ as usual, and write $k\Moneone = k[s,t]/\subgrp{s^p,t^2}$ with $\ol{s} = \zero$ and $\ol{t} = \one$. To  simplify matters, \emph{we assume throughout this section that the field $k$ is algebraically closed}, and we work with support varieties rather than with support schemes. Thus when we write $\abs{G}$ or $\Vrg$, we are referring just to the $k$-points in the ambient schemes.

The varieties $\abs{\Moneone}$ and $V_1(\Moneone)$ both identify with the affine space $\{ (\mu,a) \in k^2 \}$, and the homeomorphism $\Psi_1: V_1(\Moneone) \rightarrow \abs{\Moneone}$ is then given by $\Psi_1(\mu,a) = (\mu,a^p)$; cf.\ \cite[Corollary 3.1.2]{Drupieski:2017b}. For $\lambda \in k$, the dilation homomorphism $m_\lambda: H(\Moneone,k) \rightarrow H(\Moneone,k)$ is defined on homogeneous elements by $m_\lambda(z) = \lambda^{\deg(z)} \cdot z$, and then the dilation action of $\lambda$ on $\abs{\Moneone}$ is given by $(\mu,a) \mapsto m_\lambda^*(\mu,a) = (\lambda \mu, \lambda^2 a)$. Thus if $\mu,a \in k$ are not both zero, the affine line in $\abs{\Moneone}$ through the point $(\mu,a)$ is given by $\{(\lambda\mu,\lambda^2 a): \lambda \in k \}$, and two points $(\mu_0,a_0)$ and $(\mu_1,a_1)$ lie on the same affine line in $\abs{\Moneone}$ if and only if $a_0 \mu_1^2 = a_1 \mu_0^2$.

For each nonzero point $(\mu,a) \in \abs{\Moneone}$, define the $k\Moneone$-supermodule $L_{(\mu,a)}$ as follows: A basis for $L_{(\mu,a)}$ is given by the vectors $x_0,x_1,\ldots,x_{p-1}$ of even superdegree and the vectors $y_0,y_1,\ldots,y_{p-1}$ of odd superdegree. Set $x_i = y_i = 0$ for $i \geq p$. Then the action of $s \in k\Moneone$ on $L_{(\mu,a)}$ is defined by
	\[
	s.x_0 = -\mu^2 \cdot x_1, \quad s.x_i = x_{i+1} \text{ for $i \geq 1$,} \quad \text{and} \quad s.y_i = y_{i+1} \text{ for $i \geq 0$,}
	\]
and the action of $t \in \Moneone$ on $L_{(\mu,a)}$ is defined by
	\[
	t.x_0 = a^p \cdot y_{p-1}, \quad t.x_i = 0 \text{ for $i \geq 1$,} \quad \text{and} \quad t.y_i = x_{i+1} \text{ for $i \geq 0$.}
	\]
It is then straightforward to check:
	\begin{itemize}
	\item $L_{(\mu,a)}$ is projective (equivalently, free) over $k\Gaone = k[u]/\subgrp{u^p}$ if and only if $\mu \neq 0$, and
	\item $L_{(\mu,a)}$ is projective (equivalently, free) over $k\Gam = k[v]/\subgrp{v^2}$ if and only if $a \neq 0$.
	\end{itemize}
In particular, if $\mu$ and $a$ are both nonzero, then $L_{(\mu,a)}$ is projective over both $k\Gaone$ and $k\Gam$, but is not projective over the full (local) algebra $k\Moneone = k\Gaone \otimes k\Gam$, because it is not free.

\begin{proposition} \label{prop:M11support}
Fix scalars $\mu,a \in k$ not both zero. Then $\abs{\Moneone}_{L_{(\mu,a^p)}}$ is equal to the affine line in $\abs{\Moneone}$ through the point $(\mu,a^p)$.
\end{proposition}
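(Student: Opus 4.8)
──

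The plan is to compute the support variety $\abs{\Moneone}_{L_{(\mu,a^p)}}$ via the non-cohomological description provided by Theorem~\ref{thm:Psirinv}: namely, $\abs{\Moneone}_{L_{(\mu,a^p)}}$ is homeomorphic to $V_1(\Moneone)_{L_{(\mu,a^p)}}$, and the latter consists of those points $(\nu,c) \in V_1(\Moneone) = \{(\nu,c) \in k^2\}$ at which the pullback of $L_{(\mu,a^p)}$ along the corresponding Hopf algebra homomorphism $\rho_{(\nu,c)}: \Pone \to k\Moneone$ has infinite projective dimension over $\Pone$. Using $\Psi_1(\nu,c) = (\nu,c^p)$, it then suffices to identify the set of $(\nu,c)$ with $\pd_{\Pone}(\rho_{(\nu,c)}^* L_{(\mu,a^p)}) = \infty$, and to check that its image under $\Psi_1$ is the affine line through $(\mu,a^p)$, i.e.\ the set $\{(\lambda\mu,\lambda^2 a^p) : \lambda \in k\}$.

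First I would write down the homomorphism $\rho_{(\nu,c)}: \Pone = k[u,v]/\subgrp{u^p+v^2} \to k\Moneone = k[s,t]/\subgrp{s^p,t^2}$ explicitly. Since any Hopf superalgebra map $\Pone \to k\Moneone$ is determined by the images of the primitives $u$ and $v$, and since the augmentation ideal of $k\Moneone$ has radical square $\subgrp{st}$, one has $\rho_{(\nu,c)}(v) = \nu \cdot t$ and $\rho_{(\nu,c)}(u) = c \cdot s$ modulo $\subgrp{st}$ (for appropriately normalized coordinates on $V_1(\Moneone)$, matching the conventions used for $\Grs$ in the proof of Proposition~\ref{prop:psirinvelementary}); any higher-order corrections lie in $\subgrp{st}$ and, by Proposition~\ref{prop:secondradical} applied in the algebra $k\Moneone$, do not affect whether the pullback has finite projective dimension over $\Pone$. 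So up to this equivalence the $\Pone$-action on $L := L_{(\mu,a^p)}$ via $\rho_{(\nu,c)}$ is: $u$ acts as $c$ times the action of $s$, and $v$ acts as $\nu$ times the action of $t$.

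Next I would apply the matrix-factorization criterion of Proposition~\ref{prop:Poneprojdim}: $\pd_{\Pone}(L) = \infty$ if and only if the cup product $1_L \cup y^2 \in \Ext_{\Pone}^2(L,L)$ is nonzero, where $y \in \opH^1(\Pone,k)_{\one}$. Equivalently, one wants the complex $B^{1|1} \xleftarrow{\ol\varphi} B^{1|1} \xleftarrow{\ol\psi} \cdots$ tensored with $L$ to fail to be acyclic, which by the structure of the resolution \eqref{eq:Bfreeresolution} reduces to a rank/kernel computation for the two endomorphisms $\varphi_L = \left(\begin{smallmatrix} u^{p-1} & v \\ v & -u \end{smallmatrix}\right)$ and $\psi_L = \left(\begin{smallmatrix} u & v \\ v & -u^{p-1} \end{smallmatrix}\right)$ of $L \oplus \Pi(L)$ under the specialized action $u = c\cdot s$, $v = \nu \cdot t$. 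I would carry out this linear algebra on the explicit basis $x_0,\ldots,x_{p-1},y_0,\ldots,y_{p-1}$: the operators $s$ and $t$ are nilpotent Jordan-type blocks with the two "twist" scalars $-\mu^2$ and $a^p$ appearing in the corners, so $u^{p-1} = c^{p-1} s^{p-1}$ has a single nonzero entry and the combined operator becomes quite sparse. The upshot of this computation should be that $1_L \cup y^2 \neq 0$ precisely when $\nu$ and $c$ are not both zero and, moreover, $(\nu, c)$ is proportional to $(\mu, a)$ in the sense dictated by the twist scalars: concretely, that $a^p \nu^2 = \mu^2 c^p$ (i.e.\ after applying $\Psi_1$, that $(\nu, c^p)$ lies on the line through $(\mu, a^p)$), together with $(\nu,c) \neq (0,0)$.

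Finally I would assemble the pieces: the set of $(\nu,c) \in V_1(\Moneone)$ with $\pd_{\Pone}(\rho_{(\nu,c)}^*L) = \infty$ is exactly $\{(\lambda\mu, \lambda a) : \lambda \in k\} \setminus \{(0,0)\}$ together with $(0,0)$ itself (which lies in every support variety), hence is the affine line through $(\mu,a)$; applying the homeomorphism $\Psi_1(\nu,c) = (\nu,c^p)$ and the description of affine lines in $\abs{\Moneone}$ given just before the proposition — two points $(\mu_0,a_0)$, $(\mu_1,a_1)$ are collinear iff $a_0\mu_1^2 = a_1\mu_0^2$ — this image is precisely the affine line through $(\mu,a^p)$. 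The main obstacle I anticipate is the explicit $\Ext^2$ (equivalently, matrix-factorization) computation: one must verify carefully that the "no carries"/higher-order corrections to $\rho_{(\nu,c)}(u)$ really are harmless (this is where Proposition~\ref{prop:secondradical} does the work, after checking that $k\Moneone$ fits the form $R = k[\alpha,\beta,\gamma,\delta]/\subgrp{\alpha^p+\delta^2,\ldots}$ needed to invoke it), and then that the $2p$-dimensional module $L$ has the sparse enough structure that the cup product $1_L \cup y^2$ can be evaluated by hand and is nonzero on exactly the claimed locus. Everything else is bookkeeping with the coordinates on $V_1(\Moneone)$ and the map $\Psi_1$.
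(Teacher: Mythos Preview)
Your overall strategy---reduce to $V_1(\Moneone)_L$ via Theorem~\ref{thm:Psirinv} (or, since $\Moneone$ has height one, Lemma~\ref{lemma:psiinverseheightone}), then test finiteness of $\pd_{\Pone}$ point by point---matches the paper's. However, the computational heart differs. The paper does \emph{not} evaluate $1_L \cup y^2$ or the periodic operators $\varphi_L,\psi_L$ on $L^{1|1}$. Instead, it writes down an explicit surjection $\partial: \Pone \oplus \Pi(\Pone) \twoheadrightarrow L$ (sending the generators to $x_0$ and $y_0$), computes $\ker(\partial)$ by hand as the $\Pone$-submodule generated by six explicit vectors $w_1,\ldots,w_6$, and then shows: when $a^p d^2 \neq c^p \mu^2$, the kernel is free on $w_1,w_2$ (a $2\times 2$ determinant argument), so $\pd_{\Pone}(\phi^*L) \leq 1$; when $a^p d^2 = c^p \mu^2$, a $\Z$-degree argument (lifting the $\Z_2$-grading on $\Pone$ to $\deg(u)=2$, $\deg(v)=p$) shows $\ker(\partial)$ is not projective, so $\pd_{\Pone}(\phi^*L) \geq 2$ and hence $=\infty$ by Proposition~\ref{prop:Poneprojdim}. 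Your matrix-factorization route would also work in principle---it amounts to computing $\Tor^{\Pone}_{\geq 2}(L,k)$ via the periodic complex---but you have not actually executed it, and the sparse structure of $s$ and $t$ on the $2p$-dimensional module still requires a careful case analysis comparable in length to the paper's.

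One correction: your invocation of Proposition~\ref{prop:secondradical} is both unnecessary and inapplicable here. Since $u$ has even superdegree, $\rho_{(\nu,c)}(u)$ lands in $(k\Moneone)_{\zero} = k[s]/\langle s^p\rangle$, and as a Hopf superalgebra map $\rho$ must send the primitive $u$ to a primitive; the only even primitives are scalar multiples of $s$. Thus $\rho_{(\nu,c)}(u) = c \cdot s$ and $\rho_{(\nu,c)}(v) = \nu \cdot t$ \emph{exactly}, with no ``higher-order'' $st$-terms to worry about (and $st$ is odd anyway). Moreover, $k\Moneone$ does not fit the shape $R = k[\alpha,\beta,\gamma,\delta]/\langle\cdots\rangle$ required by Proposition~\ref{prop:secondradical}, so that proposition could not be invoked directly even if needed. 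The paper simply writes $\phi(u)=c\cdot s$, $\phi(v)=d\cdot t$ without comment, and you may do the same.
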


\begin{proof}
Set $L = L_{(\mu,a^p)}$. By Lemma \ref{lemma:psiinverseheightone}, it is equivalent to show that
	\begin{equation}
	\begin{split}
	V_1(\Moneone)_L &= \set{ (d,c) \in V_1(\Moneone): \text{$(d,c^p)$ lies on the affine line through $(\mu,a^p)$}} \\
	&= \set{ (d,c) \in V_1(\Moneone): a^p d^2 = c^p \mu^2}.
	\end{split}
	\end{equation}
Fix a point $(d,c) \in V_1(\Moneone)$, corresponding to the Hopf superalgebra homomorphism $\phi: \Pone \rightarrow k\Moneone$ such that $\phi(u) = c \cdot s$ and $\phi(v) = d \cdot t$. If $(c,d) = (0,0)$, so that $\phi$ is the trivial homomorphism, then $\pd_{\Pone}(\phi^* L) = \pd_{\Pone}(k) = \infty$, and hence $(0,0) \in V_1(\Moneone)_L$. But $(0,0)$ is trivially on the affine line in $\abs{\Moneone}$ through $(\mu,a^p)$, so assume that $(c,d) \neq (0,0)$. If $d=0$ (but $c \neq 0$), then $\phi$ factors through the canonical quotient map $\Pone \twoheadrightarrow \Pone/\subgrp{v} = k\Gaone$, and it follows from Lemma \ref{lemma:finiteifffree} and the observations preceding the proposition that $(d,c) = (0,c) \in V_1(\Moneone)_L$ if and only if $\mu = 0$, i.e., if and only if the points $(d,c^p) = (0,c^p)$ and $(\mu,a^p)$ lie on the same affine line in $\abs{\Moneone}$. Similarly, if $c = 0$ (but $d \neq 0$), it follows that $(d,c) = (d,0) \in V_1(\Moneone)_L$ if and only if $a = 0$, i.e., if and only if the points $(d,c^p) = (d,0)$ and $(\mu,a^p)$ lie on the same affine line in $\abs{\Moneone}$. Thus we may assume for the rest of the proof that $c$ and $d$ are both nonzero.

Under the assumption that $c$ and $d$ are both nonzero, we will show that $\pd_{\Pone}(\phi^*L) = \infty$ if and only if $a^p d^2 = c^p \mu^2$. Define $\partial: \Pone \oplus \Pi(\Pone) \rightarrow L$ by $\partial(\alpha,\beta^\pi) = \phi(\alpha).x_0 + \phi(\beta).y_0$. Since $c$ and $d$ are both nonzero, $\partial$ is then a surjective even $\Pone$-supermodule homomorphism. As in Section \ref{subsec:projectiveresolutions}, we will write elements of $\Pone \oplus \Pi(\Pone)$ as column vectors, with the top entry an element of $\Pone$ and the bottom entry an element $\Pi(\Pone)$. Also, for improved readability we will omit the superscript $\pi$ from elements of $\Pi(\Pone)$. Then by direct calculation, one can check that $\ker(\partial)$ is generated as a $\Pone$-super\-module by the vectors
	\[
	w_1 = \binom{d \cdot u}{\mu^2 c \cdot v},\, w_2 = \binom{c^{p-1} \cdot v}{-a^p d \cdot u^{p-1}},\, w_3 = \binom{u^p}{0},\, w_4 = \binom{0}{u^p},\, w_5 = \binom{uv}{0},\, w_6 = \binom{0}{u^{p-1}v}.
	\]

Suppose first that $a^p d^2 \neq c^p\mu^2$. We want to show that $\pd_{\Pone}(\phi^*L) < \infty$. Since the matrix of coefficients $\sm{d & c^{p-1} \\ -\mu^2 c & -a^pd}$ has nonzero determinant, it follows that the vectors 
	\[
	v \cdot w_1 = \binom{d \cdot uv}{-\mu^2c \cdot u^p} \quad \text{and} \quad u \cdot w_2 = \binom{c^{p-1} \cdot uv}{-a^pd \cdot u^p}
	\]
are $k$-linearly independent, and hence $w_4$ and $w_5$ are contained in the $\Pone$-supermodule generated by $w_1$ and $w_2$. Similarly, one sees that the vectors
	\[
	u^{p-1} \cdot w_1 = \binom{d \cdot u^p}{\mu^2c \cdot u^{p-1}v} \quad \text{and} \quad v \cdot w_2 = \binom{-c^{p-1} \cdot u^p}{-a^p d \cdot u^{p-1}v}
	\]
are $k$-linearly independent, and deduces that $w_3$ and $w_6$ are also contained in the $\Pone$-supermodule generated by $w_1$ and $w_2$. Then $\ker(\partial)$ is generated as a $\Pone$-supermodule by the even vector $w_1$ and the odd vector $w_2$. We claim that $w_1$ and $w_2$ are $\Pone$-linearly independent. If so, the complex
	\[
	\ker(\partial) \hookrightarrow \Pone \oplus \Pi(\Pone) \stackrel{\partial}{\rightarrow} L \rightarrow 0
	\]
is a finite $\Pone$-free resolution of $L$, showing that $\pd_{\Pone}(\phi^*L) < \infty$, as desired. So suppose
	\[ \textstyle
	\left( \sum_{i \geq 0} \alpha_i u^i + \sum_{i \geq 0} \beta_i u^i v \right) \cdot w_1 + \left( \sum_{i \geq 0} \gamma_i u^i + \sum_{i \geq 0} \delta_i u^i v \right) \cdot w_2 = 0
	\]
for some scalars $\alpha_i,\beta_i,\gamma_i,\delta_i \in k$ (almost all equal to $0$). This dependence relation translates into
	\[
	\binom{\sum (\alpha_{i-1}d-\delta_{i-p}c^{p-1})u^i + \sum (\gamma_i c^{p-1}+\beta_{i-1}d)u^iv}{\sum (\alpha_i \mu^2c-\delta_{i-p+1}a^pd)u^iv -\sum (\beta_{i-p}\mu^2c+\gamma_{i-p+1}a^pd)u^i} = 0,
	\]
where the summations are taken over all $i \in \Z$, and by convention $\alpha_i = \beta_i = \gamma_i = \delta_i = 0$ for $i < 0$. Then by the $k$-linear independence of the set $\{u^i,u^i v: i \in \N \}$, it follows for all $i \in \Z$ that
	\[
	\begin{pmatrix} d & c^{p-1} \\ \mu^2 c & a^pd \end{pmatrix} \begin{pmatrix}\alpha_i \\ -\delta_{i-p+1} \end{pmatrix} = 0 \quad \text{and} \quad \begin{pmatrix} d & c^{p-1} \\ \mu^2 c & a^pd \end{pmatrix} \begin{pmatrix}\beta_{i-1} \\ \gamma_i \end{pmatrix} = 0,
	\]
and hence $\alpha_i = \beta_i = \gamma_i = \delta_i = 0$ for all $i \in \Z$. Thus, $w_1$ and $w_2$ are $\Pone$-linearly independent.

Now suppose that $a^p d^2 = c^p \mu^2$. We want to show that $\pd_{\Pone}(\phi^*L) = \infty$. It follows from Proposition \ref{prop:Poneprojdim} that $\pd_{\Pone}(\phi^*L) \in \set{0,1,\infty}$, so we just have to show that $\pd_{\Pone}(\phi^*L) > 1$. For this, it suffices to show that $\ker(\partial)$ is not projective as a $\Pone$-supermodule. Suppose to the contrary that $\ker(\partial)$ is projective. Set $P_2 = \Pone \oplus \Pi(\Pone) \oplus \Pone \oplus \Pi(\Pone) \oplus \Pone \oplus \Pi(\Pone)$, and define $\partial_1: P_2 \rightarrow \ker(\partial)$ by $\partial_1(\alpha_1,\alpha_2,\alpha_3,\alpha_4,\alpha_5,\alpha_6) = \sum_{i=1}^6 \alpha_i \cdot w_i$. Once again, we have omitted the superscript $\pi$ from the elements of $\Pi(\Pone)$. Since $\ker(\partial)$ is projective by assumption, there exists a $\Pone$-supermodule splitting $\sigma: \ker(\partial) \rightarrow P_2$. Given $w \in \ker(\partial)$, write
	\[
	\sigma(w) = (\sigma_1(w),\sigma_2(w),\sigma_3(w),\sigma_4(w),\sigma_5(w),\sigma_6(w)) \in P_2.
	\]
We now use the fact that the $\Z_2$-grading on $\Pone$ lifts to a nonnegative $\Z$-grading such that $\deg(u) = 2$ and $\deg(v) = p$. Since $\sigma \circ \partial_1(w_1) = w_1$, it follows from considering the $\Z$-degree of elements that $\sigma_1(w)$ must be equal to the scalar $d^{-1} \in k$ plus a sum (perhaps zero) of terms of greater $\Z$-degree in $\Pone$. Now since $\sigma$ is a $\Pone$-supermodule homomorphism, we get
\[
0 = \sigma(0) = \sigma\left( (c^{p-1} \cdot v).w_1 - (d \cdot u).w_2 \right) = (c^{p-1} \cdot v).\sigma(w_1) - (d \cdot u).\sigma(w_2),
\]
and inspecting first coordinates we get $0 = (c^{p-1} \cdot v).\sigma_1(w_1) - (d \cdot u).\sigma_1(w_2)$. The summand of least $\Z$-degree in $(c^{p-1} \cdot v).\sigma_1(w_1)$ is now $c^{p-1} d^{-1} \cdot v$, which is of $\Z$-degree $p$. On the other hand, the least possible odd $\Z$-degree that a nonzero summand of $(d \cdot u).\sigma_1(w_2)$ could have is $p+2$. Thus, $(d \cdot u).\sigma_1(w_2)$ has no summand of $\Z$-degree $p$ to cancel out the summand of $\Z$-degree $p$ in $(c^{p-1} \cdot v).\sigma_1(w_1)$, a contradiction. Therefore $\ker(\partial)$ is not a projective $\Pone$-supermodule, and so $\pd_{\Pone}(\phi^*L) = \infty$, as desired.
\end{proof}

\subsection{Subalgebras of the Steenrod algebra} \label{subsec:steenrod}

For the rest of this section let $A$ be the mod-$p$ Steenrod algebra with scalars extended to $k$. Then $A$ is a graded connected cocommutative Hopf algebra. In \cite{Nakano:1998}, Nakano and Palmieri investigated support varieties for finite-dimensional graded Hopf subalgebras $B$ of $A$. Specifically, they investigated how support varieties for $B$ are related to support varieties for the so-called \emph{quasi-elementary} Hopf subalgebras of $B$. In the rest of this section we describe how the group algebra $k\Moneone$ occurs as a (graded, quasi-elementary) Hopf subalgebra of $A$, and we describe how the calculations of Section \ref{subsec:examples} show that, in general, a set defined in \cite[Theorem 1.1(b)]{Nakano:1998} via restriction to cyclic subalgebras need not give the cohomological support variety for modules over quasi-elementary subalgebras of $A$.

Let $B$ be a finite-dimensional graded Hopf subalgebra of $A$. Then $B$ and its dual algebra $B^\#$ are both graded connected Hopf algebras. Their augmentation ideals are  generated by their elements of nonzero degree, and hence are nilpotent because $B$ and $B^\#$ are both finite-dimensional. By reducing its $\Z$-grading modulo two, $B$ admits the structure of a cocommutative Hopf super\-algebra, so we can write $B = kG$ for some finite $k$-supergroup scheme $G$. Then the preceding discussion implies that $G$ is both infinitesimal and unipotent. Thus the support theory developed in the present paper can be applied to $B$.

The quasi-elementary condition \cite[Definition 2.4]{Nakano:1998} concerns the non-vanishing of certain products in cohomology, and this condition can be verified for $kE$ for each unipotent multiparameter $k$-super\-group scheme $E$.  In particular, $k\Moneone$ occurs as a quasi-elementary Hopf subalgebra of $A$. In the notation of \cite{Adams:1974,Nakano:1998}, let
	\[
	A^{\#gr} \cong k[\xi_1,\xi_2,\xi_3,\ldots,] \otimes E(\tau_0,\tau_1,\tau_2,\ldots)
	\]
be the graded dual of $A$. Then $k\Moneone \cong B$, where
	\[
	B = \left( A^{\#gr} / \subgrp{\xi_1^p,\xi_2,\xi_3,\ldots;\tau_0,\tau_2,\tau_3,\tau_4,\ldots} \right)^{\#gr}.
	\]
As an algebra, $B$ is generated by the commuting elements $P_1^0$ (the functional linearly dual to $\xi_1$) and $Q_1$ (the functional linearly dual to $\tau_1$) subject only to the relations $(P_1^0)^p = 0$ and $(Q_1)^2 = 0$. The $\Z$-grading on $B$ is given by $\deg(P_1^0) = 2p-2$ and $\deg(Q_1) = 2p-1$. Reducing modulo two, $B$ identifies as a Hopf superalgebra with $k\Moneone = k[s,t]/\subgrp{s^p,t^2}$ via $P_1^0 \mapsto s$ and $Q_1 \mapsto t$.  Under this isomorphism, for a $B$-module $M$, \cite[Theorem 1.1(b)]{Nakano:1998} introduces the set
	\begin{equation} \label{eq:NPrankset}
	\set{ x = (a,b) \in k\cdot s \oplus k \cdot t : M|_{\subgrp{x}} \text{ is not free}}.
	\end{equation} 

Our explicit calculations for $\Moneone$ from the previous section allows us to compare \eqref{eq:NPrankset} to the support variety $\abs{\Moneone}_M$. Indeed, take $M$ to be the $\Moneone$-supermodule $L_{(0,1)}$ as defined in Section \ref{subsec:examples}. The $\Z_2$-grading on $L_{(0,1)}$ lifts to a $\Z$-grading such that
	\begin{align*}
	\deg(y_i) &= 1+(2p-2)i & \text{for $i \geq 0$,} \\
	\deg(x_i) &= 1+(2p-2)(i-1)+(2p-1) & \text{for $i \geq 1$, and} \\
	\deg(x_0) &= 1+(2p-2)(p-1)-(2p-1).
	\end{align*}
Then considering $k\Moneone = k[s,t]/\subgrp{s^p,t^2}$ as a $\Z$-graded algebra with $\deg(s) = 2p-2$ and $\deg(t) = 2p-1$, this makes $L_{(0,1)}$ into a graded $k\Moneone$-module. Now for each nonzero point $x = (a,b) \in k\cdot s \oplus k \cdot t \subset k\Moneone$, it is straightforward to check the restriction of $L_{(0,1)}$ to the $k$-subalgebra of $k\Moneone$ generated by $x$ is free if and only if $a=0$. That is, for $M = L_{(0,1)}$ the set \eqref{eq:NPrankset} is equal to the affine space $k \cdot s \oplus k \cdot t$ with the affine line through $s$ removed.  On the other hand, $\abs{\Moneone}_{L_{(0,1)}}$ is an affine line by Proposition \ref{prop:M11support}.

As discussed in the paragraph preceding Example \ref{example:Fsurjectivity}, Theorems \ref{thm:Psirinv} and \ref{theorem:supportvarietyunion} combined with Lemma \ref{lemma:bsvrg}\eqref{item:union} can be interpreted as replacements for the stratification and rank variety results described in \cite[Theorem 1.1]{Nakano:1998}, provided one accepts a slightly broader definition of a \emph{quasi-elementary} Hopf subalgebra.

\makeatletter
\renewcommand*{\@biblabel}[1]{\hfill#1.}
\makeatother

\bibliographystyle{eprintamsplain}
\bibliography{support-schemes-unipotents}

\end{document}